\RequirePackage{rotating}
\documentclass[]{amsart}
\usepackage{amsmath}
\usepackage[english]{babel}
\usepackage[utf8]{inputenc}
\usepackage{amsfonts}
\usepackage{amsthm}
\usepackage{amssymb}
\usepackage{multicol}
\usepackage{color}
\usepackage{bm}
\usepackage[pagewise]{lineno}
\usepackage{rotating}
\usepackage{tabularx}
\usepackage[labelfont=bf]{caption} % optional
\usepackage{ragged2e}
\usepackage[table,xcdraw]{xcolor}
\usepackage{makecell}
\numberwithin{equation}{section}

\newtheorem{thm}{Theorem}[section]
\newtheorem{defn}{Definition}[section]

\newtheorem{prop}[thm]{Proposition}
\newtheorem{cor}[thm]{Corollary}
\newtheorem{lem}[thm]{Lemma}

\theoremstyle{remark}
\newtheorem{rmk}[thm]{Remark}
\theoremstyle{definition}

\DeclareMathOperator{\E}{\mathbb{E}}

\DeclareMathOperator{\N}{\mathbb{N}}

\DeclareMathOperator{\R}{\mathbb{R}}
\DeclareMathOperator{\cF}{\mathcal{F}}

\DeclareMathOperator{\bP}{\mathbb{P}}

\newcommand{\der}[2]{\frac{d #1}{d #2}}
\newcommand{\dersup}[3]{\frac{d^{#3} #1}{d #2^{#3}}}

\title[Scaling limits for Mittag-Leffler queues]{Scaling limits for some  Mittag-Leffler queues}
\author{Giacomo Ascione$^\dagger$}
\address{$^\dagger$ Scuola Superiore Meridionale, Via Mezzocannone, 4 80134 Napoli - Italy}
\author{Luigia Caputo$^\ast$}
\address{$^\ast$ Dipartimento di Matematica e Applicazioni ``Renato Caccioppoli'', Università degli Studi di Napoli Federico II, 80126 Napoli, Italy}
\begin{document}
\maketitle
\begin{abstract}
In this paper, we consider five models of heavy-tailed queues involving Mittag-Leffler distributions that generalize the classical $M/M/1$ queues. These models are suitable modifications of previously defined models in such a way that the classical $M/M/1$ queue can be recovered by a suitable selection of parameters. We provide the distribution of inter-arrival and service times of both the original and modified queueing models. We then study the scaling limits of all the proposed models and we argue that the behaviour of the limiting processes can be used to characterise the traffic regime of the queues.
\end{abstract}
%\keywords{}\\
%\subclass{}
\section{Introduction}
Queueing theory plays a prominent role in the applications nowadays. Starting from their introduction in the telecommunications field \cite{erlang1909}, queueing systems were adopted to handle several modelling issues, such as in operating systems \cite{silberschatz2013operating}, health-care \cite{mayhew2006using}, epidemiology \cite{hernandez2010application}, production facilities \cite{carlson1992simulation} and economics \cite{radivojevic2014ergodic}, among others. Through time, a quite special attention has been given to the $M/M/1$ queue (in Kendall's notation), that exhibits exponential inter-arrival and service time. For this reason, this kind of model has a purely Markov nature and thus can be studied with the typical arguments of the theory of continuous time Markov Chains (see, for instance, \cite{sharma1990}). Nevertheless, several important results have been achieved even in a more general setting. For $M/G/1$ queues, with the only additional assumption that the service times admit finite moments, Pollaczek and Khintchine (in \cite{pollaczek1930aufgabe,khintchine1932}) obtained a formula for the mean queue length, now known as the Pollaczek-Khintchine formula. An approximation of the mean waiting time for a $G/G/1$ queue has been instead obtained in \cite{kingman1961single}, still under the assumption that the inter-arrival and service times admit finite moments.

Queueing systems with heavy-tailed inter-arrival or service times require, however, a further study. Evidence of heavy-tailed waiting times has been found, to give an example, in finance \cite{sabatelli2002waiting,raberto2002waiting}. Nevertheless, results on the impact of heavy-tailed inter-arrival or service times on the performance of the queueing system have been obtained more recently (see, for instance, \cite{whitt2000impact} and references within). 

Among all possible heavy-tailed distributions, the Mittag-Leffler distribution plays a major role. Indeed, the Mittag-Leffler function represents a quite natural extension of the exponential, that, at the same time, exhibits a power-type asymptotic behaviour on the negative real axis. The related distribution arises as the Laplace transform of the inverse of a stable subordinator (see \cite{bingham1971limit}) and, for such a reason, can be identified as the composition of a stable subordinator with an independent exponential random variable. This identification can be used, for instance, to characterise the inter-jump times of some stepped semi-Markov processes constructed by means of a time-change of an original continuous time Markov chain through the inverse of a stable subordinator. Among these processes, it is important to cite the fractional Poisson process \cite{mainardi2004fractional,laskin2003fractional}, that was originally introduced as the counting process for i.i.d. Mittag-Leffler random variables and then was recognized as a proper time-change of the classical Poisson process \cite{meerschaert2011fractional}. Another important feature of the Mittag-Leffler distribution is its relation with fractional calculus, in particular with the Caputo derivative of which it is an eigenfunction. With this observation in mind, several properties of these semi-Markov processes can be studied through their governing equations by means of Fourier-type spectral decompositions \cite{ascione2021fractional,ascione2021transient}.

A first model Mittag-Leffler queueing system was considered in \cite{cahoy2015transient} and then further studied in \cite{ascione2018fractional,ascione2022skorokhod}. This model, called the fractional $M/M/1$ model, exhibits non-independent Mittag-Leffler inter-arrival and service times and is obtained by means of a time-change procedure of a classical $M/M/1$ queue with the inverse of an $\alpha$-stable subordinator, for $\alpha \in (0,1]$. The relation between the time-change procedure and the solution of fractional abstract Cauchy problems (see \cite{baeumer2001stochastic}) allows for a direct study of its transient behaviour, with an explicit determination of the one-dimensional marginals. Furthermore, this model reduces to the classical $M/M/1$ as $\alpha=1$. Other two models of Mittag-Leffler queueing systems were introduced in \cite{butt2023queuing}, in the form of a renewal process and the reflection of a fractional Skellam process \cite{kerss2014fractional}. In both processes, inter-arrival and service times are related to different Mittag-Leffler distributions, with possibly different order $\alpha,\beta \in (0,1]$. When $\alpha=\beta=1$, these models coincide with a classical $M/M/1$ until it empties. 

All these models share a rather peculiar property: unless $\alpha=1$ or $\beta=1$, inter-arrival and service times have infinite expectation. For such a reason, one cannot compare the mean inter-arrival and mean service time to deduce the \textit{regime} of the queue. Indeed, in the classical $M/M/1$ setting, positive/null recurrence and transience of the states can be identified by comparing the expected inter-arrival and service times, respectively $\lambda^{-1}$ and $\mu^{-1}$. When $\lambda<\mu$, then the queue is in its subcritical regime, i.e. all the states are positive recurrent and the queue length process admits a stationary distribution. When $\lambda>\mu$, i.e. in the supercritical regime, then arrival are much more frequent than services and therefore congestion occurs, so that all the states are transient. When $\lambda=\mu$, we are in the critical regime, in which inter-arrivals and services balance each other. We still lack a stationary distribution, but the states are now null recurrent. This study cannot be carried out in this way for a Mittag-Leffler queue. 

Nevertheless, the distinction between the regimes of the classical $M/M/1$ queue can be also carried out in terms of scaling limits. In the supercritical regime, upon applying a proper space-time scaling, the frequency of the arrivals is highlighted by the fact that the scaling limit of the queue length process is indeed a monotone non-decreasing function. In the critical regime, we need to further \textit{zoom in space} to see some oscillations. Finally, in the subcritical regime, even with this finer space scale, the queue length process converges to $0$.

In this paper, we consider the Mittag-Leffler queueing models introduced in \cite{butt2023queuing} and some suitable modifications, in such a way that these new modified queues actually extend the classical $M/M/1$ queue even after they become empty, while preserving the Mittag-Leffler nature of the inter-arrival and service times. The main aim of the present paper is to determine some scaling limits for such Mittag-Leffler queues (in the style of \cite[Theorem 3.3]{butt2023queuing}) and then use these limits to characterise the regimes of the aforementioned queues. 

The organization of the paper is as follows. In Section~\ref{sec:models} we introduce the considered Mittag-Leffler queueing models. Precisely, we first set, in Section~\ref{subs:ssqs}, the general notation we are going to use for single server queueing models. Several equivalent constructions of the standard $M/M/1$ queue are given in Section~\ref{subs:MM1}: this is done, in particular, to provide a standard reference queueing system. Mittag-Leffler queues will be then constructed by generalizing the different constructions of the $M/M/1$ queue: it will be clear by the distribution of inter-arrival and service times that these constructions, leading to the same queue in the Markovian case, now characterize quite different systems. Sections~\ref{subsmod1} to \ref{sec:Mod5} are dedicated to the construction of the Mittag-Leffler queues and the identification of the inter-arrival and service times distributions, together with some other properties. To streamline the presentation, more technical preliminary results and tools are given in Appendix~\ref{app:A}, while the proofs of the statements of these sections are provided in Appendix~\ref{app:B}. Section~\ref{sec:scaling} is instead devoted to the statements of the main theorems and the regimes of the constructed queueing systems. In Section~\ref{sec:MM1} we provide, for comparison, the statement of the scaling limits of the classical $M/M/1$ queue, together with the relation between the scaling limits and the traffic intensity. Nevertheless, since the Mittag-Leffler queues do not exhibit a proper traffic intensity, we will recognize the regimes of the queues in terms of the scaling limits, in analogy of the aforementioned relation of the $M/M/1$ queue. In Sections~\ref{sec:MM11} to \ref{sec:MM15}, for each Mittag-Leffler queue, we provide the statement of the scaling limits and the interpretation of the related regimes. The proof of such statements are then delayed to Section~\ref{sec:scalingproof}. Precisely, in Section~\ref{sec:MM1proof} we underline some details concerning the proof of the scaling limits of the $M/M/1$ queue, in particular in the subcritical regime $\rho<1$. Indeed, in such a case, one can use the limit distribution to improve the convergence of the queue length process to $0$. We give all the steps in order to compare such a situation with the Mittag-Leffler setting, in particular in the cases in which we are not able to improve the convergence in the subcritical case. Section~\ref{subsecMC} is devoted to some auxiliary (but independently interesting) scaling limits results for some related discrete-time Markov chains. These will be used in the proofs of the main scaling theorems, that are instead given in Sections~\ref{sec:proof1} to \ref{sec:proof5}. While some of the proofs are similar, each proof generally requires a different strategy. All the proofs make explicit extensive use of the continuous mapping theorem, but preliminary convergence results, that could be different for each queueing model, have to be proven.

%The paper is structured as follows: in Section~$2$ we introduce the main notation and provide some preliminaries both on the functional setting and on the involved processes. In Section~$3$ we introduce the Mittag-Leffler queueing models and their main characteristics in terms of distribution of inter-arrival and service times. Finally, Section~$4$ is completely devoted to the scaling limits: the main theorems are proved and the characterization of the regimes of the various queueing models is addressed. Proofs of some preliminary technical results are provided in Appendix.

\section{The Queueing System Models}\label{sec:models}
{In this section we introduce the main queueing system models we are going to study. Before doing that, we recall the main feature of single server queueing systems and of the classical $M/M/1$ queueing system. Then we move to a systematic description of the considered models, with particular attention to the distribution of inter-arrival and service times. In particular, we will focus on the three models of Mittag-Leffler queues introduced in \cite{butt2023queuing,cahoy2015transient} and we will introduce two further models.
%Furthermore, we will recall (or determine, whenever they are not available in the literature) the distribution of inter-arrival and service times for each of these queues. 
%This information will be useful to determine the \textit{regime} of the queueing model, in the next section. 
To recognize each model when comparing them, we will add a preceding subscript to denote the related queue length process, i.e., ${}_jQ$, for $j=1,2,3,4,5$, will denote the queue length process of model $j$, while $Q$ will generally denote the queue length process of a classical $M/M/1$ queue. Furthermore, we will only state the results concerning the distribution of inter-arrival and exit times: their proofs will be given in Appendix \ref{app:B}. Throughout the paper we denote by $\mathbb{D}$ the space of c\'adl\'ag (i.e. right-continuous with left limits) functions $f:[0,+\infty) \to \R$.} {Throughout the paper, we will assume that the queueing system is empty at time zero except if differently specified.}
%, that will be referred as the Mittag-Leffler GI/GI/1 queue
\subsection{Single server queueing systems}\label{subs:ssqs}
Let us recall the definition and some of the main features of single server queueing systems. These systems can be described as follows: we consider two sequences $(T_k)_{k \ge 1}$ and $(S_k)_{k \ge 1}$ of non-negative random variables. The first sequence represents the \textbf{inter-arrival times}, i.e., $T_k$ will be the time that occurs between the arrival of the $k-1$-th and the $k$-th customer. The second sequence represents the service times, i.e., $S_k$ is the time needed to serve the $k$-th customer. We are assuming that there is only one server, which means that the $k$-th customer will have to wait until all the previous customers are served. The arrival mechanism can be described completely by the sequence $(T_k)_{k \ge 1}$, by introducing the \textbf{arrival times} $(A_k)_{k \ge 0}$, where $A_0=0$ and $A_k=A_{k-1}+T_k$ for $k \ge 1$. Concerning the \textit{departures} from the queueing system, one needs to consider the \textbf{cumulative service times} $(CS_k)_{k \ge 0}$, where $CS_0=0$ and $CS_k=CS_{k-1}+S_k$. The respective counting processes, called the \textbf{arrival} and the \textbf{cumulative service counting processes}, are defined as
\begin{equation*}
	N^{\sf a}(t)=\max\{k \ge 0: \ A_k \le t\}, \qquad N^{\sf d}(t)=\max\{k \ge 0: \ CS_k \le t\}.
\end{equation*}
% and the \textbf{cumulative services counting process}
%\begin{equation*}
%	N^{\sf d}(t)=\max\{k \ge 0: \ CS_k \le t\},
%\end{equation*}
Notice that while $N^{\sf a}(t)$ is the number of customers that entered the system up to time $t$, $N^{\sf d}(t)$ does not describe the number of customers served up to time $t$. Indeed, we could still have some \textit{idle periods} in which no customer enters the queue and thus the server does not work: these idle periods are not considered in the evaluation of $N^{\sf d}(t)$. However, we can still use $N^{\sf d}$, up to some modifications, to measure the number of departures. Indeed, if we are able to construct a process $B:=\{B(t), \ t \ge 0\}$ that runs like the time $t$ whenever the queue is not empty but stops when the queue is empty, then $N^{\sf d}(B(t))$ will actually count the number of departures at time $t$. To create such a process, we can consider the following quantities for $t \ge 0$
\begin{equation}\label{eq:CIP}
	C(t)=CS_{N^{\sf a}(t)}, \quad 	X(t)=C(t)-t, \quad L(t)=\Phi(X)(t), \quad B(t)=C(t)-L(t),
\end{equation}
{where $\Phi$ is \textbf{Skorokhod reflection map}, defined as
\begin{equation*}
	\Phi(f)(t)=f(t)+\sup_{0 \le s \le t}\max\{-f(s),0\}, t \ge 0, \ f \in \mathbb{D}.
\end{equation*}
For further details on Skorokhod's reflection map, see Appendix \ref{sec:SRM}.}
The first process in \eqref{eq:CIP} is called the \textbf{cumulative-input process} and represents how much time is needed to serve all the customers that entered the queue up to time $t$. Clearly, since we are already at time $t$, the actual \textit{net} time we need to serve the remaining customers is given by $X$, that is called the \textbf{net-input process}. The net-input process is only allowed to jump upward, while it \textit{moves with continuity} downward. The process $L$ is called the \textbf{workload} of the queue and actually describes when the server is working. Indeed, it is not difficult to check that the server is working only when $L(t)>0$, while it is waiting for a customer to arrive if $L(t)=0$. Indeed, $L(t)$ jumps upward each time a customer enters the queue, with a jump size equal to its service time, then it starts crawling downward with slope $-1$ until it reaches $0$: when this happens, the server has processed all the customers in the queue and will have to wait for a new customer. The process $B$ is called the \textbf{cumulative busy time}: it is a piecewise linear process, whose slope only alternates between $0$ and $1$. Indeed, since $C$ and $L$ have jumps of the same size, they cancel each other. Hence, the process $B$ grows linearly with slope equal to $1$ when $L$ is positive, i.e. when the system is working, independently of the fact that new jumps occur. If instead $L(s)=0$ for $s \in [t_1,t_2]$, then $B$ will be constant in $[t_1,t_2]$. In practice, $B$ is a clock that runs only when the queue is working. We can then evaluate the length of the queue at time $t$ as
\begin{equation*}
	Q(t)=N^{\sf a}(t)-N^{\sf d}(B(t))=N^{\sf a}(t)-D(t),
\end{equation*}
i.e. by counting how many customers entered the queue up to time $t$ and then subtracting the ones whose service took place during the time $B(t)$ in which the server was working in $[0,t]$. The process $Q$ is called the \textbf{queue length process}. Notice that the number of departures is described by the process $D(t)=N^{\sf d}(B(t))$, that is then called the \textbf{departure process}, while its discontinuity points $(D_j)_{j \ge 0}$ (with $D_0=0$) are called the \textbf{departure times}. It is worth noticing that the service times $(S_k)_{k \ge 1}$ do not coincide with the increments of the departure times, since one could find an idle period between two departures. Nevertheless, the following formula holds true:
\begin{equation}\label{eq:servicefromAD}
	S_n=D_n-\max\{D_{n-1},A_n\}.
\end{equation}
The sequence of discontinuity points $(J_n)_{n \ge 0}$ (where $J_0=0$) of $Q$ are called \textbf{event times} and are either arrival or departure times. Their increments $\tau_n=J_n-J_{n-1}$ are called the \textbf{inter-event times}. {We can also consider the \textbf{event counting process}
\begin{equation*}
	G(t)=\max\{k \ge 0: \ J_k \le t\}.
\end{equation*}
It is not difficult to check that the following identity holds
\begin{equation*}
	G(t)=N^{\sf a}+D(t).
\end{equation*}}
If, for some reason, we know that $\bP(\exists k,j \ge 1, \ A_k=D_j)=0$, then the following relations hold for $n \ge 1$:
\begin{align}\label{eq:ADfromQ}
	\begin{split}
		A_n&=\min\{t>A_{n-1}, \ Q(t)=Q(t-)+1\}\\
		D_n&=\min\{t>D_{n-1}, \ Q(t)=Q(t-)-1\}.	
	\end{split}
\end{align}
Let us stress that it is possible that, due to the nature of the experiment, one can clearly observe the queue length process $Q$, while the sequences $(T_n)_{n \ge 1}$ and $(S_n)_{n \ge 1}$ should be determined from it. Furthermore, $\bP(\exists k,j \ge 1, \ A_k=D_j)=0$ can be assumed at the stage of model formulation after some empirical observation of the phenomena we want to describe. Hence, up to considering this property, one can prescribe the queue length process $Q$, then determine arrival and departure times $(A_n)_{n \ge 0}$ and $(D_n)_{n \ge 0}$ by \eqref{eq:ADfromQ}, inter-arrival times as $T_n=A_n-A_{n-1}$ for $n \ge 1$ and service times from \eqref{eq:servicefromAD}. Finally, let us observe that, by definition of reflection and regulation map, we can rewrite
\begin{equation*}
	B(t)=t-\Psi(X)(t):=t-I(t)
\end{equation*}
{where $\Psi$ is the \textbf{Skorokhod regulator map} (see Appendix \ref{sec:SRM}), defined as \begin{equation*}
	\Psi(f)(t)=\sup_{0 \le s \le t}\max\{-f(s),0\}, \ t \ge 0, \ f \in \mathbb{D}.
\end{equation*}
The process} $I(t)=\Psi(X)(t)$ is called the \textbf{cumulative idle time} and describes how much time (up to time $t$) the system has been empty.

% From this, we have that the number of departures from the queue can be described as
%\begin{equation*}
%	D(t)=N^{\sf d}(B(t)),
%\end{equation*}
%where the latter is called the \textbf{departure process}. Discontinuities of the departure process, that we will denote as $(D_k)_{k \ge 0}$, are called the \textbf{departure times}. Finally, the length of the queue at time $t$ can be written by counting the arrivals and then removing the number of departures, i.e.
%\begin{equation*}
%	Q(t)=N^{\sf a}(t)-D(t).
%\end{equation*}
%The process $Q$ is called the \textbf{queue length process}.
%{One can also define the \textbf{cumulative idle time} as 
	%\begin{equation*}
	%	I(t)=\Psi(X(t))
	%\end{equation*}
	%so that we can rewrite the cumulative busy time as
	%\begin{equation*}%
	%	B(t)=t-I(t).
	%\end{equation*}}
In general, a queueing system is said to be a $GI/GI/1$ when $(T_k)_{k \ge 1}$ and $(S_k)_{k \ge 1}$ are i.i.d. random variables and the sequences are independent of each other. If we remove the independence assumption, we get instead a $G/G/1$ queueing system. {For simplicity, throughout the paper we will always assume that the queueing system is empty at the start, i.e. $Q(0)=0$.}
\subsection{The $M/M/1$ queue}\label{subs:MM1}
{Before proceeding with the description of the Mittag-Leffler queueing models, let us first introduce the classical $M/M/1$ queueing systems, that will be our baseline model. Indeed, on the one hand, the Mittag-Leffler queues considered here can be seen as suitable modifications of the $M/M/1$ queue, that will generally appear as a special case; on the other hand, we will compare all the scaling limit results on our Mittag-Leffler queueing systems with the analogous ones for the $M/M/1$ queueing system.}

{It is worth noticing that the $M/M/1$ queueing system can be introduced in three equivalent ways, that will be now discussed separately.}

\subsubsection{The $M/M/1$ as a single server queueing system}
{In the context of single server queueing system, the $M/M/1$ is characterized by means of the sequence of its inter-arrival and service times. In particular, the inter-arrival times $(T_k)_{k \ge 1}$ and the service times $(S_k)_{k \ge 1}$ are i.i.d. exponential random variables with possibly different rates $\lambda,\mu>0$ (that are called then inter-arrival and service rates) and that are independent of each other. Notice that in this case the queue length process is actually a Markov process.}
\subsubsection{The $M/M/1$ as a Markov-renewal process}\label{sec:MRP}
{An alternative, but equivalent, construction of the $M/M/1$ is as follows. Let $X^{(1)}$ and $X^{(2)}$ be two exponential random variables with rates respectively $\lambda,\mu>0$ and let
\begin{equation}\label{mindist}	
p_{\lambda,\mu}=\bP(X^{(1)}<X^{(2)})=\frac{\lambda}{\lambda+\mu}.
\end{equation}
Consider then the Markov chain $q=(q_k)_{k \ge 0}$ on the non-negative integers with $q_0=0$ and transition matrix described, for $i,j \ge 0$, by
\begin{equation}\label{eq:mCren1}
	\bP(q_k=j \mid q_{k-1}=i)=\begin{cases}
		1 & j=1, \ i=0 \\
		p_{\lambda,\mu} & j=i+1 \not = 1 \\
		1-p_{\lambda,\mu} & j=i-1, \ i>0\\
		0 & |j-i|>1 \quad \mbox{ or } j=i \\
	\end{cases}
\end{equation}
Then, we define a sequence of random variables $Y=(Y_k)_{k \ge 1}$ with the property that $Y_k$ is conditionally independent of $(q_h)_{h<k-1}$ given $q_{k-1}$ and such that
\begin{equation}\label{eq:Y1}
	\bP(Y_k>t\mid q_{k-1})=\begin{cases} e^{-\lambda t} & q_{k-1}=0 \\
		e^{-(\lambda+\mu)t} & q_{k-1}>0.
	\end{cases}
\end{equation}
In particular, $(q,Y)_{k \ge 0}$ is a Markov renewal process. Then, if we consider the counting process
\begin{equation}\label{eq:countingproc}
	\widetilde{N}(t):=\max\left\{k \ge 0: \ \sum_{j=1}^{k}Y_k \le t\right\},
\end{equation}
the process $Q(t)=q_{\widetilde{N}(t)}$ is exactly the queue length process of a $M/M/1$ queueing system.}
\subsubsection{The $M/M/1$ as the reflection of a continuous-time random walk}\label{sec:MM1ref}
{A third equivalent construction of the $M/M/1$ is provided as follows. Consider a sequence $(\chi_k)_{k \ge 1}$ of i.i.d. random variables such that
\begin{equation}\label{eq:MCc}
	\bP(\chi_k=j)=\begin{cases} p_{\lambda,\mu} & j=1 \\
		1-p_{\lambda,\mu} & j=-1 \\
		0 & j \not = \pm 1
	\end{cases}
\end{equation}
and the random walk $r=(r_k)_{k \ge 0}$ with
\begin{equation}\label{eq:dtrw}
	r_0=0, \qquad r_k=\sum_{j=1}^{k}\chi_j, \ k \ge 1.
\end{equation}
Let $N$ be an independent Poisson process of rate $\lambda+\mu$ and define the continuous-time random walk
\begin{equation}\label{eq:ctrw}
R(t)=r_{N(t)}, \ t \ge 0.
\end{equation}
Then the process $Q(t)=\Phi(R)(t)$ is still the queue-length process of a $M/M/1$ queue. Since the reflection commutates with the time-change, by Lemma \ref{lem:scaling2}, then one can rewrite the previous process as $Q(t)=\overline{q}_{N(t)}$, where the Markov chain $\overline{q}=(\overline{q}_k)_{k \ge 0}$ over the non-negative integers, defined by means of $\overline{q}_0=0$ and
\begin{equation}\label{eq:Mchain2}
	\bP(\overline{q}_k=j \mid \overline{q}_{k-1}=i)=\begin{cases}
		p_{\lambda,\mu} & j=i+1 \\
		1-p_{\lambda,\mu} & j=i-1 \ge 0 \mbox{ or }j=i=0 \\
		0 & |j-i|>1 \quad \mbox{ or } j=i \not = 0,
	\end{cases}
\end{equation}
is the reflection of the Markov chain $r$.}

\subsubsection{The $M/M/1$ as a reflected difference of competing Poisson processes}\label{sec:comp}
{Finally, let us present a third equivalent way of constructing a $M/M/1$ queue. Let $N^{\sf a}$ and $N^{\sf d}$ be two independent Poisson processes with rates respectively $\lambda,\mu>0$. Then the process
\begin{equation*}
Q=\Phi(N^{\sf a}-N^{\sf d})
\end{equation*}
is again the queue length process of a $M/M/1$ queueing system.}

{We will see that the equivalence of these four constructions strongly rely on the underlying exponential distribution. Indeed, these will correspond respectively with our fifth, second, third and fourth model, that we will observe being extremely different in terms as soon as we drop the exponential time assumption. Once this is clear, we are ready to introduce the main objects of study of this work.}
%A special case of queueing system is given by the $M/M/1$ queue, in which the sequences $(T_k)_{k \ge 1}$ and $(S_k)_{k \ge 1}$ are i.i.d. exponential random variables with rate respectively $\lambda,\mu>0$ and are independent of each other. In this case we have a Markov single server queue length process. {} 

\subsection{Model 1: the fractional M/M/1 queue}\label{subsmod1}
A firsts model of Mittag-Leffler-type queue has been introduced in \cite{cahoy2015transient} {by starting from a $M/M/1$ queueing system and then introducing a random clock in the system}. This model is constructed as follows. {Let $Q$ be the queue length process of a classical $M/M/1$ queue with inter-arrival and service rates $\lambda,\mu>0$. For $\alpha \in (0,1]$, consider an independent $\alpha$-stable subordinator $\sigma_\alpha$ and let $L_\alpha$ be its first-passage time process, i.e.,
\begin{equation*}
L_\alpha(t):=\min\{s \ge 0: \ \sigma_\alpha(s) \ge t\},
\end{equation*}
that we call \textit{inverse $\alpha$-stable subordinator} (see Appendix \ref{sec:stab} for further results).}
%consider the queue length process $Q$ of a classical $M/M/1$ queue with inter-arrival and service rates $\lambda,\mu>0$ and an independent $\alpha$-stable subordinator $L_\alpha$, for some $\alpha \in (0,1]$. 
The \textbf{fractional $M/M/1$ queue length process}, of order $\alpha \in (0,1]$ and {generalized} inter-arrival and service rates $\lambda,\mu>0$, is defined as ${}_1Q:=Q \circ L_\alpha$. The term \textit{fractional} in the name follows from the fact that the transient state probabilities
\begin{equation*}
	p_k(t):=\bP({}_1Q(t)=k)
\end{equation*}
satisfy the fractional Cauchy problem
\begin{equation*}
	\begin{cases}
		\displaystyle \dersup{}{t}{\alpha}{}_1p_0(t)=-\lambda p_0(t)+\mu p_1(t)\\[7pt]
		\displaystyle \dersup{}{t}{\alpha}p_k(t)=-(\lambda+\mu) p_k(t)+\lambda p_{k-1}(t)+\mu p_{k+1}(t)  & k=1,2,\cdots \\[7pt]
		p_0(0)=\delta_{k,0} & k=0,1,2,\cdots 
	\end{cases}
\end{equation*}
that, for $\alpha=1$, reduces to the forward Kolmogorov equation for the distribution of the queue length of the classical $M/M/1$ queue.

Adopting the notation of Section \ref{subs:ssqs}, the distribution of the inter-arrival times $T_k$ and of the serve times $S_k$ has been obtained in \cite[Proposition $7$]{ascione2022skorokhod}. {To state such a result, let us recall the following distribution: a non-negative random variable $X$ is called a \textbf{Mittag-Leffler random variable} of order $\alpha \in (0,1]$ and generalized rate $\lambda>0$ if for $t \ge 0$
\begin{equation*}
	\bP(X>t)=E_\alpha(-\lambda t^\alpha), \quad \mbox{ where }E_\alpha(x)=\sum_{k=0}^{+\infty}\frac{x^k}{\Gamma(\alpha k+1)},
\end{equation*}
where the function $E_\alpha$ is called the \textbf{Mittag-Leffler function}. We denote such a distribution as $X \sim {\sf ML}_\alpha(\lambda)$. For further properties of Mittag-Leffler distribution we refer to Appendix \ref{sec:ML}.}
\begin{prop}
	For a fractional $M/M/1$ queue of order $\alpha \in (0,1]$ and inter-arrival and service rates $\lambda,\mu>0$, the sequences $(T_k)_{k \ge 1}$ and $(S_k)_{k \ge 1}$ are constituted by i.i.d. random variables and it holds
	\begin{equation*}
		T_k \sim {\sf ML}_\alpha(\lambda) \qquad S_k \sim {\sf ML}_\alpha(\mu).
	\end{equation*}
	The two sequences, however, are not independent of each other unless $\alpha=1$.
\end{prop}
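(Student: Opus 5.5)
The plan is to reduce everything to the embedded $M/M/1$ queue $Q$ and to work with the time change through $\sigma_\alpha$. Since $L_\alpha$ is continuous and non-decreasing, ${}_1Q=Q\circ L_\alpha$ has the same jump sequence as $Q$, in the same order. So if $A_k,D_k$ denote the arrival and departure times of $Q$, those of ${}_1Q$ are $\sigma_\alpha(A_k-)$ and $\sigma_\alpha(D_k-)$. Because $\sigma_\alpha$ is independent of $Q$ and has no fixed jump times, these equal $\sigma_\alpha(A_k)$ and $\sigma_\alpha(D_k)$ almost surely. The inter-arrival times are then
\[
{}_1T_k=\sigma_\alpha(A_k)-\sigma_\alpha(A_{k-1}).
\]
The first step is to compute their joint law. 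Conditionally on $(A_k)$, the independent-increment property of $\sigma_\alpha$ makes the ${}_1T_k$ independent with Laplace transforms $e^{-T_k s^\alpha}$. Since the $T_k$ are i.i.d. ${\rm Exp}(\lambda)$ and independent of $\sigma_\alpha$, the variables ${}_1T_k$ are i.i.d. Each has Laplace transform $\lambda/(\lambda+s^\alpha)$, which is the Laplace transform of ${\sf ML}_\alpha(\lambda)$ (Appendix \ref{sec:ML}). Equivalently, ${}_1T_k\overset{d}{=}T_k^{1/\alpha}\sigma_\alpha(1)$.

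The second step handles the service times, using \eqref{eq:servicefromAD}:
\[
{}_1S_n={}_1D_n-\max\{{}_1D_{n-1},{}_1A_n\}=\sigma_\alpha(D_n)-\sigma_\alpha(\max\{D_{n-1},A_n\}),
\]
where the last equality uses the monotonicity of $\sigma_\alpha$. For $Q$ we have $D_n-\max\{D_{n-1},A_n\}=S_n$, so ${}_1S_n=\sigma_\alpha(U_n+S_n)-\sigma_\alpha(U_n)$ with $U_n=\max\{D_{n-1},A_n\}$. The intervals $(U_n,U_n+S_n]$ are pairwise disjoint, and they are functionals of the independent $M/M/1$ data. Conditioning on $Q$ (or on all $T_k,S_k$) therefore again gives conditional independence, with conditional Laplace transforms $e^{-S_n s^\alpha}$. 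The $S_n$ are i.i.d. ${\rm Exp}(\mu)$, so the ${}_1S_n$ are i.i.d. ${\sf ML}_\alpha(\mu)$. To settle the independence claims cleanly, I would condition on the whole $\sigma$-field of $(T_k,S_k)_k$ and use Fubini.

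The third step is the dependence claim, which I expect to be the main obstacle. The sequences are joint functionals of one path of $\sigma_\alpha$, and their increments overlap whenever the arrival and service intervals overlap. For $\alpha=1$ we have $\sigma_1(t)=t$, everything reduces to the $M/M/1$, and independence holds. For $\alpha<1$ I would exhibit one explicit dependence, namely between ${}_1T_2$ and ${}_1S_1$. The interval $(A_1,A_1+T_2]$ for ${}_1T_2$ and the interval $(A_1,A_1+S_1]$ for ${}_1S_1$ (with $U_1=A_1$) both start at $A_1$. Conditionally on $T_2,S_1$, the covariance of the two increments is $\mathrm{Var}$-type in $\min(T_2,S_1)$. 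This has infinite moments, so instead I would compare joint Laplace transforms. With $m=\min(T_2,S_1)$,
\[
\E\bigl[e^{-s\,{}_1T_2-r\,{}_1S_1}\bigr]=\E\bigl[e^{-m(s+r)^\alpha-(T_2-m)s^\alpha-(S_1-m)r^\alpha}\bigr],
\]
using the independent increments on the common part and the disjoint parts. This should be compared with the product of $\lambda/(\lambda+s^\alpha)$ and $\mu/(\mu+r^\alpha)$. The computation uses the exponential laws of $T_2,S_1$ and the memoryless property at the minimum, and gives
\[
\frac{\lambda+\mu}{\lambda+\mu+(s+r)^\alpha}\left(\frac{\lambda}{\lambda+\mu}\cdot\frac{\mu}{\mu+r^\alpha}+\frac{\mu}{\lambda+\mu}\cdot\frac{\lambda}{\lambda+s^\alpha}\right).
\]
This equals the product for all $s,r$ only when $(s+r)^\alpha=s^\alpha+r^\alpha$, i.e. $\alpha=1$. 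Checking this final inequality, for instance by a small-$s,r$ expansion or at $s=r$, is the delicate point of the argument.
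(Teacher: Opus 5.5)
Your proposal is correct, and it takes a different, more self-contained route than the paper.

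The paper does not prove the marginal laws itself; it quotes \cite[Proposition 7]{ascione2022skorokhod}. For the lack of independence it reasons through \eqref{eq:interevent1}, taken from \cite{cahoy2015transient}: an inter-event time started from a positive state is ${\sf ML}_\alpha(\lambda+\mu)$. If inter-arrival and service times were independent, such a time (for instance the one after the first arrival, which is the minimum of the second inter-arrival time and the first service time) would have survival function $E_\alpha(-\lambda t^\alpha)E_\alpha(-\mu t^\alpha)$. For $\alpha<1$ this differs from $E_\alpha(-(\lambda+\mu)t^\alpha)$, because the Mittag-Leffler function has no semigroup property \cite{peng2010note}.

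You instead work directly with the subordination representation ${}_1T_k=\sigma_\alpha(A_k)-\sigma_\alpha(A_{k-1})$ and ${}_1S_n=\sigma_\alpha(U_n+S_n)-\sigma_\alpha(U_n)$. Conditional independence of $\sigma_\alpha$-increments over disjoint random intervals then gives both marginal laws. You detect the dependence through the joint Laplace transform of the same pair $({}_1T_2,{}_1S_1)$ that the paper's argument implicitly uses.

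Your representation actually contains the paper's argument. With $m=\min\{T_2,S_1\}$ you have $\min\{{}_1T_2,{}_1S_1\}=\sigma_\alpha(A_1+m)-\sigma_\alpha(A_1)\sim{\sf ML}_\alpha(\lambda+\mu)$, so you could also finish by comparing survival functions.

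What each route buys: yours costs a short computation, but it yields the full joint law of the pair and needs nothing beyond $\E[e^{-s\sigma_\alpha(t)}]=e^{-ts^\alpha}$ and the structure of the $M/M/1$. The paper's route needs only the law of the minimum, but relies on the cited inter-event distribution and the non-factorization of Mittag-Leffler survival functions.

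The step you flag as delicate is in fact immediate. Your expression simplifies so that its ratio to $\frac{\lambda}{\lambda+s^\alpha}\cdot\frac{\mu}{\mu+r^\alpha}$ is exactly $\frac{\lambda+\mu+s^\alpha+r^\alpha}{\lambda+\mu+(s+r)^\alpha}$. For $\alpha<1$ this is strictly larger than $1$ for every $s,r>0$, by strict subadditivity of $x\mapsto x^\alpha$, so no expansion is needed.
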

Let us stress that the latter statement is due to the fact that, as shown in \cite{cahoy2015transient}, the distribution of the inter-event times $(\tau_{k})_{k \ge 0}$ satisfies, for $k \ge 1$,
\begin{equation}\label{eq:interevent1}
	\bP(\tau_k>t \mid {}_1Q(J_{k-1}))=\begin{cases} E_\alpha(-\lambda t^\alpha) & {}_1Q(J_{k-1})=0 \\
		E_\alpha(-(\lambda+\mu) t^\alpha) & {}_1Q(J_{k-1})>0,
	\end{cases}
\end{equation}
together with the fact that the queue length process ${}_1Q$ is a semi-Markov process (see \cite[Example 2.13]{kaspi1988regenerative}) and that the Mittag-Leffler function does not satisfy a semi-group property unless $\alpha=1$ (see \cite{peng2010note}). For $\alpha=1$, the model reduces to the standard $M/M/1$ queue. 
%. Then, for this reason, we can assume, without loss of generality, that $k=1$ and that we are conditioning on the initial state of the queue. However it is worth noticing that, on the event $\{{}_1Q(0)>0\}$ it holds ${}_1\tau_1=\min\{{}_1S_1,{}_1T_1\}$. If ${}_1S_1$ and ${}_1T_1$ were independent, then, for $k \ge 1$, it should hold (see \cite{peng2010note})
%\begin{multline*}
%\bP({}_1\tau_1>t \mid {}_1Q(0)=k)=\bP(\min\{{}_1S_1,{}_1T_1\}>t)\\
%=E_\alpha(-\lambda t^\alpha)E_\alpha(-\mu t^\alpha) \not = E_\alpha(-(\lambda+\mu)t^\alpha),
%\end{multline*}
%unless $\alpha=1$, that is in contradiction with \eqref{eq:interevent1} whenever $\alpha<1$. 
This process has been extensively studied in \cite{cahoy2015transient,ascione2018fractional,ascione2022skorokhod,butt2023queuing}, in which almost all the characteristics have been determined, and a heavy traffic limit has been achieved in \cite{ascione2022skorokhod}.
\subsection{Model 2: the fast renewal Mittag-Leffler queue}\label{Sec:model2}
In the previous model, the lack of independency of inter-arrival and service times has been verified by checking that the inter-event times are not distributed like the minimum of two independent Mittag-Leffler random variables. Hence, let us consider an alternative model  constructed as follows: consider two i.i.d. sequences $(X^{(1)}_k)_{k \ge 1}$ and $(X^{(2)}_k)_{k \ge 2}$ independent of each other and such that $X^{(1)}_1 \sim {\sf ML}_\alpha(\lambda)$ and $X^{(2)}_1 \sim {\sf ML}_\beta(\mu)$ for some $\lambda,\mu>0$ and $\alpha,\beta \in (0,1]$. We define then the sequence of times $Y=(Y_k)_{k \ge 1}$ where $Y_k=\min\{X^{(1)}_k,X^{(2)}_k\}$. We want to use $(Y_k)_{k \ge 1}$ (or a subsequence of it) as inter-event time for our queue length process.

In \cite{butt2023queuing}, the following model has been proposed, {partially based on the renewal process construction of the $M/M/1$, as in Section \ref{sec:MRP}}. Consider two independent random variables $X^{(1)} \sim {\sf ML}_\alpha(\lambda)$ and $X^{(2)} \sim {\sf ML}_\beta(\lambda)$ {and we let $p_{\lambda,\mu}^{\alpha,\beta}$ as in \eqref{mindist}, where we added the supscripts $\alpha$ and $\beta$ are used to underline the dependence on the orders of the considered Mittag-Leffler random variables. Then we consider the Markov chain $q$ as in \eqref{eq:mCren1} (with $p_{\lambda,\mu}^{\alpha,\beta}$ in place of just $p_{\lambda,\mu}$) and we assume that it is independent of the sequence $Y$ defined before. Once we also consider the counting process $\widetilde{N}(t)$ as in \eqref{eq:countingproc}, we can introduce the desired queueing system by means of the queue length process
\begin{equation*}
	{}_2Q(t)=q_{\widetilde{N}(t)}.
\end{equation*}}
%\begin{equation*}
%	p_{\lambda,\mu}^{\alpha,\beta}=\bP(X^{(1)}<X^{(2)}).
%\end{equation*}
%Then we define the Markov chain $(q_k)_{k \ge 0}$ by setting $q_0=0$ and, for $j,i \ge 0$,
%\begin{equation}\label{eq:mCren1}
%	p_{i,j}=\bP(q_k=j \mid q_{k-1}=i)=\begin{cases}
%		1 & j=1, \ i=0 \\
%		p_{\lambda,\mu}^{\alpha,\beta} & j=i+1 \not = 1 \\
%		0 & |j-i|>1 \quad \mbox{ or } j=i \\
%		1-p_{\lambda,\mu}^{\alpha,\beta} & j=i-1.
%	\end{cases}
%\end{equation}
%We also assume that the Markov chain $(q_k)_{k \ge 0}$ is independent of the sequence $(Y_k)_{k \ge 1}$. Let $\widetilde{N}(t)$ be the counting process of the sequence $(Y_k)_{k \ge 0}$, i.e.
%\begin{equation*}
%	\widetilde{N}(t):=\max\left\{k \ge 0: \ \sum_{j=1}^{k}Y_k \le t\right\}.
%\end{equation*}
%Then the queue length process in this case is defined as
%\begin{equation*}
%	{}_2Q(t)=q_{\widetilde{N}(t)}.
%\end{equation*}
Now we want to determine the distribution of inter-arrival and service times of this queueing model. To do this, let us first introduce some notation.
%a new distribution for non-negative random variables. 
\begin{defn}
	We say that a random variable $X \sim {\sf mML}_{\alpha,\beta}(\lambda,\mu)$
	% is distributed as a \textbf{minimum Mittag-Leffler} of orders 
	with orders $\alpha,\beta \in (0,1]$ and rates $\lambda,\mu>0$ if for $t>0$
	\begin{equation*}
		\bP(X>t)=E_\alpha(-\lambda t^\alpha)E_\beta(-\mu t^\beta)=:1-F_{\alpha,\beta}^{\lambda,\mu}(t),
	\end{equation*}
	i.e. if $X\overset{d}{=}\min\{X_1,X_2\}$, where $X_1={\sf ML}_\alpha(\lambda)$ and $X_2={\sf ML}_\beta(\mu)$.
	%and we denote this as $X \sim {\sf mML}_{\alpha,\beta}(\lambda,\mu)$.
	We say that a random variable $X \sim {\sf mGE}_{\alpha,\beta}(\lambda,\mu;k)$ with orders $\alpha,\beta \in (0,1]$, rates $\lambda,\mu>0$ and shape parameter $k$ if
	\begin{equation*}
		\bP(X \le t)=(F_{\alpha,\beta}^{\lambda,\mu})^{\ast k}(t),
	\end{equation*}
	where
	\begin{equation*}
		(F_{\alpha,\beta}^{\lambda,\mu})^{\ast 1}(t)=F_{\alpha,\beta}^{\lambda,\mu}(t) \qquad (F_{\alpha,\beta}^{\lambda,\mu})^{\ast k}(t)=\int_0^t(F_{\alpha,\beta}^{\lambda,\mu})^{\ast (k-1)}(t-s)dF_{\alpha,\beta}^{\lambda,\mu}(s),
	\end{equation*}
	i.e., if it is distributed as the sum of $k$ i.i.d. random variables $(X_j)_{j=1,\dots,k}$ such that $X_1 \sim {\sf mML}_{\alpha,\beta}(\lambda,\mu)$.
\end{defn}
\noindent Notice that, by definition, $Y_k \sim {\sf mML}_{\alpha,\beta}(\lambda,\mu)$, while $\sum_{j=k_1}^{k_1+\delta-1}Y_j \sim  {\sf mGE}_{\alpha,\beta}(\lambda,\mu;\delta)$. Furthermore, recall that if $\alpha=\beta=1$, then ${\sf ML}_{1,1}(\lambda,\mu)={\sf Exp}(\lambda+\mu)$.
Now we can determine the distribution of the inter-arrival times, conditionally on the number of customers in the queue. Again, we use the notation of Section \ref{subs:ssqs}.
\begin{thm}\label{thm:interarrival2}
	Let $(T_k)_{k \ge 1}$ be the sequence of inter-arrival times of the renewal Mittag-Leffler queue ${}_2Q$. Then $T_1 \sim {\sf mML}_{\alpha,\beta}(\lambda,\mu)$. Furthermore, for any $k \ge 2$ and $n=1,\dots,k-1$ it holds
	\begin{equation*}
		\bP(T_k>t \mid {}_2Q(A_{k-1})=n)
		=\sum_{j=0}^{n}(1-(F_{\alpha,\beta}^{\lambda,\mu})^{\ast (j+1)}(t))(1-p_{\alpha,\beta}^{\lambda,\mu})^jp_{\alpha,\beta}^{\lambda,\mu}.
	\end{equation*}
\end{thm}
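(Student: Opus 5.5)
The plan is to exploit the independence between the Markov chain $q$ and the sequence of inter-event times $Y$, and to decompose $T_k$ according to the number of chain steps that separate the $(k-1)$-th and $k$-th up-jumps of $q$. The queue starts empty, so $q_0=0$ and $q_1=1$ with probability one. The first event is therefore an arrival, and $T_1=Y_1\sim{\sf mML}_{\alpha,\beta}(\lambda,\mu)$ by construction.

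For $k\ge 2$, let $\eta_{k-1}$ be the index of the chain step at which the $(k-1)$-th arrival occurs, that is, the $(k-1)$-th index $m$ with $q_m=q_{m-1}+1$. Then $A_{k-1}=\sum_{i=1}^{\eta_{k-1}}Y_i$ and ${}_2Q(A_{k-1})=q_{\eta_{k-1}}$. Next, let $\delta$ be the number of chain steps from $\eta_{k-1}$ to $\eta_k$, so that
\begin{equation*}
T_k=\sum_{i=\eta_{k-1}+1}^{\eta_{k-1}+\delta}Y_i .
\end{equation*}
The index $\eta_{k-1}$ is a stopping time for $q$. By the strong Markov property, conditionally on $q_{\eta_{k-1}}=n$, the law of $\delta$ is that of the number of steps a copy of $q$ started at $n$ needs to make its first up-jump. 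From state $n\ge 1$, the chain makes $j$ consecutive down-jumps followed by an up-jump with probability $(1-p)^jp$, for $j=0,\dots,n-1$, where $p=p_{\alpha,\beta}^{\lambda,\mu}$. If the chain instead goes all the way to $0$, which has probability $(1-p)^n$, the next step is an up-jump with probability one. Hence $\bP(\delta=j+1\mid q_{\eta_{k-1}}=n)$ equals $(1-p)^jp$ for $j<n$ and $(1-p)^n$ for $j=n$.

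Since $Y$ is i.i.d. and independent of $q$, and $\eta_{k-1}$ and $\delta$ are measurable with respect to $q$, conditionally on $\{q_{\eta_{k-1}}=n,\ \delta=j+1\}$ the variable $T_k$ is a sum of $j+1$ i.i.d. ${\sf mML}_{\alpha,\beta}(\lambda,\mu)$ variables, i.e. $T_k\sim{\sf mGE}_{\alpha,\beta}(\lambda,\mu;j+1)$. Justifying this carefully is the step I expect to need the most care. I would condition on the full path of $q$: given $\eta_{k-1}=m$ and $\delta=j+1$, the block $(Y_{m+1},\dots,Y_{m+j+1})$ still has its unconditional product law by independence. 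Summing over $m$ then removes the dependence on $\eta_{k-1}$ and leaves only the dependence on $n$ through $\delta$.

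By the law of total probability this gives
\begin{equation*}
\bP(T_k>t\mid {}_2Q(A_{k-1})=n)=\sum_{j=0}^{n-1}\bigl(1-(F_{\alpha,\beta}^{\lambda,\mu})^{\ast(j+1)}(t)\bigr)(1-p)^jp+\bigl(1-(F_{\alpha,\beta}^{\lambda,\mu})^{\ast(n+1)}(t)\bigr)(1-p)^n .
\end{equation*}
The last term carries the weight $(1-p)^n$, not $(1-p)^np$ as in the displayed formula of Theorem~\ref{thm:interarrival2}; only with the weight $(1-p)^n$ do the probabilities sum to one. So I would either reconcile this with the intended convention (for instance, a reading of the boundary term in which the final weight is understood as $(1-p)^n$), or prove the version above and note the discrepancy. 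The restriction $n\le k-1$ simply reflects the fact that at most $k-1$ customers can be present after $k-1$ arrivals.
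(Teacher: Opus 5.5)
Your proof is correct and follows the same route as the paper. Both take $T_1=Y_1$. For $k\ge 2$, both condition on the number $j$ of departures in $[A_{k-1},A_k]$ (your $\delta=j+1$) and use the independence of $Y$ from $q$, so that $T_k$ is a sum of $j+1$ i.i.d.\ ${\sf mML}_{\alpha,\beta}(\lambda,\mu)$ variables.

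You and the paper differ only in the law of that number of departures, and there you are right and the paper is not. Its proof asserts that $N_S[A_{k-1},A_k]$ is geometric with parameter $p=p_{\alpha,\beta}^{\lambda,\mu}$, and it uses the weight $(1-p)^jp$ for every $j=0,\dots,n$. This overlooks that after $n$ departures the chain is at $0$ and jumps up with probability one. Those weights sum to $1-(1-p)^{n+1}<1$, since $p\in(0,1)$. Concretely, as $t\to0^+$ the displayed right-hand side tends to $1-(1-p)^{n+1}$, while the left-hand side tends to $1$ because $T_k>0$ almost surely.

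So there is no convention to reconcile: the displayed formula is wrong. You should prove your truncated-geometric version, with last weight $(1-p)^n$, and state the correction explicitly. No such truncation affects Theorem~\ref{thm:service2}, since upward moves of $q$ are never blocked.

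The same error carries over to item (2) of Corollary~\ref{cor:toMM1}. With the correct weights, for $\alpha=\beta=1$ one gets $\bP(T_k>t\mid {}_2Q(A_{k-1})=n)=e^{-(\lambda+\mu)t}\sum_{j=0}^{n}\frac{(\mu t)^j}{j!}$. This is the probability of no $\lambda$-event and at most $n$ $\mu$-events in $[0,t]$, which gives a direct check on the corrected formula.
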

Concerning the service times, we have a similar result.
\begin{thm}\label{thm:service2}
	Let $(S_k)_{k \ge 1}$ be the sequence of service times of the renewal Mittag-Leffler queue ${}_2Q$. Then,
	\begin{equation*}
		\bP(S_k>t)
		=\sum_{j=0}^{+\infty}(1-(F_{\alpha,\beta}^{\lambda \mu})^{\ast (j+1)}(t))(p_{\alpha,\beta}^{\lambda,\mu})^j(1-p_{\alpha,\beta}^{\lambda,\mu}).
	\end{equation*}
\end{thm}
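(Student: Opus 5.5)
The plan is to read the service time directly off the embedded chain $q$ and the i.i.d. holding times $Y$. Since each jump of ${}_2Q$ corresponds to exactly one step of $q$, arrivals and departures never occur simultaneously, so \eqref{eq:ADfromQ} and \eqref{eq:servicefromAD} apply. They give $S_k=D_k-\max\{D_{k-1},A_k\}$. The time $\max\{D_{k-1},A_k\}$ is the event time at which the service of the $k$-th customer begins. It is either the $k$-th arrival into an empty queue (so $q$ jumps $0\to 1$), or the $(k-1)$-th departure leaving the queue non-empty. In both cases it is an event time $J_{\nu_k}$, where $\nu_k$ is a random index determined by the path of $q$ alone, and $q_{\nu_k}\ge 1$. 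The time $D_k$ is the first subsequent event time at which $q$ makes a downward step, because until then no departure can occur.

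First I would define $\eta_k=\min\{m\ge 1: q_{\nu_k+m}=q_{\nu_k+m-1}-1\}$. Then $D_k=J_{\nu_k+\eta_k}$ and
\begin{equation*}
S_k=\sum_{i=\nu_k+1}^{\nu_k+\eta_k}Y_i .
\end{equation*}
Next I would compute the law of $\eta_k$. The index $\nu_k$ is a stopping time for the natural filtration of $q$, and $q_{\nu_k}\ge 1$. Before the first downward step the chain only moves up, so it stays in $\{1,2,\dots\}$. There, by \eqref{eq:mCren1}, each step is up with probability $p_{\alpha,\beta}^{\lambda,\mu}$ and down with probability $1-p_{\alpha,\beta}^{\lambda,\mu}$, independently of the past. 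By the strong Markov property it follows that
\begin{equation*}
\bP(\eta_k=j+1)=(p_{\alpha,\beta}^{\lambda,\mu})^j(1-p_{\alpha,\beta}^{\lambda,\mu}), \qquad j\ge 0 .
\end{equation*}

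Then I would condition on the whole chain $q$. Both $\nu_k$ and $\eta_k$ are functions of $q$, and $Y$ is an i.i.d. ${\sf mML}_{\alpha,\beta}(\lambda,\mu)$ sequence independent of $q$. Hence, conditionally on $\{\eta_k=j+1\}$ (and on $\nu_k$), the sum above is a sum of $j+1$ i.i.d. ${\sf mML}_{\alpha,\beta}(\lambda,\mu)$ variables, i.e. it has law ${\sf mGE}_{\alpha,\beta}(\lambda,\mu;j+1)$. Its survival function is therefore $1-(F_{\alpha,\beta}^{\lambda,\mu})^{\ast(j+1)}(t)$. Summing over $j$ by the law of total probability gives the claimed series, which converges since the weights are geometric. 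As a consistency check, for $\alpha=\beta=1$ the formula yields an ${\sf Exp}(\mu)$ service time, as it should.

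The step needing the most care is the identification of the starting index $\nu_k$ as a stopping time of $q$ with $q_{\nu_k}\ge1$. This requires checking that the $k$-th service begins exactly at an event of the chain, covering both the idle case and the busy case. It also requires checking that the geometric count $\eta_k$ does not depend on the level $q_{\nu_k}$, which holds because every state $i\ge1$ has the same transition probabilities. I would handle this by an explicit case split on whether $A_k>D_{k-1}$, the only difference between the cases being the starting level ($1$ versus $q_{\nu_k}$). Once this is done, the independence of $Y$ and $q$ makes the rest routine.
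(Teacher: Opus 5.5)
Your proof is correct and follows essentially the same route as the paper. The paper also splits on whether $A_k>D_{k-1}$, notes that the number of arrivals between the start of the $k$-th service and $D_k$ (your $\eta_k-1$) is geometric with weights $(p_{\alpha,\beta}^{\lambda,\mu})^j(1-p_{\alpha,\beta}^{\lambda,\mu})$, and writes $S_k$ as a sum of $j+1$ consecutive $Y_h$'s. Your version makes explicit what the paper leaves implicit (the stopping time $\nu_k$, the strong Markov property, and conditioning on the whole chain $q$ to use its independence from $Y$) and treats both cases at once.
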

{The proofs of the latter results are given in Appendix \ref{sec:ptinter12} and \ref{sec:ptser12}.} 
Other properties concerning the existence of a limit distribution have been considered in \cite{butt2023queuing}. It is worth noticing, however, that this model does not really generalize a classical $M/M/1$ model. Indeed, according to Theorem \ref{thm:interarrival2}, if $\alpha=\beta=1$, then $T_1 \sim{\sf Exp}(\lambda+\mu)$, while we should have $T_1 \sim {\sf Exp}(\lambda)$. Actually, for $\alpha=\beta=1$, we have the following corollary.
\begin{cor}\label{cor:toMM1}
	Let $\alpha=\beta=1$ and consider the sequence of inter-arrival and service times $(T_k)_{k \ge 1}$ and $(S_k)_{k \ge 1}$ of the renewal queue ${}_2Q$. Then it holds
	\begin{enumerate}
		\item $T_1 \sim {\sf Exp}(\lambda+\mu)$
		\item For $n=1,\cdots,k-1$ it holds
		\begin{multline*}
			\bP(T_k>t \mid {}_2Q(A_{k-1})=n)\\=e^{-(\lambda+\mu)t}\sum_{j=0}^{n}\frac{(\lambda+\mu)^jt^j}{j!}\left(\left(\frac{\mu}{\lambda+\mu}\right)^{j}-\left(\frac{\mu}{\lambda+\mu}\right)^{n+1}\right)
		\end{multline*}
		\item $S_k \sim {\sf Exp}(\mu)$ for all $k \in \N$.
	\end{enumerate}
\end{cor}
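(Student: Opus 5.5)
The corollary follows by specializing Theorems \ref{thm:interarrival2} and \ref{thm:service2} to $\alpha=\beta=1$ and simplifying. For these orders $E_1(-x)=e^{-x}$, so $1-F_{1,1}^{\lambda,\mu}(t)=e^{-(\lambda+\mu)t}$, i.e. ${\sf mML}_{1,1}(\lambda,\mu)={\sf Exp}(\lambda+\mu)$. This gives item (1) directly from Theorem \ref{thm:interarrival2}. Moreover, the $(j+1)$-fold convolution of ${\sf Exp}(\lambda+\mu)$ is an Erlang (Gamma) law, whence
\begin{equation*}
1-(F_{1,1}^{\lambda,\mu})^{\ast(j+1)}(t)=e^{-(\lambda+\mu)t}\sum_{i=0}^{j}\frac{(\lambda+\mu)^it^i}{i!}.
\end{equation*}
Finally $p:=p_{1,1}^{\lambda,\mu}=\lambda/(\lambda+\mu)$ by \eqref{mindist}, and $1-p=\mu/(\lambda+\mu)$.

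For item (2), I substitute these into the formula of Theorem \ref{thm:interarrival2} and exchange the order of the two finite sums (over $j\le n$ and over $i\le j$). The coefficient of $e^{-(\lambda+\mu)t}(\lambda+\mu)^it^i/i!$ becomes
\begin{equation*}
p\sum_{j=i}^{n}(1-p)^j=(1-p)^i-(1-p)^{n+1},
\end{equation*}
using the geometric sum $p\sum_{j=i}^n(1-p)^j=(1-p)^i-(1-p)^{n+1}$. Renaming $i$ as $j$ yields exactly the stated expression.

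For item (3), the same substitution in Theorem \ref{thm:service2} followed by the same exchange of sums (now with $j$ ranging to $+\infty$) gives
\begin{equation*}
\bP(S_k>t)=e^{-(\lambda+\mu)t}\sum_{i\ge 0}\frac{(\lambda+\mu)^it^i}{i!}(1-p)\sum_{j\ge i}p^j.
\end{equation*}
The inner sum is $(1-p)\sum_{j\ge i}p^j=p^i$, so
\begin{equation*}
\bP(S_k>t)=e^{-(\lambda+\mu)t}\sum_{i\ge 0}\frac{(\lambda t)^i}{i!}=e^{-(\lambda+\mu)t}e^{\lambda t}=e^{-\mu t},
\end{equation*}
which is $S_k\sim{\sf Exp}(\mu)$.

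The only point needing care is justifying the exchange of summation in the infinite sum for item (3). All terms are nonnegative, so Tonelli's theorem applies. Beyond that, one only has to check that the indexing in Theorem \ref{thm:interarrival2} matches the $(j+1)$-fold convolution, i.e. Erlang of shape $j+1$, whose tail sum runs over $i=0,\dots,j$.
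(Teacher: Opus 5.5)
You follow exactly the route the paper intends: it omits the proof and says the corollary follows directly from Theorems \ref{thm:interarrival2} and \ref{thm:service2}. Your manipulations are all correct: the Erlang tail, the exchange of sums justified by nonnegativity, and the two geometric sums. Items (1) and (3) are therefore established. Item (2), however, is false as stated, and your derivation simply passes on an error already present in Theorem \ref{thm:interarrival2}. Evaluate the claimed formula at $t=0$. Only the $j=0$ term survives, giving $\bP(T_k>0\mid {}_2Q(A_{k-1})=n)=1-\bigl(\frac{\mu}{\lambda+\mu}\bigr)^{n+1}<1$. But $T_k$ is a sum of holding times with a continuous law, so it is a.s. positive. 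Equivalently, the mixture weights $(1-p)^jp$, $j=0,\dots,n$, in Theorem \ref{thm:interarrival2} sum to $1-(1-p)^{n+1}$, not to $1$. Checking that these weights sum to one was the real point needing care, not Tonelli or the indexing.

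The mistake is in the proof of Theorem \ref{thm:interarrival2}, which treats the number $N_S$ of departures in $[A_{k-1},A_k]$ as an untruncated geometric variable. Given ${}_2Q(A_{k-1})=n$, after $n$ consecutive departures the chain $q$ is at $0$, and by \eqref{eq:mCren1} it moves to $1$ with probability one. Hence $\bP(N_S=j)=(1-p)^jp$ for $j<n$, but $\bP(N_S=n)=(1-p)^n$. With this last weight, your exchange of sums gives the coefficient $p\sum_{j=i}^{n-1}(1-p)^j+(1-p)^n=(1-p)^i$ for every $i\le n$, so
\[
\bP(T_k>t\mid {}_2Q(A_{k-1})=n)=e^{-(\lambda+\mu)t}\sum_{j=0}^{n}\frac{(\mu t)^j}{j!}.
\]
A direct argument confirms this. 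For $\alpha=\beta=1$, ${}_2Q$ runs on independent exponential clocks of rates $\lambda$ and $\mu$, except that any ring at an empty queue is an arrival. Thus $T_k>t$ exactly when the rate-$\lambda$ clock does not ring in $(0,t]$ and the rate-$\mu$ clock rings at most $n$ times there. For general $\alpha,\beta$, with $p=p_{\lambda,\mu}^{\alpha,\beta}$, the last summand of Theorem \ref{thm:interarrival2} should read $(1-(F_{\alpha,\beta}^{\lambda,\mu})^{\ast(n+1)}(t))(1-p)^n$. Item (3) is unaffected: during a service the queue contains customer $k$ and never empties, so the number of arrivals used in Theorem \ref{thm:service2} really is untruncated geometric.
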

The proof is omitted as it follows directly from Theorems \ref{thm:interarrival2} and \ref{thm:service2}. The previous corollary tells us that if $\alpha=\beta=1$, ${}_2Q$ does not coincide with a classical $M/M/1$ queue due to a clear discrepancy with the distribution of the inter-arrival times, while we still preserve the distribution of the service times. Actually, we could notice this discrepancy even from the definition of the queueing model. Indeed, the process ${}_2Q$ is defined as the semi-Markov process associated to the Markov renewal process (see, for instance, \cite{pyke1961markov,gikhman2004theory}) $(q,Y)$ where the chain $(q_k)_{k \ge 0}$ is defined in \eqref{eq:mCren1} while $(Y_k)_{k \ge 1}$ are i.i.d. with $Y_k \sim {\sf mML}_{\alpha,\beta}(\lambda,\mu)$, hence leading to $Y_k \sim {\sf Exp}(\lambda+\mu)$ if $\alpha=\beta=1$. On the other hand, the classical $M/M/1$ queue, {the underlying Markov renewal process should satisfy \eqref{eq:Y1}, hence cannot coincide with $(q,Y)$ as introduced here.}
% is the (semi-)Markov process associated to the Markov renewal process $(q,Y^{(1)})$, where $(q_k)_{k \ge 0}$ is still the Markov chain defined in \eqref{eq:mCren1}, but $(Y^{(1)}_k)_{k \ge 1}$ is not independent of $(q_k)_{k \ge 0}$, since for $k \ge 1$
%\begin{equation}\label{eq:Y1}
%	\bP(Y^{(1)}_k>t \mid q_{k-1})=\begin{cases}
%		e^{-\lambda t} & q_{k-1}=0 \\
%		e^{-(\lambda+\mu)t} & q_{k-1}>0.
%	\end{cases}
%\end{equation}
%Hence, since the underlying Markov renewal processes do not coincide, it is clear that ${}_2Q$ for $\alpha=\beta=1$ is not a classical $M/M/1$ queue. 
{In practice, in case $\alpha=\beta=1$, the queue length process ${}_2Q$ coincides with the one of the $M/M/1$ until it reaches $0$. Its idle states are exponentially distributed of rate $\lambda+\mu$ in place of $\lambda$, i.e., they are usually shorter. Then after it leaves the idle state, it keeps behaving like a $M/M/1$ queue. For this reason, we will refer to this model as a \textbf{fast renewal Mittag-Leffler queue}. In order for ${}_2Q$ to be a proper generalization of the $M/M/1$ queue, one would need to prescribe separately the distribution of $Y_k$ when $q_{k-1}=0$.}

\subsection{Model 3: the renewal Mittag-Leffler queue}\label{Sec:model3}
{Instead of just prescribing an arbitrary distribution of $Y_k$ when $q_{k-1}=0$, in such a way that the case $\alpha=\beta=1$ leads trivially to the $M/M/1$ queue, let us introduce a further model based on the construction in Section \ref{sec:MM1ref}. To do this, we consider the random walk $r$ introduced in \eqref{eq:dtrw}, where we use $p_{\lambda,\mu}^{\alpha,\beta}$ in place of $p_{\lambda,\mu}$, and then we time-change it with the counting process $\widetilde{N}$ defined by \eqref{eq:countingproc}, where the sequence $Y=(Y_k)_{k \ge 1}$ is made of i.i.d. random variables such that $Y_k \sim {\sf mML}_{\alpha,\beta}(\lambda,\mu)$, as in the previous section. In this way, we obtain a continuous-time random walk $R(t)=r_{\widetilde{N}(t)}$ and we can consider its reflection
%To correct this behaviour, we consider the following modification of Model $2$ in \cite{butt2023queuing}. Let $(\chi_k)_{k \ge 1}$ be a sequence of i.i.d. random variables, independent of $(Y_j)_{k \ge 1}$, such that
%\begin{equation}\label{eq:MCc}
%	\bP(\chi_k=j)=\begin{cases}
%		p_{\lambda,\mu}^{\alpha,\beta} & j=1 \\
%		1-p_{\lambda,\mu}^{\alpha,\beta} & j=-1.
%	\end{cases}
%\end{equation}
%and consider the partial sum sequence $r=(r_k)_{k \ge 0}$, defined by $r_0=0$ and $r_k=r_{k-1}+\chi_k$ for $k \ge 1$.
%\begin{equation}\label{eq:MCr}
%	r_0=0 \qquad r_k=\sum_{j=1}^{k}\chi_j, \ k \ge 1.
%\end{equation}
%Then the bivariate process $(r_k,Y_k)_{k \ge 1}$ is a Markov renewal process with independent components and we can construct the semi-Markov process $R(t)=r_{\widetilde{N}(t)}$
%\begin{equation}\label{eq:Rt}
%	R(t):=r_{\widetilde{N}(t)}=\sum_{j=1}^{\widetilde{N}(t)}\chi_{j}.
%\end{equation}
%Clearly, $R(t)$ can also assume negative values. So, to construct a proper queueing model, we can consider the reflection of the process $R(t)$, i.e.
\begin{equation*}
	{}_3Q:=\Phi(R).
\end{equation*}}
Let $(T_k)_{k \ge 1}$ and $(S_k)_{k \ge 1}$ be the sequences of inter-arrival and service times of ${}_3Q$, according to the notation in Section \ref{subs:ssqs}. It is not difficult to check that the proof of Theorem \ref{thm:service2} also applies to this case, getting
\begin{equation}\label{eq:serv2Q}
	\bP(S_k>t)
	=\sum_{j=0}^{+\infty}(1-(F_{\alpha,\beta}^{\lambda, \mu})^{\ast (j+1)}(t))(p_{\alpha,\beta}^{\lambda,\mu})^j(1-p_{\alpha,\beta}^{\lambda,\mu}).
\end{equation}
Indeed, as in the previous case, ${S}_k$ is still the first downward jump of ${}_3{Q}$ after the $k$-th upward jump or the $(k-1)$-th downward jump (depending on which one happens after the other) and we only need to consider the possibility that any number of upward jumps happen in between. Differently, ${T}_k$ can be seen as the $k$-th upward jump of $R(t)$, since $R$ and ${}_3{Q}$ share the same upward jumps. However the process $R$ can admit any finite number of downward jumps between two upward jumps. As a consequence, we get the following result, whose proof is identical to the one of Theorem \ref{thm:service2} and thus is omitted.
\begin{thm}\label{thm:interarrival22}
	Let $({T}_k)_{k \ge 1}$ be the sequence of inter-arrival times of the renewal Mittag-Leffler queue ${}_3{Q}$. Then,
	\begin{equation}\label{eq:interarrival2}
		\bP({T}_k>t)
		=\sum_{j=0}^{+\infty}(1-(F_{\alpha,\beta}^{\lambda, \mu})^{\ast (j+1)}(t))(1-p_{\alpha,\beta}^{\lambda,\mu})^jp_{\alpha,\beta}^{\lambda,\mu}.
	\end{equation}
\end{thm}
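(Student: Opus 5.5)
We argue directly on the continuous-time random walk $R(t)=r_{\widetilde N(t)}$ and use that ${}_3Q=\Phi(R)$ has exactly the same upward jumps as $R$. Indeed, $\Phi(R)$ differs from $R$ only by the non-decreasing regulator $\Psi(R)$. That regulator increases only at times when $R$ jumps down to a new running minimum below zero, so upward jumps of ${}_3Q$ coincide with upward jumps of $R$. Hence $A_k$ is the time of the $k$-th index $m$ with $\chi_m=1$, i.e. $A_k=\sum_{j=1}^{\eta_k}Y_j$, where $\eta_k=\min\{m:\sum_{i\le m}\mathbf 1_{\{\chi_i=1\}}=k\}$. Then
\begin{equation*}
T_k=A_k-A_{k-1}=\sum_{j=\eta_{k-1}+1}^{\eta_k}Y_j .
\end{equation*}

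The key steps, in order, are as follows.

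First, we note that $(\chi_m)_{m\ge1}$ is i.i.d. Bernoulli-type with $\bP(\chi_m=1)=p_{\alpha,\beta}^{\lambda,\mu}$ and is independent of the i.i.d. sequence $(Y_m)_{m\ge1}$. This is by construction of Model 3, which is what distinguishes it from Model 2, where the chain $q$ forced an up-step from $0$.

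Second, by the strong Markov (renewal) property of the i.i.d. sequence $(\chi_m)$ at the stopping time $\eta_{k-1}$, the gap $\eta_k-\eta_{k-1}$ is geometric: $\bP(\eta_k-\eta_{k-1}=j+1)=(1-p_{\alpha,\beta}^{\lambda,\mu})^jp_{\alpha,\beta}^{\lambda,\mu}$ for $j\ge0$.

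Third, since the $Y$'s are independent of the $\chi$'s, conditionally on $\eta_{k-1}$ and on $\eta_k-\eta_{k-1}=j+1$ the sum $T_k$ is a sum of $j+1$ i.i.d. ${\sf mML}_{\alpha,\beta}(\lambda,\mu)$ variables. It therefore has law ${\sf mGE}_{\alpha,\beta}(\lambda,\mu;j+1)$, i.e. distribution function $(F_{\alpha,\beta}^{\lambda,\mu})^{\ast(j+1)}$, by the remark following the Definition.

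Finally, conditioning on $\eta_k-\eta_{k-1}$ and applying the law of total probability yields \eqref{eq:interarrival2}. The series converges since each term is bounded by the geometric weight.

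The only step needing care, and the one we expect to be the main obstacle, is the first: the rigorous identification of the upward jumps of $\Phi(R)$ with those of $R$, including the time index. It must also be checked that no upward jump is lost at times when the reflection is active. We would handle this pathwise. $R$ is a pure-jump step function with jumps $\pm1$, so $\Psi(R)(t)=\max\{0,-\min_{s\le t}R(s)\}$ is a step function that jumps by $+1$ exactly at down-steps of $R$ reaching a new minimum below $0$. At such times ${}_3Q$ stays at $0$, which corresponds to the idle-period case of \eqref{eq:serv2Q}. At up-steps of $R$, $\Psi(R)$ is constant, so ${}_3Q$ jumps by $+1$. Almost surely no two $Y$-sums coincide, since the distribution is continuous, so $\bP(\exists k,j:\ A_k=D_j)=0$ and \eqref{eq:ADfromQ} applies. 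With this identification the remaining steps are routine, mirroring the argument for Theorem \ref{thm:service2} with the roles of $p_{\alpha,\beta}^{\lambda,\mu}$ and $1-p_{\alpha,\beta}^{\lambda,\mu}$ exchanged.
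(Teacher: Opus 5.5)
Your proposal is correct and takes the same route as the paper, which omits this proof as identical to that of Theorem \ref{thm:service2}: view $T_k$ as the time between consecutive upward jumps of $R$ (which ${}_3Q$ shares), condition on the geometrically distributed number $j$ of intervening downward jumps, and use that the corresponding $j+1$ inter-event times sum to an ${\sf mGE}_{\alpha,\beta}(\lambda,\mu;j+1)$ variable. Your pathwise check that $\Psi(R)$ is constant at up-steps of $R$, together with your explicit use of the independence of $(\chi_m)$ and $(Y_m)$, makes rigorous two points the paper only asserts.
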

Differently from ${}_2Q$, this renewal queue ${}_3{Q}$ is a generalization of the classical $M/M/1$ queue. {Indeed, the following result clearly holds, once we observe that if $\alpha=\beta=1$, then $p^{1,1}_{\lambda,\mu}=\frac{\lambda}{\lambda+\mu}$ and $\widetilde{N}$ is a Poisson process.}
\begin{cor}
	If $\alpha=\beta=1$, then ${}_3{Q}$ is a classical $M/M/1$ queue.
\end{cor}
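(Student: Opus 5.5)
The plan is to reduce ${}_3Q$, for $\alpha=\beta=1$, to the construction of the $M/M/1$ queue in Section~\ref{sec:MM1ref}. That construction says that if $r$ is the random walk \eqref{eq:dtrw} with steps distributed as in \eqref{eq:MCc}, and $N$ is an independent Poisson process of rate $\lambda+\mu$, then $\Phi(r_{N(\cdot)})$ is the queue length process of an $M/M/1$ queue with rates $\lambda,\mu$. Since ${}_3Q=\Phi(R)$ with $R(t)=r_{\widetilde{N}(t)}$, it suffices to show two things. First, the step law of $r$ in the definition of ${}_3Q$ coincides with \eqref{eq:MCc}. Second, $\widetilde{N}$ is a Poisson process of rate $\lambda+\mu$, independent of $r$. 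Once both hold, $R$ has the same law on $\mathbb{D}$ as the continuous-time random walk \eqref{eq:ctrw}. Then $\Phi(R)$ has the same law as the $M/M/1$ queue length, because $\Phi$ is a deterministic measurable map on $\mathbb{D}$.

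First, compute $p^{1,1}_{\lambda,\mu}$. By definition it is $\bP(X^{(1)}<X^{(2)})$ with $X^{(1)}\sim {\sf ML}_1(\lambda)$ and $X^{(2)}\sim {\sf ML}_1(\mu)$ independent. Since $E_1(x)=e^x$, these are ${\sf Exp}(\lambda)$ and ${\sf Exp}(\mu)$. The elementary computation
\[
\bP(X^{(1)}<X^{(2)})=\int_0^\infty \lambda e^{-\lambda s}e^{-\mu s}\,ds=\frac{\lambda}{\lambda+\mu}
\]
recovers \eqref{mindist}. Hence the increments $\chi_k$ satisfy \eqref{eq:MCc} exactly.

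Second, compute the law of $Y_k$. We have $\bP(Y_k>t)=E_1(-\lambda t)E_1(-\mu t)=e^{-(\lambda+\mu)t}$, so the $Y_k$ are i.i.d. ${\sf Exp}(\lambda+\mu)$. Consequently the renewal counting process $\widetilde{N}$ in \eqref{eq:countingproc} is a Poisson process of rate $\lambda+\mu$. This is the standard characterization of the Poisson process through i.i.d. exponential inter-jump times. Independence of $\widetilde{N}$ from $r$ holds by construction, since $r$ is built from $(\chi_k)$, which is independent of $(Y_k)$.

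No step is a real obstacle. The only point deserving care is to make the equality in law rigorous at the level of processes rather than one-dimensional marginals. Specifically, the pair $((\chi_k)_k,(Y_k)_k)$ has the same joint law as the pair $((\chi_k)_k,\text{inter-jump times of }N)$ in Section~\ref{sec:MM1ref}. $R$ is a measurable function of this pair, and $\Phi$ is a measurable map on $\mathbb{D}$; measurability follows, for instance, from the continuity of $\Phi$ in the Skorokhod $J_1$ topology recalled in Appendix~\ref{sec:SRM}. Hence ${}_3Q$ equals in law the $M/M/1$ queue length process. The inter-arrival and service time sequences are measurable functionals of the queue length path, via \eqref{eq:ADfromQ} and \eqref{eq:servicefromAD}, and simultaneous arrivals and departures occur with probability zero. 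Therefore these sequences also have the $M/M/1$ joint law: i.i.d. ${\sf Exp}(\lambda)$ and ${\sf Exp}(\mu)$, independent of each other. As a consistency check, \eqref{eq:interarrival2} and \eqref{eq:serv2Q} then sum geometric mixtures of Gamma laws to ${\sf Exp}(\lambda)$ and ${\sf Exp}(\mu)$.
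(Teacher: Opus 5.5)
Your proposal is correct and follows the paper's own argument. The paper simply observes that for $\alpha=\beta=1$ one has $p^{1,1}_{\lambda,\mu}=\frac{\lambda}{\lambda+\mu}$ and that $\widetilde{N}$ is a Poisson process of rate $\lambda+\mu$, so ${}_3Q=\Phi(R)$ reduces to the reflected continuous-time random walk construction of Section~\ref{sec:MM1ref}; your extra care about path-level equality in law and about recovering the inter-arrival and service sequences is a sound elaboration of that same idea.
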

Furthermore, we can deduce whether $p_{\lambda,\mu}^{\alpha,\beta}$ is greater or lower than $1/2$ by comparing the distributions of $T_1$ and $S_1$. Indeed, the following proposition holds true.
\begin{prop}\label{prop:comparison}
	It holds $p_{\lambda,\mu}^{\alpha,\beta} \ge \frac{1}{2}$ if and only if
	\begin{equation}\label{eq:comparsurv}
		\bP({T}_1>t)-(1-F_{\alpha,\beta}^{\lambda,\mu}(t))p_{\alpha,\beta}^{\lambda,\mu} \le \bP({S}_1>t)-(1-F_{\alpha,\beta}^{\lambda,\mu}(t))(1-p_{\alpha,\beta}^{\lambda,\mu}) 
	\end{equation}
	Furthermore, equality holds in \eqref{eq:comparsurv} if and only if $p_{\lambda,\mu}^{\alpha,\beta}=\frac{1}{2}$.
\end{prop}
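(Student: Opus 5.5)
Write $p=p_{\lambda,\mu}^{\alpha,\beta}$ and $G_j(t)=1-(F_{\alpha,\beta}^{\lambda,\mu})^{\ast j}(t)$ for $j\ge 1$, so that $G_1=1-F_{\alpha,\beta}^{\lambda,\mu}$. The plan is to insert the explicit series from Theorem \ref{thm:interarrival22} and from \eqref{eq:serv2Q} with $k=1$ into \eqref{eq:comparsurv}. The subtracted terms $(1-F_{\alpha,\beta}^{\lambda,\mu}(t))p$ and $(1-F_{\alpha,\beta}^{\lambda,\mu}(t))(1-p)$ are exactly the $j=0$ terms of the two series, so they cancel. Inequality \eqref{eq:comparsurv} then reads
\begin{equation*}
	\sum_{j=1}^{+\infty}G_{j+1}(t)\,(1-p)^jp \le \sum_{j=1}^{+\infty}G_{j+1}(t)\,p^j(1-p).
\end{equation*}
Since $p\in(0,1)$, we may divide by $p(1-p)>0$ and reindex. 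The condition becomes
\begin{equation*}
	\Delta(t):=\sum_{j=1}^{+\infty}G_{j+1}(t)\left(p^{j-1}-(1-p)^{j-1}\right)\ge 0 .
\end{equation*}
The $j=1$ term vanishes, so effectively the sum runs over $j\ge 2$. Convergence is not an issue because $0\le G_{j+1}\le 1$ and the geometric weights are summable.

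\textbf{Sufficiency.} If $p\ge 1/2$, then $p\ge 1-p$, so every coefficient $p^{j-1}-(1-p)^{j-1}$ is $\ge 0$. Since each $G_{j+1}(t)\ge 0$, we get $\Delta(t)\ge 0$ for all $t$. If $p=1/2$, all coefficients vanish and equality holds identically.

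\textbf{Necessity and the equality case.} Suppose $p<1/2$. Then every coefficient with $j\ge 2$ is strictly negative. It therefore suffices to find some $t$ with $G_3(t)>0$, because then $\Delta(t)\le G_3(t)(p-(1-p))<0$.
\begin{itemize}
	\item $G_3(t)$ is the survival function of a sum of three i.i.d.\ ${\sf mML}_{\alpha,\beta}(\lambda,\mu)$ variables.
	\item Each summand is positive almost surely, so the sum is positive almost surely.
	\item Hence $G_3(t)\to 1$ as $t\to 0^+$, and in particular $G_3(t)>0$ for small $t>0$.
\end{itemize}
This shows that \eqref{eq:comparsurv} fails when $p<1/2$. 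The same argument settles the equality statement. If $p>1/2$, the coefficients are strictly positive and $G_3(t)>0$ for small $t$, so $\Delta(t)>0$ there and the inequality is strict. Thus equality for all $t$ forces $p=1/2$.

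\textbf{Main obstacle.} The only delicate point is interpretive: the proposition must be read as \eqref{eq:comparsurv} holding for all $t\ge 0$. The necessity direction also needs the strict positivity of $G_3$ at some $t$, which the argument above provides.
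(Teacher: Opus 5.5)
Your proof is correct and follows the same route as the paper: after cancelling the $j=0$ terms, both arguments compare the geometric weights $(1-p)^jp$ and $p^j(1-p)$ term by term for $j\ge 2$, the $j=1$ terms being equal. Your explicit check that $G_3(t)>0$ fills in the strictness step that the paper leaves implicit. In fact $G_3(t)\ge 1-F_{\alpha,\beta}^{\lambda,\mu}(t)>0$ for every $t$, which gives the statement pointwise in $t$ as well.
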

{The proof is given in Appendix \ref{proofcomp}.}
{
Arguing as in Section \ref{sec:MM1ref}, one can show by Lemma \ref{lem:scaling2} that 
%Notice that the queue length process ${}_3{Q}$ can be equivalently defined, thanks to Lemma \ref{lem:scaling2}, as
		\begin{equation*}
			{}_3{Q}(t)=\overline{q}_{\widetilde{N}(t)},
		\end{equation*}
		where $\overline{q}=(\overline{q}_k)_{k \ge 0}$ is the Markov chain independent of $\widetilde{N}$ defined in \eqref{eq:Mchain2}, with $p_{\lambda,\mu}^{\alpha,\beta}$ in place of $p_{\lambda,\mu}$.
}

% Before showing this, let us notice that the same arguments as in the proof of Item $(3)$ in Corollary \ref{cor:toMM1} lead to the following result.
%\begin{prop}
%	If $\alpha=\beta=1$, then ${}_3{T}_k \sim {\sf Exp}(\lambda)$ and ${}_3{S}_k \sim {\sf Exp}(\mu)$ for all $k \ge 1$.
%\end{prop}
%To prove that for $\alpha=\beta=1$ our model ${}_3{Q}$ coincide with a classical $M/M/1$ queue, we will first provide a representation of ${}_3{Q}$ in terms of a suitable Markov renewal process.
{Before proceeding, let us observe that we can rewrite ${}_3Q$ as a semi-Markov process with jump chain $q$ by means of a suitable modification of the distribution of $Y_k$ whenever $q_{k-1}=0$.}
\begin{prop}\label{prop:27}
	Consider the Markov chain $(q_k)_{k \ge 0}$ defined in \eqref{eq:mCren1} with $p_{\lambda,\mu}^{\alpha,\beta}$ in place of $p_{\lambda,\mu}$ and let $(\widetilde{Y}_k)_{k \ge 1}$ be a sequence of non-negative random variables such that
	\begin{equation}\label{eq:tildeY}
		\bP(\widetilde{Y}_k>t \mid q_{k-1})=\begin{cases}
			1-F_{\alpha,\beta}^{\lambda,\mu}(t) & q_{k-1}>0 \\
			\sum_{j=0}^{+\infty}(1-(F_{\alpha,\beta}^{\lambda,\mu})^{\ast(j+1)})(1-p_{\alpha,\beta}^{\lambda,\mu})^jp_{\alpha,\beta}^{\lambda,\mu} & q_{k-1}=0.
		\end{cases}
	\end{equation}
	Let also $\overline{N}(t)$ be the counting process, defined as in \eqref{eq:countingproc} with $(\widetilde{Y}_k)_{k \ge 0}$ in place of $(Y_k)_{k \ge 0}$. Then 
	\begin{equation*}
		{}_3Q(t)=q_{\overline{N}(t)}, \qquad t \ge 0.
	\end{equation*}
%	${}_3{Q}$ is the semi-Markov process induced by the Markov renewal process $(q,\widetilde{Y})$, i.e., up to equality in distribution,
%	\begin{equation*}
%		{}_3{Q}(t)=q_k \quad \mbox{ for }\quad  t \in \left[\sum_{j=1}^{k}\widetilde{Y}_j,\sum_{j=1}^{k+1}\widetilde{Y}_j\right).
%	\end{equation*}
\end{prop}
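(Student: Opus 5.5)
We work with the representation ${}_3Q(t)=\overline{q}_{\widetilde{N}(t)}$ recalled just before the statement, where $\overline{q}$ is the reflected chain of \eqref{eq:Mchain2} (with $p=p_{\lambda,\mu}^{\alpha,\beta}$) and is independent of the i.i.d. ${\sf mML}_{\alpha,\beta}(\lambda,\mu)$ sequence $(Y_k)_{k\ge1}$. The chain $\overline{q}$ differs from $q$ only in that it may stay at $0$: from $0$ it moves to $1$ with probability $p$ and stays at $0$ with probability $1-p$. Since the statement is an equality in law of processes, the plan is to collapse these holding steps at $0$ and to show that the result is a semi-Markov process with jump chain $q$ and sojourn law \eqref{eq:tildeY}.

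First, define the successive indices at which $\overline{q}$ actually changes value: $\kappa_0=0$ and $\kappa_m=\min\{k>\kappa_{m-1}: \overline{q}_k\neq \overline{q}_{k-1}\}$. Set $q_m:=\overline{q}_{\kappa_m}$ and $\widetilde{Y}_m:=\sum_{j=\kappa_{m-1}+1}^{\kappa_m}Y_j$. Then $\overline{q}_{\widetilde{N}(t)}=q_{\overline{N}(t)}$ pathwise, because between $\kappa_{m-1}$ and $\kappa_m$ the chain $\overline{q}$ is constant. Moreover $(q_m)$ is exactly the chain \eqref{eq:mCren1}. From a state $i>0$ the next step is $\pm1$ with probabilities $p,1-p$, and from $0$ the walk eventually moves to $1$ almost surely, since $p>0$ makes $\kappa_m$ finite a.s. The strong Markov property of $\overline{q}$ at the stopping times $\kappa_m$ gives the Markov property of $(q_m)$.

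Second, identify the conditional law of $\widetilde{Y}_m$. If $q_{m-1}>0$, then $\kappa_m=\kappa_{m-1}+1$ and $\widetilde{Y}_m=Y_{\kappa_{m-1}+1}$. This variable is independent of $\mathcal{F}_{\kappa_{m-1}}$ and of $\overline{q}$, so its survival function is $1-F_{\alpha,\beta}^{\lambda,\mu}$. If $q_{m-1}=0$, then $\kappa_m-\kappa_{m-1}=J+1$, where $J$ is the number of holding steps at $0$. The variable $J$ is geometric, $\bP(J=j)=(1-p)^jp$, and it is independent of the $Y$'s because $\overline{q}$ and $Y$ are independent. Conditioning on $J$ and using that $\widetilde{Y}_m$ given $J=j$ is a sum of $j+1$ fresh i.i.d. ${\sf mML}$ variables, hence ${\sf mGE}_{\alpha,\beta}(\lambda,\mu;j+1)$, yields exactly the series in \eqref{eq:tildeY}. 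This is the same computation as in Theorem \ref{thm:interarrival22}.

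Third, verify the Markov renewal structure. We need $\widetilde{Y}_m$ to be conditionally independent of the past $(q_h,\widetilde{Y}_h)_{h<m}$ given $q_{m-1}$, and to depend on no further chain information. We obtain this from the strong Markov property of the pair $(\overline{q}_k,Y_k)$ at $\kappa_{m-1}$. The blocks $\{\kappa_{m-1}+1,\dots,\kappa_m\}$ use disjoint, fresh $Y$'s, and the length of each block depends only on $q_{m-1}$ and future coin flips. Since $q_m$ is determined by those same future flips, we also have to handle the joint law of $(q_m,\widetilde{Y}_m)$. When $q_{m-1}=0$, $q_m=1$ deterministically. When $q_{m-1}>0$, $q_m$ depends on the flip, which is independent of $Y_{\kappa_{m-1}+1}$. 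So the joint kernel factorizes as required, and $q_{\overline{N}}$ is the semi-Markov process associated with $(q,\widetilde{Y})$.

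The main obstacle is this bookkeeping of independence between block lengths and holding times, which is what makes the factorization clean. A cleaner route, if needed, is to build everything from two independent i.i.d. sources: the coin flips $\chi$ and the $Y$'s. One then checks the joint finite-dimensional laws directly by conditioning on the whole flip sequence, under which the $\widetilde{Y}_m$ become independent sums of deterministic numbers of $Y$'s.
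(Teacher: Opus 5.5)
Your proof is correct and follows essentially the paper's argument: the paper reads off the event times of ${}_3Q=\Phi(R)$, notes that when $q_{k-1}>0$ the regulator is constant between events (so each step is a single walk increment with one ${\sf mML}_{\alpha,\beta}(\lambda,\mu)$ holding time), and that from $0$ the sojourn consists of a geometric number of ineffective downward steps followed by an upward step, as in Theorem \ref{thm:interarrival22} — exactly your collapsing of the holding steps of $\overline{q}$ at $0$. Your version is somewhat more careful, since you check via the strong Markov property that $(q,\widetilde{Y})$ is a genuine Markov renewal process, with next state and sojourn conditionally independent given $q_{m-1}$, a point the paper leaves implicit.
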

{The proof is given in Appendix \ref{proofprop27}. In general, we refer to this model as the \textbf{renewal Mittag-Leffler queue}.}
%Notice that if $\alpha=\beta=1$, then, applying the same proof of Item $(3)$ in Corollary \ref{cor:toMM1}, \eqref{eq:tildeY} coincides with \eqref{eq:Y1}. Hence, we get the following straightforward result.

\subsection{Model 4: the \textit{restless} Mittag-Leffler queue}
Let us now introduce a further model, according to Model 3 in \cite{butt2023queuing}. {Before proceeding let us recall that a fractional Poisson process $N$ of order $\alpha \in (0,1)$ and rate $\lambda>0$ is defined as the counting process given in \eqref{eq:countingproc} where $Y=(Y_k)_{k \ge 1}$ is a sequence of i.i.d. random variables such that $Y_k \sim {\sf ML}_\alpha(\lambda)$. Then we consider a generalization of the $M/M/1$ queue based on the approach presented in Section \ref{sec:comp}. Precisely, we consider $\alpha,\beta \in (0,1]$ and $\lambda,\mu>0$ we define the queue length process as
\begin{equation*}
	{}_4Q=\Phi(N^{\sf a}-N^{\sf d}),
\end{equation*}
where $N^{\sf a}$ and $N^{\sf d}$ are two independent fractional Poisson processes of order $\alpha,\beta$ and rate $\lambda,\mu$ respectively. To better understand how this process works, notice that the following dynamics hold: the queue length process ${}_4Q$ increases by $1$ at each jump of $N^{\sf a}$; if a jump of $N^{\sf d}$ occurs when ${}_4Q$ is not $0$, then the latter decreases by $1$, otherwise it is kept at $0$.}

Here, we will refer to this model as the \textbf{\textit{restless} Mittag-Leffler queue}: the motivation will be clear in the following. Before stating the next result, let us recall that we are again using the notation given in Section \ref{subs:ssqs}.
% $(A_n)_{n \ge 0}$ and $(D_n)_{n \ge 0}$ are respectively the sequences of arrival and departure times.
\begin{prop}\label{prop:distserv3}
	Let ${}_4Q$ be a restless Mittag-Leffler queue. Then
	\begin{equation}\label{eq:service1}
		\bP(S_{n}>t \mid A_n , \ D_{n-1})=\begin{cases} E_\beta(-\mu t^\beta) & A_n<D_{n-1} \\
			1-F_t(A_n-D_{n-1}) & A_n \ge D_{n-1}
		\end{cases}
	\end{equation}
	where
	\begin{align*}
		F_t(T)=\int_{[0,T]}\frac{E_\beta(-\mu(T-s+t)^\beta)}{E_\beta(-\mu(T-s)^\beta)}dF_{\sf LJ}(s;\beta,\mu,T)
	\end{align*}
	and $F_{\sf LJ}(s;\beta,\mu,T)$ is defined, for $s \le T$, as
	\begin{align}\label{eq:lastjumptime}
		F_{\sf LJ}(s;\beta,\mu,T)=E_\beta(-\mu T^\alpha)+\sum_{n=1}^{+\infty}\beta n \mu^n \int_0^s E_\beta(-\mu (T-z)^\beta)z^{n\beta-1}E_{\beta,n\beta+1}^{n+1}(-\mu z^\beta)\, dz,
	\end{align}
	while $F_{\sf LJ}(s;\beta,\mu,T)=1$ for $s \ge T$.
	%in \eqref{eq:lastjumptime}.
	%	\begin{equation*}
		%		{\sf LJ}_\beta(s;T)=E_\beta(-\mu T^\beta)+\sum_{n=1}^{+\infty}\beta n \mu^n \int_0^{T+t-s}E_\beta(-\mu(T-z)^\beta)z^{n\beta-1}E_{\beta,n\beta+1}^{n+1}(-\mu z^\beta)\, dz.
		%	\end{equation*}	
\end{prop}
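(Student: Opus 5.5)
Since ${}_4Q=\Phi(N^{\sf a}-N^{\sf d})$, the departures of ${}_4Q$ are exactly the jumps of $N^{\sf d}$ that occur while the queue is non-empty, and the arrivals are exactly the jumps of $N^{\sf a}$. The service time of the $n$-th customer therefore begins at $\max\{D_{n-1},A_n\}$ and ends at the first jump of $N^{\sf d}$ strictly after that instant, as in \eqref{eq:servicefromAD}. The plan is to condition on $A_n$ and $D_{n-1}$, split into the two cases of \eqref{eq:service1}, and in each case identify the residual waiting time until the next jump of the renewal process $N^{\sf d}$. A key point throughout is that $N^{\sf a}$ and $N^{\sf d}$ are independent, and that $A_n$ is a functional of $N^{\sf a}$ only.

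\textbf{Case $A_n<D_{n-1}$.} Here the $n$-th customer is already waiting when the $(n-1)$-th one leaves, so service starts at $D_{n-1}$. This instant is a jump time of $N^{\sf d}$. Since $N^{\sf d}$ is a renewal process with i.i.d. ${\sf ML}_\beta(\mu)$ inter-jump times, the next inter-jump time is independent of the past of $N^{\sf d}$. I would make this rigorous by writing $D_{n-1}$ as the $m$-th jump time of $N^{\sf d}$ for a random index $m$. The event $\{D_{n-1}=\text{$m$-th jump},\ A_n<D_{n-1}\}$ is measurable with respect to $N^{\sf a}$ and the first $m$ inter-jump times of $N^{\sf d}$, so it is independent of the $(m+1)$-th inter-jump time. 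Summing over $m$ gives $\bP(S_n>t\mid\cdot)=E_\beta(-\mu t^\beta)$.

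\textbf{Case $A_n\ge D_{n-1}$.} Now the queue is empty on $[D_{n-1},A_n)$. Service starts at $A_n$, which is a jump of $N^{\sf a}$ and not of $N^{\sf d}$, so $S_n$ equals the forward recurrence time of $N^{\sf d}$ at $A_n$. During the idle period, $N^{\sf d}$ may jump any number of times with no effect on the queue; this is the ``restless'' behaviour. Put $T=A_n-D_{n-1}$ and restart the clock at $D_{n-1}$, which by the renewal argument of the first case is a regeneration point of $N^{\sf d}$, independent of $A_n$ given the information up to $D_{n-1}$. Let $s\in[0,T]$ denote the last jump time of the restarted $N^{\sf d}$ before $T$, with the convention $s=0$ if there is none. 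Conditionally on that last jump being at $s$, the current inter-jump time, which started at $s$, is known to exceed $T-s$. The probability that it also exceeds $T-s+t$ is therefore
\[
\frac{E_\beta(-\mu(T-s+t)^\beta)}{E_\beta(-\mu(T-s)^\beta)}.
\]
Integrating this against the law of the last jump time before $T$ gives $\bP(S_n>t\mid\cdot)$. Note that the formula in the statement labels this as $1-F_t$, but the integrand is already a survival probability, so I will need to check that convention against the intended meaning of $F_t$.

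\textbf{Computing $F_{\sf LJ}$.} This is the step I expect to be the main obstacle. The atom at $0$ has mass $\bP(\text{no jump in }[0,T])=E_\beta(-\mu T^\beta)$; note that the exponent printed in \eqref{eq:lastjumptime} reads $\alpha$, presumably a typo for $\beta$. For $s\in(0,T)$, the density of the last jump is $\sum_{n\ge1} f_n(s)\,E_\beta(-\mu(T-s)^\beta)$, where $f_n$ is the density of the $n$-th jump time, i.e. of the sum of $n$ i.i.d. ${\sf ML}_\beta(\mu)$ variables. To identify $f_n$, I would use the Laplace transform $\mu^n/(z^\beta+\mu)^n$ and invert it via the Prabhakar function, obtaining $f_n(z)=\mu^n z^{n\beta-1}E^{n}_{\beta,n\beta}(-\mu z^\beta)$. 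I would then rewrite this as $\beta n\mu^n z^{n\beta-1}E^{n+1}_{\beta,n\beta+1}(-\mu z^\beta)$, using the standard identity relating Prabhakar functions with shifted parameters; this identity is what Appendix \ref{sec:ML} is likely to supply. The identity must be checked coefficientwise through the Gamma-function factors. Finally, summing over $n$ and using Fubini, which is justified by positivity, yields \eqref{eq:lastjumptime}.
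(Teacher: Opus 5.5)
Your proposal is correct and follows the paper's proof. It uses the same case split and the same conditioning on the last jump of $N^{\sf d}$ before $A_n$ to get the ratio of Mittag-Leffler survival functions. It relies on the same regeneration of $N^{\sf d}$ at $D_{n-1}$, which your decomposition over the jump index $m$ makes more rigorous than the paper does. It obtains the last-jump law as $\sum_{n}f_n(s)E_\beta(-\mu(T-s)^\beta)$; the paper takes $f_n$ from \eqref{eq:densGE} instead of Laplace inversion, and your Prabhakar identity does check out coefficientwise.

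Your suspicion about the convention is right. The paper's own proof concludes $\bP(S_n>t\mid A_n,D_{n-1})=F_t(A_n-D_{n-1})$, so the ``$1-$'' in the statement is a misprint, as is the exponent in $E_\beta(-\mu T^\alpha)$.
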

{The proof is given in Appendix \ref{sec:restlessproof1}}
In practice, if a customer enters the queue after it became empty for the first time, its service time will be not Mittag-Leffler distributed, but its distribution takes in consideration how much time passed from the arrival of such customer and the last departure: in a certain sense, the server continues working while waiting for a new customer and each \textit{finished service} while the queue is empty is \textit{thrown away}. For the inter-arrival times, since ${}_4Q$ has the same upward jumps as $N_{\sf a}$, hence we get immediately that
\begin{equation*}
	T_k \sim {\sf ML}_\alpha(\lambda).
\end{equation*}
Concerning the case $\alpha=\beta=1$, it is not difficult to check that $T_n \sim {\sf Exp}(\lambda)$ and $S_n \sim {\sf Exp}(\mu)$ independently of the values of $A_n$ and $D_{n-1}$. Furthermore, in such a case, we get that the sequences of inter-arrival and service times are independent. Indeed, it is clear by definition that $S_n$ is independent of $T_k$ for $k > n$. Furthermore, with the very same proof as in Proposition \ref{prop:distserv3}, we can substitute $A_n$ in \eqref{eq:service1} with the sequence $(T_j)_{j \le n}$, without changing the right-hand side of the equality. Hence, in case $\beta=1$, $\bP(S_n>t \mid (T_j)_{j \le n}, D_{n-1})=e^{-\mu t}$, where the right-hand side is deterministic. With this in mind, we have, by the tower property of the conditional expectation, $\bP(S_n>t \mid T_j)=e^{-\mu t}$ for any $j=1,\dots,n$, i.e. $S_n$ is independent of $(T_j)_{j=1,\dots,n}$. Hence, for $\alpha=\beta=1$, the model ${}_4Q$ reduces to a classical $M/M/1$ queue, as observed in \cite{butt2023queuing}.
\subsection{Model 5: the Mittag-Leffler GI/GI/1 queue}\label{sec:Mod5}
In the previous model, we were able to preserve the Mittag-Leffler distribution of the inter-arrival times and of the service times for a non-empty queue, but the structure of the queue length process ${}_4Q$, thanks to \eqref{eq:service1}, tells us that the service times are in general not Mittag-Leffler distributed and they depend on the value of the last departure time and the arrival time of the considered customer.

Let us now construct a proper Mittag-Leffler GI/GI/1 queueing model starting from the actual distribution of the inter-arrival and service times. Fix $\alpha,\beta \in (0,1]$ and $\lambda,\mu>0$. We want to construct a queueing system with inter-arrival and service times, respectively $(T_k)_{k \ge 1}$ and $(S_k)_{k \ge 1}$, that satisfy the following properties:
\begin{itemize}
	\item $(T_k)_{k \ge 1}$ are i.i.d. with $T_k \sim {\sf ML}_\alpha(\lambda)$;
	\item $(S_k)_{k \ge 1}$ are i.i.d. with $T_k \sim {\sf ML}_\beta(\mu)$;
	\item The two sequences are independent of each other. 
\end{itemize}
We now proceed as in Section \ref{subs:ssqs} to construct the queue length process. First, consider the arrival and cumulative service times, respectively $(A_k)_{k \ge 0}$ and $(CS_k)_{k \ge 0}$ such that, for $k \ge 1$,
\begin{equation*}
	A_0=CS_0=0 \qquad A_k=A_{k-1}+T_k \qquad CS_k=CS_{k-1}+S_k.
\end{equation*}
Then we define the arrival and cumulative services counting processes as
\begin{equation*}
	N^{\sf a}(t)=\max\{k \ge 0: \ A_k \le t\} \qquad N^{\sf d}(t)=\max\{k \ge 0: \ CS_k \le t\}.
\end{equation*}
With the cumulative service times and the arrival counting process we construct the cumulative-input process, the net-input process and then the workload
\begin{equation*}
	C(t)=CS_{N^{\sf a}(t)} \qquad X(t)={}_5C(t)-t \qquad L(t)=\Phi(X(t)).
\end{equation*}
From the latter we can define the cumulative busy time and then we combine it with the cumulative services counting process to get the departure process:
\begin{equation*}
	B(t)=C(t)-L(t) \qquad D(t)=N^{\sf d}(B(t)).
\end{equation*}
Once we have all the pieces, we can finally define the queue length process
\begin{equation*}
	{}_5Q(t)=N^{\sf a}(t)-D(t).
\end{equation*}
It is worth noticing that this model and the restless Mittag-Leffler queue share the same inter-arrival times distribution and the exact same behaviour as soon as the queue is non-empty. When the queue is empty, the cumulative busy time stops and then $D(t)$ becomes constant: as a consequence, differently from the previous model, this time the server actually stops working and restarts only if a customer enters the queue. As a consequence, we get that the service time is always Mittag-Leffler distributed and that inter-arrival and service times are actually independent, as assumed from the beginning of the construction.

\section{Scaling limits of Mittag-Leffler queues: Main Results}\label{sec:scaling}
Now that we have introduced all the models and we described the distribution of their inter-arrival and service times, we want to focus on the scaling limits of these queues. Let us first underline a quite important difference between the framework of Mittag-Leffler queues and the classical $M/M/1$ queue. We notice that in Models $1$, $4$ and $5$ the inter-arrival and service times are heavy-tailed. The same applies to the inter-event times in both ${}_2Q$ and ${}_3{Q}$, provided that $\alpha+\beta\le 1$. This clearly implies that the parameters $\lambda$ and $\mu$ cannot play the role of \textit{average frequencies of arrivals and services}, since inter-arrival and service times have infinite expected value. As a consequence, it is clear that one needs to change approach with respect to the classical case. 

{In this section, for the ease of the reader, we will only state the main scaling limit results, while leaving the proofs in the next section. Concerning the structure of the results, we will always recognize three different scaling limits. Let us anticipate that we will use this as a criterion to distinguish among different \textit{regimes} of the queues. Furthermore, we will notice that the difference between the scaling limits will always be dictated by a suitable parameter, that is different for each model. At the end of the section, we will give a table with the summary of the characteristic of the queue. For readability, we will refer to the aforementioned parameter as \textbf{Regime Discrimination Parameter} (RDP). Concerning the way this discrimination works, in practice we will discriminate among the three scaling limits according to the fact that the RDP is greater, equal or smaller than a value, that we call the \textbf{Critical Value} (CV).}

For this reason, let us first recall the scaling limits of the classical $M/M/1$ queue. Furthermore, we will make frequently use of the continuous mapping theorem (see \cite[Theorem 3.4.4]{whitt2002stochastic}). During the proof, for the ease of the reader, we will only specify the theorem that guarantees that the function we are using is actually continuous in the limit process, while implying the use of the continuous mapping theorem. 

\subsection{The scaling limits of the classical $M/M/1$ queue: a summary}\label{sec:MM1}
In the classical $M/M/1$ case, a criterion to determine the \textit{regime} of the queue is dictated by the traffic intensity $\rho=\frac{\lambda}{\mu}$, by means of the critical value $1$: if $\rho<1$, then the queue admits a limit distribution (since it is a positive recurrent continuous-time Markov chain), if $\rho \ge 1$ this fails. Furthermore, for $\rho=1$, the queue is still recurrent, but it is only null-recurrent, while for $\rho>1$ it is transient. One can refer to these three regimes as subcritical, critical and supercritical. Furthermore, these three regimes are strictly related to the scaling limits of the queue length process. Indeed, let $Q$ be the queue length process of a classical $M/M/1$ queue with inter-arrival and service rates $\lambda$ and $\mu$, and let us denote by $\mathbf{Q}_n$ its space-time scaling, i.e.
\begin{equation}\label{eq:rescaling}
	\mathbf{Q}_n(t)=\frac{Q(nt)}{n^\delta}
\end{equation}
for some $\delta>0$, that will be called a \textbf{scaling exponent}. Then the following limit theorem holds. Since this is a classical result, we omit the proof.
\begin{thm}\label{thm:classscal}
	Let $Q$ be the queue length process of a classical $M/M/1$ queue with inter-arrival and service rates $\lambda$ and $\mu$ and $\mathbf{Q}_n$ be its rescaling, defined in \eqref{eq:rescaling}. Let also $\rho=\frac{\lambda}{\mu}$ be the traffic intensity and $\mathbf{W}$ be a standard Brownian motion. Then:
	\begin{itemize}
		\item[$(i)$] If $\rho>1$ then for $\delta=1$ it holds $\mathbf{Q}_n \overset{\sf U}{\Rightarrow} (\lambda-\mu)\iota$;
		\item[$(ii)$] If $\rho=1$ then for $\delta=\frac{1}{2}$ it holds $\mathbf{Q}_n \overset{\sf U}{\Rightarrow} \sqrt{2\lambda}\Phi(\mathbf{W})$;
		%\item[$(iii)$] If $\rho<1$ then for $\delta=\frac{1}{2}$ it holds $\mathbf{Q}_n \overset{\sf U}{\Rightarrow} 0$.
		{\item[$(iii)$] If $\rho<1$ then for any $\delta>0$ it holds 
		%	$\mathbf{Q}_n \overset{\sf U}{\Rightarrow} 0$. In particular,
		\begin{equation*}
			\lim_{n \to +\infty}\sup_{t \ge 0}\mathbf{Q}_n(t)=0
			\end{equation*}
		almost surely.}
		\end{itemize}
\end{thm}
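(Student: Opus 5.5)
We will use the representation of Section \ref{sec:comp}, $Q=\Phi(N^{\sf a}-N^{\sf d})$, where $N^{\sf a},N^{\sf d}$ are independent Poisson processes of rates $\lambda,\mu$. Set $R=N^{\sf a}-N^{\sf d}$, so that
\begin{equation*}
\mathbf{Q}_n=\Phi(\mathbf{R}_n), \qquad \mathbf{R}_n(t)=n^{-\delta}R(nt).
\end{equation*}
The scaling commutes with $\Phi$. Moreover, $\Phi$ is continuous on $\mathbb{D}$ with the uniform-on-compacts topology, and it is Lipschitz with constant $2$ in the sup norm on each $[0,T]$. Hence it suffices, in cases $(i)$ and $(ii)$, to establish the convergence of $\mathbf{R}_n$ and then apply the continuous mapping theorem.

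\textbf{Case $(i)$, $\delta=1$.} By the functional strong law of large numbers for Poisson processes, $\sup_{t\le T}|n^{-1}N^{\sf a}(nt)-\lambda t|\to 0$ almost surely, and the same holds for $N^{\sf d}$ with rate $\mu$. Thus $\mathbf{R}_n\to(\lambda-\mu)\iota$ uniformly on compacts almost surely. Since $\lambda>\mu$, the limit is non-decreasing and non-negative, so $\Phi((\lambda-\mu)\iota)=(\lambda-\mu)\iota$, which gives the claim.

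\textbf{Case $(ii)$, $\delta=1/2$ and $\lambda=\mu$.} Here $R$ is a compensated process: $R(t)$ has mean zero and variance $2\lambda t$. Donsker's theorem, applied to the compound Poisson (Lévy) process $R$, gives $\mathbf{R}_n\Rightarrow\sqrt{2\lambda}\,\mathbf{W}$ in $\mathbb{D}$, with the $J_1$ topology and with a continuous limit, hence uniformly on compacts. Applying $\Phi$ and using $\Phi(c f)=c\,\Phi(f)$ for $c>0$ yields $\mathbf{Q}_n\Rightarrow\sqrt{2\lambda}\,\Phi(\mathbf{W})$.

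\textbf{Case $(iii)$, $\rho<1$.} This case is the main obstacle, since the supremum is taken over all $t\ge 0$ and not over compacts. I would argue almost surely, on the actual sample path, for a fixed realisation. The key claim to establish is that $M(s):=\max_{u\le s}Q(u)$ grows only logarithmically: $M(s)=O(\log s)$ almost surely. This follows from the geometric stationary distribution. Indeed, the embedded jump chain visits level $k$ before returning to $0$ with probability of order $\rho^k$, by the gambler's-ruin formula. The number of busy cycles up to time $s$ is at most $N^{\sf a}(s)\le 2\lambda s$ eventually. A Borel–Cantelli argument then shows that the maximum over these cycles exceeds $C\log s$ only finitely often once $C>2/\log(1/\rho)$.

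Consequently, for every $\delta>0$,
\begin{equation*}
\sup_{t\ge 0}\frac{Q(nt)}{n^\delta}=\sup_{s\ge 0}\frac{Q(s)}{n^\delta},
\end{equation*}
and we must avoid the fact that this is infinite for each fixed $n$. Indeed, $Q$ is unbounded, since it is recurrent and reaches every level. So the statement as written must be interpreted with a time horizon. I would therefore prove it as $\lim_n\sup_{t\le T}\mathbf{Q}_n(t)=0$ almost surely for every $T$. This follows from $\sup_{t\le T}Q(nt)=M(nT)=O(\log(nT))=o(n^\delta)$.

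If the authors intend the supremum over all of $[0,+\infty)$, the same logarithmic bound shows that the quantity cannot vanish. The careful step is therefore the Borel–Cantelli estimate on cycle maxima: one must control the number of cycles and the per-cycle tail $\rho^k$ simultaneously along the sequence $s=2^m$, and then interpolate between consecutive dyadic times using the monotonicity of $M$.
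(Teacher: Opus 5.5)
Your proposal is correct, and on item (iii) it is more accurate than the paper. The paper gives no proof of (i) and (ii): it calls them classical and points to continuous-mapping arguments of the kind used for the later theorems. Your route is the standard one and is fine. It uses $Q=\Phi(N^{\sf a}-N^{\sf d})$, the functional SLLN and Donsker's theorem for the L\'evy process $N^{\sf a}-N^{\sf d}$, the commutation of $\Phi$ with space-time scaling, and the Lipschitz continuity of $\Phi$.

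For (iii) the paper really does attempt the supremum over all $t \ge 0$. It combines the cycle-maximum tail $\frac{(1-\rho)\rho^{k-1}}{1-\rho^k}$ with the Gumbel limit \eqref{eq:Gumbel}. It writes $\bP(\sup_{t\ge0}\mathbf{Q}_m(t)>p^{-1})=\lim_{h\to\infty}\bP(\sup_{[0,mh]}Q>m^\delta/p)$, bounds the latter uniformly in $h$, and runs Borel--Cantelli in $m$.

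Your objection is right. Since $Q$ is irreducible and recurrent, $\sup_{t\ge0}\mathbf{Q}_n(t)=n^{-\delta}\sup_{s\ge0}Q(s)=+\infty$ a.s. for every $n$, so the claim is false as written. The paper's argument breaks at the inequality that adds $\frac{1}{\log\rho}\log\frac{mh}{\mu-\lambda}$ to $\sup_{[0,mh]}Q$. For $\rho<1$ and $mh>\mu-\lambda$ this term is negative, so the inequality goes the other way. Correctly centred, the Gumbel limit gives $\bP(\sup_{[0,mh]}Q>m^\delta/p)\to1$ as $h\to\infty$. Moreover, a limit law in $t$ cannot by itself bound these probabilities for fixed $m,h$.

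Your horizon-$T$ version is the right replacement. Your cycle argument supplies exactly the non-asymptotic estimate the paper lacks:
$\bP(M(s)\ge k)\le\bP(N^{\sf a}(s)>2\lambda s)+2\lambda s\,\rho^{k-1}$.
This is a plain union bound, so no independence between $N^{\sf a}(s)$ and the cycle maxima is needed. With $s=nT$ and $k=\lceil\varepsilon n^\delta\rceil$ both terms are already summable in $n$, so the dyadic interpolation can be skipped. Your stronger conclusion $M(s)=O(\log s)$ a.s. is also correct.

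Two small inaccuracies in your write-up:
\begin{itemize}
\item The logarithmic growth does not follow from the geometric stationary law. It follows from the gambler's-ruin tail plus the cycle count, as you then argue.
\item The failure of the all-time version comes from recurrence, i.e.\ unboundedness of $Q$, not from your logarithmic upper bound, which cannot show that anything fails to vanish.
\end{itemize}
The same correction is needed in the paper's later items (iii), e.g.\ Lemmas \ref{lem:ovqk} and \ref{lem:scalingMC2}, which also pass through $\sup_{t\ge0}$.
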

{Let us first stress that, clearly, item (iii) implies the convergence in distribution. Furthermore, despite these results are classical, we will briefly discuss them in the next section. Thanks to the previous theorem, we can recognize the different regimes by means of both the scaling exponent and the scaling limit we obtain. However, in the following, we will make use for simplicity of some non-linear scaling of time. In general, we will consider processes of the form
\begin{equation*}
	\mathbf{Q}_n(t)=\frac{Q(nm_n t)}{n^\delta},
\end{equation*}
and we will refer to $\delta$ as the \textbf{space scaling exponent}. If, furthermore, $m_n=n^{\gamma-1}$, we will refer to $\gamma$ as the \textbf{time scaling exponent}. In this case, thanks to the self-similarity of the queue length process $Q$, one could either fix $\gamma=1$, as in the statement of Theorem \ref{thm:classscal} or leave any time scaling exponent $\gamma>0$ and substitute the space scaling exponent $\delta$ with $\delta-\gamma+1$. In the following models, this will be not always possible. With this in mind, let us fix $\gamma=1$ and let us distinguish the three different regimes depending on $\delta$ and the scaling limit:}
%In the following, we will consider, for simplicity, some non-linear scaling of time. Nevertheless, the idea behind the scaling exponent does not change. Indeed, if we define $\mathbf{Q}_n$ differently as
%\begin{equation*}
%	\mathbf{Q}_n(t)=\frac{Q_n(n^\gamma t)}{n^\delta}
%\end{equation*}
%then the scaling exponent can be defined as the ratio $\kappa=\frac{\delta}{\gamma}$. From the scaling limit theorem for the classical $M/M/1$ queue, we notice that:
\begin{itemize}
	\item In the supercritical regime, the space scaling exponent is $\delta=1$ and the limit is proportional to the identity, hence, it is a non-decreasing process;
	\item As soon as we reach the critical regime, $\delta$ decreases to $1/2$ and we observe some oscillations: this also implies that if we keep $\delta=1$, then the limit process is $0$;
	%\item In the subcritical regime, keeping $\delta=1/2$ still leads to a $0$ process.
	{\item In the subcritical regime, any space scaling exponent $\delta>0$ leads to a $0$ process.}
\end{itemize}
Hence, we could recognize the three regimes as follows:
\begin{itemize}
	\item In the supercritical regime, the arrival processes leads the dynamics, generating at the scaling limit a non-decreasing process;
	\item In the critical regime, the two processes balance out and, with a finer scaling if necessary, we can capture their oscillations;
	\item In the subcritical regime, the service process leads the dynamics, keeping the queue length process at $0$. 
\end{itemize}
The previous scheme will be the rationale behind the classification of the regimes of the Mittag-Leffler queues. 
{The use of a finer scale could be dictated not only by reducing the space scaling exponent, but also by possibly changing the time scaling exponent, as it will be evident in the Mittag-Leffler GI/GI/1 ${}_5Q$.}

%It will be evident that the quantity $\rho=\frac{\lambda}{\mu}$, that does not play the role of traffic intensity in all the queueing models, will not be crucial in some of these models. For such a reason, we do not refer to the next results as heavy-traffic limits, but directly as scaling limits: indeed, the actual heavy-traffic limit only appears in the fractional M/M/1 queue, while in all the others the value of the traffic intensity will not play any prominent role.
%\begin{rmk}
%	While it will not play any role in the scaling limits we are going to present, the traffic intensity is quite important as one tries to determine transience or recurrence of the original renewal Mittag-Leffler queue ${}_2Q$, see \cite[Proposition 3.1]{butt2023queuing}.
%\end{rmk}
%Throughout this section, for any $\delta>0$ we will use $\iota_\delta$ to denote the function $\iota_\delta(t)=\delta t$ and we set $\iota:=\iota_1$. Let us also state that we will make extensive use of the continuous mapping theorem \cite[Theorem 3.4.3]{whitt2002stochastic}.

%\section{The Heavy-Traffic limit of the\textit{restless} Mittag-Leffler queue}
\subsection{The scaling limits of the fractional $M/M/1$ queue ${}_1Q$}\label{sec:MM11}
We can now proceed with the analysis of the scaling limits (and thus the regimes) of the previously introduced Mittag-Leffler queues. We start here with the fractional $M/M/1$ queue ${}_1Q$, that is scaled as follows:
\begin{equation}\label{eq:rescalingmod1}
	{}_1\mathbf{Q}_n(t)=\frac{{}_1Q(n^{\frac{1}{\alpha}}t)}{n^\delta}.
\end{equation}
 
\begin{thm}\label{thm:model1scaling}
	Let ${}_1Q$ be the queue length process of a fractional $M/M/1$ queue with order $\alpha \in (0,1)$ and generalized inter-arrival and service rates $\lambda$ and $\mu$. Define ${}_1\mathbf{Q}_n$ as in \eqref{eq:rescalingmod1}. Let also $\rho=\frac{\lambda}{\mu}$ be the generalized traffic intensity, $\mathbf{W}$ be a Brownian motion and $L_\alpha$ an inverse $\alpha$-stable subordinator independent of it. The following scaling limits hold true:
	\begin{itemize}
		\item[$(i)$] If $\rho>1$ then for $\delta=1$ it holds 
			${}_1\mathbf{Q}_n \overset{\sf U}{\Rightarrow} (\lambda-\mu)L_\alpha$;
		\item[$(ii)$] If $\rho=1$ then for $\delta=\frac{1}{2}$ it holds ${}_1\mathbf{Q}_n \overset{\sf U}{\Rightarrow} \sqrt{2\lambda}\Phi(\mathbf{W}) \circ L_\alpha$;
		%\item[$(iii)$] If $\rho<1$ then for $\delta=\frac{1}{2}$ it holds ${}_1\mathbf{Q}_n \overset{\sf U}{\Rightarrow} 0$.
		{\item[$(iii)$] If $\rho<1$ then for any $\delta>0$ it holds 
		\begin{equation*}
			\lim_{n \to +\infty}\sup_{t>0}{}_1\mathbf{Q}_n(t)=0.
		\end{equation*}	
			}
	\end{itemize}
\end{thm}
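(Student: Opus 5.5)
The plan is to exploit the representation ${}_1Q=Q\circ L_\alpha$ with $Q$ a classical $M/M/1$ queue independent of $L_\alpha$. The first step is a scaling identity for the inverse subordinator. Since $\sigma_\alpha$ is $\alpha$-self-similar, $L_\alpha$ is $\alpha$-self-similar as well, so $L_\alpha(n^{1/\alpha}t)\overset{d}{=} nL_\alpha(t)$ as processes. Hence, by independence,
\begin{equation*}
{}_1\mathbf{Q}_n(t)=\frac{Q(L_\alpha(n^{1/\alpha}t))}{n^\delta}\overset{d}{=}\frac{Q(n L_\alpha(t))}{n^\delta}=\mathbf{Q}_n(L_\alpha(t)),
\end{equation*}
with $\mathbf{Q}_n$ as in \eqref{eq:rescaling} and $L_\alpha$ independent of $\mathbf{Q}_n$. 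Because the law of ${}_1\mathbf{Q}_n$ equals that of $\mathbf{Q}_n\circ L_\alpha$ for each $n$, it suffices to prove the limits for $\mathbf{Q}_n\circ L_\alpha$.

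For items $(i)$ and $(ii)$, Theorem \ref{thm:classscal} gives $\mathbf{Q}_n\Rightarrow \mathbf{Z}$ in the uniform topology on compacts. Here $\mathbf{Z}=(\lambda-\mu)\iota$ when $\rho>1$ and $\mathbf{Z}=\sqrt{2\lambda}\Phi(\mathbf{W})$ when $\rho=1$; in both cases $\mathbf{Z}$ has continuous paths. Independence gives the joint convergence $(\mathbf{Q}_n,L_\alpha)\Rightarrow(\mathbf{Z},L_\alpha)$ in $\mathbb{D}\times\mathbb{D}$, with independent components. We then apply the continuous mapping theorem to the composition map $(x,y)\mapsto x\circ y$. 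The map $L_\alpha$ is continuous and nondecreasing, and $\mathbf{Z}$ is continuous, so the composition is continuous at the limit point by the standard composition continuity result (Whitt, Theorem 13.2.1). This yields $\mathbf{Q}_n\circ L_\alpha\Rightarrow \mathbf{Z}\circ L_\alpha$. In case $(i)$ we have $(\lambda-\mu)\iota\circ L_\alpha=(\lambda-\mu)L_\alpha$. In case $(ii)$ we get $\sqrt{2\lambda}\Phi(\mathbf{W})\circ L_\alpha$ with $\mathbf{W}$ independent of $L_\alpha$. Uniform convergence on compacts holds because both limit processes are continuous.

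For item $(iii)$, the distributional identity is not enough, since the claim is an almost sure statement about $\sup_{t>0}$. I would argue pathwise instead, because
\begin{equation*}
\sup_{t>0}{}_1\mathbf{Q}_n(t)=n^{-\delta}\sup_{t>0}Q(L_\alpha(n^{1/\alpha}t))\le n^{-\delta}\sup_{s\ge0}Q(s),
\end{equation*}
with equality if $L_\alpha$ is unbounded, which it is a.s. Thus it suffices to show $\sup_{s\ge 0}Q(s)<\infty$ a.s. when $\rho<1$, and that is exactly the content of Theorem \ref{thm:classscal}$(iii)$. The bound above holds for every realization of $L_\alpha$, so the time change does not affect the conclusion.

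The step I expect to be the main obstacle is $(iii)$, specifically the claim that $\sup_s Q(s)<\infty$ almost surely for the positive recurrent queue. The maximum of an ergodic $M/M/1$ queue over an infinite horizon actually grows like $\log s/\log(1/\rho)$, so an infinite-horizon supremum is a.s. infinite. The statement should therefore be read as the sup over compact time sets, or as convergence in probability. I would handle it through the bound on $[0,T]$: $\sup_{t\le T}{}_1\mathbf{Q}_n(t)\le n^{-\delta}\sup_{s\le L_\alpha(n^{1/\alpha}T)}Q(s)$. Then I would combine the logarithmic growth of $\sup_{s\le S}Q(s)$ with $L_\alpha(n^{1/\alpha}T)\overset{d}{=}nL_\alpha(T)$, which shows the right side is $O(\log n/n^\delta)\to0$. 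This is the same reasoning the paper's proof of Theorem \ref{thm:classscal}$(iii)$ uses.
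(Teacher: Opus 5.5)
\paragraph{Items $(i)$ and $(ii)$.} Your argument is the paper's. The $\alpha$-self-similarity of $L_\alpha$ gives ${}_1\mathbf{Q}_n\overset{d}{=}\mathbf{Q}_n\circ L_\alpha$ with $L_\alpha$ independent of $\mathbf{Q}_n$. The continuous mapping theorem then applies to the composition, because the limit of $\mathbf{Q}_n$ is continuous and $L_\alpha$ is continuous and non-decreasing. One small slip: $\sigma_\alpha$ is $\alpha^{-1}$-self-similar; it is $L_\alpha$ that is $\alpha$-self-similar.

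\paragraph{Item $(iii)$: the statement itself fails.} The paper uses exactly your pathwise identity $\sup_{t\ge 0}{}_1\mathbf{Q}_n(t)=n^{-\delta}\sup_{s\ge0}Q(s)=\sup_{t\ge0}\mathbf{Q}_n(t)$ and then cites Theorem~\ref{thm:classscal}$(iii)$. Your objection is correct, and it concerns the statement, not only the route.
\begin{itemize}
\item For $\rho<1$ the $M/M/1$ queue is an irreducible recurrent chain on the non-negative integers, so it visits every state. Hence $\sup_{s\ge0}Q(s)=+\infty$ a.s., and $\sup_{t>0}{}_1\mathbf{Q}_n(t)=+\infty$ for every $n$.
\item Therefore Theorem~\ref{thm:classscal}$(iii)$, and with it item $(iii)$ here, is false with an infinite-horizon supremum.
\item The error in the paper's proof of Theorem~\ref{thm:classscal}$(iii)$ is the first inequality after \eqref{eq:limitrep}. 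The term $\frac{1}{\log\rho}\log\bigl(\frac{mh}{\mu-\lambda}\bigr)$ is negative, so adding it shrinks the event and the inequality actually goes the other way.
\item Consistently, the bound obtained there does not depend on $h$, whereas $\bP\bigl(\sup_{t\in[0,mh]}Q(t)>m^\delta/p\bigr)\to1$ as $h\to\infty$.
\end{itemize}
The same defect affects item $(iii)$ of Lemmas~\ref{lem:ovqk} and \ref{lem:scalingMC2} and of Theorems~\ref{thm:fastren} and \ref{thm:scalingmodel3}. A local reformulation like yours, $\sup_{t\in[0,T]}{}_1\mathbf{Q}_n(t)\to0$ for each $T>0$, is the right thing to prove.

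\paragraph{Your local version.} Your bound is in fact an equality, $\sup_{t\le T}{}_1\mathbf{Q}_n(t)=n^{-\delta}\sup_{s\le L_\alpha(n^{1/\alpha}T)}Q(s)$, since $L_\alpha$ is continuous with $L_\alpha(0)=0$. More importantly, the identity $L_\alpha(n^{1/\alpha}T)\overset{d}{=}nL_\alpha(T)$ by itself only yields convergence in probability. For almost sure convergence you need two Borel--Cantelli steps.
\begin{itemize}
\item First, $\bP\bigl(L_\alpha(n^{1/\alpha}T)>n^2\bigr)=\bP\bigl(L_\alpha(T)>n\bigr)\le e^{-n}\E\bigl[e^{L_\alpha(T)}\bigr]=e^{-n}E_\alpha(T^\alpha)$ is summable. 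Hence $L_\alpha(n^{1/\alpha}T)\le n^2$ eventually a.s.
\item Second, $\sup_{s\le S}Q(s)=O(\log S)$ almost surely. This follows from the i.i.d.\ regeneration cycles, whose maxima $M_i$ satisfy $\bP(M_i\ge k)\le c\rho^k$, together with the renewal strong law for the number of cycles up to time $S$ and Borel--Cantelli again.
\end{itemize}
Combining the two gives $\sup_{t\le T}{}_1\mathbf{Q}_n(t)=O(n^{-\delta}\log n)\to0$ almost surely.
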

From the previous theorem, we notice that the generalized traffic intensity $\rho$ still plays a prominent role in determining the regime of the queueing system{, while the time scaling exponent $\gamma=\frac{1}{\alpha}$}. Indeed, if we differentiate the regimes as discussed in Section \ref{sec:MM1}, we have the following scheme:
\begin{itemize}
	\item For $\rho>1$ we are in the \textbf{supercritical regime}: in such a case, with a space scaling exponent $\delta=1$, the rescaled queue length process ${}_1\mathbf{Q}_n$ converges in distribution towards a non-decreasing stochastic process $(\lambda-\mu)L_\alpha$, indicating that the arrival counting process is driving the dynamics of the queue. 
	\item For $\rho=1$ we are in the \textbf{critical regime}: in this case, with a space scaling exponent $\delta=\frac{1}{2}$, the rescaled queue length process ${}_1\mathbf{Q}_n$ converges in distribution towards a non-monotone stochastic process $\sqrt{\lambda}\Phi(\mathbf{W})\circ L_\alpha$, that is a delayed reflected Brownian motion. This is due to the fact that arrival and service processes tend to balance, as inter-arrival and service times share the same distribution.
	\item For $\rho<1$ we are in the \textbf{subcritical regime}: in this case the rescaled queue length process ${}_1\mathbf{Q}_n$ converges almost surely towards $0$ independent of the choice of the space scaling exponent, hence even on extremely fine scale. We cannot say directly that this is due to the fact that mean service times are shorter than mean inter-arrival times, as both of their expected values are infinite. However, it is still true that for a given constant $M>0$, the probability of having a service time longer than $t>M$ is lower than the one of having a inter-arrival time longer than $t$.
\end{itemize}
{Notice further that since $L_\alpha$ is $\alpha$-self-similar, even in this case we could consider a generic time scaling exponent $\gamma>0$ and then substitute $\delta$ with $\delta-\alpha\gamma+1$.}

{Concerning the interpretation of the generalized traffic intensity $\rho$, that plays the role of our regime discrimination parameter, let} us notice that, unless $\alpha=1$, we have 
\begin{equation*}
\lim_{t \to +\infty}\frac{\bP(S_1>t)}{\bP(T_1>t)}=\rho,
\end{equation*}
thus the generalized traffic intensity, in this case, is recovered from the asymptotic comparison of the survival functions of the service and the inter-arrival times. Nevertheless, in general, the scaling behaviour, and thus the regimes, of Mittag-Leffler queueing models is not necessarily identified by means of the very same asymptotic comparison, as it will be evident for ${}_4Q$ and ${}_5Q$.

Finally, let us stress that the process $\Phi(\mathbf{W})\circ L_\alpha$ that appears as the scaling limit of ${}_1Q$ in the critical regime is a delayed reflected Brownian motion, that, in view of Lemma \ref{lem:scaling}, is equal to $\Phi(\mathbf{W}\circ L_\alpha)$. This process, together with a slightly modified drifted version, naturally arise when studying the \textit{heavy traffic limit} of ${}_1Q$, i.e when we consider the scaling ${}_1\mathbf{Q}_n=\frac{{}_1Q_n(n^{\frac{1}{\alpha}}t)}{\sqrt{n}}$ where the fractional $M/M/1$ queues admit varying generalized traffic intensities $\rho_n<1$ such that $\rho_n \uparrow 1$ and $\sqrt{n}(1-\rho_n) \to \zeta>0$. Such an heavy traffic limit, together with the properties of the reflected delayed Brownian motion, has been studied in \cite{ascione2022skorokhod}.

\subsection{The scaling limits of the fast renewal Mittag-Leffler queue ${}_2{Q}$}
Next, let us consider the fast renewal Mittag-Leffler queue ${}_2Q$. Here, we do not have a proper time-exponent, as it will be clear in case $\alpha+\beta=1$. Indeed, let us consider the scaled queue length process as follows:
\begin{equation}\label{eq:rescalingmod2}
	{}_2\mathbf{Q}_n(t)=\frac{{}_2Q(nm_nt)}{n^\delta},
\end{equation}
where, recalling the notation in Section \ref{Sec:model2}, we set
\begin{equation}\label{eq:mndef}
	m_n=\begin{cases}
		n^{\gamma-1} & \alpha+\beta \not = 1\\
		\frac{n \sin(\pi \alpha)}{2\lambda \mu}\E\left[\sin\left(\frac{2\lambda \mu Y_1}{n\sin(\pi \alpha)}\right)\right] & \alpha+\beta=1,
	\end{cases}\qquad \gamma=\max\{1,(\alpha+\beta)^{-1}\}.
\end{equation}
We have the following result.
\begin{thm}\label{thm:fastren}
	Let ${}_2Q$ be the queue length process of a fast renewal Mittag-Leffler queue with orders $\alpha,\beta \in (0,1)$ and rates $\lambda,\mu>0$. Let ${}_2\mathbf{Q}_n$ be its rescaling defined in \eqref{eq:rescalingmod2}. Denote by $\mathbf{W}$ a standard Brownian motion and let $L_{\alpha+\beta}$ be an independent inverse $(\alpha+\beta)$-stable subordinator if $\alpha+\beta<1$, while $L_{\alpha+\beta}\equiv \iota$ if $\alpha+\beta \ge 1$. Define also
	\begin{equation}\label{eq:giantconstant}
		\mathcal{K}_{\lambda,\mu}^{\alpha,\beta}=\begin{cases} \left(\frac{\lambda \mu \Gamma(1-\alpha)\Gamma(1-\beta)}{\Gamma(1-\alpha-\beta)}\right)^{\frac{1}{\alpha+\beta}} & \alpha+\beta<1 \\
			1 & \alpha+\beta=1 \\
			\frac{1}{\E[Y_1]} & \alpha+\beta>1.
		\end{cases}
	\end{equation}
	The following scaling limits hold true:
	\begin{itemize}
		\item[(i)] If $p_{\lambda,\mu}^{\alpha,\beta}>\frac{1}{2}$ then for $\delta=1$ it holds ${}_2\mathbf{Q}_n \overset{\sf U}{\Rightarrow} \left( 2p_{\lambda,\mu}^{\alpha,\beta}-1\right)L_{\alpha+\beta}\circ \iota_{\mathcal{K}_{\lambda,\mu}^{\alpha,\beta}}$
		\item[(ii)] If $p_{\lambda,\mu}^{\alpha,\beta}=\frac{1}{2}$ then for $\delta=1/2$ it holds ${}_2\mathbf{Q}_n \overset{\sf U}{\Rightarrow} \Phi(\mathbf{W})\circ L_{\alpha+\beta}\circ \iota_{\mathcal{K}_{\lambda,\mu}^{\alpha,\beta}}$
		\item[(iii)] If $p_{\lambda,\mu}^{\alpha,\beta}<\frac{1}{2}$ then for any $\delta>0$ it holds
		\begin{equation*}
			\lim_{n \to +\infty}\sup_{t \ge 0}{}_2\mathbf{Q}_n(t)=0.
		\end{equation*}
	\end{itemize}
\end{thm}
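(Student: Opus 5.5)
We write ${}_2Q(t)=q_{\widetilde N(t)}$, with the chain $q$ independent of the renewal counting process $\widetilde N$. The plan is to split the scaling into two factors,
\begin{equation*}
{}_2\mathbf{Q}_n(t)=\frac{q_{\lfloor n\cdot\rfloor}}{n^\delta}\circ\left(\frac{\widetilde N(nm_n\,\cdot)}{n}\right)(t),
\end{equation*}
prove joint convergence of the two factors (they are independent), and conclude by continuous mapping for composition (\cite[Theorem 13.2.2]{whitt2002stochastic}). Composition is continuous at limits whose inner component is continuous and non-decreasing and whose outer component is continuous, which is the case here.

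\textbf{Time-change factor.} The $Y_k$ are i.i.d. ${\sf mML}_{\alpha,\beta}(\lambda,\mu)$. We need their tail as $t\to\infty$. From the asymptotics $E_\alpha(-\lambda t^\alpha)\sim \frac{t^{-\alpha}}{\lambda\Gamma(1-\alpha)}$ (Appendix \ref{sec:ML}) we get
\begin{equation*}
\bP(Y_1>t)\sim \frac{t^{-(\alpha+\beta)}}{\lambda\mu\Gamma(1-\alpha)\Gamma(1-\beta)}.
\end{equation*}
The argument then splits into three cases.
\begin{itemize}
\item If $\alpha+\beta<1$, the sum $\sum_{k\le nt}Y_k$, normalized by $n^{1/(\alpha+\beta)}$, converges in $M_1$ to an $(\alpha+\beta)$-stable subordinator. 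The constant in \eqref{eq:giantconstant} is the one that turns the limit into a standard subordinator. Inverting (the first passage map is continuous, Whitt Ch.~13.6) gives $\widetilde N(n^{1/(\alpha+\beta)}t)/n\Rightarrow L_{\alpha+\beta}(\mathcal{K}t)$, uniformly on compacts.
\item If $\alpha+\beta>1$, the mean $\E[Y_1]$ is finite. The renewal functional strong law of large numbers gives $\widetilde N(nt)/n\to t/\E[Y_1]$ almost surely, uniformly on compacts.
\item If $\alpha+\beta=1$, we are in the boundary (relatively stable) case. The truncated-mean normalization $nm_n$ is chosen so that $\frac1n\sum_{k\le n t}Y_k/m_n\to t$ in probability; this is a weak law under a slowly varying truncated mean, and the sine expression is a smoothed truncated mean. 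Inverting again gives the identity limit.
\end{itemize}

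\textbf{Chain factor.} For the chain $q$, away from $0$ it is a simple random walk with up-probability $p$. The plan is to couple it with the reflected walk $\overline q$ from \eqref{eq:Mchain2}, or to write $q=\Phi(r)$ up to a correction. At $0$ the chain $q$ moves deterministically to $1$, whereas a reflected walk would stay at $0$ with probability $1-p$. The difference is bounded by the number of visits to $0$, which is $o(n^\delta)$ in cases (i) and (ii). In case (ii) the number of visits up to time $n$ is $O_P(\sqrt n)$, so this needs care. A cleaner alternative for (ii) is to note that $q_k-(\text{number of visits to } 0 \text{ before } k)$ equals, in law, $\Phi$ applied to a walk with i.i.d. $\pm1$ steps, and that the local-time term divided by $\sqrt n$ is absorbed by the regulator. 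Then Donsker's theorem together with continuity of $\Phi$ (Lemma \ref{lem:scaling2}) gives $q_{\lfloor n\cdot\rfloor}/\sqrt n\Rightarrow\Phi(\mathbf{W})$; the variance is $1$ when $p=1/2$. In case (i), the law of large numbers gives $q_{\lfloor n\cdot\rfloor}/n\to(2p-1)\iota$ almost surely, since $0$ is visited only finitely often. These chain limits are presumably the content of Section~\ref{subsecMC}.

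\textbf{Subcritical case and main obstacle.} Case (iii) needs $\sup_{t\ge0}$ over an infinite horizon, so continuous mapping is not enough. Here $q$ is positive recurrent, with geometric stationary tail of ratio $p/(1-p)$. Because of the time change it suffices to control $\max_{k\le K}q_k$; with $K$ the number of jumps of ${}_2Q$, we must make $\sup_{t}{}_2Q(nm_nt)/n^\delta$ vanish. Taken literally, $\sup_{t\ge 0}$ covers all times for each fixed $n$, so the claim must hold with the supremum taken over compacts $t\in[0,T]$ (as in the $M/M/1$ proof of Section~\ref{sec:MM1proof}). The plan there is to use that $\max_{k\le N}q_k=O(\log N)$ almost surely, via geometric excursion heights and Borel--Cantelli, and to combine it with $\widetilde N(nm_nT)=O(n)$; this gives $O(\log n)/n^\delta\to0$ almost surely along the sequence. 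I expect the main obstacle to be the $\alpha+\beta=1$ time normalization, namely proving the relatively stable weak law with the specific $m_n$, together with handling the boundary behaviour at $0$ in the critical case.
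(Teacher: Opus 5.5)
\textbf{Overall.} For (i)--(ii) your skeleton is the paper's. You write ${}_2\mathbf{Q}_n=\mathbf{q}_n\circ\widetilde{\mathbf{N}}_n$ with independent factors. You get $\widetilde{\mathbf N}_n\overset{\sf U}{\Rightarrow}L_{\alpha+\beta}\circ\iota_{\mathcal K^{\alpha,\beta}_{\lambda,\mu}}$ from Theorem~\ref{thm:DTa} plus the counting-process inversion of Theorem~\ref{thm:contcounting}. You then use continuity of composition at continuous limits.

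The case $\alpha+\beta=1$, which you flag as the main obstacle, needs no relative-stability weak law. The $m_n$ in \eqref{eq:mndef} is exactly the index-$1$ centering of Theorem~\ref{thm:DTa}, so $\mathbf Y_n-m_n\iota\overset{\sf J_1}{\Rightarrow}\frac{\sin(\pi\alpha)}{2\lambda\mu}\mathbf S$. One then only needs $m_n\to+\infty$, after which $m_n^{-1}\mathbf Y_n\Rightarrow\iota$. The paper gets $m_n\to+\infty$ by contradiction. A finite limit point $m_\infty$ would make the non-decreasing $\mathbf Y_n$ converge in ${\sf M_1}$ to the non-monotone $\frac{\sin(\pi\alpha)}{2\lambda\mu}\mathbf S+m_\infty\iota$, contradicting Proposition~\ref{prop:premon}. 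The case $m_\infty=-\infty$ is excluded the same way.

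\textbf{Gap: the chain limit in the critical case.} Your first comparison costs the number of visits of $q$ to $0$. For $p^{\alpha,\beta}_{\lambda,\mu}=\frac12$ this is of exact order $\sqrt n$ up to step $n$, the same as the space scale, so it proves nothing in (ii). Your ``cleaner'' identity is false. On the path $q_0=0,\ q_1=1,\ q_2=0$ (probability $1-p^{\alpha,\beta}_{\lambda,\mu}$), $q_2$ minus the number of visits to $0$ strictly before step $2$ equals $-1$, so it cannot be distributed as a reflected walk.

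The visits to $0$ should be charged to time, not to space. Here is a clean repair.
\begin{itemize}
\item Drive $q$ by i.i.d.\ signs $\chi_j$: set $q_j=q_{j-1}+\chi_j$ if $q_{j-1}>0$ and $q_j=1$ if $q_{j-1}=0$. Let $r_k=\sum_{j\le k}\chi_j$.
\item Then $q=r+a$, where $a$ is non-decreasing, $a_0=0$, and $a$ increases (by $2$) only at steps leaving $0$.
\item Since $q\ge 0$, we get $a\ge\Psi(r)$, i.e.\ $q\ge\Phi(r)$.
\item If $j\le k$ is the last increase of $a$, then $q_{j-1}=0$ gives $a_k=2-r_{j-1}\le\Psi(r)_k+2$.
\end{itemize}
Hence $0\le q_k-\Phi(r)_k\le 2$ for all $k$, so $\sup_t|\mathbf q_n(t)-\Phi(n^{-\delta}r_{\lfloor n\cdot\rfloor})(t)|\le 2n^{-\delta}$. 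Donsker's theorem with continuity of $\Phi$ then gives (i) and (ii) simultaneously.

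The paper instead proves Lemma~\ref{lem:scalingMC2} by realizing $q$ as the jump chain of an $M/M/1$ queue with arrival rate $\overline\lambda$ from \eqref{eq:lambdabar}. It writes $\mathbf q_n=\mathbf Q_n\circ\mathbf J_n$ with $\mathbf J_n\Rightarrow(\overline\lambda+\mu)^{-1}\iota$ and imports Theorem~\ref{thm:classscal}. Once repaired, your route is more elementary and self-contained.

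\textbf{On (iii) you are right, and the paper's proof does not handle it.} Because $q$ is irreducible and recurrent and $\widetilde N$ is onto $\mathbb{N}$, we have $\sup_{t\ge0}{}_2\mathbf Q_n(t)=n^{-\delta}\sup_k q_k=+\infty$ a.s.\ for every $n$. So (iii) is false over the infinite horizon.

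The paper's proof of (iii) is exactly this identity combined with Lemma~\ref{lem:scalingMC2}(iii) and Theorem~\ref{thm:classscal}(iii). Those make the same infinite-horizon claim and fail for the same reason. In Section~\ref{sec:MM1proof} the added term $\frac{1}{\log\rho}\log\frac{mh}{\mu-\lambda}$ is negative, so the displayed inequality goes the wrong way; this is why the resulting bound does not depend on $h$.

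The compact-horizon statement is the right one, and your argument proves it with one correction. The bound $\widetilde N(nm_nT)=O(n)$ holds only in distribution (tightness) when $\alpha+\beta\le 1$. Almost surely you only need $\log\widetilde N(nm_nT)=O(\log n)$. This follows from the renewal strong law $\widetilde N(t)/t\to 1/\E[Y_1]\in[0,\infty)$ and the at most polynomial growth of $nm_n$ (note $m_n\le \frac{n\sin(\pi\alpha)}{2\lambda\mu}$ when $\alpha+\beta=1$). Combine this with $\max_{k\le N}q_k\le c\log N$ for all large $N$ a.s., which follows from gambler's-ruin tails of excursion heights and Borel--Cantelli. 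This yields $\sup_{t\le T}{}_2\mathbf Q_n(t)=O(\log n)\,n^{-\delta}\to 0$ a.s.
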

The previous theorem allows us to recognize the three different regimes of the fast renewal Mittag-Leffler queue ${}_2{Q}$. Before proceeding with the classification, let us stress that due to the non-power scaling we have in case $\alpha+\beta=1$, we cannot discuss in general about time scaling exponents. 
\begin{itemize}
	\item For $p_{\lambda,\mu}^{\alpha,\beta}>\frac{1}{2}$ we are in the \textbf{supercritical regime}: in such a case, the rescaled queue length process ${}_2{\mathbf{Q}}_n$ converges in distribution towards a non-decreasing stochastic (degenerate if $\alpha+\beta \ge 1$) process $(2p_{\lambda,\mu}^{\alpha,\beta}-1)L_{\alpha+\beta} \circ \iota_{\mathcal{K}_{\lambda,\mu}^{\alpha,\beta}}$, hence the arrival counting process is driving the dynamics of the queue with a space scale dictated by the space scaling exponent $\delta=1$ and a (possibly, non-power) time scale $nm_n$ depending on $\alpha+\beta$. 
	\item For $p_{\lambda,\mu}^{\alpha,\beta}=\frac{1}{2}$ we are in the \textbf{critical regime}: if we keep the same space and time scales as in the supercritical regime, the queue converges towards $0$. Nevertheless, if we reduce the space scale to $\delta=\frac{1}{2}$, we see some oscillations appearing as the rescaled queue length process ${}_2{\mathbf{Q}}_n$ converges in distribution towards a non-monotone stochastic process $\Phi(\mathbf{W})\circ L_{\alpha+\beta}\circ \iota_{\mathcal{K}_{\lambda,\mu}^{\alpha,\beta}}$, that is again a delayed reflected Brownian motion. 
	\item For $p_{\lambda,\mu}^{\alpha,\beta}<\frac{1}{2}$ we are in the \textbf{subcritical regime}: in this case, the rescaled queue length process ${}_2\mathbf{Q}_n$ converges almost surely towards $0$ independently of the choice of the space scaling exponent.
\end{itemize}
\subsection{The scaling limits of the renewal Mittag-Leffler queue ${}_3{Q}$}
Concerning the renewal Mittag-Leffler queue ${}_3{Q}$, we will see that, despite the different behaviour in terms of idle periods, it shares the scaling limits with the fast renewal Mittag-Leffler queue ${}_2Q$. Indeed, let us consider the scaled process
\begin{equation}\label{eq:2Qresc}
	{}_3{\mathbf{Q}}_n(t)=\frac{{}_3{Q}(nm_n t)}{n^\delta},
\end{equation}
where $m_n$ is defined in \eqref{eq:mndef} with $\gamma=\max\{1,(\alpha+\beta)^{-1}\}$. The following result holds.
\begin{thm}\label{thm:scalingmodel3}
	Let ${}_3Q$ be the queue length process of a renewal Mittag-Leffler queue with orders $\alpha,\beta \in (0,1)$ and rates $\lambda,\mu>0$. Let ${}_3\mathbf{Q}_n$ be its rescaling defined in \eqref{eq:2Qresc}. Denote by $\mathbf{W}$ a standard Brownian motion and let $L_{\alpha+\beta}$ be an independent inverse $(\alpha+\beta)$-stable subordinator if $\alpha+\beta<1$, while $L_{\alpha+\beta}\equiv \iota$ if $\alpha+\beta \ge 1$. Let also $\mathcal{K}_{\lambda,\mu}^{\alpha,\beta}$ as in \eqref{eq:giantconstant}.
	%Let $\alpha,\beta \in (0,1]$, $\lambda,\mu>0$, $\gamma=\max\{1,(\alpha+\beta)^{-1}\}$, $m_n$ as in \eqref{eq:mndef} and $\mathcal{K}_{\lambda,\mu}^{\alpha,\beta}$ as in \eqref{eq:giantconstant}. Let ${}_3{Q}$ be the queue length process of a modified renewal Mittag-Leffler queue and ${}_3{\mathbf{Q}}_n$ be its rescaling defined in \eqref{eq:2Qresc}. Denote by $\mathbf{W}$ a Brownian motion and let $L_{\alpha+\beta}$ be an inverse $(\alpha+\beta)$-stable subordinator independent of it if $\alpha+\beta<1$, while $L_{\alpha+\beta}\equiv \iota$ if $\alpha+\beta \ge 1$. 
	The following scaling limits hold true:
	\begin{itemize}
		\item[$(i)$] If $p_{\lambda,\mu}^{\alpha,\beta}>\frac{1}{2}$ then for $\delta=1$ it holds
		${}_3{\mathbf{Q}}_n \overset{\sf U}{\Rightarrow}(2p_{\lambda,\mu}^{\alpha,\beta}-1)L_{\alpha+\beta}\circ \iota_{\mathcal{K}_{\lambda,\mu}^{\alpha,\beta}}$.
		\item[$(ii)$] If $p_{\lambda,\mu}^{\alpha,\beta}=\frac{1}{2}$ then for $\delta=\frac{1}{2}$ it holds ${}_3{\mathbf{Q}}_n \overset{\sf U}{\Rightarrow} \Phi(\mathbf{W})\circ L_{\alpha+\beta}\circ \iota_{\mathcal{K}_{\lambda,\mu}^{\alpha,\beta}}$;
		\item[$(iii)$] If $p_{\lambda,\mu}^{\alpha,\beta}<\frac{1}{2}$, then for any $\delta>0$ it holds 
		\begin{equation*}
			\lim_{n \to +\infty}\sup_{t \ge 0}{}_3\mathbf{Q}_n(t)=0 \quad \mbox{a.s.}
		\end{equation*}
	\end{itemize}
\end{thm}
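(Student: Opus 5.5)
We will work with the representation ${}_3Q=\Phi(R)$, where $R(t)=r_{\widetilde N(t)}$ is the continuous-time random walk built from the random walk $r$ with step probability $p:=p_{\lambda,\mu}^{\alpha,\beta}$, time-changed by the renewal counting process $\widetilde N$ of the i.i.d. ${\sf mML}_{\alpha,\beta}(\lambda,\mu)$ times $Y_k$. The walk $r$ and $\widetilde N$ are independent. Since $\Phi$ commutes with time changes (Lemma \ref{lem:scaling2}), we can also write ${}_3Q(t)=\overline q_{\widetilde N(t)}$, where $\overline q=\Phi(r)$ is the reflected chain \eqref{eq:Mchain2}. The proof then splits into two independent scaling statements, which we combine by the continuous mapping theorem for composition.

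\textbf{Time part.} We show $\widetilde N(nm_n\,\cdot)/n\Rightarrow L_{\alpha+\beta}\circ\iota_{\mathcal K}$ in the $J_1$ (in fact uniform) topology. The key input is the tail of $Y_1$:
\[
\bP(Y_1>t)=E_\alpha(-\lambda t^\alpha)E_\beta(-\mu t^\beta)\sim \frac{t^{-\alpha-\beta}}{\lambda\mu\Gamma(1-\alpha)\Gamma(1-\beta)}.
\]
We treat three cases according to $\alpha+\beta$.
\begin{itemize}
\item[(a)] If $\alpha+\beta<1$, $Y_1$ lies in the domain of attraction of an $(\alpha+\beta)$-stable law. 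The partial sums $\sum_{k\le \lfloor nt\rfloor}Y_k/n^{1/(\alpha+\beta)}$ converge in $J_1$ to a constant multiple of $\sigma_{\alpha+\beta}$, with the constant producing $\mathcal K$. Inverting via the first-passage map, which is continuous at strictly increasing limits (Whitt), gives convergence of $\widetilde N$ to $L_{\alpha+\beta}$. Uniform convergence follows because the limit is continuous.
\item[(b)] If $\alpha+\beta>1$, the mean $\E[Y_1]$ is finite, and the functional strong law for renewal processes gives $\widetilde N(nt)/n\to t/\E[Y_1]$ uniformly on compacts a.s.
\item[(c)] If $\alpha+\beta=1$, we are in the boundary case of relative stability. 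The truncated-mean normalization $m_n$ in \eqref{eq:mndef} is exactly the one for which $\sum_{k\le n}Y_k/(nm_n)\to1$ in probability, by the weak law for relatively stable variables with slowly varying truncated mean. Monotonicity then upgrades this to functional convergence to $\iota$, and inversion follows as in (b).
\end{itemize}
We expect case (c), and pinning down the constant $\mathcal K$ in case (a), to be the most delicate computations. Both reduce to Karamata-type arguments on the tail above.

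\textbf{Space part.} These are the auxiliary Markov chain limits for $\overline q$ (Section \ref{subsecMC}).
\begin{itemize}
\item If $p>1/2$, the strong law gives $r_{\lfloor nt\rfloor}/n\to(2p-1)t$ uniformly on compacts a.s. The reflection of a nondecreasing limit is itself, and $\Phi$ is Lipschitz in the uniform norm, so $\overline q_{\lfloor n\cdot\rfloor}/n\to(2p-1)\iota$.
\item If $p=1/2$, Donsker's theorem and continuity of $\Phi$ give $\overline q_{\lfloor n\cdot\rfloor}/\sqrt n\Rightarrow\Phi(\mathbf W)$.
\end{itemize}
Since $\overline q$ and $\widetilde N$ are independent, we get joint convergence of the pair. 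We then apply the composition map, which is continuous when the outer limit is continuous and the inner one is nondecreasing (Whitt, Theorem 13.2.1). Rewriting $\overline q_{\widetilde N(nm_nt)}/n^\delta$ as $(\overline q_{\lfloor n\,\cdot\rfloor}/n^\delta)\circ(\widetilde N(nm_n\cdot)/n)$ yields (i) and (ii). Uniform (rather than $J_1$) convergence holds because the limits are continuous.

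\textbf{Subcritical case (iii).} For $p<1/2$ the chain $\overline q$ is a positive recurrent reflected walk with geometric stationary law. We argue as in the $M/M/1$ case (Section \ref{sec:MM1proof}) and obtain that $\sup_{k\le N}\overline q_k$ grows only logarithmically in $N$ a.s. This follows from the geometric tail of excursion maxima and Borel--Cantelli: the probability that an excursion reaches level $h$ is of order $((1-p)/p)^{-h}$.

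The difficulty is that the statement asks for $\sup_{t\ge0}$ over an infinite horizon, which cannot be controlled by a logarithmic bound alone. The plan is to read the statement as a supremum over compact time sets, or equivalently to use that for fixed $T$ the quantity $\widetilde N(nm_nT)$ is polynomial in $n$ a.s. Then $\sup_{t\le T}{}_3\mathbf Q_n(t)\le C\log n/n^\delta\to0$ a.s. for every $\delta>0$. If the global supremum is really intended, we would instead follow the exact convention used for the $M/M/1$ statement in Theorem \ref{thm:classscal}. This subcritical almost-sure upgrade, and reconciling it with the infinite-horizon supremum, is the step we expect to be the main obstacle.
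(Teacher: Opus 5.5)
For items (i) and (ii) your argument is essentially the paper's. You use the same ingredients:
\begin{itemize}
\item the identity ${}_3\mathbf{Q}_n=\overline{\mathbf{q}}_n\circ\widetilde{\mathbf{N}}_n$;
\item the limits of $\overline{\mathbf{q}}_n$ from the law of large numbers or Donsker, plus continuity of $\Phi$ (Lemma~\ref{lem:ovqk});
\item the limits of $\widetilde{\mathbf{N}}_n$, with the same constant $\mathcal{K}_{\lambda,\mu}^{\alpha,\beta}$ (Lemma~\ref{prop:tildeN});
\item joint convergence by independence, and continuity of composition at a continuous outer and non-decreasing inner limit.
\end{itemize}
The only variation is the case $\alpha+\beta=1$. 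There you use a weak law for relatively stable sums, noting $m_n\sim c\log n$, instead of the $1$-stable functional limit \eqref{eq:Stable1} plus the lemma $m_n\to+\infty$. Both routes give $m_n^{-1}\mathbf{Y}_n\Rightarrow\iota$.

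For item (iii) you found the real problem, but it is not an obstacle to overcome. The infinite-horizon claim is false, so your fallback of following the convention of Theorem~\ref{thm:classscal} would import an error.
\begin{itemize}
\item Pathwise, $\sup_{t\ge0}{}_3\mathbf{Q}_n(t)=n^{-\delta}M$ with $M:=\sup_{k\ge0}\overline{q}_k$, and $M$ does not depend on $n$. This is exactly the surjectivity reduction the paper uses. So (iii) holds if and only if $M<\infty$ a.s.
\item For $0<p<1$ the chain $\overline{q}$ is irreducible and recurrent on $\{0,1,2,\dots\}$. It returns to $0$ infinitely often, and after each return it makes $h$ consecutive up-steps with probability $p^h>0$, independently. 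Hence $M=+\infty$ a.s., and $\sup_{t\ge0}{}_3\mathbf{Q}_n(t)=+\infty$ for every $n$.
\item The paper's route goes through Lemma~\ref{lem:ovqk}(iii) and then Theorem~\ref{thm:classscal}(iii), whose argument in Section~\ref{sec:MM1proof} breaks at its first inequality. There $\log(mh/(\mu-\lambda))/\log\rho<0$ is added to $\sup_{[0,mh]}Q$; this shrinks the event rather than enlarging it, so the inequality points the wrong way.
\item The resulting bound is also independent of $h$. Yet $\bP(\sup_{[0,mh]}Q>m^\delta/p)\to1$ as $h\to\infty$, by the same recurrence.
\end{itemize}

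So drop the hedge and state (iii) on compact time sets: for every $T>0$ and every $\delta>0$, $\sup_{t\le T}{}_3\mathbf{Q}_n(t)\to0$ a.s. Your argument proves this once two points are written out.
\begin{itemize}
\item By the renewal strong law, $\widetilde{N}(t)/t\to1/\E[Y_1]\in[0,\infty)$ a.s. Hence $\widetilde{N}(nm_nT)=O(nm_n)$, which is at most polynomial in $n$ in all three cases (note $nm_n\sim c\,n\log n$ when $\alpha+\beta=1$).
\item At most $N$ excursions of $\overline{q}$ from $0$ start before step $N$. By gambler's ruin, each reaches level $h$ with probability at most $\rho^{h-1}$, where $\rho=p/(1-p)<1$. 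Therefore $\bP(\max_{k\le N}\overline{q}_k>c\log N)\le \rho^{-1}N^{1-c|\log\rho|}$.
\item Take $c|\log\rho|>1$ and apply Borel--Cantelli along $N=2^j$. Monotonicity of $N\mapsto\max_{k\le N}\overline{q}_k$ then gives $\max_{k\le N}\overline{q}_k=O(\log N)$ a.s.
\end{itemize}
Combining the two bounds, $\sup_{t\le T}{}_3\mathbf{Q}_n(t)=n^{-\delta}\max_{k\le \widetilde{N}(nm_nT)}\overline{q}_k=O(n^{-\delta}\log n)$ a.s. This locally uniform almost-sure statement is also what Theorem~\ref{thm:classscal}(iii) can correctly assert.
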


The three different regimes of the renewal Mittag-Leffler queue ${}_3Q$ can be then recognize analogously as we already did for ${}_2Q$:
%The previous theorem allows us to recognize the three different regimes of the renewal Mittag-Leffler queue ${}_3{Q}$. Before proceeding with the classification, let us stress that due to the non-power scaling we have in case $\alpha+\beta=1$, we cannot discuss in general about scaling exponents. 
\begin{itemize}
	\item For $p_{\lambda,\mu}^{\alpha,\beta}>\frac{1}{2}$ we are in the \textbf{supercritical regime}: in such a case, the rescaled queue length process ${}_3{\mathbf{Q}}_n$ converges in distribution towards a non-decreasing stochastic (degenerate if $\alpha+\beta \ge 1$) process $(2p_{\lambda,\mu}^{\alpha,\beta}-1)L_{\alpha+\beta} \circ \iota_{\mathcal{K}_{\lambda,\mu}^{\alpha,\beta}}$, hence the arrival counting process is driving the dynamics of the queue with a space scale dictated by the space scaling exponent $\delta=1$ and a (non-power) time scale $nm_n$ depending on $\alpha+\beta$. 
	\item For $p_{\lambda,\mu}^{\alpha,\beta}=\frac{1}{2}$ we are in the \textbf{critical regime}: again, the choice of a finer space scaling exponent, i.e., $\delta=\frac{1}{2}$, allows us to underline the oscillations appearing in the limit, since the rescaled queue length process ${}_3{\mathbf{Q}}_n$ converges in distribution towards a non-monotone stochastic process $\Phi(\mathbf{W})\circ L_{\alpha+\beta}\circ \iota_{\mathcal{K}_{\lambda,\mu}^{\alpha,\beta}}$. 
	\item For $p_{\lambda,\mu}^{\alpha,\beta}<\frac{1}{2}$ we are in the \textbf{subcritical regime}: in this case, independently of the space scaling exponent $\delta>0$, the rescaled queue length process ${}_3\mathbf{Q}_n$ always converges a.s. towards $0$.
\end{itemize}
Notice that one can recognize the regime by a direct comparison of the survival functions of ${S}_1$ and ${T}_1$, according to Proposition \ref{prop:comparison}. Let us also stress that in the critical regime the scaling limit of both the fast renewal and the renewal Mittag-Leffler queue is described by means of a delayed reflected Brownian motion (if $\alpha+\beta<1$), as in the case of the fractional $M/M/1$ queue. Despite such a similarity seems to suggest that this behaviour could be due to the sole fact that the inter-event times of the process are heavy-tailed, this is not the case, as we will see for both the restless Mittag-Leffler queue and the $GI/GI/1$ Mittag-Leffler queue, in which the critical regime exhibits oscillations due to more \textit{exotic} processes.

\subsection{The scaling limits of the restless Mittag-Leffler queue ${}_4Q$}
The scaling limits for the restless Mittag-Leffler queue ${}_4Q$ have been already obtained in \cite[Theorem 3.3]{butt2023queuing}, with a fixed space scale $\delta=1$. Here we only refine the result in the subcritical case. We set
\begin{equation}\label{eq:3Qresc}
	{}_4\mathbf{Q}_n(t):=\frac{{}_4Q(n^\gamma t)}{n^\delta}
\end{equation}
where $\gamma=\frac{1}{\max\{\alpha,\beta\}}$ and $\delta>0$. Then the following result holds.
%
%
% We recall here the results, for comparison with the other models. Precisely, let us consider
%for some $\gamma>0$ that will be chosen depending on the orders $\alpha$ and $\beta$ of the involved Mittag-Leffler random variables. Let us in particular recall that
%\begin{equation*}
%	Q(t)=\Phi(N^{\sf a}-N^{\sf d})(t)
%\end{equation*}
%where $N^{\sf a}$ and $N^{\sf d}$ are two independent fractional Poisson processes respectively of order $\alpha$ and $\beta$ and with rate $\lambda$ and $\mu$. We resume here \cite[Theorem 3.3]{butt2023queuing}.
\begin{thm}\label{thm:limits3Q}
Let ${}_4Q$ be the queue length process of a restless Mittag-Leffler queue of order $\alpha,\beta \in (0,1]$ with $\min\{\alpha,\beta\}<1$ and generalized rates $\lambda,\mu>0$. Let also ${}_4\mathbf{Q}_n$ be its rescaling defined in \eqref{eq:3Qresc}. Denote by $L_\alpha^{\sf a}$ and $L^{\sf d}_{\alpha}$ are two independent inverse $\alpha$-stable subordinators. The following scaling limits hold true:
\begin{itemize}
	\item[$(i)$] If $\alpha>\beta$ then for $\delta=1$ it holds ${}_4\mathbf{Q}_n \overset{\sf U}{\Rightarrow} \lambda L^{\sf a}_\alpha$
	\item[$(ii)$] If $\alpha=\beta$ then for $\delta=1$ it holds ${}_4\mathbf{Q}_n \overset{\sf U}{\Rightarrow} \Phi(\lambda L^{\sf a}_\alpha-\mu L^{\sf d}_\alpha)$;	
	\item[$(iii)$] If $\alpha<\beta$ then for $\delta=\max \left\{\alpha\beta^{-1},2^{-1}\right\}$, it holds ${}_4\mathbf{Q}_n \overset{\sf U}{\Rightarrow} \mathbf{0}$.
\end{itemize}	
\end{thm}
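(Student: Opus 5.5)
The starting point is the representation ${}_4Q=\Phi(N^{\sf a}-N^{\sf d})$, where $N^{\sf a},N^{\sf d}$ are independent fractional Poisson processes. Since $\Phi$ commutes with space-time scaling, ${}_4\mathbf{Q}_n=\Phi\big(n^{-\delta}N^{\sf a}(n^\gamma\cdot)-n^{-\delta}N^{\sf d}(n^\gamma\cdot)\big)$, so everything reduces to functional limits for the rescaled fractional Poisson processes followed by the continuous mapping theorem.

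The key input is the classical functional limit for a fractional Poisson process $N$ of order $\alpha$ and rate $\lambda$: $n^{-1}N(n^{1/\alpha}\cdot)\Rightarrow \lambda L_\alpha$ in the $J_1$ topology. This holds because $N=\mathcal{N}\circ L_\alpha$ in law, with $\mathcal{N}$ a Poisson process of rate $\lambda$ and $L_\alpha$ independent and $\alpha$-self-similar. Since the limit is continuous, the convergence upgrades to uniform convergence on compacts. Taking $\gamma=1/\max\{\alpha,\beta\}$, we have $n^{-1}N^{\sf a}(n^\gamma t)\Rightarrow \lambda L^{\sf a}_\alpha(t)\, n^{\alpha\gamma-1}$ up to the self-similar factor. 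So for the process of larger order the scaling is exactly critical, and the other process is rescaled by a factor $n^{\alpha'\gamma-1}\to0$.

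The three cases then go as follows.

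\textbf{(i) $\alpha>\beta$.} Here $\gamma=1/\alpha$ and $n^{-1}N^{\sf d}(n^{1/\alpha}\cdot)\Rightarrow \mu\, n^{\beta/\alpha-1}L^{\sf d}_\beta\Rightarrow 0$. By independence the pair converges jointly, subtraction is continuous at continuous limits, and $\Phi$ is continuous with $\Phi(\lambda L^{\sf a}_\alpha)=\lambda L^{\sf a}_\alpha$ because $L^{\sf a}_\alpha\ge0$ is nondecreasing.

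\textbf{(ii) $\alpha=\beta$.} Both terms converge, so joint convergence and continuity of $\Phi$ give $\Phi(\lambda L^{\sf a}_\alpha-\mu L^{\sf d}_\alpha)$.

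\textbf{(iii) $\alpha<\beta$.} Now $\gamma=1/\beta$, and the service process dominates. We bound $0\le{}_4Q(t)\le N^{\sf a}(t)-N^{\sf a}(s^*(t))$, where $s^*(t)$ is the last time before $t$ at which $N^{\sf a}-N^{\sf d}$ attains its running infimum. Equivalently we use $\Phi(f)(t)=\sup_{s\le t}(f(t)-f(s))$ and write $N^{\sf a}=\mathcal N^{\sf a}\circ L^{\sf a}_\alpha$, $N^{\sf d}=\mathcal N^{\sf d}\circ L^{\sf d}_\beta$.

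On the time scale $n^{1/\beta}$, the departure clock $L^{\sf d}_\beta(n^{1/\beta}\cdot)$ is of order $n$, while the arrival clock $L^{\sf a}_\alpha(n^{1/\beta}\cdot)$ is of order $n^{\alpha/\beta}\ll n$. The reflected process is therefore essentially a compound of Poisson fluctuations. Centering each Poisson process by its compensator, $\mathcal N^{\sf a}(u)-\lambda u$ and $\mathcal N^{\sf d}(u)-\mu u$ are $O(\sqrt{n})$ uniformly on $[0,Cn]$ by Donsker. The drift part $\lambda L^{\sf a}_\alpha-\mu L^{\sf d}_\beta$ then has increments over $[s,t]$ bounded by $\lambda(L^{\sf a}_\alpha(n^\gamma t)-L^{\sf a}_\alpha(n^\gamma s))$, which is $O(n^{\alpha/\beta})$; the $-\mu L^{\sf d}$ part only decreases. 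Hence $\sup_{t\le T}{}_4Q(n^\gamma t)=O_P(n^{\alpha/\beta}+n^{1/2})$.

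This bound is exactly of order $n^{\max\{\alpha/\beta,1/2\}}$, so dividing by $n^\delta$ with $\delta=\max\{\alpha/\beta,1/2\}$ gives tightness but not convergence to $0$. To get the convergence I would sharpen the drift bound. Since $L^{\sf d}_\beta$ grows like $n$, a reflection time $s$ close to $t$ exists in which the downward drift $\mu(L^{\sf d}(t)-L^{\sf d}(s))$ dominates. More precisely, $\sup_{s\le t}\big[\lambda n^{-\alpha/\beta}(L^{\sf a}(t)-L^{\sf a}(s))-\mu n^{1-\alpha/\beta}(L^{\sf d}(t)-L^{\sf d}(s))\big]\to0$. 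This follows because the limit $\Phi(\lambda L^{\sf a}_\alpha-\mu\, n^{1-\alpha/\beta}L^{\sf d}_\beta)$ vanishes: $L^{\sf d}_\beta$ is continuous and strictly increases on any interval where $\sigma$ moves, and the large factor forces $\Phi\to0$ uniformly on compacts.

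The fluctuation part is handled similarly. The reflected Brownian term $\Phi(\mathbf W\circ L-\text{large drift}\cdot L)/\sqrt n$ tends to $0$ once the drift $n^{1/2}\to\infty$ relative to $\sqrt n$ scaling, as in the $M/M/1$ subcritical case.

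The main obstacle is this last step: controlling a reflection with two different random clocks, neither of which dominates in a uniform way. I would handle it by conditioning on the independent clocks $L^{\sf a},L^{\sf d}$ and applying the argument pathwise on compacts.
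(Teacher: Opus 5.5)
Your parts (i) and (ii) are fine. They are the standard subordination argument ($N=\mathcal N\circ L$, self-similarity, independence, continuity of $\Phi$ at continuous limits), which the paper does not redo but imports from \cite[Theorem 3.3]{butt2023queuing}. The gap is in (iii), at the step you flag yourself. Your claim that $\Phi(\lambda L^{\sf a}_\alpha-c\,\mu L^{\sf d}_\beta)\to 0$ locally uniformly as $c\to+\infty$ is false when $\beta<1$. The process $L^{\sf d}_\beta$ does not increase wherever ``$\sigma$ moves''. It increases only on the closed range of $\sigma^{\sf d}_\beta$, a Lebesgue-null set, and it is constant on each $[\sigma^{\sf d}_\beta(u-),\sigma^{\sf d}_\beta(u)]$. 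For $s<t$ in such a stretch, $\Phi(\lambda L^{\sf a}_\alpha-c\,\mu L^{\sf d}_\beta)(t)\ge\lambda\big(L^{\sf a}_\alpha(t)-L^{\sf a}_\alpha(s)\big)$ whatever $c$ is. Since $L^{\sf a}_\alpha$ is independent of $L^{\sf d}_\beta$, it increases on some such stretch inside $[0,T]$ with positive probability; the paper itself makes this observation when discussing the critical regime. Conditioning on the clocks and arguing pathwise cannot help, because the failure is pathwise.

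The step cannot be repaired, because the conclusion itself fails in that range. Realize $N^{\sf a}(n^{1/\beta}\cdot)=\mathcal N^{\sf a}(n^{\alpha/\beta}L^{\sf a}_\alpha(\cdot))$ and $N^{\sf d}(n^{1/\beta}\cdot)=\mathcal N^{\sf d}(nL^{\sf d}_\beta(\cdot))$. On a flat stretch $[s,t]$ of $L^{\sf d}_\beta$ there is no departure, so ${}_4\mathbf Q_n(t)\ge n^{-\delta}\big(\mathcal N^{\sf a}(n^{\alpha/\beta}L^{\sf a}_\alpha(t))-\mathcal N^{\sf a}(n^{\alpha/\beta}L^{\sf a}_\alpha(s))\big)$. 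When $\delta=\alpha/\beta$ this converges a.s. to $\lambda\big(L^{\sf a}_\alpha(t)-L^{\sf a}_\alpha(s)\big)$. Hence, for $\alpha<\beta<1$ with $\beta\le2\alpha$ (exactly the case $\delta=\alpha/\beta$), $\sup_{t\le T}{}_4\mathbf Q_n(t)$ does not tend to $0$ in probability. In queueing terms, with non-vanishing probability there is an inter-departure gap of length of order $n^{1/\beta}$, and it collects order $n^{\alpha/\beta}$ arrivals.

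The paper's proof meets the same obstruction. It applies Theorem \ref{thm:contcent2} with $g=\mu L_\beta$, but the hypothesis $g(t)=0\Leftrightarrow t=0$ does not exclude flat stretches of $g$. For example, $g(t)=\min\{t,1\}+\max\{t-2,0\}$ and $f_n=\iota+c_ng$ give $f_n-c_ng\to\iota$ but $\Phi(f_n)(2)\ge 1$. For $\beta\le2\alpha$ the paper's limit $\widehat{\mathbf W}$ contains $\lambda L_\alpha$, which moves on exactly those stretches.

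Where (iii) does hold, you need far less than your sharpening. Your first inequality already gives ${}_4Q\le N^{\sf a}$, hence $\sup_{t\le T}{}_4\mathbf Q_n(t)=O_P(n^{\alpha/\beta-\delta})$; the $n^{1/2}$ in your crude bound is an artefact. This settles the case $\beta>2\alpha$, where $\delta=\frac12>\frac\alpha\beta$. For $\beta=1$ the clock $L^{\sf d}_1=\iota$ is strictly increasing, so your drift claim is true by uniform continuity of $L^{\sf a}_\alpha$. Combined with $\Phi(\mathbf W-c\,\iota)\to0$ and the subadditivity $\Phi(a+b)\le\Phi(a)+\Phi(b)$, your argument then goes through.
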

\begin{rmk}
We notice that the case $\alpha=\beta=1$ is covered by the scaling limits of the classical $M/M/1$ queue, as in Theorem \ref{thm:classscal}. 	
\end{rmk}
Concerning the regimes, we have the following classification:
\begin{itemize}
	\item For $\alpha>\beta$ we are in the \textbf{supercritical regime}: in such a case, the rescaled queue length process ${}_4\mathbf{Q}_n$ converges towards a non-decreasing process $\lambda L_\alpha$ and the arrival counting process drives the limit dynamics;
	\item For $\alpha=\beta \not = 1$ we are in the \textbf{critical regime}: in this case we see oscillations even in the same scale as in the supercritical regime. This is due to the fact that despite sharing the same scaling, the arrival and departure processes $N^{\sf a}$ and $N^{\sf d}$ converge towards two independent non-degenerate stochastic processes (that are equal in law up to the multiplicative constants) that do not cancel out even if $\lambda=\mu$. It is also worth noticing that $\lambda L^{\sf a}-\mu L^{\sf d}$ is not even monotone, as $L^{\sf a}$ can increase on a constancy interval of $L^{\sf d}$, hence $\Phi(\lambda L^{\sf a}-\mu L^{\sf d})$ is not identically $0$.
	\item For $\alpha<\beta$ we are in the \textbf{subcritical regime} and the scaled queue length converges uniformly to $0$ even in a finer space-scale: notice that in such a case the space scaling still depends on $\alpha$ and $\beta$. In this case we are not able to prove the almost surely convergence of the queue length process to $0$. Actually, the queue length process ${}_4Q$ is not regenerative on its cycles (i.e., the constancy intervals of the cumulative idle time), hence we could not use classical extreme value theory even if we knew the distribution of the cycle maximum, as in \cite{asmussen1998extreme}.
\end{itemize}
\subsection{The scaling limits of the Mittag-Leffler GI/GI/1 queue ${}_5Q$}\label{sec:MM15}
We finally discuss the scaling limits of a Mittag-Leffler GI/GI/1 queue. To better handle this, we will consider a fixed space scaling exponent $\delta=1$, while we change the time scaling exponent. Indeed, let us consider the scaled queue length process
\begin{equation}\label{eq:4Qresc}
	{}_5\mathbf{Q}_n(t)=\frac{{}_5Q(n^\gamma t)}{n},
\end{equation}
where $\gamma=\min\{\alpha^{-1},\beta^{-1}\}$. In this case, we have the following result.
\begin{thm}\label{thm:scalingmodel5}
	Let ${}_5Q$ be the queue length process of a $GI/GI/1$ Mittag-Leffler queue of orders $\alpha,\beta \in (0,1]$ with $\min\{\alpha,\beta\}<1$ and generalized inter-arrival and service rates $\lambda,\mu>0$. Let ${}_5\mathbf{Q}_n$ be its rescaling defined in \eqref{eq:4Qresc}.
	%Let $\alpha,\beta \in (0,1]$ with $\min\{\alpha,\beta\}<1$ and $\lambda,\mu>0$. Let ${}_4Q$ be the queue length process of a $GI/GI/1$ Mittag-Leffler queue and ${}_4\mathbf{Q}_n$ be its rescaling defined in \eqref{eq:4Qresc}. 
	Denote by $\sigma_\alpha^{\sf a}$ and $\sigma_\beta^{\sf d}$ two independent $\alpha$-stable and $\beta$-stable subordinators with inverses $L_\alpha^{\sf a}$ and $L_\beta^{\sf d}$ (where we set $\sigma_1^{\sf a}=\iota$ and $\sigma_1^{\sf d}=\iota$). The following scaling limits hold true:
	\begin{itemize}
		\item[$(i)$] If $\alpha>\beta$ it holds ${}_5\mathbf{Q}_n \overset{\sf U}{\Rightarrow} L_\alpha^{\sf a} \circ \iota_{\lambda^{\frac{1}{\alpha}}}$;
		\item[$(ii)$] If $\alpha=\beta$ it holds
		\begin{equation*}
			{}_5\mathbf{Q}_n \overset{\sf M_1}{\Rightarrow} L_\alpha^{\sf a} \circ \iota_{\lambda^{\frac{1}{\alpha}}}-L_\alpha^{\sf d}\circ \iota_{\mu^{\frac{1}{\alpha}}}\circ \mathbf{B},
		\end{equation*}
		where
		\begin{equation}\label{eq:Bproc}
			\mathbf{B}:=(\iota-\Psi(\mu^{-\frac{1}{\alpha}}\sigma_\alpha^{\sf d}\circ (L_\alpha^{\sf a} \circ \iota_{\lambda^{\frac{1}{\alpha}}}-\iota)))
		\end{equation}
		\item[$(iii)$] If $\alpha<\beta$ it holds ${}_5\mathbf{Q}_n \overset{\sf U}{\Rightarrow} \mathbf{0}$.
	\end{itemize}
\end{thm}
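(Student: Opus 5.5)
We prove Theorem \ref{thm:scalingmodel5} by writing ${}_5Q$ as a composition of continuous maps of the two renewal sequences and then using the functional limits of the partial sums. The starting point is the well-known stable functional limit theorem for Mittag-Leffler partial sums. Since $\bP(T_1>t)\sim \frac{t^{-\alpha}}{\lambda\Gamma(1-\alpha)}$ when $\alpha<1$, we get
\begin{equation*}
\frac{A_{\lfloor n^{\alpha\gamma}\cdot\rfloor}}{n^{\gamma}} \Rightarrow \lambda^{-\frac{1}{\alpha}}\sigma^{\sf a}_\alpha \quad\mbox{in } (\mathbb{D},\mathsf{M}_1),
\end{equation*}
with the usual normalisation of $\sigma_\alpha$. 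When $\alpha=1$ the limit is the deterministic drift $\lambda^{-1}\iota$ by the law of large numbers. The same holds for $CS$ with $\beta,\mu$. By Whitt's inverse-map theorem, the counting processes then satisfy $N^{\sf a}(n^\gamma\cdot)/n^{\alpha\gamma}\Rightarrow L^{\sf a}_\alpha\circ\iota_{\lambda^{1/\alpha}}$. This convergence is uniform, because the limit is continuous. Independence of the sequences gives joint convergence.

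Choose $\gamma=\min\{\alpha^{-1},\beta^{-1}\}=1/\max\{\alpha,\beta\}$ and split into cases.

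\emph{Case $\alpha>\beta$.} Here $\alpha\gamma=1$, so $N^{\sf a}(n^\gamma t)/n\Rightarrow L^{\sf a}_\alpha\circ\iota_{\lambda^{1/\alpha}}$. We must show $D(n^\gamma\cdot)/n\Rightarrow 0$. This follows from $0\le D(t)\le N^{\sf d}(t)$, since $B(t)\le t$, together with $N^{\sf d}(n^\gamma t)/n^{\beta\gamma}$ converging while $\beta\gamma<1$.

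\emph{Case $\alpha<\beta$.} Here $\beta\gamma=1$ and $N^{\sf a}(n^\gamma\cdot)/n\Rightarrow 0$, so it suffices to note $0\le {}_5Q\le N^{\sf a}$. (If the intended reading is that $N^{\sf a}$ converges at scale $n^{\alpha\gamma}$, the same sandwich still gives $\mathbf 0$ at scale $n$.)

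\emph{Case $\alpha=\beta$.} Both processes live at scale $n$ and we need the full representation. Rescale the net input:
\begin{equation*}
\frac{X(n^\gamma t)}{n^\gamma}=\frac{CS_{N^{\sf a}(n^\gamma t)}}{n^\gamma}-t .
\end{equation*}
Use the composition theorem: $CS_{\lfloor n\cdot\rfloor}/n^\gamma\Rightarrow\mu^{-1/\alpha}\sigma^{\sf d}_\alpha$ in $\mathsf{M}_1$, and the inner process converges to the continuous, nondecreasing $L^{\sf a}_\alpha\circ\iota_{\lambda^{1/\alpha}}$. This gives
\begin{equation*}
\frac{X(n^\gamma\cdot)}{n^\gamma}\Rightarrow \mu^{-\frac{1}{\alpha}}\sigma^{\sf d}_\alpha\circ L^{\sf a}_\alpha\circ\iota_{\lambda^{1/\alpha}}-\iota .
\end{equation*}
The composition with $\iota$ written in \eqref{eq:Bproc} is this same expression after using the scaling/commutation lemmas of Appendix \ref{app:A}. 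Applying the regulator $\Psi$, which is $\mathsf{M}_1$-continuous at limits with no negative jumps, and then $B=\iota-\Psi(X)$, yields $B(n^\gamma\cdot)/n^\gamma\Rightarrow\mathbf B$. Since $\mathbf B$ is continuous, this convergence is uniform.

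We then compose again: $N^{\sf d}(n^\gamma\cdot)/n\Rightarrow L^{\sf d}_\alpha\circ\iota_{\mu^{1/\alpha}}$, which is continuous, with $B$. This gives the departure limit. Finally, subtraction is continuous in $\mathsf{M}_1$ when the two summands have no common jumps of opposite sign. Here both limits are continuous, so the difference converges, giving (ii). The $\mathsf{M}_1$ in the statement only reflects the intermediate step.

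The main obstacle is the joint convergence in case (ii). The quantities $\sigma^{\sf d}$, $L^{\sf a}$ and $L^{\sf d}$ are built from the same service sequence, so $(CS_{\lfloor n\cdot\rfloor}/n^\gamma, N^{\sf d}(n^\gamma\cdot)/n)$ must converge jointly to $(\mu^{-1/\alpha}\sigma^{\sf d},L^{\sf d}\circ\iota_{\mu^{1/\alpha}})$, with $L^{\sf d}$ the inverse of the same $\sigma^{\sf d}$. We plan to obtain this from joint continuity of the inverse map together with the Skorokhod representation. We must also check the continuity conditions of the composition theorem: the inner limit must be strictly increasing at discontinuity points of the outer limit. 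This holds almost surely because $\sigma^{\sf d}$ and $L^{\sf a}$ are independent.
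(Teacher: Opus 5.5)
Your proof is correct. For item (ii) it is essentially the paper's argument. The paper also writes ${}_5\mathbf{Q}_n=\mathbf{N}^{\sf a}_n-\mathbf{N}^{\sf d}_n\circ\mathbf{B}_n$ and gets the joint limit of $(\mathbf{N}^{\sf a}_n,\mathbf{N}^{\sf d}_n,\mathbf{CS}_n)$ from the stable FCLT plus counting-process inversion. It then composes $\mathbf{CS}_n\circ\mathbf{N}^{\sf a}_n$, using that $\sigma^{\sf a}_\alpha$ and $\sigma^{\sf d}_\alpha$ share no jumps (so $L^{\sf a}_\alpha$ is strictly increasing wherever the outer limit jumps), applies $\Psi$, and composes once more.

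You genuinely differ in items (i) and (iii). There the paper runs the same full chain: it computes the limits of $\mathbf{C}_n$, $\mathbf{X}_n$, $\mathbf{I}_n$ and shows $\mathbf{B}_n\Rightarrow\mathbf{0}$ at scale $n^{1/\beta}$. You instead use only two ingredients. The first is the pathwise bounds $0\le D(t)\le N^{\sf d}(t)$ (from $B(t)\le t$) and $0\le {}_5Q(t)\le N^{\sf a}(t)$. The second is that the slower counting process lives on scale $n^{\beta/\alpha}$ in (i) and $n^{\alpha/\beta}$ in (iii), both $o(n)$.

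Your route is more elementary. It needs no ${\sf M}_1$ composition conditions and no joint convergence of the service partial sums with their inverse, and a converging-together argument gives ${\sf U}$ convergence directly. The paper's route yields extra information your sandwich does not, for example that the rescaled busy time vanishes and, in case (iii), the rescaled idle time converges to $\iota$.

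A few points to tidy up:
\begin{itemize}
\item The expression you derive, $\Psi(\mu^{-1/\alpha}\sigma^{\sf d}_\alpha\circ L^{\sf a}_\alpha\circ\iota_{\lambda^{1/\alpha}}-\iota)$, is the right one and is what the paper's proof produces. It does not follow from \eqref{eq:Bproc} ``by scaling/commutation lemmas.'' Read literally, $\sigma^{\sf d}_\alpha\circ(L^{\sf a}_\alpha\circ\iota_{\lambda^{1/\alpha}}-\iota)$ is a different object that is not even well defined. Just treat \eqref{eq:Bproc} as a misplaced parenthesis.
\item Your claim that $\mathbf{B}$ is continuous needs a line of justification. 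Each $\mathbf{B}_n$ is nondecreasing and $1$-Lipschitz, since $B$ has slopes in $\{0,1\}$. This bound passes to the ${\sf M}_1$ limit on the dense set of its continuity points, hence everywhere by right-continuity. Equivalently, the limit net input has only upward jumps, so its regulator is continuous.
\item With that justification, your remark that the limit in (ii) is continuous, so the ${\sf M}_1$ convergence is actually locally uniform, is correct. This is stronger than the paper claims; its remark after the theorem says the critical limit is not a.s.\ continuous, which this argument contradicts.
\item $\Psi$ is ${\sf M}_1$-Lipschitz on all of $\mathbb{D}$ (Theorem \ref{thm:continuity}), so no condition on negative jumps is needed.
\item $L^{\sf a}$ comes from the arrival sequence, not the service one. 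Only the pair $(\sigma^{\sf d},L^{\sf d})$ needs the joint inversion argument you describe.
\end{itemize}
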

With this result in mind, we can recognize the different traffic regimes of the $GI/GI/1$ Mittag-Leffler queue as follows:
\begin{itemize}
	\item For $\alpha>\beta$ we are in the \textbf{supercritical regime}: in such a case, the rescaled queue length process ${}_5\mathbf{Q}_n$ converges towards a non-decreasing process $L_\alpha \circ \iota_{\lambda^{\frac{1}{\alpha}}}\overset{d}{=} \lambda L_\alpha$ and the arrival counting process drives the limit dynamics. Notice that this is exactly the same scaling limit as in the restless Mittag-Leffler queue: this is due to the fact that the arrival mechanism is left unaltered and once the queue is non-empty also the service times are the same.
	\item For $\alpha=\beta \not = 1$ we are in the \textbf{critical regime}: in this case we see oscillations even in the same scale as in the supercritical regime. While the motivation of this scaling is the same as for the restless Mittag-Leffler queue, the limit process is extremely different, due to the fact that in the $GI/GI/1$ model the departure process only moves when the cumulative busy time increases. 	
%	 This is due to the fact that despite sharing the same scaling, the arrival and departure processes $N^{\sf a}$ and $N^{\sf d}$ converge towards two independent non-degenerate stochastic processes (that are equal in law up to the multiplicative constants) that do not cancel out even if $\lambda=\mu$. It is also worth noticing that $\lambda L^{\sf a}-\mu L^{\sf d}$ is not even monotone, as $L^{\sf a}$ can increase on a constancy interval of $L^{\sf d}$, hence $\Phi(\lambda L^{\sf a}-\mu L^{\sf d})$ is not identically $0$.
	\item For $\alpha<\beta$ we can still recognize a \textbf{subcritical regime}, despite we are not able to improve the scaling. 
	% Furthermore, due to the techniques adopted here, we only have ${\sf M_1}$ convergence to $0$, in place of the uniform one.
\end{itemize}
%If we notice that for $\alpha \le \beta$ we could write
%\begin{align*}
%	{}_4\mathbf{Q}_n&=\mathbf{N}_n^{\sf d}-\mathbf{N}_n^{\sf d}\circ \mathbf{B}_n+c_n(\iota-\mathbf{B}_n)-c_n(\iota-\mathbf{B}_n)\\
%	&=\Phi(\mathbf{N}_n^{\sf d}-\mathbf{N}_n^{\sf d} \circ \mathbf{B}_n-c_n(\iota-\mathbf{B}_n)),
%\end{align*}
%for any sequence $c_n \to +\infty$, then we could try to argue as in Theorem \ref{thm:limits3Q} to get the uniform convergence in the subcritical case. Nevertheless, since we are using explicitly Theorem \ref{thm:contcomp}, that requires the ${\sf M}_1$ topology, we are not able to improve the convergence of the vector $(\mathbf{N}_n^{\sf a},\mathbf{N}_n^{\sf d}, \mathbf{B}_n)$ to the ${\sf J}_1$ or ${\sf U}$ topology, that is instead required to use Theorem \ref{thm:contcent2}. 
{Notice that we can guarantee the convergence in distribution with respect to the locally uniform topology only in the supercritical and subcritical regimes, since in the critical one the limit is not a.s. continuous, while in the proof we explicitly use the continuity of the composition operator in the ${\sf M_1}$ convergence. Furthermore, it is worth recalling that, since we are considering a proper GI/GI/1 queueing system, the process ${}_5Q$ is regenerative, where the regeneration cycles are given by the intervals of non-constancy of the cumulative idle time. Nevertheless, the cycle length is bigger than at least one service time, hence if $\beta<1$ it has infinite mean. As a consequence, we cannot use the approach provided in \cite{asmussen1998extreme} to deduce the asymptotic distribution of the supremum of the queue by means of extreme value theory. A deeper investigation in this direction is indeed required.} 
%Hence, to handle these models, a deeper study on the relation between non-linear centering and reflection of c\'adl\'ag functions is required.

% Please add the following required packages to your document preamble:
% \usepackage[table,xcdraw]{xcolor}
% Beamer presentation requires \usepackage{colortbl} instead of \usepackage[table,xcdraw]{xcolor}
\begin{sidewaystable}
	\centering
	\begin{tabular}{|l|l|l|l|l|l|l|}
		\hline
		\rowcolor[HTML]{C0C0C0} 
		Queueing Model & IAD & SD & RDP & CV & Scaling & Scaling Limits\\ \hline
		$M/M/1$ &  ${\sf Exp}(\lambda)$ &  ${\sf Exp}(\mu)$ & $\rho=\frac{\lambda}{\mu}$ & $1$ &  \makecell{$n^{-\delta} Q_n(nt)$ \\[2pt] $\delta=\begin{cases} 1 \\ 1/2 \\ >0 \end{cases}$} & \makecell{$\begin{cases} (\lambda-\mu)\iota \\ \sqrt{2\lambda}\Phi(\mathbf{W}) \\ 0 \ \mbox{(a.s.)} \end{cases}$} \\ \hline
		Fractional $M/M/1$ & ${\sf ML}_\alpha(\lambda)$ & ${\sf ML}_\alpha(\mu)$  & $\rho=\frac{\lambda}{\mu}$ & $1$ & \makecell{$n^{-\delta}{}_1Q_n(n^{\gamma}t)$ \\[2pt] $\delta=\begin{cases} 1 \\ 1/2 \\ >0 \end{cases}$ \\[2pt] $\gamma^{-1}=\alpha$} & \makecell{$\begin{cases} (\lambda-\mu)L_\alpha \\ \sqrt{2\lambda}\Phi(\mathbf{W})\circ L_\alpha \\ 0 \ \mbox{(a.s.)} \end{cases}$} \\ \hline
		Fast Renewal ML Queue &  Theorem \ref{thm:interarrival2} & Theorem \ref{thm:service2} & $p_{\lambda,\mu}^{\alpha,\beta}$ & $\frac{1}{2}$ & \makecell{$n^{-\delta}{}_2{Q}_n(nm_nt)$ \\[2pt]
			$m_n$ in \eqref{eq:mndef}\\[2pt]
			$\delta=\begin{cases} 1 \\ 1/2 \\ >0 \end{cases}$} & \makecell{$\begin{cases} (2p_{\lambda,\mu}^{\alpha,\beta}-1)L_{\alpha+\beta}\circ \iota_{K_{\lambda,\mu}^{\alpha,\beta}} \\
				\Phi(\mathbf{W})\circ L_{\alpha+\beta}\circ \iota_{\mathcal{K}_{\lambda,\mu}^{\alpha,\beta}} \\ 0 \ \mbox{(a.s.)} \end{cases}$\\ $\mathcal{K}_{\lambda,\mu}^{\alpha,\beta}$ in \eqref{eq:giantconstant}} \\ \hline
		Renewal ML Queue & Theorem \ref{thm:interarrival22} & Theorem \ref{thm:service2} & $p^{\alpha,\beta}_{\lambda,\mu}$ &  $\frac{1}{2}$ & \makecell{$n^{-\delta}{}_3{Q}_n(nm_nt)$ \\[2pt]
			$m_n$ in \eqref{eq:mndef}\\[2pt]
			$\delta=\begin{cases} 1 \\ 1/2 \\ >0 \end{cases}$} & \makecell{$\begin{cases} (2p_{\lambda,\mu}^{\alpha,\beta}-1)L_{\alpha+\beta}\circ \iota_{K_{\lambda,\mu}^{\alpha,\beta}} \\
			\Phi(\mathbf{W})\circ L_{\alpha+\beta}\circ \iota_{\mathcal{K}_{\lambda,\mu}^{\alpha,\beta}} \\ 0 \ \mbox{(a.s.)}\end{cases}$\\ $\mathcal{K}_{\lambda,\mu}^{\alpha,\beta}$ in \eqref{eq:giantconstant}} \\ \hline
		Restless ML Queue & ${\sf ML}_\alpha(\lambda)$ & Proposition \ref{prop:distserv3} & $\frac{\alpha}{\beta}$ & $1$ & \makecell{$n^{-\delta}{}_4Q\left(n^{\gamma}t\right)$ \\[2pt] $\delta=\begin{cases} 1 \\ 1 \\ \max\left\{\frac{\alpha}{\beta},\frac{1}{2}\right\}\end{cases}$ \\[2pt] $\gamma^{-1}=\max\{\alpha,\beta\}$} & $\begin{cases} \lambda L_\alpha^{\sf a} \\ \Phi(\lambda L_\alpha^{\sf a}-\mu L_{\alpha}^{\sf d})\\
		0\end{cases}$ \\ \hline
		ML $GI/GI/1$ Queue & ${\sf ML}_\alpha(\lambda)$ & ${\sf ML}_{\beta}(\mu)$ & $\frac{\alpha}{\beta}$ & $1$ & \makecell{$n^{-1}{}_5Q(n^{\gamma}t)$ \\[2pt] $\gamma^{-1}=\max\{\alpha,\beta\}$} & \makecell{$\begin{cases}
			\lambda L_{\alpha}^{\sf a} \\
			L_{\alpha}^{\sf a} \circ \iota_{\lambda^{\frac{1}{\alpha}}}-L_{\alpha}^{\sf a} \circ \iota_{\mu^{\frac{1}{\alpha}}} \circ \mathbf{B}\\
			0
		\end{cases}$\\[2pt]
	$\mathbf{B}$ in \eqref{eq:Bproc}} \\ \hline
	\end{tabular}
	\caption{Summary of the characteristics of the considered queueing models. Here IAD is the distribution of inter-arrival times, SD is the distribution of service times, RDP is the regime discrimination parameter, CV is the critical value. Furthermore, whenever three options are proposed, they are disposed, from above to below, in the following order: supercritical case, critical case, subcritical case.}
\end{sidewaystable}
\section{Scaling limits of Mittag-Leffler queues: proofs of the main results}\label{sec:scalingproof}
\subsection{A brief discussion of Theorem \ref{thm:classscal}}\label{sec:MM1proof}
As we already stated, the results contained in Theorem \ref{thm:classscal} are classical and well-known. Nevertheless, let us briefly discuss some details of their proofs. First of all, items (i) and (ii) can be actually proved in several different ways. Indeed, it will be clear that the proof strategies we will adopt for Theorems \ref{thm:scalingmodel3} and \ref{thm:scalingmodel5}, together with the one used in \cite{butt2023queuing} for Theorem \ref{thm:limits3Q}. Concerning item (iii), the aforementioned arguments only guarantee for the desired convergence as $\delta\ge \frac{1}{2}$. To prove the second part of item (iii), notice that, if we define
\begin{equation*}
	\tau=\inf\{t \ge 0: \ Q(t)=0\}
\end{equation*}
then it holds for any $k=1,2,\dots$
\begin{equation*}
	\bP(\sup_{t \in [0,\tau]}Q(t) \ge k\mid Q(0)=1)=\frac{(1-\rho)\rho^{k-1}}{1-\rho^{k}} \sim \frac{1-\rho}{\rho}\rho^{k},
\end{equation*}
see \cite[Proposition 3.1]{asmussen1998extreme}. With this in mind, by usual extreme value theory arguments one can show that for any $x \ge 0$
\begin{equation}\label{eq:Gumbel}
	\lim_{t \to +\infty}\bP\left(\left(-\log(\rho)\sup_{s \in [0,t]}Q(s)-\log\left(\frac{ t}{\mu-\lambda}\right)-\log\left(\frac{1-\rho}{\rho}\right)\right) \le x\right)=e^{-e^{-x}},
\end{equation}
see \cite[Example 1.1]{asmussen1998extreme}.
Once this has been established, we notice that
\begin{equation*}
	\bP\left(\limsup_{n \to +\infty}\sup_{t \ge 0}\mathbf{Q}_n(t)>0\right)=\bP\left(\bigcup_{p \in \N}\bigcap_{n \ge p}\bigcup_{m \ge n}\sup_{t \ge 0}\mathbf{Q}_m(t)>p^{-1}\right).
\end{equation*}
Observe then that, by the monotone convergence theorem,
%, since $\sup_{t \ge 0}Q(mt)=\sup_{t \ge 0}Q(t)$ for any $m \in \N$,
\begin{align}\label{eq:limitrep}
	\begin{split}
	\bP\left(\sup_{t \ge 0}\mathbf{Q}_m(t)>p^{-1}\right)&=\bP\left(\sup_{t \ge 0}Q(mt)>\frac{m^{\delta}}{p}\right)=\lim_{h \to +\infty}\bP\left(\sup_{t \in [0,h]}Q(mt)>\frac{m^{\delta}}{p}\right)\\
	&=\lim_{h \to +\infty}\bP\left(\sup_{t \in [0,mh]}Q(t)>\frac{m^{\delta}}{p}\right).		
	\end{split}
\end{align}
Now assume that $h$ is big enough to have $\frac{mh}{\mu-\lambda}>1$. Then
\begin{align*}
	\bP&\left(\sup_{t \in [0,mh]}Q(t)>\frac{m^{\delta}}{p}\right) \le \bP\left(\sup_{t \in [0,mh]}Q(t)+\frac{1}{\log(\rho)}\log\left(\frac{mh}{\mu-\lambda}\right)>\frac{m^{\delta}}{p}\right)\\
	&=\bP\left(-\log(\rho)\sup_{t \in [0,mh]}Q(t)-\log\left(\frac{mh}{\mu-\lambda}\right)-\log\left(\frac{1-\rho}{\rho}\right)>K_{\rho,p,\delta}(m)\right),
\end{align*}
where
\begin{equation*}
	K_{\rho,p,\delta}(m)=\frac{m^{\delta}|\log(\rho)|}{p}-\log\left(\frac{1-\rho}{\rho}\right)
\end{equation*}
that combined with \eqref{eq:limitrep} and \eqref{eq:Gumbel} implies
\begin{equation*}
	\bP\left(\sup_{t \ge 0}\mathbf{Q}_m(t)>p^{-1}\right) \le 1-\exp\left(\exp\left(-K_{\rho,p,\delta}(m)\right)\right).
\end{equation*}
Recalling that for any $x \ge 0$ it holds $1-e^{-x} \le x$, we get
\begin{equation*}
	\bP\left(\sup_{t \ge 0}\mathbf{Q}_m(t)>p^{-1}\right) \le \exp\left(-K_{\rho,p,\delta}(m)\right),
\end{equation*}
where the right hand side satisfies for fixed $\rho$, $p$ and $\delta$
\begin{equation*}
	\sum_{m=1}^{+\infty}\exp\left(-K_{\rho,p,\delta}(m)\right)<\infty.
\end{equation*}
Hence the statement follows by the Borel-Cantelli Lemma.
\subsection{Preliminary auxiliary results on some Markov chains}\label{subsecMC}
To handle some of the models, we will also need to determine the scaling limits of the Markov chains defined in \eqref{eq:mCren1}, \eqref{eq:dtrw} and \eqref{eq:Mchain2}, with $p_{\lambda,\mu}^{\alpha,\beta}$. Actually, we only need to argue for $\alpha=\beta=1$. Indeed, for any $\lambda,\mu>0$ and $\alpha,\beta \in (0,1]$ we have that if we select
\begin{equation}\label{eq:lambdabar}
	\overline{\lambda}=\frac{\mu p_{\lambda,\mu}^{\alpha,\beta}}{1-p_{\lambda,\mu}^{\alpha,\beta}},
\end{equation}
then $p_{\overline{\lambda},\mu}^{1,1}=p_{\lambda,\mu}^{\alpha,\beta}$. For this reason, we will only state the results of this section in case $\alpha=\beta=1$ and we will omit the superscript. Let us start with $(r_k)_{k \ge 0}$ as in \eqref{eq:dtrw}. We consider the following scaled processes
\begin{equation*}
	\overline{\mathbf{r}}_n(t):=\frac{1}{\sqrt{n}}(r_{\lfloor nt \rfloor}-n(1-2p_{\lambda,\mu})t), \qquad \mathbf{r}_n(t):=\frac{r_{\lfloor nt \rfloor}}{n}.
\end{equation*}
Notice in particular that if $\delta=1$ then
\begin{equation*}
	\overline{\mathbf{r}}_n=\sqrt{n}(\mathbf{r}_n-(1-2p_{\lambda,\mu})\iota).
\end{equation*}
Hence, with a direct application of Donsker's Theorem \ref{thm:DTa} and observing that $\sqrt{n} \to +\infty$ we get the following result.
\begin{lem}\label{lem:rn}
	Let $\alpha,\beta \in (0,1]$ and $\lambda,\mu>0$. Then
	\begin{equation*}
		\overline{\mathbf{r}}_n \overset{\sf U}{\Rightarrow} 	2\sqrt{p_{\lambda,\mu}(1-p_{\lambda,\mu})}\mathbf{W}, \qquad \mathbf{r}_n \overset{\sf U}{\Rightarrow} (2p_{\lambda,\mu}-1)\iota,
	\end{equation*}
	where $\mathbf{W}$ is a Brownian motion.
\end{lem}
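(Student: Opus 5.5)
The plan is a direct application of Donsker's Theorem \ref{thm:DTa} to the i.i.d. increments $(\chi_k)_{k \ge 1}$, followed by a Slutsky-type argument for the fluid scaling. The law of $r$ depends on $(\lambda,\mu,\alpha,\beta)$ only through $p=p_{\lambda,\mu}^{\alpha,\beta}$. So by the choice \eqref{eq:lambdabar} it is enough to treat $\alpha=\beta=1$, as announced at the start of this subsection. First compute the moments of $\chi_1$ from \eqref{eq:MCc}:
\begin{equation*}
\E[\chi_1]=p-(1-p)=2p-1, \qquad {\rm Var}(\chi_1)=1-(2p-1)^2=4p(1-p).
\end{equation*}
The correct centering is therefore $n(2p-1)t$. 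I will read the centering in the definition of $\overline{\mathbf{r}}_n$ with this sign, since the claimed limits only match this drift. With that reading, $\overline{\mathbf{r}}_n=\sqrt{n}(\mathbf{r}_n-(2p-1)\iota)$.

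Step one is the diffusive scaling. Set $\xi_k=\chi_k-(2p-1)$, which are i.i.d., centered, bounded, with variance $4p(1-p)$. Donsker's theorem gives
\begin{equation*}
\frac{1}{\sqrt{n}}\sum_{k=1}^{\lfloor nt\rfloor}\xi_k \Rightarrow 2\sqrt{p(1-p)}\,\mathbf{W}.
\end{equation*}
This holds in the ${\sf J}_1$ topology on $\mathbb{D}$, and since the limit is continuous the convergence upgrades to ${\sf U}$, the locally uniform topology. The difference between this process and $\overline{\mathbf{r}}_n(t)$ is $(2p-1)(\lfloor nt\rfloor-nt)/\sqrt{n}$. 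Its absolute value is bounded by $|2p-1|/\sqrt{n}$ uniformly in $t$, so it is deterministically negligible. The first convergence then follows from the convergence-together theorem.

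Step two is the fluid scaling. Write $\mathbf{r}_n=(2p-1)\iota+n^{-1/2}\,\overline{\mathbf{r}}_n$. By step one, $\overline{\mathbf{r}}_n$ converges in distribution, hence $\sup_{t\le T}|\overline{\mathbf{r}}_n(t)|$ is tight for every $T>0$. Multiplying by $n^{-1/2}\to 0$, the second term converges to $0$ in probability locally uniformly. Slutsky's theorem, or the continuous mapping theorem applied to $(x,c)\mapsto (2p-1)\iota+cx$ with $c_n=n^{-1/2}$, then gives $\mathbf{r}_n\overset{\sf U}{\Rightarrow}(2p-1)\iota$. Alternatively, the functional strong law of large numbers yields this directly, even almost surely.

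I expect no real obstacle here. The only delicate points are the sign of the drift in the centering and the passage from ${\sf J}_1$ to ${\sf U}$ convergence, which is standard because both limits are continuous.
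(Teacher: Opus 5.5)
Your proof is correct and follows the same route as the paper: Donsker's theorem (Theorem \ref{thm:DTa}) for the centered walk, then the fluid limit from $\mathbf{r}_n=(2p_{\lambda,\mu}-1)\iota+n^{-1/2}\overline{\mathbf{r}}_n$ using $\sqrt{n}\to+\infty$, with ${\sf J}_1$ upgraded to ${\sf U}$ because the limits are continuous. You are also right that the centering in $\overline{\mathbf{r}}_n$ must be $n(2p_{\lambda,\mu}-1)t$, since $\E[\chi_1]=2p_{\lambda,\mu}-1$; the printed $n(1-2p_{\lambda,\mu})t$ is a sign typo in the paper.
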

Concerning the Markov chain $(\overline{q}_k)_{k \ge 0}$ as in \eqref{eq:Mchain2}, we consider the scaling
\begin{equation}\label{eq:scalingoq}
	\mathbf{\overline{q}}_n(t):=\frac{\overline{q}_{\lfloor nt\rfloor}}{n^\delta}=n^{1-\delta}\Phi(\mathbf{r}_n).
\end{equation}
The following lemma is a direct consequence of Lemma \ref{lem:rn} and Theorem \ref{thm:classscal}.
\begin{lem}\label{lem:ovqk}
	Let $(\overline{q}_k)_{k \ge 0}$ be a Markov chain defined as in \eqref{eq:Mchain2} and $\overline{q}_n$ be as in \eqref{eq:scalingoq}. Let also $\mathbf{W}$ be a Brownian motion. The following scaling limits hold true:
	\begin{itemize}
		\item[(i)] If $p_{\lambda,\mu}>\frac{1}{2}$ then for $\delta=1$ it holds $\overline{\mathbf{q}}_n \overset{\sf U}{\Rightarrow} (2p_{\lambda,\mu}-1)\iota$;
		\item[(ii)] If $p_{\lambda,\mu}=\frac{1}{2}$ then for $\delta=\frac{1}{2}$ it holds $\overline{\mathbf{q}}_n\overset{\sf U}{\Rightarrow}\Phi(\mathbf{W})$;
		\item[(iii)] If $p_{\lambda,\mu}<\frac{1}{2}$ then for any $\delta>0$ it holds
		\begin{equation*}
			\lim_{n \to +\infty}\sup_{t>0}\mathbf{\overline{q}}_n(t)=0.
		\end{equation*}
	\end{itemize}
\end{lem}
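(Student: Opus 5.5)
The plan is to reduce Lemma \ref{lem:ovqk} to Theorem \ref{thm:classscal} by embedding the chain $\overline{q}$ into a continuous-time $M/M/1$ queue.

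\textbf{Reduction.} As observed before Lemma \ref{lem:rn}, we may assume $\alpha=\beta=1$, replacing $\lambda$ with $\overline{\lambda}$ from \eqref{eq:lambdabar}. Then $p_{\lambda,\mu}>\frac12$, $=\frac12$, $<\frac12$ correspond exactly to $\rho=\lambda/\mu>1$, $=1$, $<1$.

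\textbf{Items (i) and (ii).} I will use the identity \eqref{eq:scalingoq}, $\overline{\mathbf{q}}_n=n^{1-\delta}\Phi(\mathbf{r}_n)$, together with the continuity of $\Phi$ in the uniform topology (Appendix \ref{sec:SRM}). For (i), take $\delta=1$. Then $\overline{\mathbf{q}}_n=\Phi(\mathbf{r}_n)$, and Lemma \ref{lem:rn} with the continuous mapping theorem gives convergence to $\Phi((2p-1)\iota)$. Since $(2p-1)\iota$ is nonnegative and nondecreasing, this equals $(2p-1)\iota$.

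For (ii), take $p=\frac12$ and $\delta=\frac12$. Here $\sqrt n\,\mathbf{r}_n=\overline{\mathbf{r}}_n$, because the centering term vanishes. Lemma \ref{lem:rn} gives $\overline{\mathbf{r}}_n\Rightarrow 2\sqrt{1/4}\,\mathbf{W}=\mathbf{W}$. Positive homogeneity of $\Phi$ then yields $\overline{\mathbf{q}}_n=\Phi(\overline{\mathbf{r}}_n)\overset{\sf U}{\Rightarrow}\Phi(\mathbf{W})$. One technical point is that $\mathbf{r}_n$ is a step function, which is fine because Donsker holds in the ${\sf U}$ topology for the piecewise constant version, as used in Lemma \ref{lem:rn}.

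\textbf{Item (iii).} This is the main obstacle: convergence in distribution for every $\delta>0$ is not enough, and I want $\sup_{t}$ tending to $0$ almost surely. I will use the embedding $Q(t)=\overline{q}_{N(t)}$, with $N$ an independent Poisson process of rate $\lambda+\mu$, from Section \ref{sec:MM1ref}.

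Since $N$ is a.s. surjective onto $\N$, the two suprema coincide:
\[
\sup_{t\ge0}\overline{q}_{\lfloor nt\rfloor}=\sup_k \overline{q}_k=\sup_{t\ge0}Q(t).
\]
Hence $\sup_t\overline{\mathbf{q}}_n(t)=n^{-\delta}\sup_{s\ge 0}Q(s)=\sup_t\mathbf{Q}_n(t)$ for the $M/M/1$ rescaling. Theorem \ref{thm:classscal}(iii) then gives the almost sure limit $0$.

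Alternatively, and more robustly, I would note that for $\rho<1$ the chain $\overline q$ is positive recurrent. This makes $\sup_k \overline q_k$ equal to $+\infty$ a.s. in the infinite horizon, so the stated claim must be interpreted as in Theorem \ref{thm:classscal}(iii), via the Borel--Cantelli argument of Section \ref{sec:MM1proof}. Concretely, I would repeat that argument with horizon $[0,nh]$. The key input is the bound
\[
\bP\Bigl(\sup_{k\le nh}\overline q_k>n^\delta/p\Bigr)\le \bP\Bigl(\sup_{s\le T}Q(s)>n^\delta/p\Bigr)+\bP\bigl(N(T)<nh\bigr),
\]
with $T=2nh/(\lambda+\mu)$. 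The first term is controlled through the Gumbel limit \eqref{eq:Gumbel}, exactly as in Section \ref{sec:MM1proof}. The second term is exponentially small by a Chernoff bound, so it is summable in $n$.

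Borel--Cantelli then concludes, matching the reasoning the paper already accepts for Theorem \ref{thm:classscal}(iii).
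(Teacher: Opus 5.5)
Items (i) and (ii) are correct and follow the paper's argument exactly: Lemma \ref{lem:rn}, continuity of $\Phi$, and positive homogeneity of $\Phi$.

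The gap is in item (iii). Your first argument is precisely the paper's proof, and the identity $\sup_{t>0}\overline{\mathbf{q}}_n(t)=n^{-\delta}\sup_{k}\overline{q}_k=\sup_{t\ge 0}\mathbf{Q}_n(t)$ is right. But, as you yourself note, $\overline{q}$ is an irreducible recurrent chain on $\{0,1,2,\dots\}$. It therefore visits every state, and both sides equal $+\infty$ almost surely for every $n$. So item (iii) as literally written is false, and so is Theorem \ref{thm:classscal}(iii), to which it is reduced. The first argument proves nothing and should be dropped, not offered alongside the second. What can actually be proved is the locally uniform version: $\sup_{t\in[0,h]}\overline{\mathbf{q}}_n(t)\to 0$ almost surely for every $h>0$.

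For that version, your comparison bound with $T=2nh/(\lambda+\mu)$ and the Chernoff estimate are fine. The decisive step fails, however: you control the first term ``through the Gumbel limit, exactly as in Section \ref{sec:MM1proof}'', and this does not work, for two reasons.
\begin{itemize}
\item The limit \eqref{eq:Gumbel} is convergence of distribution functions at a fixed level, with no rate. It gives no bound at levels $x_n\to\infty$ along horizons $T_n\to\infty$ that is summable in $n$.
\item The computation in Section \ref{sec:MM1proof} cannot be copied. It adds $\log(mh/(\mu-\lambda))/\log\rho<0$ to the supremum, which shrinks the event, so the displayed inequality runs the wrong way. This is consistent with its conclusion being the false infinite-horizon claim.
\end{itemize}

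What you need instead is a non-asymptotic tail bound. Write $\rho=p_{\lambda,\mu}/(1-p_{\lambda,\mu})<1$.
\begin{itemize}
\item Lindley's representation gives $\overline q_k=\max_{j\le k}(r_k-r_j)\overset{d}{=}\max_{j\le k}r_j$.
\item Since $(\rho^{-r_j})_{j\ge 0}$ is a nonnegative martingale started at $1$, Doob's maximal inequality yields $\bP(\overline q_k\ge x)\le \rho^{x}$ uniformly in $k$.
\item A union bound then gives $\bP(\max_{k\le nh}\overline q_k\ge \varepsilon n^\delta)\le (nh+1)\rho^{\varepsilon n^\delta}$, which is summable in $n$.
\item Borel--Cantelli over rational $\varepsilon>0$ and integer $h$ concludes.
\end{itemize}
No Poisson embedding is needed. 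Equivalently, you can union-bound over regeneration cycles, using the exact cycle-maximum tail $\frac{(1-\rho)\rho^{k-1}}{1-\rho^{k}}$ quoted in Section \ref{sec:MM1proof}.
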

\begin{proof}
	Items (i) and (ii) follow by Lemma \ref{lem:rn} and the continuity of the reflection map (Theorem \ref{thm:continuity}), once we recall that for $p_{\lambda,\mu}=\frac{1}{2}$ it holds
	\begin{equation*}
		\overline{\mathbf{q}}_n=\Phi(\overline{\mathbf{r}}_n).
	\end{equation*}
	To prove item (iii), notice that if $N$ is a Poisson process with rate $\lambda+\mu$, we have that $Q(t)=\overline{q}_{N(t)}$ is the queue length process of a M/M/1 queue with intensity traffic $\overline{\rho}=\frac{\overline{\lambda}}{\mu}<1$. Hence we get
	\begin{equation*}
		\sup_{t \ge 0}\overline{\mathbf{q}}_n(t)=n^{-\delta}\sup_{k \ge 0}\overline{q}_k=n^{-\delta}\sup_{t \ge 0}Q(t)=\sup_{t \ge 0}\mathbf{Q}_n(t) \to 0.
	\end{equation*} 
\end{proof}
Before proceeding, let us underline that $p_{\lambda,\mu} \le \frac{1}{2}$ if and only if $\rho=\frac{\lambda}{\mu} \le 1$, with equality occurring if and only if $\rho=1$. Now let us consider the Markov chain $(q_k)_{k \ge 0}$ defined as in \eqref{eq:mCren1}. We consider the following scaling
\begin{equation}\label{eq:scalingqn}
	\mathbf{q}_n(t)=\frac{q_{\lfloor nt \rfloor}}{n^\delta}.
\end{equation}
We will now derive the scaling limits for $\mathbf{q}_n$ from Theorem \ref{thm:classscal}.
\begin{lem}\label{lem:scalingMC2}
	Let $(q_k)_{k \ge 0}$ be a Markov chain as in \eqref{eq:mCren1}, $\mathbf{q}_n$ as in \eqref{eq:scalingqn} and $\mathbf{W}$ be a standard Brownian motion. Then
	\begin{itemize}
		\item[(i)] If $p_{\lambda,\mu}>\frac{1}{2}$ then for $\delta=1$ it holds $\displaystyle \mathbf{q}_n \overset{\sf U}{\Rightarrow}(2p_{\lambda,\mu}-1)\iota$
		\item[(ii)] If $p_{\lambda,\mu}=\frac{1}{2}$ then for $\delta=1/2$ it holds $\displaystyle \mathbf{q}_n \overset{\sf U}{\Rightarrow}\Phi(\mathbf{W})$
		\item[(iii)] If $p_{\lambda,\mu}<\frac{1}{2}$ then for any $\delta>0$ it holds 
		\begin{equation*}
		\lim_{n \to +\infty}\sup_{t \ge 0}\mathbf{q}_n(t)=0.
		\end{equation*}
	\end{itemize}
\end{lem}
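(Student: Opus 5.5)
We derive Lemma \ref{lem:scalingMC2} from Lemma \ref{lem:ovqk} by coupling the chains $q$ and $\overline{q}$. The only difference between them is at $0$. From $0$, $q$ moves to $1$ with probability one. From $0$, $\overline{q}$ moves to $1$ with probability $p_{\lambda,\mu}$ and otherwise stays at $0$. Hence $q$ is obtained from $\overline{q}$ by deleting the \emph{lazy} steps at $0$. Concretely, let $\overline{q}$ be as in \eqref{eq:Mchain2} and define the increasing sequence of indices $\kappa_0=0$, $\kappa_{k}=\min\{j>\kappa_{k-1}:\ \overline{q}_j\neq \overline{q}_{j-1}\mbox{ or } \overline{q}_{j-1}\neq 0\}$. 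By the strong Markov property, $(\overline{q}_{\kappa_k})_{k\ge 0}$ has the law of $q$ in \eqref{eq:mCren1}, so we may set $q_k=\overline{q}_{\kappa_k}$. In particular
\begin{equation*}
\sup_{k\ge 0}q_k=\sup_{j\ge 0}\overline{q}_j .
\end{equation*}
Item (iii) then follows immediately from item (iii) of Lemma \ref{lem:ovqk}, since $\sup_{t\ge 0}\mathbf{q}_n(t)=\sup_{t \ge 0}\overline{\mathbf{q}}_n(t)\to 0$.

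For items (i) and (ii) we control the time change $\kappa_k-k$. This quantity is the total number of lazy steps at $0$ made by $\overline{q}$ before its $k$-th effective move. Each visit of $\overline{q}$ to $0$ produces a geometric number of lazy steps with mean $(1-p_{\lambda,\mu})/p_{\lambda,\mu}$.

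\textbf{Case $p_{\lambda,\mu}>\frac12$.} The chain $\overline{q}$ is transient, so it visits $0$ only finitely many times a.s. Therefore $\kappa_k-k$ is eventually constant, equal to some a.s. finite random variable $K$. Consequently $\sup_{t\le T}|\mathbf{q}_n(t)-\overline{\mathbf{q}}_n(t)|\le 2K/n$ on compacts. Indeed, $q_k=\overline{q}_{k+K_k}$ with $K_k\le K$, and $\overline{q}$ has unit increments. This bound tends to $0$ a.s., and (i) follows from Lemma \ref{lem:ovqk}(i).

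\textbf{Case $p_{\lambda,\mu}=\frac12$.} Here the relevant quantity is the local time at $0$. The number of visits of $\overline{q}$ to $0$ up to time $j$ is of order $\sqrt{j}$. For reflected simple symmetric random walk, $n^{-1/2}\#\{j\le nT:\overline{q}_j=0\}$ converges in law to a multiple of Brownian local time, and in particular it is tight. Hence $\kappa_{\lfloor nT\rfloor}-\lfloor nT\rfloor=O_{\bP}(\sqrt n)$, which gives
\begin{equation*}
\sup_{t\le T}\left|\frac{\kappa_{\lfloor nt\rfloor}}{n}-\frac{\lfloor nt\rfloor}{n}\right|\to 0
\end{equation*}
in probability. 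Writing $\mathbf{q}_n(t)=\overline{\mathbf{q}}_n(\kappa_{\lfloor nt\rfloor}/n)$, we conclude with the continuity of composition at continuous limits together with Lemma \ref{lem:ovqk}(ii). Alternatively one can simply bound the modulus of continuity of $\overline{\mathbf{q}}_n$, which is tight in $C$, over time shifts of size $o_{\bP}(1)$.

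The main obstacle is this critical case. We need an $o(n)$ bound on the number of lazy steps uniformly on compacts, and we need it with control strong enough to pass through the composition. I would first try the elementary route: bound the number of visits to $0$ before time $nT$ by $\sqrt n$ times a tight quantity, then use the tightness of the modulus of continuity of $\overline{\mathbf{q}}_n$ from Donsker's theorem. This avoids local-time limit theorems entirely.
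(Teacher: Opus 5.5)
For (i)--(ii) your argument is correct, and it takes a different route from the paper's.

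\textbf{The paper's route.} It realizes $q$ as the jump chain of a continuous-time $M/M/1$ queue $Q$, with holding times as in \eqref{eq:Y1}. It then writes $\mathbf{q}_n=\mathbf{Q}_n\circ\mathbf{J}_n$, where $\mathbf{J}_n(t)=n^{-1}J_{\lfloor nt\rfloor}$. Next it proves $\mathbf{J}_n\Rightarrow(\lambda+\mu)^{-1}\iota$ for $\rho\ge1$ by pushing fluid limits through the cumulative input, regulator, busy time and departure process (Theorems \ref{thm:DTa}, \ref{thm:contcounting}, \ref{thm:continuity}). Finally it composes with the classical Theorem \ref{thm:classscal} via Theorem \ref{thm:contcomp}.

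\textbf{Your route.} You stay in discrete time and delete the self-loops at $0$ of the lazy walk $\overline{q}=\Phi(r)$. Lemma \ref{lem:ovqk}(i)--(ii) rests only on Donsker's theorem and the continuity of $\Phi$. So you avoid both the quoted $M/M/1$ limit theorem and the ${\sf M_1}$ counting-process machinery. In the supercritical case you even get an almost sure bound that is uniform in $t\ge0$.

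\textbf{The critical case is easier than you fear.} A lazy step of $\overline{q}$ is exactly a new running minimum of $r$ below $0$. Hence $\kappa_k-k=\Psi(r)_{\kappa_k}$, with $\Psi(r)_j=\max_{i\le j}\max\{-r_i,0\}$. For $p_{\lambda,\mu}=\frac12$ the strong law gives $\Psi(r)_j/j\to0$ a.s., hence $\kappa_k/k\to1$ a.s. Since $\kappa_k-k$ is non-decreasing, it follows that $\sup_{t\le T}(\kappa_{\lfloor nt\rfloor}-\lfloor nt\rfloor)/n\to0$ a.s. Only $o(n)$ is needed here, because the limit $\Phi(\mathbf{W})$ is continuous and the time change tends to $\iota$. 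No local time and no $\sqrt n$ rate enter. This also fixes a point your sketch glosses over: the lazy steps must be counted up to the random time $\kappa_{\lfloor nT\rfloor}\ge nT$, not up to $nT$. So, like the paper, you end up needing only a law of large numbers for the time change.

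\textbf{Item (iii).} Here you make the same move as the paper: a surjective time change preserves the global supremum. You simply use $\overline{q}$ in place of $Q$. One caveat applies to both arguments. Read literally with $\sup_{t\ge0}$, item (iii) cannot hold. Neither can Lemma \ref{lem:ovqk}(iii) or Theorem \ref{thm:classscal}(iii), on which your argument and the paper's respectively rely. The reason is that for $p_{\lambda,\mu}<\frac12$ the chain $q$ is irreducible and recurrent. Hence $\sup_k q_k=+\infty$ a.s., and $\sup_{t\ge0}\mathbf{q}_n(t)=+\infty$ for every $n$.

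The meaningful statement uses $\sup_{t\in[0,T]}$. There your identity becomes $\max_{k\le nT}q_k=\max_{j\le \kappa_{\lfloor nT\rfloor}}\overline{q}_j$. Since $\kappa_{\lfloor nT\rfloor}\ge nT$, you additionally need $\kappa_{\lfloor nT\rfloor}=O(n)$. The same identity provides this: $\Psi(r)_j/j\to 1-2p_{\lambda,\mu}$ a.s. yields $\kappa_k/k\to(2p_{\lambda,\mu})^{-1}$ a.s. The paper's reduction needs the analogous bound on $J_{\lfloor nT\rfloor}$.
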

\begin{proof}
	Let $(Y_k)_{k \ge 0}$ be as in \eqref{eq:Y1}. Let also $\widetilde{N}$ be the counting process of the $(Y_k)$, as in \eqref{eq:countingproc}. Then the process $Q(t)=q_{\widetilde{N}(t)}$ is the queue length process of a classical $M/M/1$ with traffic intensity $\rho=\frac{\lambda}{\mu}$. Let us immediately prove item (iii): since $\widetilde{N}:[0,+\infty) \to \N$ is surjective and $\rho<1$, we have
	\begin{equation*}
		\sup_{t \ge 0}\mathbf{q}_n(t)=n^{-\delta}\sup_{k \ge 0}q_k=n^{-\delta}\sup_{t \ge 0}Q(t)=\sup_{t \ge 0}\mathbf{Q}_n(t) \to 0.
	\end{equation*}
	To prove items (i) and (ii), with the notation of Section \ref{subs:ssqs}, let
	\begin{equation*}
		\mathbf{N}_n^{\sf a}(t)=n^{-1}N^{\sf a}(nt), \quad \mathbf{N}_n^{\sf d}(t)=n^{-1}N^{\sf d}(nt), \quad \mathbf{D}_n(t)=n^{-1}D(nt)
	\end{equation*}
	\begin{equation*}
		\mathbf{B}_n(t)=n^{-1}B(nt), \quad \mathbf{I}_n(t)=n^{-1}I(nt), \quad \mathbf{X}_n(t)=n^{-1}X(nt),
	\end{equation*}
	\begin{equation*}
	\mathbf{C}_n(t)=n^{-1}C(nt), \quad \mathbf{CS}_n(t)=n^{-1}CS_{\lfloor nt \rfloor}, \quad  \mathbf{G}_n(t)=n^{-1}G(t)
	\end{equation*}
	Then the following relations hold
	\begin{equation*}
		\mathbf{B}_n=\iota-\mathbf{I}_n, \quad \mathbf{I}_n=\Psi(\mathbf{X}_n), \quad \mathbf{X}_n=\mathbf{C}_n-\iota
	\end{equation*}	
	\begin{equation*}
		\mathbf{C}_n=\mathbf{CS}_n \circ \mathbf{N}_n^{\sf a}, \quad \mathbf{D}_n=\mathbf{N}^{\sf d}\circ \mathbf{B}_n, \quad \mathbf{G}_n=\mathbf{N}^{\sf a}_n+\mathbf{D}_n.
	\end{equation*}
	Once we recall the limit
	\begin{equation*}
		(\mathbf{N}_n^{\sf a}, \mathbf{N}_n^{\sf d}, \mathbf{CS}_n) \overset{\sf M_1}{\Rightarrow} (\lambda \iota, \mu \iota, \mu^{-1}\iota),
	\end{equation*}
	that clearly follows by Theorems \ref{thm:DTa} and \ref{thm:contcounting}, we immediately get
	\begin{equation*}
		\mathbf{C}_n \overset{\sf M_1}{\Rightarrow} \rho \iota, \quad \mathbf{X}_n \overset{\sf M_1}{\Rightarrow} (\rho-1) \iota.
	\end{equation*}
	Notice that we are only arguing for $\rho \ge 1$, hence $\rho-1 \ge 0$ and we have, by continuity of the regulator map as in Theorem \ref{thm:continuity},
	\begin{equation*}
		\mathbf{I}_n \overset{\sf M_1}{\Rightarrow} \mathbf{0}, \quad \mathbf{B}_n \overset{\sf M_1}{\Rightarrow} \iota, \quad \mathbf{D}_n \overset{\sf M_1}{\Rightarrow} \mu\iota, \quad 
		\mathbf{G}_n \overset{\sf M_1}{\Rightarrow} (\lambda+\mu) \iota.
	\end{equation*}
	Now let also
	\begin{equation*}
		\mathbf{J}_n(t)=n^{-1}J_{\lfloor nt \rfloor}
	\end{equation*}
	and use Theorem \ref{thm:contcounting} to guarantee that
	\begin{equation*}
		\mathbf{J}_n \overset{\sf M_1}{\Rightarrow} (\lambda+\mu)^{-1}\iota.
	\end{equation*}
	Now it remains to observe that $q_k=Q(J_k)$ for all $k \ge 0$, hence
	\begin{equation*}
		\mathbf{q}_n(t)=n^{-\delta}Q(J_{\lfloor nt \rfloor})=n^{-\delta}Q(n\mathbf{J}_n(t))=\mathbf{Q}_n \circ \mathbf{J}_n.
	\end{equation*}
	Since the limit of $\mathbf{Q}_n$ is continuous and the one of $\mathbf{J}_n$ is strictly increasing, we can use the continuous mapping theorem together with the fact that the composition is continuous in such a case (Theorem \ref{thm:contcomp}) to finally get the statement from Theorem~\ref{thm:classscal}.
\end{proof}
Now we can proceed with the proofs of the scaling limits.

% the following lemma is an immediate consequence of Donsker's theorem \ref{thm:DTa}.

\subsection{Proof of Theorem \ref{thm:model1scaling}}\label{sec:proof1}
The proof of items (i) and (ii) of Theorem \ref{thm:model1scaling} is an immediate application of the continuous mapping theorem. Indeed, by the $\alpha$-self-similarity of $L_\alpha$, we have that ${}_1\mathbf{Q}_n\overset{d}{=}\mathbf{Q}_n \circ L_\alpha$, where $Q$ is a classical M/M/1 queue and $\mathbf{Q}_n$ is defined as in \eqref{eq:rescaling}. Let $\mathbf{Q}_\infty$ be the limit in distribution of $\mathbf{Q}_n$ (independently of the value of $\rho$), as given in Theorem \ref{thm:classscal}. Then, since $L_\alpha$ is constant with respect to $n$ and independent of $\mathbf{Q}_n$, we clearly have $(\mathbf{Q}_n,L_\alpha) \overset{\sf M_1}{\Rightarrow} (\mathbf{Q}_\infty,L_\alpha)$. In particular, $\mathbf{Q}_\infty$ is continuous and $L_\alpha$ is increasing, thus $(\mathbf{Q}_\infty,L_\alpha)$ is a continuity point of the composition operator (see Theorem \ref{thm:contcomp}). Hence, by the continuous mapping theorem and the fact that the limits are continuous, we get the desired statement for $\rho \ge 1$. To prove item (iii) of Theorem \ref{thm:model1scaling} instead, let $Q$ be the M/M/1 queue such that ${}_1Q(t)=Q(L_\alpha(t))$ and let $\mathbf{Q}_n$ be defined as in \eqref{eq:rescaling}. Then, since $L_\alpha$ is continuous,
\begin{equation*}
\sup_{t \ge 0} {}_1\mathbf{Q}_n(t)=n^{-\delta}\sup_{t \ge 0}{}_1Q(mt)=n^{-\delta}\sup_{t \ge 0}Q(t)=\sup_{t \ge 0}\mathbf{Q}_n(t),
\end{equation*}
hence the statement follows by item (iii) in Theorem \ref{thm:classscal}.
\hfill \qed
\subsection{Proof of Theorem \ref{thm:fastren}}
Let us immediately observe that, with the notation of Sections \ref{Sec:model2} and \ref{subsecMC}, we have
\begin{equation*}
	{}_2\mathbf{Q}_n=\mathbf{q}_n \circ \widetilde{\mathbf{N}}_n, \quad \mbox{ where } \quad \widetilde{\mathbf{N}}_n(t)=\frac{\widetilde{N}(nm_nt)}{n}. 
\end{equation*}
Up to substituting $\lambda$ with $\overline{\lambda}$ in \eqref{eq:lambdabar}, we already know the limits of $\mathbf{q}_n$ thanks to Lemma \ref{lem:scalingMC2}. In particular, this immediately gives item (iii), since $\widetilde{N}:[0,+\infty) \to \N$ is surjective and then
\begin{equation*}
	\lim_{n \to +\infty}\sup_{t \ge 0}{}_2\mathbf{Q}_n(t)=\lim_{n \to +\infty}\sup_{t \ge 0}\mathbf{q}_n(t)=0.
\end{equation*}
Now we focus on items (i) and (ii). To prove them, we only need to identify the limits of $\widetilde{\mathbf{N}}_n$, so that the statement will follow by applying the continuous mapping theorem together with the continuity of the composition map as in Theorem \ref{thm:contcomp}. To do this, let us first define the non-decreasing processes
\begin{equation*}
	\mathbf{Y}_n(t)=\frac{1}{n^\gamma}\sum_{j=1}^{\lfloor nt \rfloor}Y_j
\end{equation*}
of which $\widetilde{\mathbf{N}}_n$ is the scaled counting process. Concerning functional limit theorems for $\mathbb{Y}_n$, we notice that, if $\max\{\alpha,\beta\}<1$, then
\begin{equation}\label{eq:asymptotic}
	\bP(Y_k>t)=E_\alpha(-\lambda t^\alpha)E_\beta(-\lambda t^\beta) \sim \frac{t^{-\alpha-\beta}}{\lambda \mu \Gamma(1-\alpha)\Gamma(1-\beta)},
\end{equation}
that implies that $Y_k$ belongs to the normal domain of attraction of a $(\alpha+\beta)$-stable law. On the other hand, if $\alpha<\beta=1$ we have
\begin{equation*}
	\bP(Y_k>t)=E_\alpha(-\lambda t^\alpha)e^{-\mu t} \sim \frac{t^{-\alpha}}{\lambda\Gamma(1-\alpha)}e^{-\mu t},
\end{equation*}
thus it belongs to the normal domain of attraction of a normal random variable. The latter also holds if $\beta<\alpha=1$ or $\beta=\alpha=1$.

Next, observe that for $\alpha+\beta \not = 1$ we clearly have $nm_n \to +\infty$. Furthermore, if $\alpha+\beta<1$, then $m_n \to +\infty$ while if $\alpha+\beta=1$ we have $m_n \equiv 1$. For $\alpha+\beta=1$, we need to prove the following lemma.
\begin{lem}
	It holds $\lim_{n \to \infty}m_n=+\infty$
\end{lem}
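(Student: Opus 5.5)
Set $c=\frac{2\lambda\mu}{\sin(\pi\alpha)}$, so that $m_n=\frac{n}{c}\E\left[\sin\left(\frac{cY_1}{n}\right)\right]$ when $\alpha+\beta=1$. The natural route is to write the expectation through the tail of $Y_1$: integrate by parts (Fubini), using $\bP(Y_1>y)=E_\alpha(-\lambda y^\alpha)E_\beta(-\mu y^\beta)=:\bar F(y)$ and $\frac{d}{dy}\sin(cy/n)=\frac{c}{n}\cos(cy/n)$. This gives
\begin{equation*}
m_n=\int_0^{+\infty}\cos\left(\frac{cy}{n}\right)\bar F(y)\,dy=n\int_0^{+\infty}\cos(cu)\,\bar F(nu)\,du .
\end{equation*}
The boundary terms vanish because $\sin$ is bounded and $\bar F(y)\to0$. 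The last integral converges only conditionally at infinity, since $\bar F(y)\sim K y^{-1}$ with $K=(\lambda\mu\Gamma(1-\alpha)\Gamma(1-\beta))^{-1}$ by \eqref{eq:asymptotic}. It is still well defined, because $\bar F$ is monotone and the Dirichlet test applies.

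The key step is to show that $n\int_0^\infty\cos(cu)\bar F(nu)\,du$ diverges like $K\log n$. Split the integral at $u=1/n$ and at $u=1$.
\begin{itemize}
\item On $[0,1/n]$ the integrand is bounded by $n\cdot(1/n)$, so this piece is $O(1)$.
\item On $[1,\infty)$, write $n\bar F(nu)=K u^{-1}+r_n(u)$ with $r_n(u)=o(u^{-1})$ uniformly in $u \ge 1$. The main part $K\int_1^\infty \cos(cu)u^{-1}du$ is a finite constant. The remainder needs care because of conditional convergence. I plan to control it by a second integration by parts, so that only the derivative of $\bar F$ enters. Since $\bar F$ is monotone and $y\bar F(y)$ is bounded, $\int_1^\infty \sin(cu)\,d(n\bar F(nu))$ is bounded uniformly in $n$, and this piece is $O(1)$.
\item On $[1/n,1]$, use $\cos(cu)=1+O(u^2)$ and $n\bar F(nu)\le C u^{-1}$ for $nu\ge1$. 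Then the integral equals $\int_{1/n}^1 n\bar F(nu)\,du+O(1)=\int_1^n\bar F(y)\,dy+O(1)$. By the asymptotics this is $K\log n+o(\log n)$.
\end{itemize}
Hence $m_n=K\log n+o(\log n)\to+\infty$.

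The main obstacle is justifying the integration by parts and the uniform bound on the tail piece, where only conditional convergence is available. If the integration by parts is troublesome, the fallback is to avoid it and argue directly on the original expectation. Bound $\frac nc\sin(cY_1/n)$ from below by $Y_1(1-\varepsilon)$ on $\{Y_1\le \eta n\}$, with $\eta$ small so that $\sin x\ge(1-\varepsilon)x$ for $x\le c\eta$, and by $-n/c$ elsewhere. This gives
\begin{equation*}
m_n\ge(1-\varepsilon)\E[Y_1\mathbf 1_{\{Y_1\le\eta n\}}]-\frac nc\bP(Y_1>\eta n).
\end{equation*}
The second term is bounded, since $\bP(Y_1>\eta n)\sim K/(\eta n)$. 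The first term is $\int_0^{\eta n}\bar F(y)\,dy-\eta n\bar F(\eta n)$, which is $\sim K\log n\to\infty$. This lower bound alone suffices, and I would likely present this simpler version.
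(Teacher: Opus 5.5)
Your proof is correct, and it takes a genuinely different route from the paper's.

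\textbf{The paper's route.} The paper argues softly and by contradiction. Along a subsequence realizing $\liminf_n m_n$, it uses the generalized FCLT (Theorem \ref{thm:DTa}) in the form $\mathbf{Y}_n-m_n\iota\overset{\sf J_1}{\Rightarrow}\frac{\sin(\pi\alpha)}{2\lambda\mu}\mathbf{S}$ with $\mathbf{S}\sim\mathbf{S}_1(1,1)$. A finite limit $m_\infty$ would make $\frac{\sin(\pi\alpha)}{2\lambda\mu}\mathbf{S}+m_\infty\iota$ an ${\sf M_1}$-limit of non-decreasing paths. By Proposition \ref{prop:premon} it would then be non-decreasing, which is impossible. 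The case $m_\infty=-\infty$ would instead give $|m_n|^{-1}\mathbf{Y}_n\Rightarrow-\iota$, also a contradiction.

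\textbf{Your route.} You compute directly from the definition of $m_n$ and the tail asymptotics \eqref{eq:asymptotic}, namely $\bar F(y)\sim Ky^{-1}$ with $K=\sin(\pi\alpha)/(\pi\lambda\mu)$ when $\alpha+\beta=1$. Your fallback lower bound is complete and elementary; you may even take $c\eta=\pi/2$ and use $\sin x\ge 2x/\pi$. The integration-by-parts version also works. On $[1,\infty)$, a single integration by parts bounds that piece by $\frac{2}{c}\,n\bar F(n)=O(1)$ directly, so the splitting $n\bar F(nu)=Ku^{-1}+r_n(u)$ is unnecessary. Pairing your lower bound with the trivial upper bound $\frac nc\sin(cy/n)\le\min\{y,n/c\}$ gives $m_n\le\int_0^{n/c}\bar F(y)\,dy$. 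So even the elementary version yields $m_n\sim K\log n$.

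\textbf{What each approach buys.} Yours is quantitative. For $\alpha+\beta=1$, the time scale $nm_n$ in Theorems \ref{thm:fastren} and \ref{thm:scalingmodel3} is of order $Kn\log n$, which the paper's argument does not reveal. It also avoids the fact, used without comment in the paper, that a totally skewed $1$-stable motion with drift is not a subordinator. The paper's route, in turn, needs no computation with the Mittag-Leffler tails. It reuses the FCLT machinery already required for Lemma \ref{prop:tildeN}.
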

\begin{proof}
	It is sufficient to show that $\liminf_{n \to \infty}m_n=+\infty$. To do this, let us consider a non-relabelled $m_n$ converging towards the limit inferior. In general, for $\alpha+\beta=1$, by Theorem \ref{thm:DTa} we have
	\begin{equation}\label{eq:Stable1}
		\mathbf{Y}_n-m_n \iota \overset{\sf J_1}{\Rightarrow}\frac{\sin(\pi \alpha)}{2\mu \lambda}\mathbf{S},
	\end{equation}
	where $\mathbf{S} \sim \mathbf{S}_1(1,1)$. If $\lim_{n \to \infty}m_n=m_\infty \in \R$, then since $\iota$ is continuous, we can use Theorem \ref{thm:contsum} to get $\mathbf{Y}_n \overset{\sf M_1}{\Rightarrow}  \frac{\sin(\pi \alpha)}{2\mu \lambda}\mathbf{S}+m_\infty\iota$. Since $\mathbf{Y}_n$ is a sequence of non-decreasing processes, by Proposition \ref{prop:premon} we have that $\frac{\sin(\pi \alpha)}{2\mu \lambda}\mathbf{S}+m_\infty\iota$ must be non-decreasing as well, which is absurd. Furthermore, if $m_\infty=-\infty$, we get that $|m_n|^{-1}\mathbf{Y}_n\overset{\sf M_1}{\Rightarrow}-\iota$, that is again a contradiction since $|m_n|^{-1}\mathbf{Y}_n$ are non-decreasing and $-\iota$ is non-increasing.
\end{proof}
With this in mind, we can easily identify the limits of $\widetilde{\mathbf{N}}_n$.
\begin{lem}\label{prop:tildeN}
	Let $\alpha,\beta \in (0,1]$ and $\lambda,\mu>0$. Denote by $L_{\alpha+\beta}$ an inverse $(\alpha+\beta)$-stable subordinator if $\alpha+\beta<1$, while $L_{\alpha+\beta}=\iota$ if $\alpha+\beta \ge 1$. Then
	\begin{equation}\label{eq:limittildeN}
		\widetilde{\mathbf{N}}_n\overset{\sf U}{\Rightarrow} L_{\alpha+\beta} \circ \iota_{\mathcal{K}_{\alpha,\beta}^{\lambda,\mu}}.
	\end{equation}
\end{lem}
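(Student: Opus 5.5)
The plan is to derive \eqref{eq:limittildeN} from a functional limit theorem for the partial-sum process $\mathbf{Y}_n$. We then pass to its inverse through the continuity of the first-passage (counting) map, which is Theorem \ref{thm:contcounting}. Since $\widetilde{\mathbf{N}}_n$ is the scaled counting process of $\mathbf{Y}_n$, the key identity is
\begin{equation*}
\widetilde{\mathbf{N}}_n(t)=\frac{1}{n}\max\Big\{k\ge 0:\ \sum_{j=1}^k Y_j\le nm_nt\Big\},
\end{equation*}
and this is, up to an $O(1/n)$ error, the first-passage time of $m_n^{-1}\,n^{-1}\sum_{j\le \lfloor n\cdot\rfloor}Y_j$ above level $t$. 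It is therefore convenient to work with $\mathbf{Y}_n/m_n$, since $n^\gamma=nm_n$ when $\alpha+\beta\neq1$, and with the suitably normalized sum when $\alpha+\beta=1$. We split into three cases according to the value of $\alpha+\beta$.

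\textbf{Case $\alpha+\beta<1$.} By the tail asymptotics \eqref{eq:asymptotic}, $Y_1$ lies in the normal domain of attraction of an $(\alpha+\beta)$-stable law, with $n\bP(Y_1>n^{1/(\alpha+\beta)}x)\to x^{-(\alpha+\beta)}/(\lambda\mu\Gamma(1-\alpha)\Gamma(1-\beta))$. The stable version of Donsker's theorem (Theorem \ref{thm:DTa}) then gives, with no centering since $\alpha+\beta<1$, that $\mathbf{Y}_n\overset{\sf J_1}{\Rightarrow} c\,\sigma_{\alpha+\beta}$. Here $\sigma_{\alpha+\beta}$ is a standard stable subordinator with $\E[e^{-s\sigma(1)}]=e^{-s^{\alpha+\beta}}$, and the constant is $c^{\alpha+\beta}=\Gamma(1-\alpha-\beta)/(\lambda\mu\Gamma(1-\alpha)\Gamma(1-\beta))$. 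This $c$ is exactly $(\mathcal{K}^{\alpha,\beta}_{\lambda,\mu})^{-1}$, and checking this identification is a routine Laplace-exponent computation. Taking inverses, the first passage of $c\sigma$ over $t$ equals $L_{\alpha+\beta}(t/c)=L_{\alpha+\beta}\circ\iota_{\mathcal{K}}(t)$. The limit $c\sigma$ is strictly increasing and unbounded, so Theorem \ref{thm:contcounting} applies and yields ${\sf M_1}$ convergence of $\widetilde{\mathbf{N}}_n$. Because the limit $L_{\alpha+\beta}$ is continuous, ${\sf M_1}$ convergence upgrades to ${\sf U}$ convergence.

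\textbf{Case $\alpha+\beta>1$.} Here $\E[Y_1]<\infty$, because the tail is of order $t^{-\alpha-\beta}$, or exponentially small if one order equals $1$. With $\gamma=1$ and $m_n=1$, the functional strong law of large numbers, which follows from Theorem \ref{thm:DTa} by dividing by $n$, gives $\mathbf{Y}_n\Rightarrow \E[Y_1]\iota$ uniformly on compacts. Inverting via Theorem \ref{thm:contcounting} gives $\widetilde{\mathbf{N}}_n\Rightarrow \iota_{1/\E[Y_1]}$, which is continuous, so the convergence is uniform.

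\textbf{Case $\alpha+\beta=1$.} This is the main obstacle, because of the non-power centering. From \eqref{eq:Stable1} we have $\mathbf{Y}_n-m_n\iota\overset{\sf J_1}{\Rightarrow}\frac{\sin(\pi\alpha)}{2\lambda\mu}\mathbf{S}$, and the preceding Lemma gives $m_n\to+\infty$. We divide by $m_n$: the fluctuation term $m_n^{-1}(\mathbf{Y}_n-m_n\iota)$ converges to $0$ in probability, since a tight sequence multiplied by a vanishing constant goes to zero. Hence $m_n^{-1}\mathbf{Y}_n\Rightarrow\iota$. Here we use Theorem \ref{thm:contsum} to add the continuous deterministic term, and then Proposition \ref{prop:premon} or monotonicity to upgrade to uniform convergence on compacts. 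Since $m_n^{-1}\mathbf{Y}_n(t)=(nm_n)^{-1}\sum_{j\le\lfloor nt\rfloor}Y_j$, its inverse is exactly $\widetilde{\mathbf{N}}_n$. Theorem \ref{thm:contcounting} therefore gives $\widetilde{\mathbf{N}}_n\overset{\sf U}{\Rightarrow}\iota$, matching $\mathcal{K}=1$ and $L_{\alpha+\beta}=\iota$.

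The delicate points are the bookkeeping of normalizations, namely the match between $n^\gamma$ and $nm_n$ and the identification of the stable constant with $\mathcal{K}^{-1}$, and making sure the $\lfloor nt\rfloor$ versus continuous-time discrepancy is negligible. The latter costs at most $1/n$ in the inverse and so disappears in the limit.
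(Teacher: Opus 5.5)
Your proposal is correct and follows the paper's proof essentially step by step. It uses the same three-way split on $\alpha+\beta$ and the (stable) Donsker theorem, Theorem \ref{thm:DTa}, for $\mathbf{Y}_n$: the fluctuations are killed by the factor $n^{1-\frac{1}{\alpha+\beta}}$ or $\sqrt{n}$ when $\alpha+\beta>1$, and by $m_n\to+\infty$ together with \eqref{eq:Stable1} when $\alpha+\beta=1$. Inversion then goes through Theorem \ref{thm:contcounting}, with the upgrade to ${\sf U}$ coming from continuity of the limit. Your explicit check that the stable constant equals $(\mathcal{K}^{\alpha,\beta}_{\lambda,\mu})^{-1}$, and your writing the law-of-large-numbers limit as $\E[Y_1]\iota$ (whose inverse gives $\mathcal{K}^{\alpha,\beta}_{\lambda,\mu}=1/\E[Y_1]$), fill in details the paper leaves implicit or states loosely.
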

\begin{proof}
	In case $\alpha+\beta<1$, the statement is an immediate consequence of Theorems \ref{thm:DTa} and \ref{thm:contcounting}. Let us now assume that $\alpha+\beta>1$ with $\max\{\alpha,\beta\}<1$. In this case, we introduce the sequence of processes
	\begin{equation*}
		\overline{\mathbf{Y}}_n=n^{1-\frac{1}{\alpha+\beta}}\left(\mathbf{Y}_n-\mathcal{K}^{\alpha,\beta}_{\lambda,\mu}\iota\right).	
	\end{equation*} 
	By Theorem \ref{thm:DTa} we know that $\overline{\mathbf{Y}}_n \overset{\sf J_1}{\Rightarrow}C \mathbf{S}_{\alpha+\beta}$ where $\mathbf{S}_{\alpha+\beta} \sim \mathbf{S}_{\alpha+\beta}(1,1)$ and $C$ is a suitable constant depending on $\alpha,\beta,\lambda,\mu$. However, since $\frac{1}{\alpha+\beta}<1$ and then $n^{1-\frac{1}{\alpha+\beta}} \to +\infty$, the previous limit also implies
	\begin{equation}\label{eq:scalinglimYn}
		\mathbf{Y}_n \overset{\sf J_1}{\Rightarrow} \mathcal{K}^{\alpha,\beta}_{\lambda,\mu}\iota
	\end{equation}
	and then \eqref{eq:limittildeN} follows by Theorem \ref{thm:contcounting}. If instead $\max\{\alpha,\beta\}=1$, we can introduce
	\begin{equation*}
		\overline{\mathbf{Y}}_n=n^{\frac{1}{2}}\left(\mathbf{Y}_n-\mathcal{K}^{\alpha,\beta}_{\lambda,\mu}\iota\right)	
	\end{equation*} 
	and then we get, by Theorem \ref{thm:DTa}, that $\overline{\mathbf{Y}}_n \overset{\sf J_1}{\Rightarrow}{\sf Var}[Y_1] \overline{\mathbf{W}}$, where $\overline{\mathbf{W}}$ is a Brownian motion. Even in this case we have \eqref{eq:scalinglimYn} and thus \eqref{eq:limittildeN} follows by Theorem \ref{thm:contcounting}. Finally, if $\alpha+\beta=1$, then we get the statement by Theorem \ref{thm:contcounting} once we use \eqref{eq:Stable1} to deduce that $m_n^{-1}\mathbf{Y}_n \overset{\sf J_1}{\Rightarrow} \iota$.
\end{proof}
Once this is clear, to prove items (i) and (ii) it is sufficient to observe that the limits of $\mathbf{q}_n$ are continuous, while the ones of $\widetilde{\mathbf{N}}_n$ are non-decreasing, hence we can use the continuous mapping theorem together with the continuity of the composition operator as in Theorem \ref{thm:contcomp}.
\hfill \qed
\subsection{Proof of Theorem \ref{thm:scalingmodel3}}
With the notation of Sections \ref{Sec:model3} and \ref{subsecMC}, let us consider the process $R(t)=r_{\widetilde{N}(t)}$ and its scaling
\begin{equation*}
	\mathbf{R}_n(t)=\frac{R(nm_nt)}{n^\delta}=n^{1-\delta}\mathbf{r}_n \circ \widetilde{\mathbf{N}}_n
\end{equation*}
so that, by Lemma \ref{lem:scaling2}, we have
\begin{equation*}
{}_3\mathbf{Q}_n=\Phi(\mathbf{R}_n)=n^{1-\delta}\Phi(\mathbf{r}_n)\circ \widetilde{\mathbf{N}}_n=\overline{\mathbf{q}}_n\circ \widetilde{\mathbf{N}}_n.
\end{equation*}
Then items (i) and (ii) follow by Lemmas \ref{lem:ovqk}, \ref{prop:tildeN} and the continuous mapping theorem, since the limits of $\overline{\mathbf{q}}_n$ are continuous, while the ones of $\widetilde{\mathbf{N}}_n$ are increasing and thus we can use Theorem \ref{thm:contcomp}. Concerning item (iii), since $\mathbf{\widetilde{N}}_n$ is surjective, we have
\begin{equation*}
	\sup_{t \ge 0}{}_3\mathbf{Q}_n(t)=\sup_{t \ge 0}\overline{\mathbf{q}}_n(t) \to 0,
\end{equation*}
where we used item (iii) of Lemma \ref{lem:ovqk}.
\hfill \qed
%theorem from a straightforward combination of  and Theorem \ref{thm:contcomp}.
\subsection{Proof of Theorem \ref{thm:limits3Q}}
Items $(i)$ and $(ii)$ are given in \cite[Theorem 3.3]{butt2023queuing}. Let us prove item $(iii)$. Set $\mathbf{N}^{\sf a}_n(t)=n^{-\delta}N^{\sf a}(n^{\frac{1}{\beta}}t)$ and $\mathbf{N}^{\sf d}_n(t)=n^{-\delta}N^{\sf d}(n^{\frac{1}{\beta}}t)$, so that
\begin{equation*}
	{}_4\mathbf{Q}_n=\Phi(\mathbf{N}^{\sf a}_n-\mathbf{N}^{\sf d}_n)
\end{equation*}
By \cite[Theorem 1]{leonenko2019limit} we have, in general,
\begin{equation}\label{eq:Lalpha}
	n^{\delta-\frac{\alpha}{\beta}}\mathbf{N}^{\sf a}_n \overset{\sf J_1}{\Rightarrow} L_\alpha,
\end{equation}
where $L_\alpha$ is an inverse $\alpha$-stable subordinator. On the other hand, by \cite[Proposition 6]{leonenko2019limit} and recalling that $n^{\delta}\mathbf{N}_n^{\sf d}$ is a fractional Poisson process with order $\beta$ and generalized rate $n\mu$, we have
\begin{equation}\label{eq:Lbeta}
	\left(\mathbf{N}_n^{\sf d}-n^{1-\delta}\mu L_\beta\right)n^{\delta-\frac{1}{2}} \overset{\sf J_1}{\Rightarrow} \mathbf{W} \circ L_\beta,
\end{equation}
where $\mathbf{W}$ is a standard Brownian motion and $L_\beta$ is an independent inverse $\beta$-stable subordinator. Combining \eqref{eq:Lalpha} and \eqref{eq:Lbeta}, we achieve, by Theorem \ref{thm:contsum},
\begin{equation*}
	\mathbf{N}_n^{\sf a}-\mathbf{N}_n^{\sf d}+n^{1-\delta}\mu L_\beta \overset{\sf J_1}{\Rightarrow} \widehat{\mathbf{W}},
\end{equation*}
where $\widehat{\mathbf{W}}=\mathbf{W}\circ L_\beta$ if $\beta>2\alpha$, $\widehat{\mathbf{W}}=\mathbf{W}\circ L_\beta+\lambda L_\alpha$ if $\beta=2\alpha$ and $\widehat{\mathbf{W}}=\lambda L_\alpha$ if $\beta<2\alpha$. Since in general $1-\delta>0$, we can use Theorem \ref{thm:contcent2} to get the statement.
%
%
%
%
%
%Let us distinguish among three cases. If $\beta>2\alpha$, then $\delta=\frac{\alpha}{\beta}>\frac{1}{2}$. As a consequence, $\mathbf{N}^{\sf a}_n \overset{\sf J_1}{\Rightarrow} \lambda L_\alpha$ by \cite[Theorem 1]{leonenko2019limit}, where $L_\alpha$ is an inverse $\alpha$-stable subordinator, and, by \cite[Proposition 6]{leonenko2019limit}
%
%
%
%
%
%If $\delta=\frac{\alpha}{\beta}=\frac{1}{2}$, then the exact same argument leads to
%\begin{equation*}
%	\mathbf{N}_n^{\sf a}-\mathbf{N}_n^{\sf d}+n^{1-\delta}\mu L_\beta \overset{\sf J_1}{\Rightarrow} \lambda L_\alpha-\mathbf{W} \circ L_\beta.
%\end{equation*}
%Next, if $\delta=\frac{1}{2}>\frac{\alpha}{\beta}$, then \eqref{eq:Lbeta} still holds true, while, since $n^{\frac{\alpha}{\beta}-\delta}\mathbf{N}_n^{\sf a}$
%
%
%
%
\hfill \qed
\subsection{Proof of Theorem \ref{thm:scalingmodel5}}\label{sec:proof5}
	First of all, let us stress that despite several (but not all) convergences can be improved from the ${\sf M_1}$ topology to the ${\sf J_1}$ one, we will always consider the ${\sf M_1}$ convergence for the ease of the reader.
	
	With the notation of Sections \ref{subs:ssqs} and \ref{sec:Mod5}, let 
	\begin{equation*}
		\mathbf{N}^{\sf a}_n(t)=\frac{N^{\sf a}(n^\gamma t)}{n}, \quad \mathbf{D}_n(t)=\frac{D(n^\gamma t)}{n}, \quad \mathbf{N}^{\sf d}_n(t)=\frac{N^{\sf d}(n^{\frac{1}{\beta}} t)}{n} 
	\end{equation*}
	\begin{equation*}
		\mathbf{I}_n(t)=\frac{I(n^\gamma t)}{n^{\frac{1}{\beta}}}, \quad \mathbf{X}_n(t)=\frac{X(n^{\gamma }t)}{n^{\frac{1}{\beta}}}, \quad \mathbf{C}_n(t)=\frac{C(n^\gamma t)}{n^{\frac{1}{\beta}}}, \quad \mathbf{CS}_n(t)=n^{-\frac{1}{\beta}}CS_{\lfloor nt \rfloor} 
	\end{equation*}
	so that
	\begin{equation*}
		{}_5\mathbf{Q}_n=\mathbf{N}^{\sf a}_n-\mathbf{D}_n, \quad \mathbf{D}_n=\mathbf{N}^{\sf d}_n \circ \mathbf{B}_n, \quad \mathbf{B}_n=n^{\gamma-\frac{1}{\beta}}\iota-\mathbf{I}_n 
	\end{equation*}
	\begin{equation*}
		\mathbf{I}_n=\Psi(\mathbf{X}_n), \quad \mathbf{X}_n=\mathbf{C}_n-n^{\gamma-\frac{1}{\beta}}\iota, \quad \mathbf{C}_n=\mathbf{CS}_n \circ \mathbf{N}_n^{\sf a}.
	\end{equation*}
	As a consequence of Theorem \ref{thm:DTa}, together with Theorems \ref{thm:contcounting} and \ref{thm:prodconv}, we have $(\mathbf{N}^{\sf d}_n,\mathbf{ CS}_n) \overset{\sf M_1}{\Rightarrow} \left(L_\beta^{\sf d}\circ \iota_{\mu^{\frac{1}{\beta}}},\mu^{-\frac{1}{\beta}}\sigma_\beta^{\sf d}\right)$ for a $\beta$-stable subordinator $\sigma_\beta^{\sf d}$ with inverse $L_\beta^{\sf d}$. Now let us distinguish among two cases. If $\alpha \ge \beta$, then we can use again Theorems \ref{thm:contcounting} and \ref{thm:prodconv} to get 
	\begin{equation*}
		(\mathbf{N}^{\sf a}_n,\mathbf{N}^{\sf d}_n,\mathbf{ CS}_n) \overset{\sf M_1}{\Rightarrow} \left(L_\beta^{\sf a}\circ \iota_{\lambda^{\frac{1}{\alpha}}},L_\beta^{\sf d}\circ \iota_{\mu^{\frac{1}{\beta}}},\mu^{-\frac{1}{\beta}}\sigma_\beta^{\sf d}\right)
	\end{equation*}
	where $L_\alpha^{\sf a}$ is the inverse of an $\alpha$-stable subordinator $\sigma_\alpha^{\sf a}$ independent of $\sigma_\beta^{\sf d}$. Now, to obtain the limit of $\mathbf{C}_n$, we need to show that the joint limit of $\mathbf{CS}_n$ and $\mathbf{N}_n^{\sf a}$ is a ${\sf M_1}$-continuity point of the composition operator, via Theorem \ref{thm:contcomp}. Let us first notice that $L_{\alpha}^{\sf a}\circ \iota_{\lambda^{\frac{1}{\alpha}}}$ is continuous hence item $(ii)$ of Theorem \ref{thm:contcomp} is trivially verified. To verify item $(i)$ of the same theorem, notice that $L_\alpha^{\sf a} \circ \iota_{\lambda^{\frac{1}{\alpha}}}$ is not strictly increasing in $t$ if and only if $L_\alpha^{\sf a}(\lambda^{\frac{1}{\alpha}}t) \in {\sf Disc}(\lambda^{-\frac{1}{\alpha}}\sigma_\alpha^{\sf a})$. Since, however, $\lambda^{-\frac{1}{\alpha}}\sigma_\alpha^{\sf a}$ and $\mu^{-\frac{1}{\alpha}}\sigma_\alpha^{\sf d}$ are independent, then by Proposition \ref{prop:noshareddisc}, the cannot share discontinuities, hence if $L_\alpha^{\sf a} \circ \iota_{\lambda^{\frac{1}{\alpha}}}(t) \in {\sf Disc}(\mu^{-\frac{1}{\beta}}\sigma_\beta^{\sf d})$, then it is strictly increasing in $t$ with probability $1$. Thus, also item $(i)$ of Theorem \ref{thm:contcomp} is verified and we can use it to get $\mathbf{C}_n \overset{\sf M_1}{\Rightarrow} \mu^{-\frac{1}{\beta}}\sigma_\beta^{\sf d} \circ L_\alpha^{\sf a} \circ \iota_{\lambda^{\frac{1}{\alpha}}}$. If $\alpha>\beta$, then $\gamma-\frac{1}{\beta}<0$ and then, by using Theorems \ref{thm:contsum},\ref{thm:continuity} and \ref{thm:prodconv}, together with the fact that $\mu^{-\frac{1}{\beta}}\sigma_\beta^{\sf d} \circ L_\alpha^{\sf a} \circ \iota_{\lambda^{\frac{1}{\alpha}}}$ is non-negative, we get, for $\alpha>\beta$,
	\begin{equation*}
		(\mathbf{N}^{\sf a}_n,\mathbf{N}^{\sf d}_n,\mathbf{B}_n) \overset{\sf M_1}{\Rightarrow} \left(L_\alpha^{\sf a}\circ \iota_{\lambda^{\frac{1}{\alpha}}},L_\beta^{\sf d}\circ \iota_{\mu^{\frac{1}{\beta}}},\mathbf{0}\right)
	\end{equation*}
	that, by a further application of Theorems \ref{thm:contcomp} and \ref{thm:contsum}, leads to the statement in item (i). If instead $\alpha=\beta$, then $\gamma=\frac{1}{\beta}$ and thus we get
	\begin{equation*}
		(\mathbf{N}^{\sf a}_n,\mathbf{N}^{\sf d}_n,\mathbf{B}_n) \overset{\sf M_1}{\Rightarrow}\left(L_\alpha^{\sf a}\circ \iota_{\lambda^{\frac{1}{\alpha}}},L_\alpha^{\sf d}\circ \iota_{\mu^{\frac{1}{\alpha}}},\iota-\Psi(\mu^{-\frac{1}{\alpha}}\sigma_\alpha^{\sf d}\circ (L_\alpha^{\sf a} \circ \iota_{\lambda^{\frac{1}{\alpha}}}-\iota))\right).
	\end{equation*}
	Notice that the third component on the right-hand side is the ${\sf M_1}$-limit of a sequence $\mathbf{B}_n$ of non-decreasing processes, hence by Proposition \ref{prop:premon} it is also non-decreasing. Hence, we can again use Theorems \ref{thm:contcomp} and \ref{thm:contsum} to get item (ii).
	
	If $\alpha<\beta$, then $\gamma=\frac{1}{\beta}$ and thus we have
	\begin{equation*}
		(\mathbf{N}^{\sf a}_n,\mathbf{N}^{\sf d}_n,\mathbf{ CS}_n) \overset{\sf M_1}{\Rightarrow} \left(\mathbf{0},L_\beta^{\sf d}\circ \iota_{\mu^{\frac{1}{\beta}}},\mu^{-\frac{1}{\beta}}\sigma_\beta^{\sf d}\right).
	\end{equation*}
	By Theorem \ref{thm:contcomp} we have $\mathbf{C}_n \overset{\sf M_1}{\Rightarrow} \mathbf{0}$ which in turn implies by $\mathbf{X}_n \overset{\sf M_1}{\Rightarrow} -\iota$. By Theorem \ref{thm:continuity} we then get $\mathbf{I}_n \overset{\sf M_1}{\Rightarrow} \iota$ and thus, still by Theorem \ref{thm:prodconv}
	\begin{equation*}
		(\mathbf{N}^{\sf a}_n,\mathbf{N}^{\sf d}_n,\mathbf{B}_n) \overset{\sf M_1}{\Rightarrow} \left(\mathbf{0},L_\beta^{\sf d}\circ \iota_{\mu^{\frac{1}{\beta}}},\mathbf{0}\right).
	\end{equation*}
	Finally, by Theorems \ref{thm:contcomp} and \ref{thm:contsum} we get Item (iii).
\hfill \qed

\appendix
\section{Preliminary materials}\label{app:A}
In this section, we give some preliminary definitions and properties concerning some of the tools we will use in the main results. 
\subsection{The space of c\'adl\'ag functions}
Let us recall that a function $f:[0,+\infty) \to \R^k$ is said to be \textbf{c\'adl\'ag} if it is right continuous and for all $t >0$ it holds
\begin{equation*}
	\lim_{s \to t-}f(s)=f(t-)\in \R^k.
\end{equation*}
We denote by $\mathbb{D}^k$ the space of c\'adl\'ag functions $f:[0,+\infty) \to \R^k$ and we omit the superscript $k$ if $k=1$. Furthermore, for $T>0$, we denote by $\mathbb{D}^k_T$ the space of c\'adl\'ag functions $f:[0,T] \to \R^k$. Together with the usual uniform topology ${\sf U}$ on $\mathbb{D}_T^k$, i.e., the one induced by the metric
\begin{equation*}
	d^{\sf U}_T(f_1,f_2)=\sup_{t \in [0,T]}|f_1(t)-f_2(t)|
\end{equation*} 
one can consider several metrics (and then topologies) that take into account the possible discontinuities of the involved functions. To do this, let us denote by $\Lambda_T$ the set of strictly increasing homeomorphisms of $[0,T]$ into itself. Clearly the identity function $\iota(t)=t$ belongs to $\Lambda_T$. The \textbf{Skorokhod ${\sf J_1}$ metric} on $\mathbb{D}_T^k$ can be then introduced as follows:
\begin{equation*}
	d^{\sf J_1}_T(f_1,f_2)=\inf_{g \in \Lambda}\max\{\sup_{t \in [0,T]}|f_1(g(t))-f_2(t)|, \sup_{t \in [0,T]}|g(t)-\iota(t)|\}.
\end{equation*} 
We will also make use of another topology. To describe it, let us introduce the \textbf{segments}
\begin{equation*}
	[x_1,x_2]:=\{tx_1+(1-t)x_2, \ t \in [0,1]\}, \ x_1,x_2 \in \R^k
\end{equation*}
and then the \textbf{thin graph} of a c\'adl\'ag function $f$ as
\begin{equation*}
	\Gamma_f:=\{(t,z) \in [0,T]\times \R^k: \ z \in [f(t-),f(t)]\}.
\end{equation*}
In practice, $\Gamma_f$ coincides with the graph of $f$ with the addition of segments connecting the discontinuity points. Now we need to understand how we can parametrize the thin graph $\Gamma_f$. If $f$ is continuous, it is clear that the couple $t \in [0,T] \mapsto (t,f(t)) \in [0,T]\times \R^k$ is a parametrization of $\Gamma_f$ that is coherent with the order in $[0,T]$. However, if $f$ is discontinuous, then we cannot \textit{transfer} directly the order of $[0,T]$ into $\Gamma_f$, due to the presence of the connecting segments. It is worth noticing that for a fixed value of $t$ we can introduce an order on $[f(t-),f(t)]$ by simply comparing the distance from one of the extrema. If we combine this order with the one on $[0,T]$ in a \textit{lexicographic-type way}, we get the following total order relation on $\Gamma_x$:
\begin{equation*}
	(t_1,z_1) <_{\Gamma_x} (t_2,z_2) \Leftrightarrow t_1<t_2 \mbox{ or } (t_1=t_2 \mbox{ and } |f(t_1-)-z_1|<|f(t_1-)-z_2|).
\end{equation*}
With this lexicographic order in mind, we can define a \textbf{strong parametrization} of $f \in \mathbb{D}$ as a non-decreasing continuous function $(u,r):[0,1] \mapsto \Gamma_f$. The set of all strong parametrizations of $f$ will be denoted as $\Pi(f)$.

Once we have a complete description of the thin graph of a c\'adl\'ag function in terms of strong parametrizations, we can use the distance of the parametrized graph to characterise a different topology on $\mathbb{D}^k_T$. The \textbf{Skorokhod ${\sf M_1}$ metric} on $\mathbb{D}^k_T$ is defined as
\begin{equation*}
	d^{\sf M_1}_T(f_1,f_2)=\inf_{\substack{(u_1,r_1) \in \Pi(f_1)\\ (u_2,r_2) \in \Pi(f_2)}}\max\{\sup_{t \in [0,1]}|u_1(t)-u_2(t)|, \sup_{t \in [0,1]}|r_1(t)-r_2(t)|\}.
\end{equation*}  
It is worth noticing that (see \cite[Theorem 12.3.2]{whitt2002stochastic})
\begin{equation*}
	d^{\sf M_1}_T(f_1,f_2)\le d^{\sf J_1}_T(f_1,f_2) \le d^{\sf U}_T(f_1,f_2).
\end{equation*}
Furthermore, recall that despite $\mathbb{D}^k_T$ can be represented as the Cartesian product of $k$ copies of $\mathbb{D}_T$, the ${\sf M_1}$ metric on $\mathbb{D}^k_T$ does not generate the product topology of $\mathbb{D}_T$. A sufficient (but not necessary) condition that guarantees ${\sf M_1}$ convergence in $\mathbb{D}^k_T$ provided that the components are ${\sf M_1}$ convergent in $\mathbb{D}_T$ is the absence of common discontinuities among them (see \cite[Theorem 12.6.1]{whitt2002stochastic}).

The very same topologies can be introduced in $\mathbb{D}^k$, by saying that a sequence $f_n$ converges towards $f$ in any of the topologies ${\sf U},{\sf J_1}$ or ${\sf M_1}$ if the restriction of $f_n$ on $[0,T]$ converge in the same topology to $f$ for any $T \not \in {\sf Disc}(f)$, where ${\sf Disc}(f)$ is the set of discontinuity points of $f$.
%there exists a sequence $T_n \to \infty$ such that all the restrictions on $[0,T_n]$ converge in the same topology in $\mathbb{D}^k_T$. 
%One can also actually extend the metric, by setting for $f_1,f_2 \in \mathbb{D}^k$
%\begin{equation*}
%	d^{{\sf T}}(f_1,f_2)=\int_0^{+\infty}e^{-t}\min\{d^j_t(f_1,f_2), \ 1\}\, dt \quad \mbox{ where } {\sf T}={\sf U}, {\sf J_1}, {\sf M_1}.
%\end{equation*}
Throughout the paper, we will denote by $f_n \overset{{\sf T}}{\to} f_n$ the convergence of a sequence of functions $(f_n)_{n \in \N} \subset \mathbb{D}^k$ to $f \in \mathbb{D}^k$ with respect to the topology ${\sf T}$, that can be either ${\sf U}$, ${\sf J}_1$ or ${\sf M}_1$. {Furthermore, notice that, according to \cite[Theorem 12.4.1, Lemma 12.4.2, Theorem 12.5.1]{whitt2002stochastic}, the three modes of convergence coincide when the limit is continuous.} For any $c \in \R$, we will denote $\iota_c(t)=ct$, where clearly $\iota=\iota_1$. Moreover, by $\mathbb{D}_{\uparrow}$ we will denote the subset of $\mathbb{D}$ composed of non-decreasing functions. 
%Finally, for $f \in \mathbb{D}^k$, we denote by ${\sf Disc}(f)$ the set of its discontinuity points.

We will use several properties of the ${\sf M}_1$ convergence on $\mathbb{D}^k$. First, let us state that ${\sf M}_1$-convergence preserves monotonicity when passing to the limit. This is indeed a consequence of \cite[Corollary 12.5.1]{whitt2002stochastic}.
%The proof of this proposition is given in Appendix \ref{app:A}.
\begin{prop}\label{prop:premon}
	Assume that $(f_n)_{n \ge 1} \subset \mathbb{D}$ is a sequence of non-decreasing functions such that $f_n \overset{\sf M_1}{\to}f \in \mathbb{D}$. Then $f$ is non-decreasing.
\end{prop}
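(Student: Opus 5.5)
The plan is to argue by contradiction, working with the time-restricted spaces $\mathbb{D}_T$ through the local characterization of ${\sf M_1}$ convergence in \cite[Theorem 12.5.1]{whitt2002stochastic}. We also use \cite[Corollary 12.5.1]{whitt2002stochastic}, which says that ${\sf M_1}$ convergence $f_n \to f$ in $\mathbb{D}_T$ implies $f_n(t)\to f(t)$ for every $t$ in a dense subset of $[0,T]$ containing $T$ and all continuity points of $f$.

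Fix $0 \le s<t$ and suppose, for contradiction, that $f(s)>f(t)$. The first step is to move the two points to continuity points of $f$. Since $f$ is c\'adl\'ag, ${\sf Disc}(f)$ is at most countable. Right continuity of $f$ then lets us pick continuity points $s'\in[s,s+\varepsilon)$ and $t'\in[t,t+\varepsilon)$ with $s'<t'$ and $f(s')>f(t')$, as long as $\varepsilon$ is small enough.

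Next choose $T>t'$ with $T\notin{\sf Disc}(f)$. By the definition of ${\sf M_1}$ convergence on $\mathbb{D}$, the restrictions of $f_n$ to $[0,T]$ converge to $f$ in ${\sf M_1}$. The cited corollary then gives $f_n(s')\to f(s')$ and $f_n(t')\to f(t')$. Each $f_n$ is non-decreasing, so $f_n(s')\le f_n(t')$ for all $n$, and passing to the limit yields $f(s')\le f(t')$. This is a contradiction, so $f(s)\le f(t)$.

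The only delicate point is pointwise convergence at continuity points. If one prefers to avoid the corollary, it can be derived directly from the parametric representations. Take $(u_n,r_n)\in\Pi(f_n)$ and $(u,r)\in\Pi(f)$ within distance $\eta$. At a continuity point $s'$ of $f$, every parameter $\tau$ with $u_n(\tau)=s'$ has $|u(\tau)-s'|\le\eta$. Since $f$ is continuous at $s'$, the points of $\Gamma_f$ with time coordinate within $\eta$ of $s'$ have $z$-values close to $f(s')$, so $|r(\tau)-f(s')|$ is small. It follows that $|f_n(s')-f(s')|\le \eta+o(1)$.
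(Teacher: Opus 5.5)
Your proof is correct and takes essentially the paper's route. The paper simply defers to \cite[Corollary 12.5.1]{whitt2002stochastic}, and your argument is the reasoning behind that citation: ${\sf M_1}$ convergence on $\mathbb{D}_T$, for $T\notin{\sf Disc}(f)$, gives pointwise convergence on the dense, co-countable set of continuity points, and right-continuity then extends monotonicity from that set to all of $[0,+\infty)$.
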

%\begin{rmk}
%	Actually, during the proof given in Appendix \ref{app:A}, we showed the following more general statement: if $f \in \mathbb{D}_T$ is monotone on a dense subset ${\sf S}\subset [0,T]$ such that $0,T \in {\sf S}$, then $f$ is monotone on $[0,T]$. 	
%\end{rmk}
Furthermore, if $f_n \overset{{\sf M}_1}{\to} f$ in $\mathbb{D}^k$ and $g_n \overset{{\sf M}_1}{\to} g$ in $\mathbb{D}^h$, then it is not necessarily true that $(f_n,g_n) \overset{{\sf M}_1}{\to} (f,g)$ in $\mathbb{D}^{k+h}$. However, the following result (see \cite[Theorem 12.6.1]{whitt2002stochastic}) holds true. 
\begin{thm}\label{thm:prodconv}
	Let $(f_n)_{n \ge 1} \subset \mathbb{D}^k$ and $(g_n)_{n \ge 1}\subset \mathbb{D}^{h}$ such that $f_n \overset{\sf M_1}{\to}f$ and $g_n \overset{\sf M_1}{\to} g$. If ${\sf Disc}(f)\cap {\sf Disc}(g)=\emptyset$, then $(f_n,g_n)\overset{\sf M_1}{\to}(f,g) \in \mathbb{D}^{k+h}$.
\end{thm}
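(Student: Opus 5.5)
The statement is Theorem \ref{thm:prodconv}: $\mathsf{M_1}$ convergence of the components $f_n\to f$ in $\mathbb{D}^k$ and $g_n\to g$ in $\mathbb{D}^h$, plus disjoint discontinuity sets of $f$ and $g$, gives joint $\mathsf{M_1}$ convergence of $(f_n,g_n)$. The plan is to work on a fixed interval $[0,T]$ with $T\notin{\sf Disc}(f)\cup{\sf Disc}(g)$. By the definition of convergence in $\mathbb{D}^k$, this is enough.

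\textbf{Step 1: reduce to parametric representations of the pair.} I would use the characterization of $\mathsf{M_1}$ convergence through oscillation functions (the analogue of \cite[Theorem 12.5.1]{whitt2002stochastic}). The convergence $h_n\overset{\sf M_1}{\to}h$ is equivalent to two conditions:
\begin{itemize}
\item[(a)] $h_n(t)\to h(t)$ for $t$ in a dense set containing $0$ and $T$;
\item[(b)] $\lim_{\delta\to0}\limsup_n w_s(h_n,\delta)=0$, where $w_s$ is the strong $\mathsf{M_1}$ oscillation function.
\end{itemize}
Here $w_s(h,\delta)$ is the supremum over $t_1<t_2<t_3$ with $t_3-t_1\le\delta$ of the distance from $h(t_2)$ to the segment $[h(t_1),h(t_3)]$.

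Condition (a) for $(f_n,g_n)$ is immediate. Each component converges at continuity points of its limit. The set of common continuity points of $f$ and $g$ has countable complement, so it is dense.

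\textbf{Step 2: control the joint oscillation (the main obstacle).} The segment $[(f(t_1),g(t_1)),(f(t_3),g(t_3))]$ in $\mathbb{R}^{k+h}$ is not the product of the two component segments. So small component oscillations do not by themselves give a small joint oscillation.

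I would fix $\varepsilon>0$ and use the fact that $f$ and $g$ have only finitely many jumps of size larger than $\varepsilon$ in $[0,T]$. These big jumps occur at distinct times, since the discontinuity sets are disjoint. So there is $\eta>0$ such that every window of length $2\eta$ meets big jumps of at most one of the two functions. For $n$ large, component $\mathsf{M_1}$ convergence localizes the large-oscillation behaviour of $f_n$ and $g_n$ near the big jumps of $f$ and $g$ respectively.

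On a window of length $\delta<\eta$ around a big jump of $f$, $g$ is continuous. Local uniform convergence then applies: $\mathsf{M_1}$ convergence is locally uniform near continuity points of the limit. Hence $g_n$ varies by less than $\varepsilon$ on that window, while $f_n(t_2)$ lies within $\varepsilon$ of $[f_n(t_1),f_n(t_3)]$, say close to the point $\lambda f_n(t_1)+(1-\lambda)f_n(t_3)$. Taking the same $\lambda$ in the $g$ coordinate costs at most $2\varepsilon$, because $g_n$ is nearly constant there. The symmetric argument handles windows around big jumps of $g$. On windows away from all big jumps, both functions oscillate by $O(\varepsilon)$ for $n$ large, and the joint oscillation is $O(\varepsilon)$ trivially.

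This gives $\limsup_n w_s((f_n,g_n),\delta)\le C\varepsilon$ for small $\delta$, which is condition (b).

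\textbf{Step 3: conclude.} The endpoint conditions at $0$ and $T$ follow from the choice of $T$ and from right continuity at $0$. Combining Steps 1 and 2 with the oscillation characterization gives $(f_n,g_n)\overset{\sf M_1}{\to}(f,g)$ on $[0,T]$ for every admissible $T$, hence in $\mathbb{D}^{k+h}$.

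The delicate point is the uniformity in $n$ in Step 2. It requires the lemma that $\mathsf{M_1}$ convergence implies uniform convergence on compact sets avoiding the discontinuities of the limit, applied on neighbourhoods of the finitely many big jumps.
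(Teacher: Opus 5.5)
The paper gives no argument of its own for this statement; it is taken from \cite[Theorem 12.6.1]{whitt2002stochastic}. You instead prove it directly from two standard ingredients. The first is the characterization of ${\sf M_1}$ convergence on $[0,T]$ by pointwise convergence on a dense set containing $0$ and $T$ together with $\lim_{\delta\to 0}\limsup_n w_s(\cdot,\delta)=0$. The second is local uniform convergence at continuity points of the limit. Your approach is sound, and unlike a bare citation it shows exactly where the hypothesis is used. In $\mathbb{R}^{k+h}$ the segment joining $(f_n,g_n)(t_1)$ and $(f_n,g_n)(t_3)$ is not the product of the two block segments. Disjointness of ${\sf Disc}(f)$ and ${\sf Disc}(g)$ ensures that near each large jump one block is nearly constant, so the convex weight $\lambda$ chosen for the jumping block can be reused for the other. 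The citation is shorter and fully rigorous, whereas your sketch leaves one step under-justified.

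That step is the claim that, on short windows away from all jumps larger than $\varepsilon$, both $f_n$ and $g_n$ oscillate by $O(\varepsilon)$ uniformly for large $n$. The lemma you invoke for it (uniform convergence on compact sets avoiding the discontinuities of the limit) does not deliver this. The reason is that $f$ and $g$ may have small jumps accumulating everywhere, so such compact sets need not contain any interval. Use instead the $w_s$ bound you already have:
\begin{itemize}
\item Fix a finite grid of common continuity points of $f$ and $g$ (including $0$ and $T$) with mesh below $\delta$, so that $f_n\to f$ uniformly on the grid.
\item Any window $[t_1,t_3]$ of length at most $\delta$ sits between grid points $s<s'$ with $s'-s<3\delta$.
\item Every $f_n(t)$ with $t\in[s,s']$ lies within $w_s(f_n,3\delta)$ of the segment $[f_n(s),f_n(s')]$.
\item For $\delta$ small, $\|f(s)-f(s')\|\le 3\varepsilon$, because $f$ has no jump larger than $\varepsilon$ near the window.
\end{itemize}
Hence $\sup_{t\in[s,s']}\|f_n(t)-f(s)\|\le w_s(f_n,3\delta)+4\varepsilon$ for $n$ large, and likewise for $g_n$.

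Relatedly, near a large jump time $t_0$ of $f$ you should not say that $g$ is continuous on the window, since it may have small jumps there. Only continuity of $g$ at $t_0$ is available, and that is all local uniform convergence needs to give $\sup_{|s-t_0|<\eta_0}\|g_n(s)-g(t_0)\|<\varepsilon$ for $n\ge n_0$. With these two repairs your argument is a complete proof.
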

%For further details on the space of c\'adl\'ag functions we refer to \cite{whitt2002stochastic} and references therein.
We will also need to work with operators and functionals on $\mathbb{D}$. The most common operator is the sum, which could be not continuous in the ${\sf M}_1$-topology. A quite general continuity condition for the sum is given in \cite[Theorem 2.7.3]{whitt2002stochastic}
\begin{thm}\label{thm:contsum}
	Let $(f_n)_{n \ge 1}, (g_n)_{n \ge 1}\subset \mathbb{D}^k$ and assume that $f_n {\sf M_1}{\to} f$ and $g_n {\sf M_1}{\to} g$ in $\mathbb{D}^k$. If the following condition holds
	\begin{equation}\label{eq:samejump}
		(f(t)-f(t-))(g(t)-g(t-)) \ge 0, \ \forall t>0
	\end{equation}
	then $f_n+g_n \overset{{\sf M}_1}{\to} f+g$ in $\mathbb{D}^k$.
\end{thm}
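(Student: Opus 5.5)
We would not try to build a common parametrization of $f_n$ and $g_n$ directly. Instead we would use the characterization of ${\sf M_1}$ convergence in terms of the ${\sf M_1}$ oscillation function (see \cite[Theorem 12.5.1]{whitt2002stochastic}). Fix $T\notin {\sf Disc}(f)\cup{\sf Disc}(g)$. Then $h_n \overset{\sf M_1}{\to} h$ on $[0,T]$ holds if and only if two things are true. First, $h_n(t)\to h(t)$ for $t$ in a dense subset of $[0,T]$ containing $0$ and $T$. Second, we have
\begin{equation*}
\lim_{\delta\downarrow 0}\limsup_{n}w_s(h_n,\delta)=0,
\end{equation*}
where $w_s(h,\delta)$ is the supremum, over $t_1<t_2<t_3$ with $t_3-t_1\le\delta$, of the distance of $h(t_2)$ from the segment $[h(t_1),h(t_3)]$. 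We also need the analogous control near $0$ and $T$. We apply this with $h_n=f_n+g_n$ and $h=f+g$.

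The pointwise part is immediate. ${\sf M_1}$ convergence implies convergence at every continuity point of the limit. The common continuity points of $f$ and $g$ form a set with countable complement, hence a dense set containing $0$ and $T$. On this set $f_n+g_n\to f+g$ by adding the two limits. The endpoint conditions follow the same way, since $f$ and $g$ are continuous at $0$ and $T$.

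For the oscillation part we argue by contradiction. Suppose there are $\varepsilon>0$, $\delta_n\downarrow0$ and triples $t_1^n<t_2^n<t_3^n$ with $t_3^n-t_1^n\le\delta_n$ at which the oscillation of $f_n+g_n$ is at least $\varepsilon$. Passing to a subsequence, $t_i^n\to t$ for $i=1,2,3$. We then split into cases according to whether $t$ is a continuity point of $f$ and of $g$.

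\textbf{Key local lemma.} If $f_n\overset{\sf M_1}{\to}f$ and $t\in{\sf Disc}(f)$ with $f(t)>f(t-)$, then for every $\eta>0$ there are $r>0$ and $n_0$ such that
\begin{equation*}
f_n(u)\ge f_n(s)-\eta \qquad \text{for all } n\ge n_0 \text{ and all } s<u \text{ in } (t-r,t+r).
\end{equation*}
The analogous statement holds, with reversed inequality, for a downward jump. If instead $t\notin{\sf Disc}(f)$, then
\begin{equation*}
\sup_{s,u\in(t-r,t+r)}|f_n(s)-f_n(u)|\le\eta \qquad \text{for all } n\ge n_0.
\end{equation*}
To prove the lemma, we take parametrizations $(u_n,r_n)\in\Pi(f_n)$ and $(u,r)\in\Pi(f)$ converging uniformly. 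The parameter interval mapped by $u$ near $t$ is, up to small neighbourhoods, the vertical segment $[f(t-),f(t)]$, along which $r$ is monotone, together with nearly constant pieces on either side. Uniform closeness then forces $r_n$ to be monotone up to an error $\eta$ on the corresponding parameter interval. Since the parametrization is order preserving, this transfers to $f_n$ on $(t-r,t+r)$. This lemma is the step we expect to be the main technical obstacle. The delicate point is the bookkeeping of the parametrization, so that the time window $(t-r,t+r)$ really corresponds to a parameter window on which $r$ is monotone.

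With the lemma the cases close as follows. Suppose $t$ is a continuity point of both $f$ and $g$. Then both $f_n$ and $g_n$ have oscillation at most $\eta$ near $t$, so $w_s$ of the sum is at most $2\eta$. Suppose $t$ is a jump of $f$ only, say upward. Then $f_n$ is $\eta$-nearly nondecreasing and $g_n$ has oscillation at most $\eta$, so $f_n+g_n$ is $2\eta$-nearly nondecreasing. Suppose $t$ is a jump of both. By \eqref{eq:samejump} the two jumps have the same sign, say both upward. Then both $f_n$ and $g_n$ are $\eta$-nearly nondecreasing, so their sum is $2\eta$-nearly nondecreasing. In the last two cases the distance of $h_n(t_2)$ from $[h_n(t_1),h_n(t_3)]$ is at most $4\eta$, for large $n$. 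The downward cases are symmetric.

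Choosing $\eta<\varepsilon/4$ contradicts the assumption on the triples. Hence the oscillation condition holds, and \cite[Theorem 12.5.1]{whitt2002stochastic} gives $f_n+g_n\overset{\sf M_1}{\to}f+g$ on $[0,T]$. Since $T$ was an arbitrary common continuity point, the convergence holds in $\mathbb{D}^k$. The vector case $k>1$ is handled in the same way, with the jump condition \eqref{eq:samejump} applied to each pair of coordinates.
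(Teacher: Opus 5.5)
The paper does not prove this statement; it only cites Whitt's book. Your argument is therefore a genuinely different, self-contained route, and for $k=1$ it is correct.

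The reduction to the oscillation characterization is sound: pointwise convergence on a dense set containing $0$ and $T$, plus $\lim_{\delta\downarrow0}\limsup_n w_s(h_n,\delta)=0$. Your local lemma also holds as you sketch it. Between the parameters of $(s,f_n(s))$ and $(v,f_n(v))$, the limit's graph consists of three pieces: values within $\eta'$ of $f(t-)$, the vertical segment traversed upward, and values within $\eta'$ of $f(t)$. So the limit coordinate is nondecreasing up to $2\eta'$, and uniform closeness transfers this to $f_n$ with an extra error of $2\epsilon_n$.

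What your route buys over the citation is transparency. It shows exactly where \eqref{eq:samejump} enters: near a common jump both summands are almost monotone in the same direction, so their sum is too, and that is all ${\sf M_1}$ tolerates. One small point should be made explicit. Condition \eqref{eq:samejump} rules out cancellation, so ${\sf Disc}(f+g)={\sf Disc}(f)\cup{\sf Disc}(g)$. This is what lets ``arbitrary common continuity point $T$'' cover every $T\notin{\sf Disc}(f+g)$, as the definition of convergence in $\mathbb{D}$ requires.

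Your last sentence, on $k>1$, needs to be restricted. Running the scalar argument coordinate by coordinate only gives convergence in the product (weak) ${\sf M_1}$ topology. In the strong ${\sf M_1}$ metric defined in the appendix on $\mathbb{D}^k$ (segments in $\mathbb{R}^k$), the statement is false for $k\ge2$, so it cannot be handled ``in the same way''. Take
$f=(1,1)\mathbf{1}_{[1,\infty)}$, $f_n=(1,1)\mathbf{1}_{[1-1/n,\infty)}$ and $g_n=g=(1,0)\mathbf{1}_{[1,\infty)}$.
Condition \eqref{eq:samejump} holds coordinatewise, and even as an inner product, and $f_n\to f$, $g_n\to g$. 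However, $f_n+g_n$ equals $(1,1)$ on $[1-1/n,1)$. That point is at distance $1/\sqrt5$ from the segment joining $(0,0)$ to $(2,1)$, so $w_s(f_n+g_n,\delta)\ge 1/\sqrt5$ for all large $n$.

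The underlying reason is that your key lemma is intrinsically one-dimensional. Two paths that are nearly monotone along different directions do not sum to a path close to a segment. Since the paper only applies this theorem to real-valued processes, nothing downstream is affected.
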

\begin{rmk}
	Condition \eqref{eq:samejump} holds if ${\sf Disc}(f)\cap {\sf Disc}(g)=\emptyset$.
\end{rmk}
Another operator we will frequently use is the composition, that will be denoted by means of the operator $\circ:\mathbb{D}^k \times \mathbb{D}_{\uparrow} \to \mathbb{D}^k$ (see \cite[Lemma 13.2.4]{whitt2002stochastic} to guarantee that it is well-defined), as
%\begin{equation*}
%	(f\circ g)(t):=\circ(f,g)(t)=f(g(t)).
%\end{equation*}
\begin{equation*}
	(f\circ g)(t):=f(g(t)), \quad \mbox{ for } t \ge 0.
\end{equation*}
Concerning the continuity of $\circ$ with respect to the ${\sf M_1}$-convergence, we recall here \cite[Theorem 13.2.4]{whitt2002stochastic}.
\begin{thm}\label{thm:contcomp}
	Let $(f_n)_{n \ge 1}\subset \mathbb{D}^k$ and $(g_n)_{n \ge 1}\subset \mathbb{D}_{\uparrow}$ with $(f_n,g_n) \overset{\sf M_1}{\to} (f,g) \in \mathbb{D}^{k+1}$. Assume that:
	\begin{itemize}
		\item[$(i)$] If $g(t) \in {\sf Disc}(f)$, then $t \not \in {\sf Disc}(g)$ and $g$ is strictly increasing in $t$;
		\item[$(ii)$] If $t \in {\sf Disc}(g)$, then $g(t),g(t-)\not \in {\sf Disc}(f)$ and the coordinates of $f$ are monotone in $[g(t-),g(t)]$. 
	\end{itemize}
	Then $f_n \circ g_n \overset{\sf M_1}{\to}f \circ g \in \mathbb{D}^k$.
\end{thm}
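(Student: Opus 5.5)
We would prove this by working on a fixed interval $[0,T]$, with $T\notin{\sf Disc}(f\circ g)$, and using the standard characterization of ${\sf M_1}$ convergence (cf. \cite[Theorem 12.5.1]{whitt2002stochastic}). It says that $h_n\overset{\sf M_1}{\to}h$ in $\mathbb{D}_T^k$ if and only if two conditions hold. The first is that $h_n(t)\to h(t)$ for all $t$ in a dense subset of $[0,T]$ containing $0$ and $T$. The second is that the ${\sf M_1}$ oscillation vanishes uniformly:
\begin{equation*}
\lim_{\delta\downarrow 0}\limsup_{n\to\infty} w_s(h_n,\delta)=0,\qquad w_s(h,\delta)=\sup_{0\vee(t_2-\delta)\le t_1<t_2<t_3\le (t_2+\delta)\wedge T}\|h(t_2)-[h(t_1),h(t_3)]\|,
\end{equation*}
together with control near the endpoints. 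We apply this to $h_n=f_n\circ g_n$ and $h=f\circ g$. Since $(f_n,g_n)\overset{\sf M_1}{\to}(f,g)$ jointly, we have $f_n\to f$ locally uniformly near continuity points of $f$ and $g_n\to g$ near continuity points of $g$. We also have uniform smallness of the ${\sf M_1}$ oscillations of both $f_n$ and $g_n$.

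\textbf{Step 1: pointwise convergence on a dense set.} Consider the set $S$ of $t$ that are continuity points of $g$ with $g(t)\notin{\sf Disc}(f)$. For such $t$, $g_n(t)\to g(t)$, and $f_n\to f$ uniformly on a neighbourhood of $g(t)$, so $f_n(g_n(t))\to f(g(t))$. The complement of $S$ consists of jump times of $g$, which form a countable set, and times $t$ with $g(t)\in{\sf Disc}(f)$. By hypothesis (i), $g$ is strictly increasing at each such $t$. Hence each of the countably many jump locations of $f$ is hit by at most one such $t$, and this set is countable too. So $S$ is dense, and $0,T$ can be handled in the same way.

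\textbf{Step 2: oscillation control.} We use the fact that composing with a non-decreasing map preserves ``betweenness'' up to the oscillation of the outer function. For $t_1<t_2<t_3$ we have $g_n(t_1)\le g_n(t_2)\le g_n(t_3)$, so $\|f_n(g_n(t_2))-[f_n(g_n(t_1)),f_n(g_n(t_3))]\|$ is controlled by the ${\sf M_1}$ oscillation of $f_n$ over the interval $[g_n(t_1),g_n(t_3)]$. There are two cases. If $[t_1,t_3]$ is small and contains no large jump of $g$, then $[g_n(t_1),g_n(t_3)]$ is eventually short, because the oscillation of $g_n$ is small and $g_n$ is monotone. In that case the bound reduces to $w_s(f_n,\delta')$, which is eventually small. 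This covers in particular the times where $g(t)\in{\sf Disc}(f)$, since there $g$ is continuous by (i). The remaining case is $[t_1,t_3]$ straddling a large jump time $t$ of $g$. Then $[g_n(t_1),g_n(t_3)]$ is close to $[g(t-),g(t)]$, and hypothesis (ii) gives that the coordinates of $f$ are monotone there and that the endpoints are continuity points of $f$. Monotonicity of $f$ together with ${\sf M_1}$ convergence makes $f_n$ ``nearly monotone'' on a slightly enlarged interval: the endpoints are continuity points, so $f_n$ converges uniformly near them, and the small ${\sf M_1}$ oscillation inside handles the rest. Consequently $f_n(g_n(t_2))$ lies close to the segment between $f_n(g_n(t_1))$ and $f_n(g_n(t_3))$.

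\textbf{Main obstacle.} We expect the main obstacle to be this last case: making the ``near-monotonicity'' of $f_n$ on $[g(t-),g(t)]$ quantitative and uniform. Only finitely many jumps of $g$ exceed a given size on $[0,T]$, so we would treat these finitely many large jumps individually, choosing $\delta$ small enough to isolate them. The small jumps would be absorbed into the short-image-interval case. Combining Steps 1 and 2 with the characterization then yields $f_n\circ g_n\overset{\sf M_1}{\to}f\circ g$ on $[0,T]$, and hence in $\mathbb{D}^k$.
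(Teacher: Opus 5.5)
The paper gives no proof of this statement; it quotes \cite[Theorem 13.2.4]{whitt2002stochastic}. Your strategy is a sound way to prove the result for $k=1$: convergence on a co-countable set, plus vanishing ${\sf M_1}$ oscillation after isolating the finitely many large jumps of $g$. As written, however, it proves too much. In the large-jump case you claim that $f_n(g_n(t_2))$ is close to the segment $[f_n(g_n(t_1)),f_n(g_n(t_3))]\subset\R^k$. For $k\ge 2$, monotonicity of each coordinate of $f$ on $[g(t-),g(t)]$ only puts $f(s_2)$ in the box with opposite corners $f(s_1)$ and $f(s_3)$, not near the segment. The strong oscillation $\|h(t_2)-[h(t_1),h(t_3)]\|$ you use needs the segment.

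This cannot be patched, because the statement itself fails in the strong ${\sf M_1}$ topology that the paper puts on $\mathbb{D}^k$. Take $k=2$, $f_n=f$ with $f(s)=(s,s^2)$, and $g=\mathbf{1}_{[1,\infty)}$. Let $g_n$ be $0$ on $[0,1-1/n]$, $1$ on $[1,\infty)$, and linear in between. Then ${\sf Disc}(f)=\emptyset$, both coordinates of $f$ increase on $[0,1]$, and $(f,g_n)\to(f,g)$ in $\mathbb{D}^3$, so (i) and (ii) hold. Yet $(f\circ g_n)(1-\frac{1}{2n})=(\frac12,\frac14)$. This point is at distance at least $\frac18$ from the segment joining $(0,0)$ and $(1,1)$, and that segment contains every spatial value of the thin graph of $f\circ g$. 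Hence $d^{\sf M_1}_T(f\circ g_n,f\circ g)\ge\frac18$ for all $n$ and all $T>1$.

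What survives for $k\ge 2$ is coordinatewise (weak ${\sf M_1}$) convergence, so you should reduce to $k=1$ at the outset; every application in the paper has $k=1$. In the scalar case your ``main obstacle'' is routine. Given $s_1\le s_2\le s_3$ near $[a,b]=[g(t-),g(t)]$, take a parametric representation $(u_n,r_n)$ of $f_n$ within $\kappa_n$ of a representation $(u,r)$ of $f$. Choose $\tau_1\le\tau_2\le\tau_3$ with $(u_n,r_n)(\tau_i)=(s_i,f_n(s_i))$. Then $r$ is monotone on the parameter interval where $u\in[a,b]$. Where $u$ lies just outside $[a,b]$, $r$ stays within $\omega$ of $f(a)$ or $f(b)$, because $f$ is continuous at $a$ and $b$. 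Hence $r(\tau_2)$ lies within $2\omega$ of $[r(\tau_1),r(\tau_3)]$, and $f_n(s_2)$ lies within $2\omega+2\kappa_n$ of $[f_n(s_1),f_n(s_3)]$.

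Two smaller corrections are needed.

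First, split the triples according to the distance from $t_2$ to the large jump times, not according to whether $[t_1,t_3]$ contains one. The approximants may realise a jump of $g$ early. For example, $g_n=g\circ(\iota+1/n)$ jumps at $t-1/n$, so a short $[t_1,t_3]$ just left of $t$ can have $g_n$-image covering nearly all of $[g(t-),g(t)]$. Your ``eventually short'' claim therefore fails uniformly over triples, which is what $\limsup_n\sup$ requires. With the corrected split, triples near a large jump have images inside $[g_n(c),g_n(c')]$ for continuity points $c<t<c'$ of $g$ close to $t$, and the scalar argument above applies there. All remaining triples have image intervals of length $O(\eta)$, which $w_s(f_n,\cdot)$ controls.

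Second, $t=0$ cannot be handled ``in the same way'' when $g(0)>0$ and $g(0)\in{\sf Disc}(f)$. Hypothesis (i) allows this, and then the conclusion fails. Take $f_n=f=\mathbf{1}_{[1,\infty)}$, $g=1+\iota$ and $g_n=g-1/n$; then $(f\circ g_n)(0)=0\neq 1=(f\circ g)(0)$. You need $g(0)\notin{\sf Disc}(f)$, for instance $g(0)=0$, which holds in all of the paper's applications.
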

We will also work with counting functions. Precisely, let $(x_n)_{n \ge 1}\subset [0,+\infty)$ be a sequence of non-negative real numbers and consider the partial sum sequence $(s_n)_{n \ge 0}$ defined as $s_0=0$ and $s_n=s_{n-1}+x_n$ for $n \ge 1$.
%\begin{equation*}
%	s_0=0, \qquad s_n=\sum_{k=1}^{n}x_k.
%\end{equation*}
Assume that $s_n \to +\infty$ and let $y(t)=s_{\lfloor t \rfloor}$, where $\lfloor t \rfloor$ is the integer part of $t$. Then $y \in \mathbb{D}_{\uparrow}$ as well as its generalized inverse,
\begin{equation*}
	y^{-1}(t)=\inf\{s \ge 0: \ y(s)>t\}.
\end{equation*}
We can then define the counting function
\begin{equation*}
	c(t)=\max\{k \ge 0: \ s_k \le t\}, \ t \ge 0
\end{equation*}
that also belongs to $\mathbb{D}_{\uparrow}$. The ${\sf M}_1$-convergence of a sequence of processes $(y_n)_{n \ge 1} \subset \mathbb{D}_{\uparrow}$, constructed as described before, is related with the ${\sf M}_1$-convergence of the sequence $(c_n)_{n \ge 1}\subset \mathbb{D}_{\uparrow}$ of the related counting processes (see \cite[Theorem 13.8.1]{whitt2002stochastic}).
\begin{thm}\label{thm:contcounting}
	Let $(x_k^n)_{n,k \ge 1},(a_n)_{n \ge 1}\subset [0,+\infty)$ be such that $a_n>0$, $a_n \to +\infty$ and, defining the partial sums $(s_k^n)_{k \ge 0}$ as $s_0^n=0$ and $s_k^n=s_{k-1}^n+x_k^n$ for $k \ge 1$,
	%	\begin{equation*}
		%		s_0^n=0, \quad, s_k^n=\sum_{j=1}^{k}x_j^n,
		%	\end{equation*}
	it holds $s_k^n \to +\infty$ as $k \to +\infty$ for fixed $n \in \N$. Define $y_n(t)=s_{\lfloor t \rfloor}^n$ and $c_n$ the respective counting function. With $\mathbf{y}_n(t)=\frac{y_n(nt)}{a_n}$ and $\mathbf{c}_n(t)=\frac{c_n(a_nt)}{n}$, then the following statements are equivalent:
	\setlength{\columnsep}{3pt}
	\begin{multicols}{4}
		\begin{itemize}
			\item[$(i)$] $\mathbf{y}_n \overset{\sf M_1}{\to} \mathbf{y}$;
			\item[$(ii)$] $\mathbf{y}_n^{-1} \overset{\sf M_1}{\to} \mathbf{y}^{-1}$;
			\item[$(iii)$] $\mathbf{c}_n \overset{\sf M_1}{\to} \mathbf{y}^{-1}$;
			\item[$(iv)$] $\mathbf{c}_n^{-1}\overset{\sf M_1}{\to}\mathbf{y}$.
		\end{itemize}
	\end{multicols}
	\setlength{\columnsep}{10pt}
\end{thm}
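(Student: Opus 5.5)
The plan is to reduce everything to two facts: the generalized inverse map is an ${\sf M_1}$-isometry-type operation on thin graphs, and $\mathbf{c}_n$ and $\mathbf{y}_n^{-1}$ differ by a uniformly vanishing term. This is essentially the content of \cite[Theorem 13.8.1]{whitt2002stochastic}, and I would follow that route.

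\textbf{Step 1: $(iii)\Leftrightarrow(ii)$ and $(iv)\Leftrightarrow(i)$.} The first task is to compare $\mathbf{c}_n$ with $\mathbf{y}_n^{-1}$ directly. Since $\mathbf{y}_n(s)=s^n_{\lfloor ns\rfloor}/a_n$, the definition gives
\begin{equation*}
\mathbf{y}_n^{-1}(t)=\inf\{s\ge 0:\ s^n_{\lfloor ns\rfloor}>a_nt\}=\frac{c_n(a_nt)+1}{n}.
\end{equation*}
Hence $\mathbf{c}_n=\mathbf{y}_n^{-1}-\tfrac1n$, so $d^{\sf U}_T(\mathbf{c}_n,\mathbf{y}_n^{-1})\le 1/n\to0$. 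Because $d^{\sf M_1}_T\le d^{\sf U}_T$, and adding a vanishing constant preserves ${\sf M_1}$ convergence (Theorem \ref{thm:contsum}, with a continuous summand), conditions $(ii)$ and $(iii)$ are equivalent. The same computation applied to the inverse of $\mathbf{c}_n$ shows that $\mathbf{c}_n^{-1}$ and $\mathbf{y}_n$ differ by at most $1/n$ in time, up to the usual right-continuous regularization. This will give $(iv)\Leftrightarrow(i)$ once Step 2 identifies the limits, in particular that $(\mathbf{y}^{-1})^{-1}=\mathbf{y}$ for $\mathbf{y}\in\mathbb{D}_\uparrow$ unbounded.

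\textbf{Step 2: $(i)\Leftrightarrow(ii)$ (continuity of the inverse map).} For $f\in\mathbb{D}_\uparrow$ with $f(t)\to\infty$, the thin graph $\Gamma_{f^{-1}}$ is the reflection of $\Gamma_f$ across the diagonal $\{(t,z): t=z\}$. A jump of $f$ becomes a flat stretch of $f^{-1}$, and a flat stretch becomes a jump filled by a vertical segment. Consequently, if $(u,r)\in\Pi(f)$, then $(r,u)$ is a strong parametrization of $f^{-1}$, since monotonicity in the lexicographic order is preserved under the swap for non-decreasing functions. It follows that the ${\sf M_1}$ distance between $f_1^{-1}$ and $f_2^{-1}$, over corresponding windows, is controlled by that between $f_1$ and $f_2$. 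Applying this to $f_1=\mathbf{y}_n$ and $f_2=\mathbf{y}$ gives $(i)\Rightarrow(ii)$. Symmetry, together with $(f^{-1})^{-1}=f$ for right-continuous $f\in\mathbb{D}_\uparrow$, gives the converse. Applying the same argument to $\mathbf{c}_n$ yields $(iii)\Leftrightarrow(iv)$.

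\textbf{Main obstacle: time horizons.} The hard step is matching the windows, since ${\sf M_1}$ convergence on $\mathbb{D}$ is convergence on $[0,T]$ for $T\notin{\sf Disc}$. A graph on $[0,T]$ for $f$ corresponds to the portion of $\Gamma_{f^{-1}}$ on $[0,f(T)]$, and the endpoint $f(T)$ moves with $n$. I would fix $T'\notin{\sf Disc}(\mathbf{y}^{-1})$ and then pick $T$ with $\mathbf{y}(T)>T'$ and $T\notin{\sf Disc}(\mathbf{y})$. This is possible because $\mathbf{y}$ is unbounded, which follows from $s_k^n\to\infty$ and the limiting relation, and because ${\sf Disc}$ is countable. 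Using convergence of $\mathbf{y}_n(T)\to\mathbf{y}(T)$ at continuity points, I would then restrict the swapped parametrizations to the sub-arc of the graph lying in $[0,T']$, choosing the cut consistently for all $n$ large. If needed, this is where I would invoke \cite[Theorem 13.6.1, Corollary 13.6.4]{whitt2002stochastic} for the truncation details rather than redoing them.
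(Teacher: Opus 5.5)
Your overall route (exact comparison of $\mathbf{c}_n$ with $\mathbf{y}_n^{-1}$, then continuity of the inverse via reflection of thin graphs) is the standard argument behind \cite[Theorem 13.8.1]{whitt2002stochastic}. The paper gives no proof of its own; it only cites that result. Your identity $\mathbf{c}_n=\mathbf{y}_n^{-1}-\frac{1}{n}$ is exact. So is $\mathbf{c}_n^{-1}=\mathbf{y}_n\circ(\iota+\frac{1}{n})$, with no regularization needed. Away from $t=0$, the thin graph of $f^{-1}$ is indeed the reflection of that of $f$.

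The genuine gap is the claim in Step 2 that $(r,u)\in\Pi(f^{-1})$ whenever $(u,r)\in\Pi(f)$: this fails at the origin. With the standard convention $f(0-)=f(0)$, a strong parametrization of $f$ starts at $(0,f(0))$, so $d^{\sf M_1}_T(f_1,f_2)\ge|f_1(0)-f_2(0)|$. Two things can go wrong.
\begin{itemize}
\item If $f$ vanishes on $[0,a]$ with $a>0$, the reflected graph contains the vertical segment $\{0\}\times[0,a]$. But $\Gamma_{f^{-1}}$ starts at the single point $(0,f^{-1}(0))=(0,a)$.
\item If $f(0)>0$, then $\Gamma_{f^{-1}}$ contains $[0,f(0)]\times\{0\}$, which is not the reflection of any part of $\Gamma_f$.
\end{itemize}
The same defect sits in your time shift for $(i)\Leftrightarrow(iv)$, since $\mathbf{c}_n^{-1}(0)=s_1^n/a_n$ while $\mathbf{y}_n(0)=0$.

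This is not a technicality. Take $a_n=n$, $x_k^n=\frac{1}{n}$ for $k\le n$ and $x_k^n=1$ for $k>n$. Then $\sup_{t\ge0}|\mathbf{y}_n(t)-(t-1)^+|\le\frac{1}{n}$, so $(i)$ holds with $\mathbf{y}=(\iota-1)^+$, and $\mathbf{y}^{-1}=1+\iota$. However $\mathbf{y}_n^{-1}(0)=\frac{1}{n}$ and $\mathbf{c}_n(0)=0$, so $(ii)$ and $(iii)$ fail in ${\sf M_1}$. Hence the equivalence as literally stated is false, and Step 2 cannot be closed without an extra ingredient.

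There are two fixes.
\begin{itemize}
\item Run your argument in the ${\sf M_1'}$ topology, where the thin graph at $t=0$ is the segment joining $(0,0)$ and $(0,f(0))$ (i.e.\ $f(0-)=0$). The reflection identity then holds exactly at the origin, and only your horizon-matching argument remains.
\item Add the hypothesis $\mathbf{y}(0)=0<\mathbf{y}(t)$ for $t>0$, so that $\mathbf{y}^{-1}(0)=0$. Then the spurious segments at the origin have length tending to $0$, and ${\sf M_1'}$ and ${\sf M_1}$ convergence coincide for all four sequences.
\end{itemize}
The limits to which the paper actually applies the result (multiples of $\iota$, and stable subordinators) satisfy the second condition, so those applications are unaffected.

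A smaller point: unboundedness of $\mathbf{y}$ does not follow from $s_k^n\to+\infty$ together with $(i)$. Large increments can sit at indices $k\gg n$, and one can obtain $\mathbf{y}=\min\{\iota,1\}$ this way. Unboundedness has to be taken as a hypothesis; it is implicit in $(ii)$--$(iv)$, which require $\mathbf{y}^{-1}$ to be real valued.
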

The same relation can be explored when the processes are subject to centering.
\begin{thm}\label{thm:countingcenter}
	Let $(x_k^n)_{n,k \ge 1},(a_n)_{n \ge 1},(m_n)_{n \ge 1}\subset [0,+\infty)$ be such that $a_n>0$, $a_n \to +\infty$, $\frac{n}{a_n}\to +\infty$, $m_n \to m \in (0,+\infty)$ and, defining the partial sums $(s_k^n)_{k \ge 0}$ as $s_0^n=0$ and $s_k^n=s_{k-1}^n+x_k^n$ for $k \ge 1$,
	%	\begin{equation*}
		%		s_0^n=0, \quad, s_k^n=\sum_{j=1}^{k}x_j^n,
		%	\end{equation*}
	it holds $s_k^n \to +\infty$ as $k \to +\infty$ for fixed $n \in \N$. Define $y_n(t)=s_{\lfloor t \rfloor}^n$ and $c_n$ the respective counting function and set
	\begin{equation*}
		\overline{\mathbf{y}}_n(t)=\frac{y_n(nt)-m_nnt}{\delta_n}, \qquad \overline{\mathbf{c}}_n(t)=\frac{c_n(nt)-m_n^{-1}nt}{\delta_n}, \qquad \mbox{ for } t \ge 0.
	\end{equation*}
	Assume that $\overline{\mathbf{y}} \in \mathbb{D}$ with $\overline{\mathbf{y}}(0)=0$. Then $\overline{\mathbf{y}}_n \overset{\sf M_1}{\to} \overline{\mathbf{y}}$ if and only if $\overline{\mathbf{c}}_n \overset{\sf M_1}{\to} -m^{-1}\overline{\mathbf{y}} \circ \iota_{m^{-1}}$.
\end{thm}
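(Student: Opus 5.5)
The plan is to reduce the statement to Vervaat's theorem for inverses with centering, in the ${\sf M_1}$ form of \cite[Theorem 13.7.1 and Corollary 13.7.1]{whitt2002stochastic}. The main care goes into two perturbations: a discretization error from the counting function, and a centering constant $m_n$ that varies with $n$. We read $\delta_n$ as the normalizing sequence $a_n$, so that $\delta_n\to+\infty$ and $n/\delta_n\to+\infty$.

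\textbf{Step 1: pass to the fluid scale.} Set $Z_n(t)=n^{-1}y_n(nt)$ and $\varepsilon_n=\delta_n/n\to 0$. The hypothesis then reads
\begin{equation*}
\varepsilon_n^{-1}\bigl(Z_n-m_n\iota\bigr)=\overline{\mathbf{y}}_n .
\end{equation*}
For the counting side, we compare $n^{-1}c_n(nt)$ with the generalized inverse $Z_n^{-1}(t)=\inf\{s\ge0: Z_n(s)>t\}$. By construction of $c_n$ and of $y_n(t)=s^n_{\lfloor t\rfloor}$, the two differ by at most $n^{-1}$ uniformly in $t$, since they count the same partial sums up to the boundary index. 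Hence
\begin{equation*}
\overline{\mathbf{c}}_n=\varepsilon_n^{-1}\bigl(Z_n^{-1}-m_n^{-1}\iota\bigr)+O(\delta_n^{-1})
\end{equation*}
uniformly. Because $\delta_n\to\infty$, this error is negligible in ${\sf U}$, and therefore in ${\sf M_1}$ by Theorem \ref{thm:contsum}, since the error converges to the continuous function $0$. So it suffices to prove the equivalence with $Z_n^{-1}$ in place of $n^{-1}c_n(n\cdot)$.

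\textbf{Step 2: remove the varying centering.} Define $\widehat Z_n(t)=m_n^{-1}Z_n(t)$. Then $\widehat Z_n-\iota=\varepsilon_n m_n^{-1}\overline{\mathbf{y}}_n$. Moreover $\widehat Z_n^{-1}(t)=Z_n^{-1}(m_nt)$, so $Z_n^{-1}-m_n^{-1}\iota=m_n^{-1}\bigl(\widehat Z_n^{-1}-\iota\bigr)\circ\iota_{m_n}$. Since $m_n\to m\in(0,\infty)$, multiplying by $m_n^{\pm1}$ preserves ${\sf M_1}$ limits. Composing with $\iota_{m_n}\to\iota_m$ is also continuous, by Theorem \ref{thm:contcomp} with a continuous, strictly increasing limit $\iota_m$. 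We are therefore reduced to the unit-rate case: show that $\varepsilon_n'^{-1}(\widehat Z_n-\iota)\overset{\sf M_1}{\to}\mathbf u$ with $\mathbf u(0)=0$ if and only if $\varepsilon_n'^{-1}(\widehat Z_n^{-1}-\iota)\overset{\sf M_1}{\to}-\mathbf u$, where $\varepsilon_n'=\varepsilon_n\to0$. Unwinding the scalings then gives exactly the limit $-m^{-1}\overline{\mathbf{y}}\circ\iota_{m^{-1}}$.

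\textbf{Step 3: apply Vervaat's theorem.} The unit-rate equivalence is precisely Vervaat's theorem in ${\sf M_1}$. The functions $\widehat Z_n$ are non-decreasing, unbounded and lie in $\mathbb{D}_{\uparrow}$, and the condition $\mathbf u(0)=0$ is exactly the hypothesis Whitt requires. This proves the forward direction. For the converse, note that the inverse of $\widehat Z_n^{-1}$ agrees with $\widehat Z_n$ up to right-continuous modification, and such a modification does not change ${\sf M_1}$ limits; the same fact underlies Theorem \ref{thm:contcounting}. We then apply the forward direction to $\widehat Z_n^{-1}$, whose centered limit $-\mathbf u$ also vanishes at $0$.

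\textbf{Main obstacle.} I expect the hardest point to be justifying Vervaat's theorem in ${\sf M_1}$ rather than in ${\sf U}$ or ${\sf J_1}$. Near a jump of $\mathbf u$, the inverse turns the jump into a flat piece, and the ${\sf M_1}$ parametrizations must be matched across the connecting segments of the thin graph. If citing \cite{whitt2002stochastic} directly is not acceptable, I would argue through strong parametrizations. If $(u,r)\in\Pi(\iota+\varepsilon\mathbf u)$, then swapping coordinates gives a parametrization of the inverse's thin graph. The distance to $\iota$ is then controlled by $\sup|r-u|$, using that $\varepsilon\to0$ and that $\mathbf u(0)=0$ prevents a defect at the origin.
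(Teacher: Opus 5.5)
\paragraph{Comparison with the paper.} The paper gives no proof of this statement. It is recorded in the appendix as the centered companion of Theorem \ref{thm:contcounting}, i.e.\ the counting-function result of \cite[Section 13.8]{whitt2002stochastic}. The $\delta_n$ appearing in it is indeed the $a_n$ of the hypotheses, as you read it.

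You instead derive the result from the more basic ${\sf M_1}$ inverse-with-centering (Vervaat) theorem of \cite[Section 13.7]{whitt2002stochastic}, and the derivation is sound:
\begin{itemize}
\item The identity $n^{-1}c_n(nt)=Z_n^{-1}(t)-n^{-1}$ holds exactly. The discretization error in $\overline{\mathbf{c}}_n$ is therefore the constant $-\delta_n^{-1}$, which vanishes because $\delta_n\to\infty$.
\item The hypothesis $n/\delta_n\to\infty$ is precisely what makes $\varepsilon_n^{-1}\to\infty$, as Vervaat's theorem requires.
\item Dividing by $m_n$, rather than centering at the fixed function $m\iota$, is the right device, since it needs no rate for $m_n\to m$.
\item The converse works: $\widehat Z_n$ and $\widehat Z_n^{-1}$ are both nonnegative, nondecreasing, unbounded and right-continuous, and $(\widehat Z_n^{-1})^{-1}=\widehat Z_n$ holds exactly here.
\end{itemize}
Compared with the bare citation, your route shows where each hypothesis, including $\overline{\mathbf{y}}(0)=0$, is used. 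Your fallback sketch, swapping the coordinates of parametric representations, is also the mechanism behind the cited ${\sf M_1}$ inverse theorem.

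\paragraph{A correction in Step 2.} The displayed identity is wrong. Since $\widehat Z_n^{-1}=Z_n^{-1}\circ\iota_{m_n}$, you have $Z_n^{-1}=\widehat Z_n^{-1}\circ\iota_{m_n^{-1}}$, and therefore
\begin{equation*}
Z_n^{-1}-m_n^{-1}\iota=\bigl(\widehat Z_n^{-1}-\iota\bigr)\circ\iota_{m_n^{-1}}.
\end{equation*}
There is no prefactor $m_n^{-1}$, and the composition is with $\iota_{m_n^{-1}}$, not $\iota_{m_n}$. Taken literally, your formula would give the limit $-m^{-2}\overline{\mathbf{y}}\circ\iota_m$.

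With the corrected identity, take $\mathbf u=m^{-1}\overline{\mathbf{y}}$. Then Theorem \ref{thm:contcomp}, applied with $\iota_{m_n^{-1}}\to\iota_{m^{-1}}$ (and with $\iota_{m_n}\to\iota_m$ for the converse), gives exactly $-m^{-1}\overline{\mathbf{y}}\circ\iota_{m^{-1}}$, as you claim.
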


\subsection{The Skorokhod reflection map}\label{sec:SRM}
Now we want to discuss the solutions of one-dimensional and one-sided reflection problems for c\'adl\'ag paths. Precisely, for a function $f \in \mathbb{D}$, a couple $(g,a) \in \mathbb{D} \times \mathbb{D}$ is a solution of the \textbf{Skorokhod reflection problem} for the path $f$ if $g(t)\ge 0$ for all $t \ge 0$, $a$ is non-decreasing, $a(0)=0$, $\int_{[0,+\infty)}g(t)da(t)=0$, and $g(t)=f(t)+a(t)$ for all $t \ge 0$.

The aim is to construct a non-negative c\'adl\'ag path $g$, that is called the \textbf{reflected} (or \textbf{regulated}) path, by adding a \textbf{regulator} $a$, that is allowed to increase only when $g$ touches $0$, to the original path $f$. Concerning existence and uniqueness of the solution of the Skorokhod reflection problem, we have the following result (see, for instance, \cite{tanaka1979stochastic}).
\begin{thm}
	For any function $f \in \mathbb{D}$ there exists a unique solution $(g,a) \in \mathbb{D} \times \mathbb{D}$ to the Skorokhod reflection problem for the path $f$. Precisely, for all $t \ge 0$,
	\begin{equation*}
		a(t)=\sup_{0 \le s \le t}\max\{-f(s),0\}, \quad g(t)=f(t)+\sup_{0 \le s \le t}\max\{-f(s),0\}.
	\end{equation*}
\end{thm}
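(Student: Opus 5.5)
Existence and uniqueness follow from a direct verification plus a comparison argument. First I will check that the explicit pair
\begin{equation*}
a(t)=\sup_{0 \le s \le t}\max\{-f(s),0\}, \qquad g(t)=f(t)+a(t),
\end{equation*}
solves the Skorokhod reflection problem for $f$. Since $f$ is c\'adl\'ag, $\max\{-f,0\}$ is c\'adl\'ag, so its running supremum $a$ is c\'adl\'ag and non-decreasing, and $g$ is c\'adl\'ag too. The condition $a(0)=0$ holds provided $f(0)\ge 0$; in the queueing applications $f(0)=0$, and otherwise one uses the convention $a(0)=\max\{-f(0),0\}$. Non-negativity follows because $a(t)\ge -f(t)$ gives $g(t)\ge 0$.

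The complementarity condition $\int g\,da=0$ needs the most care. The measure $da$ charges only the points of increase of $a$. At a jump time $t$ of $a$ we have $a(t)=-f(t)$, since the supremum has a new, larger value exactly at $t$, so $g(t)=0$. On the continuous part of $da$, any $t$ in its support is a point where $a$ strictly increases on every right or left neighbourhood. There, by right-continuity and the existence of left limits, $a(t)=\max\{-f(t),0\}$ or $a(t)=-f(t-)$. In the second case $g(t-)=0$, and the mass at a single point of a continuous measure is zero. So $g=0$ $da$-a.e., which I would make rigorous by showing $\{t: g(t)>0\}$ is a countable union of intervals on which $a$ is constant. Concretely, if $g(t)>\varepsilon$, then $-f(s)<a(t)-\varepsilon/2$ for $s$ in a right neighbourhood of $t$, so $a$ is constant there.

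Uniqueness goes through a difference argument. Suppose $(g_1,a_1)$ and $(g_2,a_2)$ are two solutions. Then $g_1-g_2=a_1-a_2$ has locally bounded variation, and I would expand $(g_1-g_2)^2$ via the integration-by-parts formula for BV c\'adl\'ag functions:
\begin{equation*}
(g_1-g_2)^2(t)\le 2\int_{(0,t]}(g_1-g_2)(s)\,d(a_1-a_2)(s).
\end{equation*}
The jump terms contribute $-\sum(\Delta)^2\le 0$, which justifies the inequality with the right-hand limits as written. By complementarity this equals $-2\int g_2\,da_1-2\int g_1\,da_2\le 0$, so $g_1=g_2$ and hence $a_1=a_2$.

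The main obstacle is handling the jumps correctly in this formula, that is, choosing left versus right values in the integrand. I would use the form $(h^2)(t)=2\int h(s-)\,dh(s)+\sum(\Delta h)^2$ with $h=g_1-g_2$. I would rewrite $h(s-)=h(s)-\Delta h(s)$, so that the sum becomes $-\sum(\Delta h)^2\le 0$, and then apply complementarity with the right-continuous values.

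An alternative, elementary route to uniqueness is also available. One shows that any solution satisfies $a\ge \sup_{s\le t}(-f(s))^+$, since $a(t)\ge -f(t)$ and $a$ is non-decreasing. Equality then holds because $a$ only increases when $g=0$, that is, when $a=-f$. This gives minimality and hence identification, and I would fall back on it if the BV calculus becomes messy.
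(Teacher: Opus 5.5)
Your proof is correct. The paper itself gives no argument for this theorem: it refers to Tanaka (1979), where the Skorokhod problem is treated in the more general setting of convex regions of $\R^d$. Your route is an elementary, self-contained one-dimensional proof instead.

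\textbf{Existence.} Your existence part is the direct verification:
\begin{itemize}
\item $a$ is c\`adl\`ag and non-decreasing, and $g\ge 0$ because $a\ge -f$;
\item a jump of $a$ at $t$ forces $a(t)=-f(t)$, so $g(t)=0$ there;
\item at every $t$ with $g(t)>0$, the function $a$ is continuous at $t$ and constant on a right neighbourhood of $t$.
\end{itemize}
It follows that the components of $\{g>0\}$ are countably many intervals with non-empty interior, and $a$ is constant on each of them. This gives $\int g\,da=0$.

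This rigorous version makes your earlier heuristic unnecessary. As written, that heuristic is loose: ``zero mass at a single point'' says nothing about a possibly uncountable set. What actually makes those points null for the continuous part of $da$ is that the ones with $g(t)>0$ are jump times of $f$, hence countable.

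\textbf{Uniqueness.} This is the one-dimensional form of the classical quadratic argument. For $h=g_1-g_2=a_1-a_2$ you get $h^2(t)=2\int_{(0,t]}h\,dh-\sum_{0<s\le t}(\Delta h(s))^2\le -2\int_{(0,t]}g_1\,da_2-2\int_{(0,t]}g_2\,da_1\le 0$. Integrating the right-continuous values both matches the complementarity condition and gives the jump correction the favourable sign. The minimality route you mention is even more elementary, and it also identifies $\Psi(f)$ as the smallest admissible regulator.

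\textbf{The case $t=0$.} Your caveat here is a genuine correction of the statement. If $f(0)<0$, no pair with $a(0)=0$ and $g\ge 0$ exists, since then $g(0)=f(0)<0$. So one must allow an atom of $da$ at $0$, i.e.\ set $a(0-)=0$.

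In that convention, add one line to the uniqueness proof, because your identity for $h^2$ implicitly starts from $h(0)=0$. Complementarity at the atom gives $g_i(0)\,a_i(0)=0$. Together with $g_i(0)=f(0)+a_i(0)\ge 0$, this forces $a_1(0)=a_2(0)=\max\{-f(0),0\}$, hence $h(0)=0$.
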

With this theorem in mind, we can define the \textbf{Skorokhod regulator and reflection maps}, $\Psi$ and $\Phi$, respectively, acting on $f \in \mathbb{D}$ as
\begin{equation*}
	\Psi(f)(t)=\sup_{0 \le s \le t}\max\{-f(s),0\} \qquad \Phi(f)(t)=f(t)+\sup_{0 \le s \le t}\max\{-f(s),0\},
\end{equation*}
i.e., in such a way that $(\Phi(f),\Psi(f)) \in \mathbb{D} \times \mathbb{D}$ is the solution of the Skorokhod reflection problem for the path $f$.

We are interested in the continuity properties of $\Psi$ and $\Phi$ with respect to the different topologies in $\mathbb{D}$. These properties are summarized in the following statement (see \cite[Lemmas 13.4.1, 13.5.1 and Theorems 13.4.1, 13.5.1]{whitt2002stochastic}).
\begin{thm}\label{thm:continuity}
	For all $f_1,f_2 \in \mathbb{D}$, for ${\sf T}={\sf U}$, ${\sf J_1}$ or ${\sf M_1}$ it holds
	\begin{equation*}
		d^{{\sf T}}(\Psi(f_1),\Psi(f_2)) \le d^{\sf T}(f_1,f_2), \qquad  d^{\sf T}(\Phi(f_1),\Phi(f_2)) \le 2d^{\sf T}(f_1,f_2).
	\end{equation*}
\end{thm}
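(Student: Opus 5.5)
The plan is to prove the three inequalities in increasing order of difficulty: first on $\mathbb{D}_T$ for a fixed $T$, then pass to $\mathbb{D}$ by the definition of convergence through restrictions to $[0,T]$ with $T\notin {\sf Disc}(f)$. In every case, the bound for $\Phi$ follows from the one for $\Psi$. In the uniform and ${\sf J_1}$ cases this uses $\Phi(f)=f+\Psi(f)$ and the triangle inequality, applied after the same time change or parametrization is used for both terms.

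\emph{Uniform case.} The map $x\mapsto \max\{-x,0\}$ is $1$-Lipschitz on $\R$. Also, for bounded functions $a,b$ on $[0,t]$ we have $|\sup_{[0,t]}a-\sup_{[0,t]}b|\le \sup_{[0,t]}|a-b|$. Combining these gives $\sup_{t\le T}|\Psi(f_1)(t)-\Psi(f_2)(t)|\le d^{\sf U}_T(f_1,f_2)$. Adding $|f_1(t)-f_2(t)|$ then gives the factor $2$ for $\Phi$.

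\emph{${\sf J_1}$ case.} The key observation is that $\Psi$ commutes with time changes: for $g\in\Lambda_T$ we have $g(0)=0$ and $g$ is an increasing bijection of $[0,s]$ onto $[0,g(s)]$, so $\Psi(f\circ g)=\Psi(f)\circ g$. Hence $\Phi(f\circ g)=\Phi(f)\circ g$ as well. Fix $g\in\Lambda_T$. By the uniform case,
\begin{equation*}
\sup_{t\le T}|\Psi(f_1)(g(t))-\Psi(f_2)(t)|=\sup_{t\le T}|\Psi(f_1\circ g)(t)-\Psi(f_2)(t)|\le \sup_{t\le T}|f_1(g(t))-f_2(t)|,
\end{equation*}
while the term $\sup_t|g(t)-t|$ is unchanged. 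Taking the infimum over $g$ yields the claim, and the same computation with the factor $2$ works for $\Phi$.

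\emph{${\sf M_1}$ case, which I expect to be the main obstacle.} Here one has to transport strong parametrizations. Given $(u,r)\in\Pi(f)$, I will define $\tilde r(s)=\sup_{0\le v\le s}\max\{-r(v),0\}$ and claim that $(u,\tilde r)\in\Pi(\Psi(f))$ and $(u,r+\tilde r)\in\Pi(\Phi(f))$. Continuity and the matching at continuity points of $f$ are immediate.

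The real check is on a jump time $t$, where $r$ moves monotonically through $[f(t-),f(t)]$ while $u\equiv t$. There $\Psi(f)(t)=\max\{\Psi(f)(t-),\max\{-f(t),0\}\}$, so $\tilde r$ moves monotonically from $\Psi(f)(t-)$ to $\Psi(f)(t)$. For $\Phi$, consider the two directions of the jump:
\begin{itemize}
\item On an upward jump, $\tilde r$ is constant, so $r+\tilde r$ increases monotonically.
\item On a downward jump, write $c=\Psi(f)(t-)$. Then $r+\tilde r=\max\{r+c,0\}$, which is monotone non-increasing, and its endpoints are $\Phi(f)(t-)$ and $\Phi(f)(t)$.
\end{itemize}
So both curves run monotonically along the thin graphs, as required.

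Finally, for parametrizations $(u_i,r_i)\in\Pi(f_i)$, the uniform estimate applied on the parameter interval $[0,1]$ gives $\sup|\tilde r_1-\tilde r_2|\le\sup|r_1-r_2|$. It also gives $\sup|(r_1+\tilde r_1)-(r_2+\tilde r_2)|\le 2\sup|r_1-r_2|$, with the $u_i$ unchanged. Taking the infimum over the $\Pi(f_i)$ on the right-hand side, and using that the induced pairs are admissible parametrizations on the left, gives $d^{\sf M_1}_T(\Psi(f_1),\Psi(f_2))\le d^{\sf M_1}_T(f_1,f_2)$ and the bound with constant $2$ for $\Phi$.
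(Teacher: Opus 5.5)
Your proof is correct. It follows the standard argument from Whitt's book, which the paper cites in place of giving its own proof: the sup-norm Lipschitz bound, commutation of $\Psi$ and $\Phi$ with $g\in\Lambda_T$ for ${\sf J_1}$, and, for ${\sf M_1}$, the fact that $(u,\tilde r)$ and $(u,r+\tilde r)$ are strong parametrizations of $\Psi(f)$ and $\Phi(f)$, which your jump-by-jump check, including the $\max\{r+c,0\}$ computation on downward jumps, verifies correctly. The only loose step is the passage to $\mathbb{D}$, which is cleaner through the metric $\int_0^{\infty}e^{-t}\min\{d^{\sf T}_t(f_1,f_2),1\}\,dt$ than through convergence of restrictions: use that $\Psi,\Phi$ commute with restriction to $[0,t]$ and that $\min\{2x,1\}\le 2\min\{x,1\}$.
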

We will also need some commutation properties of the reflection map with the composition with strictly monotone functions and multiplication by positive constants. The next lemma is immediate from the definition of both the reflection and the regulator map.
\begin{lem}\label{lem:scaling}
	Let $f \in \mathbb{D}$, $\lambda>0$ and $g$ be a strictly increasing homeomorphism of $[0,+\infty)$ into itself. Then
	\begin{equation*}
		\lambda\Phi(f)\circ g=\Phi(\lambda f \circ g) \qquad \lambda \Psi(f)\circ g=\Psi(\lambda f \circ g).
	\end{equation*}
\end{lem}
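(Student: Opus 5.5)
The identities in Lemma \ref{lem:scaling} will be checked pointwise, directly from the explicit formulas for $\Psi$ and $\Phi$, by a change of variables in the supremum. The key observation is that a strictly increasing homeomorphism $g$ of $[0,+\infty)$ onto itself maps $[0,t]$ bijectively onto $[0,g(t)]$ and satisfies $g(0)=0$. I will prove the identity for $\Psi$ first and then deduce the one for $\Phi$, since $\Phi(f)=f+\Psi(f)$.

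Fix $t\ge 0$ and start from
\begin{equation*}
\Psi(\lambda f\circ g)(t)=\sup_{0\le s\le t}\max\{-\lambda f(g(s)),0\}.
\end{equation*}
Because $\lambda>0$, we have $\max\{-\lambda x,0\}=\lambda\max\{-x,0\}$ for every real $x$. Also, $\sup$ commutes with multiplication by a positive constant. Together these give
\begin{equation*}
\Psi(\lambda f\circ g)(t)=\lambda\sup_{0\le s\le t}\max\{-f(g(s)),0\}.
\end{equation*}
Next substitute $u=g(s)$. As $s$ ranges over $[0,t]$, the variable $u$ ranges over exactly $[0,g(t)]$, because $g$ is a continuous strictly increasing bijection with $g(0)=0$. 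The supremum therefore equals
\begin{equation*}
\sup_{0\le u\le g(t)}\max\{-f(u),0\}=\Psi(f)(g(t)).
\end{equation*}
This yields $\Psi(\lambda f\circ g)=\lambda\Psi(f)\circ g$.

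For $\Phi$, use $\Phi(h)=h+\Psi(h)$ with $h=\lambda f\circ g$:
\begin{equation*}
\Phi(\lambda f\circ g)=\lambda f\circ g+\lambda\Psi(f)\circ g=\lambda\bigl(f+\Psi(f)\bigr)\circ g=\lambda\Phi(f)\circ g.
\end{equation*}
It remains to note that $\lambda f\circ g\in\mathbb{D}$. This holds because composing a c\'adl\'ag function with a continuous increasing homeomorphism preserves right continuity and the existence of left limits, so both sides are well defined.

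No real obstacle is expected. The one point needing care is the surjectivity of $g$ onto $[0,g(t)]$, which is what allows the change of variables in the supremum. If $g$ were only non-decreasing and discontinuous, the range of $g$ on $[0,t]$ could miss points, and the identity could fail. This is exactly why the statement assumes $g$ is a homeomorphism.
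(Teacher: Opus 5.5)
Your proof is correct and takes the same approach as the paper. The paper gives no written argument and only says the lemma is immediate from the definitions of $\Phi$ and $\Psi$; your pointwise check, which pulls $\lambda>0$ out of $\max\{\cdot,0\}$ and the supremum, uses $g([0,t])=[0,g(t)]$ (here $g(0)=0$ is essential), and then obtains $\Phi$ from $\Phi(h)=h+\Psi(h)$, is exactly the verification the paper leaves implicit.
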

The previous lemma can be further extended to the case in which $g$ is not continuous while $f$ is a stepped function as follows. This is stated in the next lemma whose proof is immediate and is omitted.
\begin{lem}\label{lem:scaling2}
	Let $(a_n)_{n \ge 1} \subset \R$, $\lambda>0$ and $g \in \mathbb{D}$ be increasing, such that $\lambda^{-1}\mathbb{N} \subset g([0,+\infty))$. Set also $\mathbf{a}^\lambda(t)=a_{\lfloor \lambda t \rfloor}$. Then
	\begin{equation*}
		\Phi(\mathbf{a}^\lambda)\circ g=\Phi(\mathbf{a}^{\lambda} \circ g) \qquad \Psi(\mathbf{a}^\lambda)\circ g=\Psi(\mathbf{a}^\lambda \circ g).
	\end{equation*}
\end{lem}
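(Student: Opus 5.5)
The idea is to reduce everything to the definitions of $\Phi$ and $\Psi$, checking the identity pointwise in $t$. Since $\Phi(h)=h+\Psi(h)$ for every $h\in\mathbb{D}$, it suffices to prove the statement for $\Psi$:
\begin{equation*}
\Psi(\mathbf{a}^\lambda)(g(t))=\Psi(\mathbf{a}^\lambda\circ g)(t), \qquad t\ge 0.
\end{equation*}
Adding $\mathbf{a}^\lambda(g(t))=(\mathbf{a}^\lambda\circ g)(t)$ to both sides then gives the identity for $\Phi$.

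Write out both sides:
\begin{equation*}
\Psi(\mathbf{a}^\lambda)(g(t))=\sup_{0\le u\le g(t)}\max\{-a_{\lfloor \lambda u\rfloor},0\}, \qquad \Psi(\mathbf{a}^\lambda\circ g)(t)=\sup_{0\le s\le t}\max\{-a_{\lfloor \lambda g(s)\rfloor},0\}.
\end{equation*}
The inequality ``$\ge$'' is immediate. Since $g$ is non-decreasing, every $s\in[0,t]$ gives $u=g(s)\in[g(0),g(t)]\subseteq[0,g(t)]$, so the right-hand side is a supremum over a subset of the values appearing on the left. Here I use that $g\ge 0$, which follows from $g$ mapping into the domain $[0,+\infty)$ of $\mathbf{a}^\lambda$.

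For the inequality ``$\le$'', note that $\mathbf{a}^\lambda$ is a step function taking the value $a_k$ on $[k/\lambda,(k+1)/\lambda)$. The supremum on the left is therefore a maximum over the finitely many indices $k$ with $k/\lambda\le g(t)$, that is, $0\le k\le \lfloor\lambda g(t)\rfloor$. For each such $k$, I need some $s\le t$ with $\lfloor\lambda g(s)\rfloor=k$.

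The surjectivity hypothesis $\lambda^{-1}\mathbb{N}\subset g([0,+\infty))$ handles this. It gives some $s_k$ with $g(s_k)=k/\lambda$, so $\lfloor \lambda g(s_k)\rfloor=k$. The remaining point is to show that $s_k$ can be taken $\le t$. If $k/\lambda<g(t)$, any such $s_k$ satisfies $s_k<t$ by monotonicity, since $s_k>t$ would force $g(s_k)\ge g(t)$. If $k/\lambda=g(t)$, simply take $s=t$. In both cases the term $\max\{-a_k,0\}$ appears on the right-hand side, which gives ``$\le$''. (I read $\mathbb{N}$ as including $0$; otherwise the index $k=0$ is covered by $s=0$, provided $g(0)<1/\lambda$, as holds for the counting processes used.)

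The main obstacle is only this bookkeeping: ensuring every step level $k\le\lfloor\lambda g(t)\rfloor$ is actually visited by $g$ before time $t$. That is exactly where the surjectivity onto $\lambda^{-1}\mathbb{N}$ and monotonicity are used. Continuity of $g$ is never needed, because $\mathbf{a}^\lambda$ is constant between the lattice points.
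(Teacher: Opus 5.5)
Your proof is correct. It is the direct pointwise check from the definitions of $\Psi$ and $\Phi=\mathrm{id}+\Psi$ that the paper calls immediate and omits, and your observation that the index $k=0$ needs $0\in\mathbb{N}$ (or $g(0)<1/\lambda$) is a worthwhile precision of the hypothesis, since the identity can fail otherwise.
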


Finally, let us recall the following result, as given in \cite[Theorem 13.5.2]{whitt2002stochastic}.
\begin{thm}\label{thm:contcent}
	Let $(c_n)_{n \ge 1}\subset \R$ and $(f_n)_{n \ge 1} \subset \mathbb{D}$ such that $f_n-\iota_{c_n} \overset{\sf M_1}{\to} f$. Then
	\begin{itemize}
		\item[$(i)$] If $c_n \to +\infty$, then $\Phi(f_n)-\iota_{c_n} \overset{\sf M_1}{\to} f-\min\{f(0),0\}$;
		\item[$(ii)$] If $c_n \to -\infty$, $f(0) \le 0$ and $f$ has no positive jump, then $\Phi(f_n) \overset{\sf M_1}{\to} 0$.
	\end{itemize}
\end{thm}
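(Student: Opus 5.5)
The plan is to work directly with the explicit formulas
\begin{equation*}
\Psi(g)(t)=\sup_{0\le s\le t}\max\{-g(s),0\}, \qquad \Phi(g)(t)=\max\Bigl\{g(t),\ \sup_{0\le s\le t}\bigl(g(t)-g(s)\bigr)\Bigr\}.
\end{equation*}
Set $h_n:=f_n-\iota_{c_n}$, so that $h_n\overset{\sf M_1}{\to} f$. Since ${\sf M_1}$ convergence on $\mathbb{D}$ is defined through restrictions to $[0,T]$ with $T\notin{\sf Disc}(f)$, I fix such a $T$. I will use three standard consequences of ${\sf M_1}$ convergence, all obtainable from parametric representations of the thin graphs:
\begin{itemize}
\item[(a)] $\sup_n\sup_{[0,T]}|h_n|<\infty$;
\item[(b)] $h_n(0)\to f(0)$;
\item[(c)] for every $\eta>0$ there are $\varepsilon>0$ and $n_0$ such that $\sup_{s\le\varepsilon}|h_n(s)-f(0)|\le\eta$ for all $n\ge n_0$. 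This uses right-continuity of $f$ at $0$ and the fact that points on nearby thin graphs are close.
\end{itemize}

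\emph{Item $(i)$.} Here $\Phi(f_n)-\iota_{c_n}=h_n+\Psi(f_n)$, and $-f_n(s)=-h_n(s)-c_ns$.
\begin{itemize}
\item Take $s\in[\varepsilon,T]$. By (a), $-h_n(s)-c_ns\le \sup|h_n|-c_n\varepsilon\to-\infty$. Hence, for $n$ large and all $t\le T$, only times $s\le\varepsilon$ contribute, and $\Psi(f_n)(t)\le\sup_{s\le\varepsilon}\max\{-h_n(s),0\}$.
\item The lower bound is $\Psi(f_n)(t)\ge\max\{-h_n(0),0\}$.
\item By (b) and (c), both bounds lie within $\eta$ of $\max\{-f(0),0\}=-\min\{f(0),0\}$. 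Therefore $\Psi(f_n)\to-\min\{f(0),0\}$ uniformly on $[0,T]$.
\item The limit of $\Psi(f_n)$ is a continuous constant, so it has no jumps. Theorem \ref{thm:contsum} then gives $h_n+\Psi(f_n)\overset{\sf M_1}{\to} f-\min\{f(0),0\}$.
\end{itemize}

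\emph{Item $(ii)$.} Write $f_n=h_n-|c_n|\iota$. Since $\Phi(f_n)\ge0$, it suffices to show $\sup_{[0,T]}\Phi(f_n)\to0$; uniform convergence to the continuous limit $0$ then implies ${\sf M_1}$ convergence. The key ingredient is an ``upward modulus'' estimate:
\begin{equation*}
\forall\eta>0\ \exists\delta>0,\ n_0:\qquad h_n(t)-h_n(s)\le\eta\quad\mbox{whenever } 0\le s\le t\le T,\ t-s\le\delta,\ n\ge n_0.
\end{equation*}
Given this, I bound the two terms in the formula for $\Phi(f_n)$.
\begin{itemize}
\item Increments with $t-s\le\delta$: $f_n(t)-f_n(s)\le h_n(t)-h_n(s)\le\eta$.
\item Increments with $t-s>\delta$: $f_n(t)-f_n(s)\le 2\sup|h_n|-|c_n|\delta\to-\infty$.
\item The term $f_n(t)$ for $t\le\delta$: $f_n(t)\le h_n(t)\le h_n(0)+\eta\to f(0)+\eta\le\eta$, using (b) and $f(0)\le0$.
\item The term $f_n(t)$ for $t>\delta$: $f_n(t)\le\sup|h_n|-|c_n|\delta\to-\infty$.
\end{itemize}
Combining these, $\limsup_n\sup_{[0,T]}\Phi(f_n)\le2\eta$ for every $\eta>0$, which gives the claim.

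The main obstacle is proving the upward modulus estimate. I would first show the corresponding property for $f$ itself: because $f$ is c\'adl\'ag with no positive jumps, for every $\eta>0$ there is $\delta>0$ such that $f(t)-f(s)\le\eta/2$ whenever $0\le t-s\le\delta$. I would prove this by contradiction, extracting sequences $s_k\le t_k$ with $t_k-s_k\to0$ and $f(t_k)-f(s_k)\ge\eta/2$, passing to a common limit point, and noting that the only way such increments can persist is through a jump of size at least $\eta/2$ upward, which is excluded. Equivalently, every point $(t,z)$ of the thin graph $\Gamma_f$ has $z\le f(s)+\eta/2$ for each earlier $s$ within $\delta$, since the connecting segments of $\Gamma_f$ only go downward.

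I would then transfer this property to $h_n$ using strong parametrizations $(u_n,r_n)\in\Pi(h_n)$ and $(u,r)\in\Pi(f)$ that are uniformly within $\epsilon_n\to0$. Given $s\le t$ with $t-s\le\delta/2$, I pick parameters with $u_n$ hitting $s$ and $t$. The corresponding points of $\Gamma_f$ have times within about $\delta/2+2\epsilon_n\le\delta$ of each other, are in the same order, and have values within $\epsilon_n$ of $h_n(s)$ and $h_n(t)$. This yields $h_n(t)-h_n(s)\le\eta/2+2\epsilon_n\le\eta$ for $n$ large.

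Some care is needed near the endpoint $T$ and with ordering ties along parametrizations. This is where I expect most of the technical effort; otherwise I would fall back on citing the ${\sf M_1}$ oscillation characterization in \cite[Section 12.5]{whitt2002stochastic}.
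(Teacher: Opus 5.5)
Your proof is correct, but the paper proves nothing here: it quotes the statement from \cite[Theorem 13.5.2]{whitt2002stochastic} without argument. You instead give a self-contained proof on each $[0,T]$ with $T\notin{\sf Disc}(f)$.

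\textbf{Item (i).} You show that the regulator $\Psi(f_n)$ converges \emph{uniformly} to the constant $-\min\{f(0),0\}$. The drift $c_n\to+\infty$ makes all times $s\ge\varepsilon$ irrelevant in the running supremum, and (b)--(c) control the times $s\le\varepsilon$. Adding this to $h_n=f_n-\iota_{c_n}$ then gives the claim.

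\textbf{Item (ii).} You isolate an ``upward modulus'' lemma for ${\sf M_1}$-approximants of a limit with no positive jumps. You prove it by first establishing the property on the thin graph $\Gamma_f$, whose connecting segments only go downward, and then transporting it through close strong parametrizations. This is sound, and it shows exactly where each hypothesis enters:
\begin{itemize}
\item $f(0)\le 0$ enters through $h_n(0)\to f(0)$, which relies on the convention $f(0-)=f(0)$ in the thin graph;
\item the absence of positive jumps enters only through the modulus lemma.
\end{itemize}

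\textbf{Comparison.} The citation is shorter. Your route is elementary and gives more than the statement claims: locally uniform convergence $\Phi(f_n)\to 0$ in (ii), and uniform convergence of the regulator part in (i).

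\textbf{Two things to tidy up in the transfer step.}
\begin{itemize}
\item You must choose parameters $a,b$ with $(u_n(a),r_n(a))=(s,h_n(s))$ and $(u_n(b),r_n(b))=(t,h_n(t))$, not merely $u_n(a)=s$ and $u_n(b)=t$. Otherwise $r_n(a)$ could lie anywhere on the jump segment $[h_n(s-),h_n(s)]$.
\item For $s<t$ these two points are strictly ordered in $\Gamma_{h_n}$. Monotonicity of the parametrization therefore forces $a<b$, so the corresponding points of $\Gamma_f$ are ordered as well, with time gap at most $t-s+2\epsilon_n$.
\end{itemize}
Since everything takes place inside $[0,T]$, the worries about ordering ties and the endpoint $T$ disappear. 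No fallback to the oscillation characterization in \cite[Section 12.5]{whitt2002stochastic} is needed.
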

By a simple modification of the proof given in \cite[Theorem 6.2, Item $(ii)$]{whitt1980some}, we can also provide the following result for non-linear centering.
\begin{thm}\label{thm:contcent2}
	Let $(c_n)_{n \ge 1}\subset \R$ such that $c_n \to -\infty$, $(f_n)_{n \ge 1} \subset \mathbb{D}$ and $g \in \mathbb{D}_{\uparrow}$ with $g(t)=0$ if and only if $t=0$. If $f_n-c_ng \overset{\sf J_1}{\to} f$ for some function $f \in \mathbb{D}$, then $\Phi(f_n) \overset{\sf U}{\to} 0$.
\end{thm}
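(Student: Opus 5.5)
The plan is to use a pathwise argument in the spirit of Whitt's proof of the linear-centering result (Theorem \ref{thm:contcent}(ii)). The statement is deterministic, so fix $T>0$; it suffices to show $\sup_{t\in[0,T]}\Phi(f_n)(t)\to 0$ for every $T$. Write
\begin{equation*}
h_n:=f_n-c_ng,
\end{equation*}
so that $f_n=h_n+c_ng$ with $h_n\overset{\sf J_1}{\to}f$ and $-c_n\to+\infty$. Since $\Phi(f_n)\ge 0$, only an upper bound is needed. I will use the elementary representation
\begin{equation*}
\Phi(f_n)(t)=\sup_{0\le s\le t}\big(f_n(t)-f_n(s)\big)\vee f_n(t),
\end{equation*}
which follows directly from the definition of $\Phi$.

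The key steps are as follows.

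Step 1 (uniform bound on $h_n$). Because $h_n\overset{\sf J_1}{\to}f$ on $[0,T]$, we have $\sup_n\sup_{[0,T]}|h_n|\le M<\infty$. Moreover, $\mathsf J_1$ convergence gives a uniform modulus $w'$ control: for every $\varepsilon>0$ there are $\eta>0$ and $n_0$ such that for $n\ge n_0$ and $0\le s\le t\le T$ with $t-s<\eta$,
\begin{equation*}
h_n(t)-h_n(s)\le \varepsilon+J^+_n(s,t),
\end{equation*}
where $J^+_n(s,t)$ accounts for the positive jumps of $h_n$ near the jumps of $f$. To avoid dealing with these jumps, I will instead use the cruder bound $h_n(t)-h_n(s)\le 2M$ when $t-s\ge\eta$, and handle short increments separately.

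Step 2 (the term $f_n(t)$ alone). We have $f_n(t)=h_n(t)+c_ng(t)\le M+c_ng(t)$. On $[\eta',T]$ this is negative for large $n$, since $g(\eta')>0$ and $g$ is non-decreasing. On $[0,\eta']$ we only have $f_n(t)\le h_n(t)\le |f(0)|+o(1)+\varepsilon$ by right continuity at $0$. The case $f(0)>0$ needs care: $f_n(0)=h_n(0)\to f(0)$, and $\Phi$ keeps $f(0)$ if it is positive. So either the intended statement implicitly has $f(0)\le 0$ (true in the application, where $f(0)=0$, i.e.\ $f_n(0)=0$), or the conclusion must be understood on $(0,\infty)$. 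I will add the hypothesis $f(0)=0$, which is satisfied in the use made in Theorem \ref{thm:limits3Q}.

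Step 3 (increments). For $s\le t$ we have $f_n(t)-f_n(s)=h_n(t)-h_n(s)+c_n\big(g(t)-g(s)\big)$. Since $c_n<0$ and $g$ is non-decreasing, the second term is $\le 0$. So I need the first term small whenever the second is not very negative. This is the main obstacle. Monotonicity alone does not force $g(t)-g(s)$ to be bounded below when $t-s\ge\eta$, because $g$ may be flat, so the drift kills nothing on flat stretches of $g$. Whitt's linear case uses strict growth there. Here I will have to exploit exactly what the hypothesis gives: $g(t)=0$ only at $t=0$. This suggests a reduction: reflection at $0$ of $f_n$ is governed by the running infimum, and $c_ng(s)\to-\infty$ for every $s>0$. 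I would therefore try to show that the regulator $\Psi(f_n)(t)$ essentially equals $-f_n$ at times past any $\eta'$. Concretely, for $t\ge \eta'$,
\begin{equation*}
\Phi(f_n)(t)=f_n(t)-\inf_{s\le t}f_n(s)\le \big(h_n(t)-\inf_{s\le t}h_n(s)\big)+c_n\Big(g(t)-g\big(s^*_n(t)\big)\Big),
\end{equation*}
where $s^*_n(t)$ is a near-minimiser.

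Step 4 (closing). Handling the case where $s^*_n(t)$ is far from $t$ while $g$ is flat between them is where the argument can genuinely fail. My first attempt is to show that near-minimisers of $h_n+c_ng$ must lie where $g$ attains its current level, i.e.\ within the last flat stretch of $g$ containing $t$, and then use $\mathsf J_1$ convergence of $h_n$ on that stretch. If this fails in general, I would fall back on the structure of the application, where $g=L_\beta$ and $f$ is continuous or has only non-negative jumps. There the dominance $|c_n|\to\infty$ together with the absence of upward jumps bounds $h_n(t)-h_n(s)$ by $\varepsilon$ modulo the drift, giving $\sup_{[0,T]}\Phi(f_n)\le 2\varepsilon$ for large $n$.
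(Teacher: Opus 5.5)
Your plan breaks down exactly where you feared (Steps 3--4), and no argument can repair it. As stated, the theorem is false. Neither your route nor the ``simple modification'' of Whitt's linear-centering proof to which the paper defers can work without extra hypotheses.

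The assumption ``$g(t)=0$ iff $t=0$'' only disposes of the term $f_n(t)$, i.e.\ the case $s=0$ of your representation. On a stretch where $g$ is flat the drift is switched off, and $f_n(t)-f_n(s)=h_n(t)-h_n(s)\to f(t)-f(s)$, which need not be small. Concretely, take $g=\min\{\iota,1\}$, $f=(\iota-1)^+$ and $f_n=f+c_ng$. Then $f_n-c_ng=f$ is continuous with $f(0)=0$, yet $\inf_{s\le 2}f_n(s)=f_n(1)=c_n$ once $c_n<0$, so $\Phi(f_n)(2)=1$ for all large $n$. Hence your ``first attempt'' in Step 4 fails at its final step: localising near-minimisers to the last flat stretch of $g$ and using ${\sf J_1}$ convergence there does not make the increment small.

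Besides $f(0)>0$, which you caught, there is a second obstruction you do not address: positive jumps of $f$. With $g=\iota$, $f=\mathbf{1}_{[1,\infty)}$ and $f_n=f+c_n\iota$, one gets $\Phi(f_n)(1)\ge f_n(1)-f_n(1-)=1$. This is why Theorem~\ref{thm:contcent}(ii) assumes that $f$ has no positive jumps.

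Your fallback does not help either. Note first that ``only non-negative jumps'' is the opposite of ``absence of upward jumps''. More importantly, in the application $g=L_\beta$ is constant on every interval $[\sigma_\beta(u-),\sigma_\beta(u))$. For $\alpha<\beta\le 2\alpha$ the limit $\widehat{\mathbf{W}}$ contains $\lambda L_\alpha$, which is independent of $L_\beta$ and, with positive probability, increases on such an interval. There $\Phi(f_n)(t)\ge h_n(t)-h_n(s)\to\lambda(L_\alpha(t)-L_\alpha(s))>0$.

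This is not an artefact of the method. Write $N^{\sf a}=N_1(\lambda L_\alpha)$ and $N^{\sf d}=N_2(\mu L_\beta)$ and use self-similarity: a departure-free gap of length of order $n^{1/\beta}$ contains of order $n^{\alpha/\beta}=n^\delta$ arrivals. So Theorem~\ref{thm:limits3Q}(iii) itself fails in that range.

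What your Step 3 does prove is a corrected statement. Suppose $f$ is continuous, $f(0)\le 0$, and $f(t)-f(s)\le\omega(g(t)-g(s))$ for $0\le s\le t\le T$, with $\omega$ bounded and $\omega(0+)=0$. This holds, e.g., if $g$ is strictly increasing, or if $f=\varphi\circ g$ with $\varphi,g$ continuous; the latter covers $f=\mathbf{W}\circ L_\beta$ when $\beta>2\alpha$. Set $e_n=\sup_{[0,T]}|h_n-f|\to 0$. Then your representation of $\Phi$ gives
\begin{equation*}
\sup_{t\in[0,T]}\Phi(f_n)(t)\le\sup_{0\le x\le g(T)}\bigl(\omega(x)+c_nx\bigr)^+ +2e_n\longrightarrow 0.
\end{equation*}
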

For further properties of both the reflection and regulator maps we refer to \cite{whitt2002stochastic} and references therein.

\subsection{Stable processes and a generalized functional central limit theorem}\label{sec:stab}
We fix now a probability space $(\Omega, \cF, \bP)$ on which all the involved random variables and processes will be defined, while $\E$ will denote the expectation operator. Furthermore, the symbol $\overset{d}{=}$ is used to denote equality in distribution. In the following we will make extensive use of stable distributions and L\'evy stable motions. Hence, let us recall some definitions concerning these distributions. We will mostly use the notation in \cite{whitt2002stochastic}.
\begin{defn}
	A random variable $X$ is said to be \textbf{stable} if for any $(a_1,a_2) \in \R^2$ there exist two constants $b$ and $c$ such that for any independent copies $X_1$ and $X_2$ of $X$ it holds
	\begin{equation*}
		a_1X_1+a_2X_2 \overset{d}{=}bX+c.
	\end{equation*}
\end{defn}
It can be shown (see \cite[Theorem 1.1.2]{samorodnitsky1994non}) that if $X$ is stable there exists a constant $\alpha \in (0,2]$ such that
\begin{equation*}
	b^\alpha=a_1^\alpha+a_2^\alpha.
\end{equation*}	
For $\alpha=2$, we are identifying Gaussian random variables and these are the only stable random variables with finite variance. In general, we call $\alpha$ the \textbf{stability index}. The characteristic function of an $\alpha$-stable random variable can be identified as follows (see \cite[Definition 1.1.6]{samorodnitsky1994non}):
\begin{equation*}
	-\log(\E[e^{i\theta X}])=\begin{cases} \displaystyle \gamma^\alpha|\theta|^\alpha\left(1-i\beta {\sf sign}(\theta)\tan\left(\frac{\pi \alpha}{2}\right)\right)+i\mu \theta & \alpha \not = 1 \\[7pt]
		\displaystyle 
		\gamma |\theta|\left(1+i\beta \frac{2}{\pi}{\sf sign}(\theta)\log(|\theta|)\right)+i\mu \theta & \alpha=1\end{cases}		
\end{equation*}
for three parameters $\beta \in [-1,1]$, $\gamma>0$ and $\mu \in \R$, called respectively the \textbf{skewness}, \textbf{scale} and \textbf{location} parameter. We use the notation $X \sim S_\alpha(\gamma,\beta,\mu)$ to underline that $X$ is an $\alpha$-stable random variable with given skewness, scale and location. Let us recall some properties of stable random variables (see \cite[Section 1.2]{samorodnitsky1994non}).
\begin{prop}
	The following properties hold true:
	\begin{itemize}
		\item If $X \sim S_\alpha(\gamma,\beta,\mu)$, then $X-\mu \sim S_\alpha(\gamma,\beta,0)$. 
		\item If $X \sim S_\alpha(\gamma,\beta,0)$ and $a \in \R \setminus \{0\}$, then $aX \sim S_\alpha(|a|\gamma,{\sf sign}(a)\beta,0)$.
		\item If $\alpha<1$ and $X \sim S_\alpha(\gamma,\pm 1, 0)$, then $\bP({\sf sign}(\beta)X<0)=0$.
		\item If either $\alpha \in [1,2]$ or $\alpha<1$ and $\beta \in (-1,1)$, then for all intervals $[a,b] \subset \R$ it holds $\bP(X \in [a,b])>0$.
		\item If $X \sim S_\alpha(\gamma,\beta,\mu)$ and $p>0$, then $\E[|X|^p]<\infty$ if and only if $p \in (0,\alpha)$.
	\end{itemize}
\end{prop}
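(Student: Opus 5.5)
The plan is to derive every item from the characteristic function representation given just before the statement, together with the Lévy--Khintchine decomposition of $X$ as the value at time $1$ of a Lévy process. Throughout I write $\varphi_X(\theta)=\E[e^{i\theta X}]$.

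For the first two items I would compute directly. Shifting by the location parameter multiplies $\varphi_X$ by a unimodular factor $e^{\pm i\mu\theta}$, and this factor exactly cancels the linear term in the exponent, which gives $X-\mu \sim S_\alpha(\gamma,\beta,0)$. For scaling, I substitute $\theta\mapsto a\theta$ and use the identities
\begin{equation*}
|a\theta|^\alpha=|a|^\alpha|\theta|^\alpha, \qquad {\sf sign}(a\theta)={\sf sign}(a){\sf sign}(\theta).
\end{equation*}
These give scale $|a|\gamma$ and skewness ${\sf sign}(a)\beta$.

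There is a caveat when $\alpha=1$. The term $\log|a\theta|=\log|a|+\log|\theta|$ produces an additional linear term $\frac{2}{\pi}\beta\gamma a\log|a|\,\theta$. So the statement holds verbatim for $\alpha\neq1$ or $\beta=0$, and in general only up to this location shift. I would record this explicitly rather than hide it.

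For the third item I would pass to the Lévy--Khintchine triplet. A strictly $\alpha$-stable law with $\alpha<1$ has Lévy measure
\begin{equation*}
\nu(dx)=\bigl(c_+\mathbf{1}_{x>0}+c_-\mathbf{1}_{x<0}\bigr)|x|^{-1-\alpha}\,dx, \qquad \beta=\frac{c_+-c_-}{c_++c_-}.
\end{equation*}
Since $\alpha<1$, no compensation is needed and the location is $0$. If $\beta=\pm1$, then $\nu$ is concentrated on one half-line. Hence $\pm X$ is the value at time $1$ of a driftless subordinator, equivalently a limit of compound Poisson sums of jumps of a single sign. It is therefore a.s. of sign ${\sf sign}(\beta)$. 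Alternatively, one can check that the Laplace transform $\E[e^{-s\,{\sf sign}(\beta)X}]=e^{-c s^\alpha}$ is finite for $s\geq 0$.

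The fifth item follows from tail asymptotics. For $\alpha<2$, the compound-Poisson/large-jump decomposition gives
\begin{equation*}
\bP(|X|>x)\sim C_\alpha\gamma^\alpha x^{-\alpha}, \qquad x\to+\infty.
\end{equation*}
This is because the tail of $X$ is asymptotically that of $\nu$, while the small-jump part has all moments. Integrating $\E|X|^p=\int_0^\infty p x^{p-1}\bP(|X|>x)\,dx$ then gives finiteness exactly for $p<\alpha$. For $\alpha=2$ the law is Gaussian and all moments are finite, so the item should be read for $\alpha<2$.

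The main obstacle is the fourth item, full support. My plan is to show that the support of $X$ is a closed interval and that it is unbounded on both sides. If $\nu$ charges both half-lines, which is the case $|\beta|<1$, or if $\alpha=2$ so that there is a Gaussian part, then sums of independent pieces reach arbitrarily large positive and negative values. Since the support of a convolution is the closure of the sum of the supports, the support is $\R$.

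The delicate cases are $\alpha\geq1$ with $\beta=\pm1$, where $\nu$ lives on one side only. For $\alpha\in[1,2)$ the compensating drift diverges as the truncation level goes to $0$, because $\int_0^1 x\,\nu(dx)=\infty$. Small positive jumps minus a huge compensating drift can therefore reach any negative level, and large jumps reach any positive level. I would make this precise using the classical criterion of Sato for the support of infinitely divisible laws. As a fallback, I would invoke the positivity of stable densities on the interior of their support, as in Zolotarev's work, to conclude that $\bP(X\in[a,b])>0$.
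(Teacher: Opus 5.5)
Your proposal is correct. The paper gives no proof of this proposition; it only cites \cite{samorodnitsky1994non}, Section 1.2, so the comparison is with a bare citation.

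You do items 1--2 by substitution in the characteristic function. For the other items you work from the L\'evy--Khintchine triplet:
\begin{itemize}
\item Item 3: a one-sided L\'evy measure with zero drift. This is legitimate, since for $\alpha<1$ the uncompensated integral $\int(e^{i\theta x}-1)\,\nu(dx)$ reproduces the $\mu=0$ exponent with no linear term.
\item Item 5: regularly varying tails.
\item Item 4: Sato's support criterion, which is what really does the work there. Unboundedness on both sides alone would not exclude gaps, so the ``convolution of supports'' heuristic is not enough on its own. The criterion applies because $\int_{|x|\le 1}|x|\,\nu(dx)=\infty$ for every $\alpha\in[1,2)$, which covers $\beta=\pm1$, and because the L\'evy density charges both half-lines when $\alpha<1$, $|\beta|<1$.
\end{itemize}

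The citation buys brevity. Your derivation actually tests the statement, and you are right that it is not literally true as written:
\begin{itemize}
\item item 2 needs the extra location term you computed (times $i$ in the exponent) when $\alpha=1$ and $\beta\neq0$;
\item item 5 fails for $\alpha=2$;
\item item 4 needs $a<b$, since non-degenerate stable laws are absolutely continuous.
\end{itemize}

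Two small refinements.
\begin{itemize}
\item In item 1, fix a convention rather than writing $e^{\pm i\mu\theta}$. The formula displayed in the paper has $+i\mu\theta$ inside $-\log\E[e^{i\theta X}]$, and with it $X+\mu$, not $X-\mu$, is the centred variable. Item 1 therefore presupposes the Samorodnitsky--Taqqu sign $\log\E[e^{i\theta X}]=\dots+i\mu\theta$.
\item In item 5 you do not need the asymptotic equivalence $\bP(|X|>x)\sim C_\alpha\gamma^\alpha x^{-\alpha}$ at all. For infinitely divisible laws, $\E[|X|^p]<\infty$ if and only if $\int_{|x|>1}|x|^p\,\nu(dx)<\infty$. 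For the stable L\'evy measure, whose density is a multiple of $|x|^{-1-\alpha}$ on each half-line it charges, this is exactly $p<\alpha$.
\end{itemize}
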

In case $\beta=\pm 1$, we say that $X \sim S_\alpha(\gamma,\beta,\mu)$ is \textbf{totally skewed}, while if $\beta=0$ we say that $X$ is \textbf{symmetric}. Let us now recall that a L\'evy process $X=\{X(t), \ t \ge 0\}$ is a c\'adl\'ag stochastic process whose increments are stationary and independent. We always assume that $X(0)=0$ a.s. 
\begin{defn}
	A L\'evy process $X=\{X(t), t \ge 0\}$ is called a \textbf{stable L\'evy motion} if there exist $\alpha \in (0,2]$, $\beta \in [-1,1]$ and $\gamma>0$ such that
	\begin{equation*}
		X(t+s)-X(s)\sim S_\alpha(\gamma t^{\frac{1}{\alpha}},\beta,0).
	\end{equation*}
	We will denote this as $X \sim \mathbf{S}_\alpha(\gamma,\beta)$. Notice that, if $\alpha=2$, $X \sim \mathbf{S}_\alpha(1,\beta)$ is a Brownian motion independently of $\beta$.
\end{defn}
Let us consider a special case. If $\beta=1$, $\alpha \in (0,1)$ and $\gamma=\left(\cos\left(\frac{\pi \alpha}{2}\right)\right)^{-\frac{1}{\alpha}}$, we say that $\sigma_\alpha \sim S_\alpha(\gamma,1)$ is an $\alpha$-\textbf{stable subordinator}. It is well-known that $\alpha$-stable subordinators are $\alpha^{-1}$-self similar, i.e., for any $z>0$, $\sigma_\alpha \circ \iota_z \overset{d}{=}z^{\frac{1}{\alpha}}\sigma_\alpha$. Furthermore, $\alpha$-stable subordinators are characterised in law by means of the Laplace transform $\E[e^{-z\sigma_\alpha(t)}]=e^{-tz^\alpha}$. Furthermore $\alpha$-stable subordinators are a.s. strictly increasing. Given an $\alpha$-stable subordinator $\sigma_\alpha$, we can define its first passage time process
\begin{equation*}
	L_\alpha(t):=\min\{s \ge 0: \ \sigma_\alpha(s) \ge t\},
\end{equation*}
that is called \textbf{inverse $\alpha$-stable subordinator}. Notice that $L_\alpha$ is not a Markov process, differently from $\sigma_\alpha$. Furthermore, $L_\alpha$ has a.s. continuous sample paths that are only non-decresing (that is to say that they can still admit intervals of constancy). Finally, while $\sigma_\alpha$ is $\alpha^{-1}$-self similar, $L_\alpha$ is $\alpha$-self similar, i.e., for any $z>0$, $L_\alpha \circ \iota_z \overset{d}{=}z^{\alpha}L_\alpha$. For further details on inverse $\alpha$-stable subordinators, see \cite{meerschaert2013inverse}. 

In the following, we will need some generalizations of the Central Limit Theorem for stochastic processes in $\mathbb{D}$. Consider any random variable $X$ and let $F_X$ be its cumulative distribution function. Define, for $t \in \R$,
\begin{equation*}
	\overline{F}_X(t)=1-F_X(t)=\bP(X>t) \quad \mbox{ and } \quad \overline{G}_X(t)=\bP(|X|>t).
\end{equation*}
For $\alpha \in (0,2]$, we say that $X$ belongs to the \textbf{normal domain of attraction of a stable law} if either $\alpha<2$ and there exist two constants $C>0$ and $\beta \in [-1,1]$ such that
\begin{equation*}
	\lim_{t \to +\infty}\frac{t^\alpha \overline{G}_X(t)}{C}=1 \quad \mbox{ and } \quad \lim_{t \to +\infty}\frac{\overline{F}_X(t)}{\overline{G}_X(t)}=\frac{1+\beta}{2}
\end{equation*} 
or $\alpha=2$ and $\E[X^2]<\infty$. The following theorem, that summarizes  \cite[Theorem 4.3.2]{whitt2002stochastic} and \cite[Theorem 4.5.3]{whitt2002stochastic} holds true.
\begin{thm}\label{thm:DTa}
	Let $\alpha \in (0,2]$. Assume that $X$ belongs to the normal domain of attraction of a stable law with parameters $C>0$ and $\beta \in [-1,1]$ if $\alpha<2$ or, if $\alpha=2$, set $C=\sqrt{{\sf Var}[X]}$. Let $(X_n)_{n \ge 1}$ be a sequence of i.i.d. random variables with the same distribution as $X$ and define the sequence of processes $(Y_n)_{n \ge 1}$ as  $Y_n(t)=\sum_{k=1}^{\lfloor nt \rfloor}X_k$. Then
	\begin{equation*}
		\left(\frac{C_\alpha}{C}\right)^{\frac{1}{\alpha}}n^{-\frac{1}{\alpha}}(Y_n-\iota_{m_n}) \overset{{\sf J}_1}{\Rightarrow} \mathbf{S},
	\end{equation*}
	where $\mathbf{S} \sim \mathbf{S}_\alpha(1,\beta)$ and
	\begin{equation}\label{eq:Ca}
		C_\alpha:=\begin{cases}
			\displaystyle \frac{1-\alpha}{\Gamma(2-\alpha)\cos\left(\frac{\pi \alpha}{2}\right)} & \displaystyle\substack{\alpha \in (0,2) \\ \alpha \not = 1} \\[7pt]
			\displaystyle 2/\pi & \alpha=1\\
			1 & \alpha=2
		\end{cases},
		\quad m_n=\begin{cases}
			0 & \alpha<1 \\[7pt]
			\displaystyle \frac{\pi Cn^{2}}{2}\E\left[\sin\left(\frac{2X}{\pi Cn}\right)\right] & \alpha=1 \\[7pt]
			n\E[X] & \alpha \in (1,2].
		\end{cases}
	\end{equation}
\end{thm}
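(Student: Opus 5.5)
The plan is to split the argument into a one-dimensional stable limit, then finite-dimensional convergence, then $\sf J_1$-tightness, exploiting that $Y_n$ is a rescaled random walk. Set $a_n:=(C/C_\alpha)^{1/\alpha}n^{1/\alpha}$, so the claim reads $a_n^{-1}(Y_n-\iota_{m_n})\overset{\sf J_1}{\Rightarrow}\mathbf{S}$.

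\textbf{Step 1: the marginal at $t=1$.} For $\alpha=2$ this is the classical CLT: $m_n=n\E[X]$, $C=\sqrt{{\sf Var}[X]}$ and $C_2=1$. For $\alpha<2$, I would invoke the Gnedenko--Kolmogorov criterion for convergence of triangular arrays to an infinitely divisible law. The condition $t^\alpha\overline{G}_X(t)\to C$ together with the tail balance ratio $(1+\beta)/2$ gives
\begin{equation*}
n\,\bP(X>a_nx)\to \frac{C_\alpha(1+\beta)}{2}x^{-\alpha}, \qquad n\,\bP(X<-a_nx)\to\frac{C_\alpha(1-\beta)}{2}x^{-\alpha}, \qquad x>0.
\end{equation*}
This is exactly the L\'evy measure of $S_\alpha(1,\beta,0)$ in the parametrization of \cite{samorodnitsky1994non}. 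Indeed, the constant $C_\alpha$ in \eqref{eq:Ca} is precisely the one for which $\int(e^{i\theta x}-1-i\theta x\mathbf{1}_{\{|x|<1\}})\,\alpha C_\alpha x^{-\alpha-1}dx$ produces scale $\gamma=1$. Checking this reduces to the classical Gamma-function integral $\int_0^\infty(1-\cos u)u^{-1-\alpha}du=\Gamma(2-\alpha)\cos(\pi\alpha/2)/(\alpha(1-\alpha))$. The truncated-variance condition holds since $\alpha<2$. The centering is handled case by case:
\begin{itemize}
\item For $\alpha<1$ no centering is needed, since the truncated means are negligible.
\item For $\alpha\in(1,2)$ one centers by the full mean $n\E[X]$, which exists.
\item For $\alpha=1$ the truncated mean diverges logarithmically. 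Here I would use the smooth truncation $x\mapsto \frac{\pi C n}{2}\sin(\frac{2x}{\pi Cn})$, which is bounded and equals $x+O(x^3)$ near $0$. This produces exactly $m_n$ in \eqref{eq:Ca} with no residual drift, because $\sin$ is odd and kills the drift term of the $\alpha=1$ characteristic exponent.
\end{itemize}

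\textbf{Step 2: finite-dimensional distributions.} For $0=t_0<t_1<\dots<t_k$, the increments $Y_n(t_j)-Y_n(t_{j-1})$ are independent sums of $\lfloor nt_j\rfloor-\lfloor nt_{j-1}\rfloor$ i.i.d. copies of $X$. By Step 1 and the scaling $\lfloor ns\rfloor/n\to s$, each normalized increment converges to $S_\alpha((t_j-t_{j-1})^{1/\alpha},\beta,0)$. For $\alpha=1$ the centering mismatch $m_n(t_j-t_{j-1})$ versus $m_{\lfloor n(t_j-t_{j-1})\rfloor}$ must be checked to be $o(a_n)$; this holds because both are $O(n\log n)$ with the same leading term. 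Independence then gives convergence of the finite-dimensional laws to those of $\mathbf{S}\sim\mathbf{S}_\alpha(1,\beta)$.

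\textbf{Step 3: tightness in $\sf J_1$.} This is the main obstacle. I would avoid proving it by hand and instead use Skorokhod's theorem for processes with independent increments. It states that for normalized partial-sum processes of i.i.d. variables, convergence of the one-dimensional marginal at $t=1$ already implies $\sf J_1$ convergence to the corresponding L\'evy process. This is the route in \cite[Theorem 4.5.3]{whitt2002stochastic}. Should a direct argument be required, I would verify Aldous' criterion. For stopping times $\tau_n$ and $h_n\downarrow 0$, the strong Markov property of the random walk gives $Y_n(\tau_n+h_n)-Y_n(\tau_n)\overset{d}{=}Y_n(h_n)$ up to a deterministic centering error of order $h_n$. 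The latter tends to $0$ in probability by Step 1, using the uniform-in-$t\le h$ control provided by the L\'evy--Ottaviani inequality.

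Combining Steps 2 and 3 yields the stated $\sf J_1$ convergence, and the case $\alpha=2$ reduces to Donsker's theorem.
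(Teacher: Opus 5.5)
Your route is the one behind the paper's treatment. The paper gives no proof: it quotes Whitt's Theorems 4.3.2 and 4.5.3 (Donsker's theorem and the stable FCLT). These rest on exactly your Steps~1 and~3: the one-dimensional stable CLT, plus Skorokhod's theorem that for i.i.d.\ partial sums, time-one convergence already gives ${\sf J_1}$ convergence to the L\'evy limit. Used that way, Step~2 is superfluous. As written, however, it is wrong for $\alpha=1$, $\beta\neq0$, and it carries real weight in your Aldous fallback.

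\textbf{The error in Step~2.} Let $k=\lfloor ns\rfloor$, $Z\sim S_1(1,\beta,0)$, and let $\nu$ be the L\'evy measure identified in Step~1.
\begin{itemize}
\item Step~1 and $a_k/a_n\to s$ give $a_n^{-1}(Y_n(s)-m_k)\Rightarrow sZ$. But $1$-stable laws with $\beta\neq0$ are not strictly stable. Comparing characteristic exponents gives $\mathbf{S}(s)\overset{d}{=}sZ+\tfrac{2}{\pi}\beta s\log s$, so $sZ\not\sim S_1(s,\beta,0)$.
\item The centering mismatch is not $o(a_n)$ either. Write $m_k=k\,g(a_k)$ with $g(c)=c\,\E[\sin(X/c)]$. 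The tail assumptions give $g(sc)-g(c)\to C\beta\log s$, hence $a_n^{-1}(m_k-sm_n)\to\tfrac{2}{\pi}\beta s\log s$. ``Same leading term'' only controls the $n\log n$ order; the next term has exact order $n\asymp a_n$.
\end{itemize}
The two errors cancel, so your limit is right but the reasoning is not.

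\textbf{How to fix it.} Either of the following works.
\begin{itemize}
\item Run Gnedenko--Kolmogorov directly on the array $\{a_n^{-1}X_i\}_{i\le k}$. Keep the normalization $a_n$ and center by $k\,a_n\E[\sin(X/a_n)]$, which differs from $sm_n$ by $O(\log n)=o(a_n)$. The limit then has L\'evy measure $s\nu$, the same $\sin$-truncation and zero drift. Its exponent is $s$ times the time-one exponent, which is exactly $S_1(s,\beta,0)$.
\item Drop Step~2 and apply Skorokhod's theorem to $\xi_{n,i}=a_n^{-1}(X_i-m_n/n)$. This is legitimate because $|m_nt-\lfloor nt\rfloor m_n/n|\le|m_n|/n=o(a_n)$.
\end{itemize}

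\textbf{Smaller points in Step~1.}
\begin{itemize}
\item For $\alpha<1$ the truncated means are not negligible. In fact $n\E[a_n^{-1}X\mathbf{1}_{\{|X|<a_n\}}]\to\int_{\{|x|<1\}}x\,\nu(dx)$, which is nonzero when $\beta\neq0$. No centering is needed because this limit is precisely the drift that turns the compensated exponent into $\int(e^{i\theta x}-1)\,\nu(dx)$, the exponent of $S_\alpha(1,\beta,0)$.
\item For $\alpha=1$, oddness of $\sin$ is not why the drift vanishes. The odd truncation $x\mathbf{1}_{\{|x|<1\}}$ leaves a drift proportional to $\beta(1-\gamma_{\mathrm{E}})$. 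What you need is $\int_0^\infty(\sin(\theta x)-\theta\sin x)x^{-2}\,dx=-\theta\log\theta$ for $\theta>0$.
\item For $\alpha=2$, the statement's constants do not reduce to the classical CLT as you assert. With $C=\sqrt{{\sf Var}[X]}$ and $C_2=1$ the prefactor is $({\sf Var}[X])^{-1/4}n^{-1/2}$. Meanwhile $\mathbf{S}_2(1,\beta)$ has marginals $N(0,2t)$ in the paper's parametrization. That case only holds after correcting the constants, e.g.\ $C={\sf Var}[X]$ with a standard Brownian limit, and you should flag this rather than gloss over it.
\end{itemize}
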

\subsection{Mittag-Leffler and related distributions}\label{sec:ML}
Throughout this paper we will consider some special distributions which generalize the notion of exponential and Erlang distributions. First of all, we recall that the \textbf{Mittag-Leffler function} of order $\alpha \in (0,1]$ is the entire function $E_\alpha: \mathbb{C} \to \mathbb{C}$ defined as
\begin{equation*}
	E_\alpha(z)=\sum_{k=0}^{+\infty}\frac{z^k}{\Gamma(\alpha k+1)}.
\end{equation*}
It is clear that if $\alpha=1$, then $E_1(z)=e^z$. For general properties of the Mittag-Leffler functions, we mainly refer to \cite{gorenflo2020mittag}. 

In our case, we will use such a function to construct a generalization of the exponential distribution as follows.
\begin{defn}
	A random variable $X$ is said to be a \textbf{Mittag-Leffler random variable} of \textbf{order} $\alpha \in (0,1]$ and \textbf{generalized rate} $\lambda>0$ if, for $t \ge 0$,
	\begin{equation*}
		\bP(X>t)=E_\alpha(-\lambda t^\alpha)
	\end{equation*}
	and we denote such a property as $X \sim {\sf ML}_\alpha(\lambda)$. If $\alpha=1$, $X$ is an exponential random variable of rate $\lambda$ and we will denote this as $X \sim {\sf Exp}(\lambda)$.
\end{defn}
This class of distributions has been considered, for instance, in \cite{mainardi2004fractional,mainardi2005renewal}, in the context of renewal processes. Let us also stress that Mittag-Leffler distributions are strictly related to stable subordinators. Indeed, the following property, which is a consequence of \cite{bingham1971limit}, holds true.
\begin{prop}\label{eq:sigma}
	Let $\sigma_\alpha$ be an $\alpha$-stable subordinator and $T$ an independent exponential random variable of parameter $\lambda>0$. Then $\sigma_\alpha(T)\sim {\sf ML}_\alpha(\lambda)$.
\end{prop}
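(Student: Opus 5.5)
We have to prove Proposition \ref{eq:sigma}: if $\sigma_\alpha$ is an $\alpha$-stable subordinator and $T\sim{\sf Exp}(\lambda)$ is independent of it, then $\sigma_\alpha(T)\sim{\sf ML}_\alpha(\lambda)$. The plan is to compare Laplace transforms, using the normalization $\E[e^{-z\sigma_\alpha(t)}]=e^{-tz^\alpha}$ recalled in Section \ref{sec:stab}.

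First, condition on $T$ and use independence and Fubini:
\begin{equation*}
\E\left[e^{-z\sigma_\alpha(T)}\right]=\int_0^{+\infty}\lambda e^{-\lambda t}e^{-tz^\alpha}\,dt=\frac{\lambda}{\lambda+z^\alpha},\qquad z>0.
\end{equation*}

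Second, compute the Laplace transform of a variable $X\sim{\sf ML}_\alpha(\lambda)$ through its survival function $\overline{F}(t)=E_\alpha(-\lambda t^\alpha)$. We use the classical identity
\begin{equation*}
\int_0^{+\infty}e^{-zt}E_\alpha(-\lambda t^\alpha)\,dt=\frac{z^{\alpha-1}}{z^\alpha+\lambda}.
\end{equation*}
For $z>\lambda^{1/\alpha}$ this follows by integrating the power series term by term, since $\int_0^\infty e^{-zt}t^{\alpha k}\,dt=\Gamma(\alpha k+1)z^{-\alpha k-1}$. Term-by-term integration is justified by dominated convergence, because $\sum_k \lambda^k t^{\alpha k}/\Gamma(\alpha k+1)=E_\alpha(\lambda t^\alpha)$ has a finite Laplace transform at such $z$. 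The resulting geometric series sums to the stated expression, and the identity extends to all $z>0$ by analytic continuation, both sides being analytic in $z$. Integration by parts then gives
\begin{equation*}
\E[e^{-zX}]=1-z\int_0^\infty e^{-zt}\overline{F}(t)\,dt=1-\frac{z^\alpha}{z^\alpha+\lambda}=\frac{\lambda}{\lambda+z^\alpha}.
\end{equation*}
For this step we also need $\overline{F}$ to be an honest survival function: $E_\alpha(-\lambda t^\alpha)$ must be non-increasing on $[0,\infty)$, start at $1$ and tend to $0$. These facts hold by the complete monotonicity of $E_\alpha(-x)$ for $\alpha\in(0,1]$ (Pollard) together with its asymptotics (see \cite{gorenflo2020mittag}).

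Third, the two Laplace transforms coincide, so uniqueness of the Laplace transform for nonnegative random variables yields the claim. The case $\alpha=1$ is trivial, since $\sigma_1=\iota$.

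The main obstacle is the second step: verifying that ${\sf ML}_\alpha(\lambda)$ is a genuine distribution and that the series manipulation is rigorous. An alternative route bypasses this entirely. The $\alpha$-self-similarity gives $\sigma_\alpha(T)\overset{d}{=}T^{1/\alpha}\sigma_\alpha(1)$, and one can then compute $\bP(\sigma_\alpha(T)>t)=\bP(T>L_\alpha(t))=\E[e^{-\lambda L_\alpha(t)}]$, which equals $E_\alpha(-\lambda t^\alpha)$ by the known Laplace transform of the inverse subordinator \cite{bingham1971limit}. This second route directly shows that the survival function is $E_\alpha(-\lambda t^\alpha)$, so it is a valid distribution. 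The first equality uses $\{\sigma_\alpha(T)>t\}=\{T>L_\alpha(t)\}$, which holds a.s. because $\sigma_\alpha$ is strictly increasing.
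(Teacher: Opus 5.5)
Your proof is correct. Note that the paper gives no proof of this proposition. It records it as a consequence of \cite{bingham1971limit} and points to \cite[Lemma 2]{ascione2022skorokhod}. The introduction makes the intended reasoning clear: $E_\alpha(-\lambda t^\alpha)=\E[e^{-\lambda L_\alpha(t)}]$, so that $\bP(\sigma_\alpha(T)>t)=\bP(T>L_\alpha(t))=\E[e^{-\lambda L_\alpha(t)}]=E_\alpha(-\lambda t^\alpha)$.

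That is exactly your ``alternative route''. Your main route is genuinely different: it compares Laplace transforms in the space variable and uses only the Laplace exponent of $\sigma_\alpha$ and the term-by-term transform of the Mittag-Leffler series. The paper's route takes one line, but it outsources the identity $\E[e^{-\lambda L_\alpha(t)}]=E_\alpha(-\lambda t^\alpha)$. That identity is typically derived from the double Laplace transform $\int_0^\infty e^{-zt}\E[e^{-\lambda L_\alpha(t)}]\,dt=z^{\alpha-1}/(z^\alpha+\lambda)$, which is essentially your computation. So your main route is the self-contained version of the same fact.

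Two small remarks.

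First, the ``main obstacle'' you flag can be bypassed. Compare the Laplace transforms of the survival functions directly: $\int_0^\infty e^{-zt}\bP(\sigma_\alpha(T)>t)\,dt=(1-\E[e^{-z\sigma_\alpha(T)}])/z=z^{\alpha-1}/(z^\alpha+\lambda)$. Match this with your series computation, which is valid for $z>\lambda^{1/\alpha}$. Uniqueness of the Laplace transform (Lerch), plus right-continuity of both functions, then gives $\bP(\sigma_\alpha(T)>t)=E_\alpha(-\lambda t^\alpha)$ for all $t\ge 0$. This needs no analytic continuation and no appeal to Pollard's theorem. As a byproduct it shows that $E_\alpha(-\lambda t^\alpha)$ is a genuine survival function.

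Second, in the alternative route, strict monotonicity of $\sigma_\alpha$ alone does not give the a.s. equality $\{\sigma_\alpha(T)>t\}=\{T>L_\alpha(t)\}$. On $\{T=L_\alpha(t)\}$ the subordinator may jump across $t$, so you also need $\bP(T=L_\alpha(t))=0$. This holds because $T$ is independent of $L_\alpha(t)$ and has a continuous law.
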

The latter has been shown, for instance, in \cite[Lemma 2]{ascione2022skorokhod}. A direct consequence of Proposition \ref{eq:sigma} (together with the $\alpha^{-1}$-self similarity of the $\alpha$-stable subordinator) is the following scaling property for the Mittag-Leffler distribution.
\begin{prop}\label{prop:scalingML}
	If $X \sim {\sf ML}_\alpha(\lambda)$ then $\lambda^{\frac{1}{\alpha}}X\sim {\sf ML}_\alpha(1)$
\end{prop}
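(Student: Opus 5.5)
The plan is to derive the statement directly from Proposition \ref{eq:sigma} together with the $\alpha^{-1}$-self-similarity of the $\alpha$-stable subordinator. Write $X \overset{d}{=} \sigma_\alpha(T)$, where $\sigma_\alpha$ is an $\alpha$-stable subordinator and $T \sim {\sf Exp}(\lambda)$ is independent of it. Then $T \overset{d}{=} \lambda^{-1}T_1$ with $T_1 \sim {\sf Exp}(1)$ independent of $\sigma_\alpha$, so that
\begin{equation*}
	X \overset{d}{=} \sigma_\alpha(\lambda^{-1}T_1)=(\sigma_\alpha \circ \iota_{\lambda^{-1}})(T_1).
\end{equation*}

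The key step is to use self-similarity, $\sigma_\alpha \circ \iota_{z} \overset{d}{=} z^{\frac{1}{\alpha}}\sigma_\alpha$ as processes, with $z=\lambda^{-1}$. Since $T_1$ is independent of $\sigma_\alpha$, we can condition on $T_1$: for every fixed $s \ge 0$ the law of $\sigma_\alpha(\lambda^{-1}s)$ equals that of $\lambda^{-\frac{1}{\alpha}}\sigma_\alpha(s)$. Integrating against the law of $T_1$ then gives $X \overset{d}{=} \lambda^{-\frac{1}{\alpha}}\sigma_\alpha(T_1)$. A final application of Proposition \ref{eq:sigma} with rate $1$ yields $\lambda^{\frac{1}{\alpha}}X \overset{d}{=} \sigma_\alpha(T_1)\sim {\sf ML}_\alpha(1)$.

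Only the one-dimensional marginals of $\sigma_\alpha$ are needed, so the only point requiring care is the conditioning on the independent random time; this is the step I would write out carefully. As a sanity check, the same conclusion follows directly from the survival function: $\bP(\lambda^{\frac{1}{\alpha}}X>t)=\bP(X>\lambda^{-\frac{1}{\alpha}}t)=E_\alpha(-\lambda(\lambda^{-\frac{1}{\alpha}}t)^\alpha)=E_\alpha(-t^\alpha)$. This could serve as an alternative one-line proof. The case $\alpha=1$ reduces to exponential scaling.
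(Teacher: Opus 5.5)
Your argument is correct and follows the route the paper indicates. The paper gives no separate proof; it only notes that the statement is a direct consequence of Proposition \ref{eq:sigma} together with the $\alpha^{-1}$-self-similarity of $\sigma_\alpha$, which is what you carry out by conditioning on the independent exponential time. Your one-line survival-function check, $E_\alpha(-\lambda(\lambda^{-\frac{1}{\alpha}}t)^\alpha)=E_\alpha(-t^\alpha)$, is also valid and covers every $\alpha\in(0,1]$ at once, including $\alpha=1$, without any appeal to subordinators.
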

Together with the Mittag-Leffler distribution, we need to consider a possible generalization obtained by summing a fixed number of independent Mittag-Leffler random variable. Before proceeding, let us recall that the \textbf{Prabhakar function} with parameters $\gamma,\beta>0$, $\alpha \in (0,1]$ is the entire function $E_{\alpha,\beta}^{\gamma}:\mathbb{C} \to \mathbb{C}$ defined as
\begin{equation*}
	E_{\alpha,\beta}^{\gamma}(z)=\sum_{k=0}^{+\infty}\frac{\Gamma(\gamma+k)z^k}{\Gamma(\gamma)k!\Gamma(\alpha k+\beta)}.
\end{equation*}
The Prabhakar function has been first introduced in \cite{prabhakar1971singular}. Let us stress that the following derivative formula holds:
\begin{equation}\label{eq:derPrab}
	\der{}{z}E^{\gamma}_{\alpha,\beta}(z)=\frac{\Gamma(\gamma+1)}{\Gamma(\gamma)}E^{\gamma+1}_{\alpha,\alpha+\beta}(z).
\end{equation}
\begin{defn}\label{def:GE}
	A random variable $X$ is said to be a \textbf{generalized Erlang random variable} of order $\alpha \in (0,1]$, rate $\lambda>0$ and shape parameter $k \in \N$ if, for $t \ge 0$,
	\begin{equation*}
		\bP(X>t)=\sum_{n=0}^{k-1}\sum_{j=0}^{+\infty}(-1)^j\binom{n+j}{n}\frac{(\lambda t^{\alpha})^{j+n}}{\Gamma(\alpha(j+n)+1)}=\sum_{n=0}^{k-1}(\lambda t^{\alpha})^n E_{\alpha,n\alpha+1}^{n+1}(-\lambda t^\alpha)
	\end{equation*}
	and we denote such a property as $X \sim {\sf GE}_\alpha(k,\lambda)$.
\end{defn}
This random variable has been introduced in \cite{mainardi2004fractional} and the following property holds true.
\begin{prop}
	Let $X_1,\dots,X_k$ be independent Mittag-Leffler random variables of order $\alpha \in (0,1]$ and rate $\lambda>0$. Then $X_1+\cdots+X_k \sim {\sf GE}_\alpha(k,\lambda)$.
\end{prop}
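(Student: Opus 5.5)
Since the statement only concerns the law of a sum of $k$ i.i.d. random variables, I will identify it through Laplace transforms. Proposition \ref{eq:sigma} gives a convenient representation: each $X_j \overset{d}{=} \sigma_\alpha^{(j)}(T_j)$, with $\sigma_\alpha^{(j)}$ independent $\alpha$-stable subordinators and $T_j\sim{\sf Exp}(\lambda)$. Conditioning on $T_j$ and using $\E[e^{-z\sigma_\alpha(t)}]=e^{-tz^\alpha}$ yields
\begin{equation*}
\E[e^{-zX_j}]=\E[e^{-T_jz^\alpha}]=\frac{\lambda}{\lambda+z^\alpha}.
\end{equation*}
By independence, the sum then has Laplace transform $\lambda^k(\lambda+z^\alpha)^{-k}$.

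Equivalently, the stationarity and independence of increments of a single subordinator show that $X_1+\cdots+X_k \overset{d}{=}\sigma_\alpha(T_1+\cdots+T_k)$. Here $T_1+\cdots+T_k$ is Erlang, so the sum is a subordinated Erlang variable. This gives a second route: I can compute
\begin{equation*}
\bP(\sigma_\alpha(\Gamma_k)>t)=\int_0^\infty \bP(\sigma_\alpha(s)>t)\frac{\lambda^ks^{k-1}e^{-\lambda s}}{(k-1)!}ds
\end{equation*}
directly. I would still prefer the transform route.

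The key step is to show that the survival function in Definition \ref{def:GE} has the same Laplace transform. For the survival function $\bar F$ of the candidate law I need $\int_0^\infty e^{-zt}\bar F(t)\,dt=\frac{1}{z}\bigl(1-\lambda^k(\lambda+z^\alpha)^{-k}\bigr)$. I will use the classical Prabhakar transform
\begin{equation*}
\int_0^\infty e^{-zt}t^{b-1}E^{\gamma}_{\alpha,b}(-\lambda t^\alpha)\,dt=\frac{z^{\alpha\gamma-b}}{(z^\alpha+\lambda)^\gamma},\qquad \Re z>|\lambda|^{1/\alpha},
\end{equation*}
which follows by termwise integration of the series for large $z$ and analytic continuation. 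With $b=n\alpha+1$ and $\gamma=n+1$, the $n$-th summand $\lambda^n t^{n\alpha}E^{n+1}_{\alpha,n\alpha+1}(-\lambda t^\alpha)$ transforms to
\begin{equation*}
\lambda^n\frac{z^{\alpha-1}}{(z^\alpha+\lambda)^{n+1}}.
\end{equation*}
Setting $q=\lambda/(\lambda+z^\alpha)$, summing $n=0,\dots,k-1$ gives a geometric sum:
\begin{equation*}
\frac{z^{\alpha-1}}{\lambda+z^\alpha}\cdot\frac{1-q^k}{1-q}=\frac{1}{z}(1-q^k).
\end{equation*}
This is exactly the target, and uniqueness of the Laplace transform identifies the distributions. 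The equality of the double-series form and the Prabhakar form in the definition is just a rewriting: expand $E^{n+1}_{\alpha,n\alpha+1}$ and note that $\Gamma(n+1+j)/(\Gamma(n+1)j!)=\binom{n+j}{n}$.

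The main obstacle I expect is justifying the termwise integration. The series $\sum_j |\lambda|^j\binom{n+j}{n} t^{\alpha(j+n)}/\Gamma(\alpha(j+n)+1)$ is dominated by an entire function of growth order $1$ in $t$, roughly like $e^{c|\lambda|^{1/\alpha}t}$. Fubini therefore applies for $z$ real and large. Both sides are then analytic, or I can simply invoke the uniqueness theorem for Laplace transforms on a half-line $z>z_0$, which suffices. Finally I should check that the candidate $\bar F$ is a genuine survival function. This comes for free, because it shares its Laplace transform with the law of $X_1+\cdots+X_k$ and both are continuous, so they coincide everywhere.
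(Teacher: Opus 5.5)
Your proof is correct. For real $z>\lambda^{1/\alpha}$, Tonelli gives $\sum_j\binom{n+j}{n}\lambda^{n+j}z^{-\alpha(n+j)-1}<\infty$, which justifies the termwise transform directly, so the growth-order estimate is unnecessary. The geometric sum then yields $(1-q^k)/z$, and Lerch's uniqueness theorem together with continuity of both survival functions identifies the laws.

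The paper gives no proof of its own and attributes the result to \cite{mainardi2004fractional}, where, to the best of my knowledge, it is obtained by the same Laplace-transform inversion of $\lambda^k(\lambda+s^\alpha)^{-k}$, so your approach is the standard one. Note also that the ${\sf GE}$ survival function in Definition~\ref{def:GE} is literally $\sum_{n<k}p_n(t;\alpha,\lambda)=\bP(N_\alpha(t)\le k-1)$, so your computation is the same one that underlies \eqref{eq:pkfPP}.
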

Furthermore, we can actually characterise the probability density function of a random variable $X \sim {\sf GE}_\alpha(k,\lambda)$ as:
\begin{equation}\label{eq:densGE}
	f_{{\sf GE}_\alpha(k,\lambda)}(t)=\alpha k \lambda^k t^{k\alpha-1}E_{\alpha,k\alpha+1}^{k+1}(-\lambda t^\alpha).
\end{equation}
This formula follows by using the derivative relation \eqref{eq:derPrab} to differentiate term by term the sum in $\bP(X>t)$ in Definition \ref{def:GE}. Furthermore, if $k=1$, we get the density of a Mittag-Leffler random variable $X \sim {\sf ML}_\alpha(\lambda)$, hence we denote it by $f_{{\sf ML}_\alpha(\lambda)}$.
Our aim consists in using these distributions in order to characterise inter-arrival and service times of some models of queue. For this reason, we need also to introduce a very special counting process.
\subsection{The fractional Poisson process}
Now we define the analogous of the Poisson process in the context of Mittag-Leffler random variables.
\begin{defn}
	Let $(T_k)_{k \ge 0}$ be a sequence of i.i.d. random variables such that $T_1 \sim {\sf ML}_\alpha(\lambda)$ for some $\alpha \in (0,1]$ and $\lambda>0$. For all $t \ge 0$ we consider the random variable
	\begin{equation*}
		N_\alpha(t):=\max\left\{k \ge 0: \  \sum_{j=1}^{k}T_j \le t\right\},
	\end{equation*}
	where $\sum_{j=1}^{0}T_j=0$. The process $N_\alpha$ is called a \textbf{fractional Poisson process} of order $\alpha$ and rate $\lambda$.
\end{defn}
While the definition of the fractional Poisson process as a renewal process has been first given in \cite{mainardi2004fractional}, its name refers to its first introduction in \cite{laskin2003fractional}. Indeed, one can show that the probability masses $p_k(t;\alpha,\lambda)=\bP(N_\alpha(t)=k)$ are the unique bounded solutions of the fractional Cauchy problem
\begin{equation*}
	\begin{cases}
		\displaystyle \dersup{}{t}{\alpha}p_0(t;\alpha,\lambda)=-\lambda p_0(t;\alpha,\lambda)\\[7pt]
		\displaystyle \dersup{}{t}{\alpha}p_k(t;\alpha,\lambda)=-\lambda (p_k(t;\alpha,\lambda)-p_{k-1}(t;\alpha,\lambda)) & k=1,2,\cdots \\[7pt]
		p_0(0;\alpha,\lambda)=\delta_{0,k} & k=0,1,2,\cdots 
	\end{cases}
\end{equation*}
where the operator $\dersup{}{t}{\alpha}$ is the \textbf{Caputo fractional derivative}, i.e.
\begin{equation*}
	\dersup{}{t}{\alpha}f(t)=\frac{1}{\Gamma(1-\alpha)}\der{}{t}\int_0^t (t-\tau)^{-\alpha}(f(\tau)-f(0))\, d\tau.
\end{equation*}
For further details on fractional operators and differential equations, we refer to \cite{ascione2023fractional,Samko,kilbas2006theory} and references therein. Here we will not focus on such aspect. In any case, let us observe, as given in \cite{laskin2003fractional}, that for $\alpha \in (0,1)$, $\lambda,t>0$ and $k=0,1,2,\cdots$
\begin{equation}\label{eq:pkfPP}
	p_k(t;\alpha,\lambda)=\sum_{j=0}^{+\infty}(-1)^j\binom{j+k}{j}\frac{(\lambda t^\alpha)^{j+k}}{\Gamma(\alpha(j+k)+1)}=(\lambda t^\alpha)^kE^{k+1}_{\alpha,k\alpha+1}(-\lambda t^\alpha).
\end{equation}

A further important property of the fractional Poisson process is that it can be defined by means of a time-change procedure. 
%To do this, we need to introduce the notion of inverse $\alpha$-stable subordinator. Consider an $\alpha$-stable subordinator $\sigma_\alpha$. Then we define the first passage time process
%\begin{equation*}
%	L_\alpha(t)=\min\{s \ge 0: \ \sigma_\alpha(s) \ge t\},
%\end{equation*}
%that is called \textbf{inverse $\alpha$-stable subordinator}. It is worth noticing that while $\sigma_\alpha$ is strictly increasing but only c\'adl\'ag, $L_\alpha$ is only non-decreasing, since we have intervals of constancy corresponding to the jumps of $\sigma_\alpha$, but it admits continuous sample paths. Furthermore, while $\sigma_\alpha$ is $\alpha^{-1}$-self similar, $L_\alpha$ is $\alpha$-self-similar. For further details on this process, we refer to \cite{meerschaert2013inverse}. 
Indeed, the following result has been proved in \cite{meerschaert2011fractional}.
\begin{prop}
	Let $N$ be a Poisson process with rate $\lambda>0$ and let $L_\alpha$ be an independent inverse $\alpha$-stable subordinator. Then $N_\alpha(t):=N(L_\alpha(t))$ is a fractional Poisson process of order $\alpha$ and rate $\lambda$.
\end{prop}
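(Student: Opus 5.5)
The statement is the time-change representation of the fractional Poisson process: $N(L_\alpha(t))$, with $N$ a Poisson process of rate $\lambda$ and $L_\alpha$ an independent inverse $\alpha$-stable subordinator, is a renewal counting process with i.i.d. ${\sf ML}_\alpha(\lambda)$ inter-arrival times. The plan is to identify the jump times of $N\circ L_\alpha$ explicitly and read off their law from Proposition \ref{eq:sigma}.

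\textbf{Step 1: jump times.} Let $(E_k)_{k\ge 1}$ be the i.i.d. ${\sf Exp}(\lambda)$ inter-arrival times of $N$, with arrival epochs $\Gamma_k=E_1+\cdots+E_k$. Since $L_\alpha$ is continuous and non-decreasing, $N(L_\alpha(t))\ge k$ holds exactly when $L_\alpha(t)\ge \Gamma_k$. By the definition of $L_\alpha$ as first passage time, and because $\sigma_\alpha$ is strictly increasing, this is equivalent to $\sigma_\alpha(\Gamma_k-)\le t$. We will check this equivalence carefully at the boundary.

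Since $\Gamma_k$ is independent of $\sigma_\alpha$ and has a continuous law, a.s. $\Gamma_k$ is not a jump time of $\sigma_\alpha$. Hence $\sigma_\alpha(\Gamma_k-)=\sigma_\alpha(\Gamma_k)$ a.s. Therefore the $k$-th jump time of $N\circ L_\alpha$ is $A_k:=\sigma_\alpha(\Gamma_k)$, and
\begin{equation*}
N(L_\alpha(t))=\max\{k\ge 0:\ \sigma_\alpha(\Gamma_k)\le t\}.
\end{equation*}
This is exactly the counting process of the sequence $(A_k)$.

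\textbf{Step 2: i.i.d. Mittag-Leffler increments.} Set $T_k:=A_k-A_{k-1}=\sigma_\alpha(\Gamma_{k-1}+E_k)-\sigma_\alpha(\Gamma_{k-1})$. Conditionally on $(E_j)_{j\ge1}$, the strong/simple Markov property and stationary independent increments of the Lévy process $\sigma_\alpha$ give the following: the increments over the disjoint intervals $[\Gamma_{k-1},\Gamma_k)$ are independent, and each is distributed as $\sigma_\alpha'(E_k)$ for independent copies $\sigma_\alpha'$. Since the $E_k$ are themselves i.i.d. and independent of $\sigma_\alpha$, integrating out the conditioning shows that the $T_k$ are i.i.d. with $T_k\overset{d}{=}\sigma_\alpha(E_1)$. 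By Proposition \ref{eq:sigma}, $\sigma_\alpha(E_1)\sim{\sf ML}_\alpha(\lambda)$.

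\textbf{Conclusion and main obstacle.} By the definition of the fractional Poisson process as a renewal counting process, Steps 1 and 2 give the claim. The main obstacle is the boundary bookkeeping in Step 1, namely the equivalence between $L_\alpha(t)\ge\Gamma_k$ and $\sigma_\alpha(\Gamma_k-)\le t$, where left limits and jumps of $\sigma_\alpha$ appear. I would handle it by using that $L_\alpha(t)\ge s$ if and only if $\sigma_\alpha(r)<t$ for all $r<s$, and then discarding the null event that a $\Gamma_k$ coincides with a jump of $\sigma_\alpha$.
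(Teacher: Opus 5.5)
The paper does not prove this statement; it quotes it from \cite{meerschaert2011fractional}. Your argument is a correct, self-contained proof along the standard pathwise route. The jump times of $N\circ L_\alpha$ are $\sigma_\alpha(\Gamma_k)$, their increments are i.i.d.\ copies of $\sigma_\alpha(E_1)$, and Proposition~\ref{eq:sigma} identifies this law as ${\sf ML}_\alpha(\lambda)$. Your boundary check is right for the paper's convention $L_\alpha(t)=\min\{s\ge 0:\ \sigma_\alpha(s)\ge t\}$. Indeed, $L_\alpha(t)\ge x>0$ holds iff $\sigma_\alpha(r)<t$ for all $r<x$, and by strict monotonicity this is equivalent to $\sigma_\alpha(x-)\le t$.

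Three points are worth making explicit, though none is a gap.

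First, the hypothesis gives independence of $N$ from $L_\alpha$, whereas you use independence from $\sigma_\alpha$. These are the same, because $\sigma_\alpha(s)=\inf\{t\ge 0:\ L_\alpha(t)>s\}$ is a measurable functional of $L_\alpha$.

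Second, the exceptional set $\bigcup_{k\ge 1}\{\sigma_\alpha(\Gamma_k-)\neq\sigma_\alpha(\Gamma_k)\}$ does not depend on $t$. So $N(L_\alpha(t))=\max\{k\ge 0:\ \sigma_\alpha(\Gamma_k)\le t\}$ holds for all $t\ge 0$ simultaneously almost surely, which gives equality as processes and not just of finite-dimensional laws. That each of these events is null is exactly Proposition~\ref{prop:noshareddisc} with $X=\sigma_\alpha$ and $Y=N$.

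Third, Step 2 needs no Markov property. By independence and Fubini, for bounded measurable $f_1,\dots,f_n$ one has $\E\bigl[\prod_{k=1}^n f_k(T_k)\mid (E_j)_{j\ge 1}\bigr]=\prod_{k=1}^n g_k(E_k)$ with $g_k(e)=\E[f_k(\sigma_\alpha(e))]$. Taking expectations then gives the i.i.d.\ structure, with common law that of $\sigma_\alpha(E_1)$.
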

It is worth noticing that a fractional Poisson process $N_\alpha$ is not a Markov process. However, one can prove that $N_\alpha$ still satisfies a quite interesting property, which will play a prominent role. Indeed, if we define the soujourn time process
%To better understand this, for a given a.s. c\'adl\'ag process $X=\{X(t), \ t \ge 0\}$, let us introduce the \textbf{sojourn time process}
\begin{equation*}
	J_{N_\alpha}(t)=t-\sup\{s \le t: \ N_\alpha(s)\not = N_\alpha(t)\},
\end{equation*}
where we set, in this case, $\sup \emptyset =0$, then the following result holds true:
\begin{prop}
	The couple $(N_\alpha,J_{N_\alpha})$ is a strong Markov process. Furthermore, as a consequence, if we denote $\tau_n=\min\{t \ge 0: \ N_\alpha(t)=n\}$, it holds
	\begin{equation*}
		\bP(\{N_\alpha(t), \ t \ge \tau_n \} \in B \mid \cF_\tau)=\bP(\{N_\alpha(t)+n, \ t \ge 0 \} \in B)
	\end{equation*}
	for any $B \in \mathcal{B}(\mathbb{D})$. In particular, $N_\alpha$ is a semi-Markov process.
\end{prop}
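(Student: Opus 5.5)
The plan is to exploit the renewal structure of $N_\alpha$: $N_\alpha$ is the counting process of the i.i.d. sequence $(T_k)_{k \ge 1}$ with $T_k \sim {\sf ML}_\alpha(\lambda)$. The process $J_{N_\alpha}$ is its \emph{age} process, i.e. the time elapsed since the last renewal. The convention $\sup\emptyset=0$ gives $J_{N_\alpha}(t)=t$ before the first jump, which is consistent with starting at age $0$. I would treat $(N_\alpha,J_{N_\alpha})$ as a piecewise-deterministic process on $\N_0\times[0,+\infty)$. Between jumps, the second coordinate increases with slope $1$. At a jump, the first coordinate increases by $1$ and the second is reset to $0$. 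Since $T_k$ has the density $f_{{\sf ML}_\alpha(\lambda)}$ from \eqref{eq:densGE}, jumps occur at the hazard rate $h(a)=f_{{\sf ML}_\alpha(\lambda)}(a)/E_\alpha(-\lambda a^\alpha)$. Since $E_\alpha(-\lambda a^\alpha)>0$ for all $a\ge 0$, this hazard rate is well defined.

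\textbf{Step 1 (simple Markov property).} Fix $t$ and condition on $\cF_t$. On the event $\{N_\alpha(t)=k,\,J_{N_\alpha}(t)=a\}$, the residual time until the next jump is $A_{k+1}-t$. This quantity depends on the past only through $T_{k+1}>a$, because $T_{k+1}$ is independent of $T_1,\dots,T_k$. Hence
\begin{equation*}
\bP(A_{k+1}-t>s\mid \cF_t)=\frac{E_\alpha(-\lambda(a+s)^\alpha)}{E_\alpha(-\lambda a^\alpha)}.
\end{equation*}
The later interarrival times $T_{k+2},T_{k+3},\dots$ are i.i.d. ${\sf ML}_\alpha(\lambda)$ and independent of $\cF_t$. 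Therefore the law of the future path is a measurable function of $(k,a)$ alone, which proves the Markov property.

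\textbf{Step 2 (strong Markov property).} I would approximate a general stopping time $\tau$ from above by the discrete stopping times $\tau^{(m)}=2^{-m}\lceil 2^m\tau\rceil$, for which Step 1 applies directly. To pass to the limit $m\to\infty$, I would use two facts. First, the paths of $(N_\alpha,J_{N_\alpha})$ are right-continuous. Second, the transition law obtained in Step 1 depends continuously (weakly) on the age $a$, since $a\mapsto E_\alpha(-\lambda(a+s)^\alpha)/E_\alpha(-\lambda a^\alpha)$ is continuous and $N_\alpha$ is integer-valued. Together these give the Feller-type continuity needed for the limit. I expect this step to be the main technical point. If the approximation argument becomes cumbersome, the alternative is to invoke the general strong Markov theory for piecewise-deterministic Markov processes (Davis), whose hypotheses are met here because the hazard $h$ is locally integrable.

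\textbf{Step 3 (regeneration at $\tau_n$ and semi-Markov property).} Note that $\tau_n=A_n=T_1+\cdots+T_n$, and that $(N_\alpha,J_{N_\alpha})(\tau_n)=(n,0)$. The strong Markov property therefore gives that $\{N_\alpha(t),\,t\ge\tau_n\}$ has the law of $N_\alpha+n$ started afresh, independently of $\cF_{\tau_n}$. This can also be seen directly: the shifted process is a measurable function of $(T_{n+k})_{k\ge1}$, which is independent of $\cF_{\tau_n}\subseteq\sigma(T_1,\dots,T_n)$. Finally, the pair consisting of the jump chain $k\mapsto k+1$ and the i.i.d. sojourn times $(T_k)_{k\ge1}$ is a Markov renewal process, and $N_\alpha$ is its associated process. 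This is exactly the statement that $N_\alpha$ is semi-Markov.
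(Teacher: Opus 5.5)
Your proof is correct. It rests on the same idea the paper appeals to: the renewal structure of $N_\alpha$, with the age $J_{N_\alpha}$ as supplementary variable. The paper, however, does not carry the argument out. It only says the statement follows from $N_\alpha$ being a renewal process ``with the same arguments'' as in \cite[Chapter III.3]{gikhman2004theory}, and it presents the regeneration at $\tau_n$ as a consequence of the strong Markov property.

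Your version is self-contained and differs from that in two places.

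First, you derive the strong Markov property from the simple one by dyadic approximation and a Feller-type continuity in the age variable. This uses features specific to the Mittag-Leffler law, namely strict positivity of $E_\alpha(-\lambda a^\alpha)$ and absolute continuity. It would not go through, for example, with lattice inter-renewal times. The general semi-Markov theory the paper cites, or Davis's PDMP theory (your fallback), needs no such regularity. When you write that step, state explicitly what continuity you need: continuity of $a\mapsto\E_{(k,a)}[f(N_\alpha(s),J_{N_\alpha}(s))]$ for bounded continuous $f$. This follows from two facts together: the residual density $f_{{\sf ML}_\alpha(\lambda)}(a+\cdot)/E_\alpha(-\lambda a^\alpha)$ is continuous in $a$ in $L^1$, and $f$ is continuous along the flow $a\mapsto a+s$. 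Continuity of the survival ratio alone does not give it.

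Second, you note that the regeneration identity and the semi-Markov property need no strong Markov property at all. You read the identity, correctly, as being about $\{N_\alpha(\tau_n+t),\ t\ge 0\}$ and $\cF_{\tau_n}$. The shifted process is a measurable function of $(T_{n+k})_{k\ge1}$, which is independent of $\cF_{\tau_n}$. This is more elementary than the paper's route. It does rely on $\cF_{\tau_n}\subseteq\sigma(T_1,\dots,T_n)$ for the natural filtration of a counting process. That is a standard but non-trivial fact, and you should state it as an explicit input.
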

The latter is a direct consequence of the fact that $N_\alpha$ is a renewal process, with the same arguments used in \cite[Chapter III.3]{gikhman2004theory}. For further details on semi-Markov process, we refer to \cite{harlamov2013continuous} and references therein.
Throughout the paper, we will also use another quite important property: two independent fractional Poisson processes cannot share any discontinuity point. This is actually common to any couple of independent stochastically continuous process, as recalled in the following proposition.
%, whose proof is given in Appendix \ref{app:B}
\begin{prop}\label{prop:noshareddisc}
	Let $X=\{X(t), \ t \ge 0\}$ and $Y=\{Y(t), \ t \ge 0\}$ be two independent c\'adl\'ag processes such that $X$ is stochastically continuous.
	%, i.e. such that for all $\varepsilon>0$ and $t \ge 0$ it holds
	%\begin{equation*}
	%	\lim_{h \to 0}\bP(|X(t+h)-X(t)|>\varepsilon)=0.
	%\end{equation*}
	Let also $F:\mathbb{D}^k \to [0,+\infty)$ be a measurable functional. Then $X(F(Y)-)=X(F(Y))$ almost surely.
	%	\begin{equation}\label{eq:measF}
		%		\bP(\not = X(F(Y)))=0.
		%	\end{equation}
	In particular, as a consequence ${\sf Disc}(X) \cap {\sf Disc}(Y)=\emptyset$ almost surely.
	%	\begin{equation}\label{eq:nocomdisc}
		%		\bP()=1.
		%	\end{equation}
\end{prop}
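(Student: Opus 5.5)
We argue by conditioning on $Y$. Since $Y$ is independent of $X$, we can write
\begin{equation*}
\bP\big(X(F(Y)-)\neq X(F(Y))\big)=\int_{\mathbb{D}^k}\bP\big(X(F(y)-)\neq X(F(y))\big)\,\bP_Y(dy).
\end{equation*}
This identity uses only the measurability of $(x,y)\mapsto x(F(y))$ and $(x,y)\mapsto x(F(y)-)$ on $\mathbb{D}\times\mathbb{D}^k$. That measurability holds because evaluation maps on $\mathbb{D}$ are measurable and $x(s-)=\lim_{q\uparrow s,\, q\in\mathbb{Q}}x(q)$. So the claim reduces to showing that, for every deterministic time $s=F(y)\ge 0$, we have $\bP(X(s-)\neq X(s))=0$.

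Fix $s>0$; the case $s=0$ is trivial with the convention $X(0-)=X(0)$. Stochastic continuity gives $X(s-h)\to X(s)$ in probability as $h\downarrow 0$. On the other hand, the càdlàg property gives $X(s-h)\to X(s-)$ almost surely, hence also in probability. Limits in probability are a.s. unique, so $X(s-)=X(s)$ almost surely. This means every fixed time is a.s. a continuity point of $X$, and integrating over $\bP_Y$ gives the first claim.

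For the statement about discontinuities, I would enumerate the jumps of $Y$ measurably. For $m,j\in\N$, let $F_{m,j}(y)$ be the time of the $j$-th jump of $y$ of size larger than $1/m$, and set it to $0$ if fewer than $j$ such jumps exist. A càdlàg path has only finitely many jumps of size larger than $1/m$ on each compact interval, so ${\sf Disc}(Y)=\bigcup_{m,j}\{F_{m,j}(Y)\}\setminus\{0\}$ up to the convention at $0$. These functionals are measurable, which can be checked through hitting times of rational approximations. Applying the first part to each $F_{m,j}$ and taking the countable union of null events gives $X(t-)=X(t)$ for all $t\in{\sf Disc}(Y)$ a.s., that is, ${\sf Disc}(X)\cap{\sf Disc}(Y)=\emptyset$ a.s.

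The main technical point is the measurability of the jump-time functionals $F_{m,j}$. Everything else is immediate.
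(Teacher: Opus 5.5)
Your proof is correct and is the conditioning argument the paper alludes to; the paper omits the details. Independence and Fubini reduce the first claim to deterministic times, where stochastic continuity together with the c\`adl\`ag property forces $X(s-)=X(s)$ a.s., and your measurable enumeration $F_{m,j}$ of the jump times of $Y$ correctly handles the ``in particular'' part by turning ${\sf Disc}(X)\cap{\sf Disc}(Y)=\emptyset$ into a countable intersection of events to which the first part applies.
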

The proof is based on a simple conditioning argument and thus is omitted. 
As a consequence, together with the fact that, clearly, the composition of a stochastically continuous process and a continuous process is still stochastically continuous, we have the following statement.
%Let us finally show the following easy property that we will use in the following.
\begin{lem}\label{lem:nonsim}
	Let $N_\alpha$, $N_\beta$ be two independent fractional Poisson processes with orders respectively $\alpha,\beta \in (0,1]$. Then ${\sf Disc}(N_\alpha)\cap {\sf Disc}(N_\beta)=\emptyset$ almost surely. 
\end{lem}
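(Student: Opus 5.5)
The plan is to deduce Lemma \ref{lem:nonsim} from Proposition \ref{prop:noshareddisc}, by checking that a fractional Poisson process is stochastically continuous and then turning the pointwise statement "$X(F(Y)-)=X(F(Y))$ a.s." into a statement about all jump times at once.

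\emph{Stochastic continuity.} First I will show that $N_\alpha$ is stochastically continuous. Use the time-change representation $N_\alpha=N\circ L_\alpha$, where $N$ is a Poisson process and $L_\alpha$ is an independent inverse $\alpha$-stable subordinator. The process $L_\alpha$ has continuous paths, and $N$ is stochastically continuous. Hence, conditioning on $L_\alpha$,
\begin{equation*}
\bP\bigl(N_\alpha(t+h)\neq N_\alpha(t)\bigr)=\E\bigl[1-e^{-\lambda|L_\alpha(t+h)-L_\alpha(t)|}\bigr]\to 0
\end{equation*}
as $h\to 0$, by dominated convergence. For $\alpha=1$ this is just the Poisson case.

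\emph{Reduction to single jump times.} Take $X=N_\alpha$ and $Y=N_\beta$. Let $(A_k)_{k\ge1}$ be the jump times of $N_\beta$, that is, the partial sums of its i.i.d. Mittag-Leffler waiting times. Each $A_k$ is a measurable functional $F_k(N_\beta)$ of the path, namely $F_k(f)=\inf\{t\ge0:\ f(t)\ge k\}$, which takes values in $[0,+\infty)$ because $A_k<\infty$ a.s.

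Proposition \ref{prop:noshareddisc} applied with $F=F_k$ gives $N_\alpha(A_k-)=N_\alpha(A_k)$ almost surely. In words, $A_k\notin{\sf Disc}(N_\alpha)$ a.s.

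\emph{Countable union.} The set ${\sf Disc}(N_\beta)$ is exactly $\{A_k:\ k\ge1\}$, which is countable. Taking the countable intersection of the full-probability events $\{A_k\notin {\sf Disc}(N_\alpha)\}$ over $k\ge1$ yields ${\sf Disc}(N_\alpha)\cap{\sf Disc}(N_\beta)=\emptyset$ almost surely.

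The only delicate point is measurability of $F_k$ on $\mathbb{D}$. This holds because $\{F_k(f)\le t\}=\{f(t)\ge k\}$ for non-decreasing integer-valued càdlàg paths, and the projection $f\mapsto f(t)$ is measurable.
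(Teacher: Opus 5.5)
Your proposal is correct and follows the paper's route. The paper obtains the lemma from Proposition \ref{prop:noshareddisc}, using that $N_\alpha=N\circ L_\alpha$ is stochastically continuous because it composes a stochastically continuous process with an independent continuous one; you make explicit the two steps the paper leaves implicit, namely the bound $\E\bigl[1-e^{-\lambda|L_\alpha(t+h)-L_\alpha(t)|}\bigr]\to 0$ and the countable intersection over the jump times $A_k=F_k(N_\beta)$ behind the ``in particular'' clause.
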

%\begin{rmk}
%	As a direct consequence of the previous lemma we get that if $\tau$ is a stopping time with respect to the natural filtration of $N_\alpha$, then
%	\begin{equation*}
	%		\bP(N_\beta(\tau-)\not = N_\beta(\tau))=0.
	%	\end{equation*}
%	Indeed, if $\tau$ is a stopping time with respect to $N_\alpha$, then it is measurable with respect to the $\sigma$-algebra generated by $N_\alpha$ as a random variable with values in $\mathbb{D}$. Hence, by the Doob-Dynkin Lemma, $\tau=F(N_\alpha)$ for some measurable function $F:\mathbb{D} \to [0,+\infty)$ and then the previous lemma applies.
%\end{rmk}
%\begin{rmk}
%	Let us also stress that with a slight modification of the previous proof one can show that two stochastically continuous c\'adl\'ag independent processes have no common discontinuities with probability $1$.
%\end{rmk}
In the following we will also make use of the distribution of the \textit{last discontinuity time} of a fractional Poisson process in an interval. This distribution is given by the following result.
%, whose proof is left in Appendix \ref{app:C}
\begin{thm}\label{thm:lastjumptime}
	Let $\alpha \in (0,1]$, $\lambda>0$ and consider a fractional Poisson process $N_\alpha$ of order $\alpha$ and rate $\lambda$. For any $t>0$ consider
	\begin{equation}\label{eq:lastjumptime0}
		\tau[t]=\sup\{s \le t: \ N_\alpha(s)\not = N_\alpha(s-)\},
	\end{equation}
	where $\sup \emptyset =0$ and set also $\tau[0]=0$. Let $F_{\sf LJ}(s;\alpha,\lambda,t)=\bP(\tau[t] \le s)$. Then $F_{\sf LJ}(s;\alpha,\lambda,t)=1$ if $s \ge t$, while, for $0 \le s \le t$, it is given in \eqref{eq:lastjumptime}.
\end{thm}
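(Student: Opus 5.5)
We compute $F_{\sf LJ}(s;\alpha,\lambda,t)=\bP(\tau[t]\le s)$ for $0\le s<t$ by decomposing on the number of jumps of $N_\alpha$ in $[0,t]$. Write $A_n=\sum_{j=1}^n T_j$ for the jump times, with $T_j\sim{\sf ML}_\alpha(\lambda)$ i.i.d. and $A_0=0$.

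First, on $\{N_\alpha(t)=0\}$ we have $\tau[t]=0\le s$ by the convention $\sup\emptyset=0$. This event has probability
\begin{equation*}
\bP(N_\alpha(t)=0)=\bP(T_1>t)=E_\alpha(-\lambda t^\alpha),
\end{equation*}
which gives the first term of \eqref{eq:lastjumptime}. Here the exponent in \eqref{eq:lastjumptime} should read $t^\alpha$ with the order of the process, a typo-level matter we would note.

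Next, on $\{N_\alpha(t)=n\}$ with $n\ge1$, we have $\tau[t]=A_n$. Hence
\begin{equation*}
\{\tau[t]\le s,\ N_\alpha(t)=n\}=\{A_n\le s,\ A_n+T_{n+1}>t\}.
\end{equation*}
Since $T_{n+1}$ is independent of $A_n$, conditioning on $A_n=z$ yields
\begin{equation*}
\bP(\tau[t]\le s,\ N_\alpha(t)=n)=\int_0^s \bP(T_{n+1}>t-z)\,f_{A_n}(z)\,dz=\int_0^s E_\alpha(-\lambda(t-z)^\alpha)\,f_{A_n}(z)\,dz.
\end{equation*}
By the proposition on sums of Mittag-Leffler variables, $A_n\sim{\sf GE}_\alpha(n,\lambda)$. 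Its density is given by \eqref{eq:densGE}:
\begin{equation*}
f_{A_n}(z)=\alpha n\lambda^n z^{n\alpha-1}E^{n+1}_{\alpha,n\alpha+1}(-\lambda z^\alpha).
\end{equation*}
Summing over $n\ge1$ and adding the $n=0$ term reproduces \eqref{eq:lastjumptime}, since the events $\{N_\alpha(t)=n\}$ partition the sample space. For $s\ge t$ the claim $F_{\sf LJ}=1$ is immediate because $\tau[t]\le t$.

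The only delicate step is justifying that the series converges and that sum and integral may be exchanged. This follows from monotone convergence (Tonelli), because all terms are nonnegative probabilities and their total is bounded by $1$. Nonnegativity of the density \eqref{eq:densGE} holds because it is a genuine density. The density itself is justified by differentiating the survival function of Definition \ref{def:GE} term by term via \eqref{eq:derPrab}, using that the Prabhakar functions are entire. This interchange is what we expect to need the most care.
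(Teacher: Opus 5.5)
Your proposal is correct and follows essentially the same route as the paper. Both arguments decompose on $\{N_\alpha(t)=n\}$, write the joint event as $\{A_n\le s,\,T_{n+1}>t-A_n\}$, use the independence of $T_{n+1}$ from $A_n$ together with the ${\sf GE}_\alpha(n,\lambda)$ density \eqref{eq:densGE}, and sum over $n$. The paper's only difference is a detour through the conditional law $\bP(\tau_n\le s\mid N_\alpha(t)=n)$, which it then multiplies back by $\bP(N_\alpha(t)=n)$.

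You are also right that the first term of \eqref{eq:lastjumptime} should read $E_\beta(-\mu T^\beta)$. One small correction: no sum--integral interchange is actually needed, since \eqref{eq:lastjumptime} keeps the sum outside the integrals. Its convergence is just countable additivity over the partition $\{N_\alpha(t)=n\}_{n\ge 0}$, which is an a.s. partition because $A_n\to+\infty$.
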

\begin{proof}
	Let us denote by $(\tau_k)_{k \ge 1}$ the sequence of discontinuities of $N_\alpha$ and set $\tau_0=0$. We first show that, for $s \le t$,
	\begin{equation}\label{eq:condlastjump}
		\bP(\tau_n \le s \mid N_\alpha(t)=n)=\frac{\alpha n}{t^{\alpha n}E_{\alpha,n\alpha+1}^{n+1}(-\lambda t^\alpha)} \int_0^s E_\alpha(-\lambda (t-z)^\alpha)z^{n\alpha-1}E_{\alpha,n\alpha+1}^{n+1}(-\lambda z^\alpha)\, dz.
	\end{equation}
	To evaluate this, let us first consider the case $n=1$. Then we have
	\begin{equation}\label{eq:n10}
		\bP(\tau_1 \le s \mid N_\alpha(t)=1)=\frac{\bP(\tau_1 \le s, N_\alpha(t)=1)}{\bP(N_\alpha(t)=1)}.
	\end{equation}
	Now we let $T_2=\tau_2-\tau_1$, where we recall that $T_2 \sim {\sf ML}_\alpha(\lambda)$ is independent of $\tau_1$, and we observe that
	\begin{equation*}
		\bP(\tau_1 \le s, N_\alpha(t)=1)=\bP(\tau_1 \le s, T_1 > t-\tau_1).
	\end{equation*}
	By the tower property of conditional expectation, we have
	\begin{equation}\label{eq:n1}
		\bP(\tau_1 \le s, N_\alpha(t)=1)=\E\left[\mathbf{1}_{[0,s]}(\tau_1)\E\left[\mathbf{1}_{(t-\tau_1,+\infty)}(T_1)\mid \tau_1\right]\right].
	\end{equation}
	Concerning the inner conditional expectation, let
	\begin{equation}\label{eq:n11}
		F(s;t)=\E\left[\mathbf{1}_{(t-s,+\infty)}(T_1)\right]=E_\alpha(-\lambda(t-s)^\alpha)
	\end{equation}
	so that using \eqref{eq:n11} into \eqref{eq:n1}, we have
	\begin{align}\label{eq:n13}
		\begin{split}
			\bP(\tau_1 \le s, N_\alpha(t)=1)&=\E\left[\mathbf{1}_{[0,s]}(\tau_1)E_\alpha(-\lambda(t-\tau_1)^\alpha)\right]\\
			&=\lambda \alpha \int_{0}^{s}E_\alpha(-\lambda(t-z)^\alpha)z^{\alpha-1}E^2_{\alpha,\alpha+1}(-\lambda z^\alpha)\, dz,	
		\end{split}
	\end{align}
	where we used \eqref{eq:densGE} with $k=1$ since $\tau_1 \sim {\sf ML}_\alpha(\lambda)$.
	%Now recall that
	%\begin{equation*}
	%	\der{}{z}E_\alpha(z)=E_{\alpha,\alpha+1}^2(z)
	%\end{equation*}
	%hence, since $\tau_1 \sim {\sf ML}_\alpha(\lambda)$, we can use its density in \eqref{eq:n12} to get
	%\begin{equation}\label{eq:n13}
	%	\bP(\tau_1 \le s, N_\alpha(t)=1)=\lambda \alpha \int_{0}^{s}E_\alpha(-\lambda(t-z)^\alpha)z^{\alpha-1}E^2_{\alpha,\alpha+1}(-\lambda z^\alpha)\, dz.
	%\end{equation}
	Plugging \eqref{eq:n13} into \eqref{eq:n10} we have
	\begin{equation}\label{eq:n10fin}
		\bP(\tau_1 \le s \mid N_\alpha(t)=1)=\frac{\alpha}{t^\alpha E^{2}_{\alpha,k\alpha+1}(-\lambda t^\alpha)} \int_{0}^{s}E_\alpha(-\lambda(t-z)^\alpha)z^{\alpha-1}E^2_{\alpha,\alpha+1}(-\lambda z^\alpha)\, dz.
	\end{equation}
	Now let us consider the case in which $n \ge 2$. Then we have
	\begin{equation}\label{eq:ngen0}
		\bP(\tau_n \le s \mid N_\alpha(t)=n)=\frac{\bP(\tau_n \le s, N_\alpha(t)=n)}{\bP(N_\alpha(t)=n)}.
	\end{equation}
	Let, again, $T_{n+1}=\tau_{n+1}-\tau_n$, and rewrite the numerator in \eqref{eq:ngen0} as
	\begin{equation*}
		\bP(\tau_n \le s, N_\alpha(t)=n)=\bP(\tau_n \le s, T_{n+1}>t-\tau_n).
	\end{equation*}
	Again, since $T_{n+1}$ is independent of $\tau_n$, we can use the same argument as before to achieve
	\begin{align}\label{eq:ngen2}
		\begin{split}
			\bP(\tau_n \le s, N_\alpha(t)=n)&=\E\left[\mathbf{1}_{[0,s]}(\tau_n)E_\alpha(-\lambda(t-\tau_n)^{\alpha})\right]\\
			&=\alpha n \lambda^n \int_0^s E_\alpha(-\lambda (t-z)^\alpha)z^{n\alpha-1}E_{\alpha,n\alpha+1}^{n+1}(-\lambda z^\alpha)\, dz,	
		\end{split}
	\end{align}
	where, since $\tau_n \sim {\sf GE}_\alpha(n,\lambda)$, we used \eqref{eq:densGE}.
	% hence we need the density of a generalized Erlang random variable. To get it, we need to differentiate term by term the summation in Definition \ref{def:GE}. Indeed, taking the derivative in the $t$ variable, using \eqref{eq:derPrab} and changing sign we get
	%\begin{align*}
	%	-\der{}{t}\bP(\tau_n>t)&=\sum_{k=0}^{n-1}\alpha(k+1) \lambda^{k+1} t^{(k+1)\alpha-1}E_{\alpha,(k+1)\alpha+1}^{k+2}(-\lambda t^\alpha)\\
	%	&\qquad -\sum_{k=1}^{n-1}k\alpha \lambda^k t^{k\alpha-1}E_{\alpha,k\alpha+1}^{k+1}(-\lambda t^{\alpha})\\
	%	&=\sum_{k=0}^{n-1}\alpha(k+1) \lambda^{k+1} t^{(k+1)\alpha-1}E_{\alpha,(k+1)\alpha+1}^{k+2}(-\lambda t^\alpha)\\
	%	&\qquad -\sum_{k=0}^{n-2}(k+1)\alpha \lambda^{k+1} t^{(k+1)\alpha-1}E_{\alpha,(k+1)\alpha+1}^{k+2}(-\lambda t^{\alpha})\\
	%	&=\alpha n\lambda^{n} t^{n\alpha-1}E_{\alpha,n\alpha+1}^{n+1}(-\lambda t^\alpha)
	%\end{align*}
	%thus \eqref{eq:ngen1} becomes
	%\begin{equation}\label{eq:ngen2}
	%	\bP(\tau_n \le s, N_\alpha(t)=n)=\alpha n \lambda^n \int_0^s E_\alpha(-\lambda (t-z)^\alpha)z^{n\alpha-1}E_{\alpha,n\alpha+1}^{n+1}(-\lambda z^\alpha)\, dz.
	%\end{equation}
	Finally, plugging this equation into \eqref{eq:ngen0} and using \eqref{eq:pkfPP}, we get
	\begin{equation*}
		\bP(\tau_n \le s \mid N_\alpha(t)=n)=\frac{\alpha n}{t^{\alpha n}E_{\alpha,n\alpha+1}^{n+1}(-\lambda t^\alpha)} \int_0^s E_\alpha(-\lambda (t-z)^\alpha)z^{n\alpha-1}E_{\alpha,n\alpha+1}^{n+1}(-\lambda z^\alpha)\, dz.
	\end{equation*}
	Now that we have \eqref{eq:condlastjump}, we can prove \eqref{eq:lastjumptime}. Indeed, by using both \eqref{eq:condlastjump} and \eqref{eq:pkfPP}, we have
	\begin{align*}
		\bP(\tau[t] \le s)&=\sum_{n=0}^{+\infty}\bP(\tau[t] \le s \mid N_\alpha(t)=n)\bP(N_\alpha(t)=n)\\
		&=E_\alpha(-\lambda t^\alpha)+\sum_{n=1}^{+\infty}\alpha n \lambda^n \int_0^s E_\alpha(-\lambda (t-z)^\alpha)z^{n\alpha-1}E_{\alpha,n\alpha+1}^{n+1}(-\lambda z^\alpha)\, dz,
	\end{align*}
	ending the proof.
\end{proof}
%For $s \in [0,t]$, we denote by $F_{\sf LJ}(s;\alpha,\lambda,t)$ the right-hand side of \eqref{eq:lastjumptime}, while we set $F_{\sf LJ}(s;\alpha,\lambda,t)=1$ if $s \ge t$.

%For simplicity, in what follows, we will denote
%\begin{align}\label{lastjumpdist}
%	\begin{split}
	%	F_{\sf LJ}(s;\alpha,\lambda,t)&:=E_\alpha(-\lambda t^\alpha)\\
	%	&\quad +\sum_{n=1}^{+\infty}\alpha n \lambda^n \int_0^s E_\alpha(-\lambda (t-z)^\alpha)z^{n\alpha-1}E_{\alpha,n\alpha+1}^{n+1}(-\lambda z^\alpha)\, dz
	%	\end{split}
%\end{align}
%for $s \in [0,t]$, while $F_{\sf LJ}(s;\alpha,\lambda,t)=1$ if $s \ge t$. 
\section{Proof of the results in Section \ref{sec:models}}\label{app:B}
\subsection{Proof of Theorem \ref{thm:interarrival2}}\label{sec:ptinter12}
It is clear that $T_1 \sim {\sf mML}_{\alpha,\beta}(\lambda,\mu)$ as the first discontinuity time must be an arrival time, i.e. $T_1=Y_1$. Now let us consider $T_k$ with $k \ge 2$. Then notice that under ${}_2Q(A_{k-1})=n$ one can only have at most $n$ departures before the arrival of new customer. Let $N_S[t_1,t_2]$ be the number of departures in the time interval $[t_1,t_2]$ and let us write
	\begin{multline*}
		\bP(T_k>t \mid {}_2Q(A_{k-1})=n)\\
		=\sum_{j=0}^{n}\bP(T_k>t, \ \mid {}_2Q(A_{k-1})=n, N_S[A_{k-1},A_k]=j)\bP(N_S[A_{k-1},A_k]=j).
	\end{multline*}
	Let $\widetilde{k}$ be such that $\sum_{h=0}^{\widetilde{k}}Y_{h}=A_{k-1}$. Then, under $N_S[A_{k-1},A_k]=j$ we have that $A_k=\sum_{h=0}^{\widetilde{k}+j+1}Y_{h}$ and then
	\begin{align*}
		\bP&(T_k>t, \ \mid {}_2Q(A_{k-1})=n, N_S[A_{k-1},A_k]=j)\\
		&=\bP\left(\sum_{h=\widetilde{k}+1}^{\widetilde{k}+j+1}Y_{h}>t, \ \mid {}_2Q(A_{k-1})=n, N_S[A_{k-1},A_k]=j\right)\\
		&=1-\bP\left(\sum_{h=\widetilde{k}+1}^{\widetilde{k}+j+1}Y_{h} \le t\right)=1-(F_{\alpha,\beta}^{\lambda,\mu})^{\ast (j+1)}(t).
	\end{align*}
	On the other hand, notice that $N_S[A_{k-1},A_k]$ is geometrically distributed with parameter $p_{\alpha,\beta}^{\lambda,\mu}$ (here $j$ is the number of failures).
	%, i.e.,
	%\begin{equation*}
	%\bP(N_S[{}_2A_{k-1},{}_2A_k]=j)=(1-p_{\alpha,\beta}^{\lambda,\mu})^jp_{\alpha,\beta}^{\lambda,\mu}
	%\end{equation*}
	Hence we get
	\begin{equation*}
		\bP(T_k>t \mid {}_2Q(A_{k-1})=n)
		=\sum_{j=0}^{n}(1-(F_{\alpha,\beta}^{\lambda,\mu})^{\ast (j+1)}(t))(1-p_{\alpha,\beta}^{\lambda,\mu})^jp_{\alpha,\beta}^{\lambda,\mu}.
	\end{equation*}
	%ending the proof.
\hfill \qed
\subsection{Proof of Theorem \ref{thm:service2}}\label{sec:ptser12}
	Let us consider $S_k$ with $k \ge 1$. If $A_{k}>D_{k-1}$, then $S_k=D_k-A_k$. Let $N_{A}[t_1,t_2]$ be the number of arrivals in the interval $(t_1,t_2]$. Then
	\begin{multline*}
		\bP(S_k>t \mid A_k>D_{k-1})\\
		=\sum_{j=0}^{+\infty}\bP(S_k>t, \ \mid A_k>D_{k-1}, N_A[A_{k},D_k]=j)\bP(N_A[A_{k},D_k]=j).
	\end{multline*}
	Now, let $A_k=Y_{\widetilde{k}}$ for some $\widetilde{k}$. Then, under $N_A[A_{k},D_k]=j$, we have $D_k=Y_{\widetilde{k}+j+1}$ and then
	\begin{align*}
		\bP(&S_k>t, \ \mid A_k>D_{k-1}, N_A[A_{k},D_k]=j)\\
		&=\bP\left(\sum_{h=\widetilde{k}+1}^{\widetilde{k}+j+1}Y_h>t \mid {}_2Q(D_{k-1})=0, N_A[A_{k},D_k]=j\right)=1-(F_{\alpha,\beta}^{\lambda \mu})^{\ast (j+1)}(t).
		%		\\
		%		&=1-(F_{\alpha,\beta}^{\lambda \mu})^{\ast (j+1)}(t).
	\end{align*}
	On the other hand, $N_A[A_{k},D_k]$ is geometrically distributed with parameter $1-p_{\alpha,\beta}^{\lambda,\mu}$, hence
	%\begin{equation*}
	%	\bP(N_A[{}_2A_{k},{}_2D_k]=j)=(p_{\alpha,\beta}^{\lambda,\mu})^j(1-p_{\alpha,\beta}^{\lambda,\mu}),
	%\end{equation*}
	%hence
	\begin{equation*}
		\bP(S_k>t \mid A_k>D_{k-1})
		=\sum_{j=0}^{+\infty}(1-(F_{\alpha,\beta}^{\lambda \mu})^{\ast (j+1)}(t))(p_{\alpha,\beta}^{\lambda,\mu})^j(1-p_{\alpha,\beta}^{\lambda,\mu}).
	\end{equation*}
	If $A_k<D_{k-1}$, then $S_k=D_k-D_{k-1}$ and we apply the same argument using $N_A[D_{k-1},D_k]$ in place of $N_A[A_{k},D_k]$.
	% Then we have, with the same argument,
	%\begin{align*}
	%	\bP&({}_2S_k>t \mid {}_2A_k<{}_2D_{k-1})\\
	%	&=\sum_{j=0}^{+\infty}\bP({}_2S_k>t, \ \mid {}_2A_k<{}_2D_{k-1}, N_A[{}_2D_{k-1},{}_2D_k]=j)\bP(N_A[{}_2D_{k-1},{}_2D_k]=j)\\
	%	&=\sum_{j=0}^{+\infty}(1-(F_{\alpha,\beta}^{\lambda \mu})^{\ast (j+1)}(t))(p_{\alpha,\beta}^{\lambda,\mu})^j(1-p_{\alpha,\beta}^{\lambda,\mu}).
	%\end{align*}
\hfill \qed
\subsection{Proof of Proposition \ref{prop:comparison}}\label{proofcomp}
	The fact that $p_{\lambda,\mu}^{\alpha,\beta}=\frac{1}{2}$ implies that $\bP({T}_1>t)=\bP({S}_1>t)$, and thus equality in \eqref{eq:comparsurv}, is evident from \eqref{eq:serv2Q} and \eqref{eq:interarrival2}. The strict inequality  in \eqref{eq:comparsurv} in case $p_{\lambda,\mu}^{\alpha,\beta}>1/2$ still follows by \eqref{eq:serv2Q} and \eqref{eq:interarrival2} once we observe that for $j \ge 2$ it holds
	\begin{equation*}
		(1-p_{\alpha,\beta}^{\lambda,\mu})^jp_{\alpha,\beta}^{\lambda,\mu}<(p_{\alpha,\beta}^{\lambda,\mu})^j(1-p_{\alpha,\beta}^{\lambda,\mu}),
	\end{equation*}
	while the reverse one holds if $p_{\lambda,\mu}^{\alpha,\beta}<1/2$.
\hfill \qed 
\subsection{Proof of Proposition \ref{prop:27}}\label{proofprop27}
	Let $J_0=0$ and
	\begin{equation*}
		J_k=\inf\{t>J_{k-1}: \ {}_3{Q}(t)\not = {}_3{Q}(t-)\}
	\end{equation*}
	be the event times of ${}_3{Q}$ and define $q_k={}_3{Q}(J_k)$. It is clear that $q_0=0$. Furthermore, if $q_{k-1}>0$, then, by definition of Skorokhod reflection,
	\begin{equation*}
		q_k-q_{k-1}={}_3{Q}(J_k)-{}_3{Q}(J_{k-1})=R(J_k)-R(J_{k-1})=r_k-r_{k-1},
	\end{equation*}
	since on $[J_{k-1},J_{k})$ the regulator $\Psi(R)$ is constant. If $q_{k-1}=0$, then clearly $q_k=1$. This shows, in particular, that $q_k$ is a Markov chain whose transition probability is specified in \eqref{eq:mCren1}. Now let $\widetilde{Y}_k=J_k-J_{k-1}$ for $k \ge 1$. If $q_{k-1}>0$, then $\widetilde{Y}_k$ is an inter-jump time of $R$, i.e. it coincides with $Y_j$ for some $j \in \N$ and then
	\begin{equation*}
		\bP(\widetilde{Y}_k>t \mid q_{k-1})=1-F_{\alpha,\beta}^{\lambda,\mu}(t).
	\end{equation*}
	If instead $q_{k-1}=0$, then $J_k$ must be an upward jump time of $R$, hence the argument in the discussion above Theorem \ref{thm:interarrival22} applies.
\hfill \qed
\subsection{Proof of Proposition \ref{prop:distserv3}}\label{sec:restlessproof1}
	%	Let $\{d_n\}_{n \ge 0}$ the sequence of random times defined as $d_0=0$ and
	%	\begin{equation*}
		%		d_n=\min\{t>d_{n-1}: \ N_{\sf d}(t-)\not = N_{\sf d}(t)\}, \ n \ge 1.
		%	\end{equation*}
	%	Notice that $\{d_n\}_{n \ge 0}$ constitute a subsequence of $\tau_n$.
	%	
	Let us first show the first equality in \eqref{eq:service1}. If $A_n<D_{n-1}$, then clearly $n \ge 2$ and
	\begin{equation*}
		S_n=D_n-D_{n-1}.	
	\end{equation*}
	Furthermore for $t \in [D_{n-1},D_n)$, it must hold ${}_4Q(t) \ge 1$ a.s. Then, by definition of ${}_4Q$, $D_{n-1}$ and $D_n$ are two subsequent discontinuities of $N_{\sf d}$ and then, under $A_n<D_n$, we have $S_n=D_n-D_{n-1} \sim {\sf ML}_\beta(\mu)$. This shows the first equality in \eqref{eq:service1}.
	
	Concerning the second equality, if $A_n>D_{n-1}$, then we have
	\begin{equation*}
		S_n=D_n-A_n=D_n-\tau_{\sf d}[A_n]-(A_n-\tau_{\sf d}[A_n]),
	\end{equation*}
	where, for $t \ge 0$, $\tau_{\sf d}[t]$ is defined in \eqref{eq:lastjumptime0} with respect to the fractional Poisson process $N_{\sf d}$. To determine the distribution of $S_n$, let us condition further with respect to $\tau_{\sf d}[A_n]$, getting, under $A_n>D_{n-1}$, 
	\begin{align*}
		\bP&(S_n>t \mid A_n,D_{n-1},\tau_{\sf d}[A_n])\\
		&=\bP(D_n-\tau_{\sf d}[A_n]>A_n-\tau_{\sf d}[A_n]+t \mid A_n,D_{n-1},\tau_{\sf d}[A_n]).
	\end{align*}
	Once we condition with respect to $A_n$, $D_{n-1}$ and $\tau_{\sf d}[A_n]$, for $D_{n-1}<A_n$, the variable $D_n-\tau_{\sf d}[A_n]$ is a Mittag-Leffler random variable (since it is the time between two subsequent discontinuities of $N_{\sf d}$) conditioned on the event $D_n-\tau_{\sf d}[A_n]>A_n-\tau_{\sf d}[A_n]$ (since we know that $N_{\sf d}(A_n)=N_{\sf d}(\tau_{\sf d}[A_n])$), hence
	\begin{align*}
		\bP&(S_n>t \mid A_n,D_{n-1},\tau_{\sf d}[A_n])=\frac{E_\beta(-\mu(A_n-\tau_{\sf d}[A_n]+t)^\beta)}{E_\beta(-\mu(A_n-\tau_{\sf d}[A_n])^\beta)}\\
		&=\frac{E_\beta(-\mu(A_n-D_{n-1}-(\tau_{\sf d}[A_n]-D_{n-1})+t)^\beta)}{E_\beta(-\mu(A_n-D_{n-1}-(\tau_{\sf d}[A_n]-D_{n-1}))^\beta)}.
	\end{align*}
	Next, since $D_{n-1}$ is a regeneration time of $N_{\sf d}$, we have
	\begin{equation*}
		\tau_{\sf d}[A_n]-D_{n-1}\overset{d}{=}\tau_{\sf d}[A_n-D_{n-1}].
	\end{equation*}
	Now let $T>0$ and notice that
	\begin{equation*}
		F_t(T):=\E\left[\frac{E_\beta(-\mu(T-\tau_{\sf d}[T]+t)^\beta)}{E_\beta(-\mu(T-\tau_{\sf d}[T])^\beta)}\right]=\int_{[0,T]}\frac{E_\beta(-\mu(T-s+t)^\beta)}{E_\beta(-\mu(T-s)^\beta)}dF_{\sf LJ}(s;\beta,\mu,T).
	\end{equation*}
	Then, we have, by the tower property of the conditional expectation, under $A_n>D_{n-1}$,
	\begin{align*}
		\bP&(S_n>t \mid A_n,D_{n-1})=F_t(A_n-D_{n-1}),
	\end{align*}
	ending the proof.
\hfill \qed
%\section{Proof of Theorem \ref{thm:lastjumptime}}\label{app:C}
\section*{Acknowledgements}
The authors are partially supported by the PRIN 2022 project 2022XZSAFN: \textit{Anomalous Phenomena on Regular and Irregular Domains: Approximating Complexity for the Applied Sciences}. GA is partially supported by the GNAMPA group of the Istituto Nazionale di Alta Matematica. LC is partially supported by the GNCS group of the Istituto Nazionale di Alta Matematica.

\bibliographystyle{abbrv}
\bibliography{biblio}
\end{document}